\theoremstyle{definition}
\newtheorem{theorem}{Theorem}[section]
\newtheorem{proposition}[theorem]{Proposition}
\newtheorem{corollary}[theorem]{Corollary}
\newtheorem{lemma}[theorem]{Lemma}
\newtheorem{example}[theorem]{Example}
\newtheorem{definition}[theorem]{Definition}
\newtheorem{remark}[theorem]{Remark}
\newcommand{\stkout}[1]{\ifmmode\text{\sout{\ensuremath{#1}}}\else\sout{#1}\fi}
\newcommand{\Z}{\mathbb{Z}}
\newcommand{\R}{\mathbb{R}}
\newcommand{\C}{\mathbb{C}}
\newcommand{\Per} {\operatorname{Per} }
\newcommand{\percompact}{{\mathcal Per}}
\newcommand{\per}{\operatorname{Per}}
\newcommand{\mnc}{\overline{\mathcal{S}}}
\newcommand{\mrs}{\mathcal{S}}
\newcommand{\PP}{R}
\author{Liza Arzhakova}
\address{}
\email{elizaveta.arzhakova1@gmail.com} 
\author{Gabriel Calsamiglia}
\address{Departamento de Matem\' atica Aplicada, Instituto de Matemática e Estatística, Universidade Federal Fluminense,Rua Professor Marcos Waldemar de Freitas Reis, s/n, Campus Gragoat\' a, Bloco G, Niter\' oi, 24210-201 Brazil} 
\email{gcalsamiglia@id.uff.br}
\author{Bertrand Deroin}
\address{CNRS \& CY Cergy Paris Universit\'e, Laboratoire AGM, UMR 8088, 2 av. Adolphe Chauvin, 95302 Cergy-Pontoise Cedex, France}
\email{bertrand.deroin@cyu.fr}
\keywords{Isoperiodic foliation, Hodge bundle, isoperiodic differentials, meromorphic forms}
\subjclass[2020]{30F30 (primary), 57M50, 14H15, 32G13}
\date{2025}
\title{Isoperiodic meromorphic forms with at least three simple poles}
\begin{document}
\begin{abstract}
    In this paper we prove the connectedness of  isoperiodic moduli spaces of meromorphic differentials with at least three simple poles on homologically marked smooth curves whose periods are either not contained in a real line, or not contained in the rational space generated by the peripheral periods. From this topological property we deduce dynamical properties of the underlying foliation in the moduli space meromorphic differentials, by describing leaf closures associated to those spaces.  
\end{abstract}
\maketitle
\section{Introduction}

\subsection{Overview}

This work is the third of a series aiming at studying the dynamics and topology of isoperiodic foliations on moduli spaces of meromorphic one forms with simple poles on algebraic curves.
The leaves of those foliations are the maximal families of meromorphic forms so that the integrals on every closed cycle in the domain where the form is holomorphic remain constant.

In our first two papers on this topic, we developped the theory of isoperiodic foliations on moduli spaces of holomorphic differentials \cite{CDF2}, and on moduli of meromorphic differentials with two (simple) poles \cite{CD}.   

Here we focus on the general case of meromorphic forms with \textit{any} number of simple poles. The main novelty is that we have a family  of foliations for each choice of genus and number of poles, depending on the data of the residues at the poles (which are \(\frac{1}{2\pi i} \) multiples of peripheral periods around poles). The structure of subspaces of forms with fixed residues has been considered in \cite{GT, LT, CGPT}. Notice that multiplication in the moduli space by a complex number defines a conjugation of these different foliations, so we can consider the parameter as the projective class of tuples of residues that forms the right parameter space, a space of dimension \(n-2\) if \(n\) is the number of poles (as the sum of residues is always zero). These parameters in the moduli were not present in the previous cases of holomorphic differentials, or meromophic differentials with two poles and play an important role in the present paper.  

One of our main results is that, for moduli spaces of meromorphic differentials with at least three poles, and for a generic choice of residues, the foliation is minimal and ergodic. We also study the dynamics of the isoperiodic foliations for non generic parameters, and eventually compute explicitely the closures of the leaves.  We leave only one case: we do not compute closures of isoperiodic leaves corresponding to forms with real periods, all contained in the \(\mathbb Q\)-vector space generated by the peripheral periods, i.e.  $2\pi i $-times the residues. This case poses analogous difficulties as the finite degree cases of \cite{CDF2}, and seems to be complicated to achieve in a reasonable number of pages, so we decided to postpone for a separate paper.  

The general strategy is similar to the one of the first two papers: it is based on a transfer principle, that allows to relate the dynamics of isoperiodic foliations to the the action of modular group on the set of the period homomorphisms naturally defined by integration of the forms on cycles. This latter can be understood using Ratner's theory. The main feature that leads to the applicability of this transfer principle is a topological property of the period map, that associates to each form its period homomorphism, globally defined on a convenient pointed Torelli covering of moduli spaces of meromorphic differentials: namely that its fibers are connected. The proof of the latter takes up most of the paper, and uses convenient bordifications of the pointed Torelli space, and an inductive argument relating the boundary components with combinatorial properties of curves of lower complexity (either genus or number of poles). 

An interesting application of the connectedness of the fibers of the period map, that was already observed in the second paper on the two poles case, is that there are transcendental isoperiodic leaves that are closed in moduli space: the leaves corresponding to forms whose periods form a lattice in \(\mathbb C\). This phenomenon does not occur in the case of holomorphic forms. 

\subsection{Statement of results}
Let $\mathcal{M}_{g,n}$ denote the moduli space of smooth genus $g\geq 0$ complex curves with $n$ marked ordered points. Consider the bundle $\Omega\mathcal{M}_{g,n}\rightarrow \mathcal{M}_{g,n}$ whose fiber over a point $(C,(q_1,\ldots,q_n))$ is the space of meromorphic forms $\omega$ on $C$ having  (at worst) simple poles on $Q=(q_1,\ldots,q_n)$. It is a holomorphic vector bundle on a complex orbifold. To each element $(C,Q,\omega)\in\Omega\mathcal{M}_{g,n}$ we can define the period homomorphism $H_1(C\setminus Q)\rightarrow\mathbb{C}$ defined by $\gamma\mapsto\int_\gamma\omega$. The homology class of a positively oriented peripheral curve around $q\in Q$ will be denoted by $\pi_{q}$ and its image called peripheral period (it is $2\pi i$-times the residue).  The subgroup generated by those peripheral classes is the peripheral submodule $\Pi_Q$.

To be able to compare period homomorphisms on different curves we need to identify their homology groups. The orbifold universal cover  of $ \mathcal{M}_{g,n}$ is the Teichm\" uller space. A model for it can be defined with the data of a pair $(\Sigma, \PP)$ where \(\Sigma \) is a reference closed oriented surface of genus $g$ and \(\PP \subset \Sigma\) is a finite set of marked points having cardinal \(n\). Such a pair $(\Sigma,\PP)$ is said to be of type $(g,n)$. 
 The Teichm\" uller space $\mathcal{T}_{\Sigma,\PP}$ is as a set formed by the equivalence classes of triples $(C,Q,[f])$ where $C$ is a compact Riemann surface of genus $g$, \(Q\subset C\) a subset  of $n$ marked points and  $f:(\Sigma,\PP)\rightarrow (C,Q)$ is an orientation preserving homeomorphism with $f(\PP)=Q$ and the class is taken up to isotopy fixing each point of \(\PP\). In this representation, the covering group of the map that forgets $f$,  $$\mathcal{T}_{\Sigma,\PP}\rightarrow \mathcal{M}_{g,n}$$ is the  mapping class group $\text{Mod}_{\Sigma,\PP}$, namely the set of isotopy classes of orientation preserving diffeomorphisms of $\Sigma$ that fix each marked point in $\PP$.

The pointed Torelli space is an intermediate Galois covering, defined as the quotient of $\mathcal{T}_{\Sigma, \PP}$ by the subgroup $\mathcal{I}_{\Sigma,\PP}\subset\text{Mod}_{\Sigma,\PP}$ formed by the elements that act trivially on \(H_1(\Sigma\setminus\PP,\mathbb{Z})\), referred to as the pointed Torelli group.\footnote{A word of warning: this group is not to be confused with  other known subgroups of $\text{Mod}_{\Sigma,\PP}$ characterized by their (trivial) action on some other homology groups, for instance  the subgroup that fixes every class in the relative homology group $H_1(\Sigma,\PP;\mathbb{Z})$ used in \cite{CDF2}} 
 We denote it by 
 \[\mathcal{S}_{\Sigma,\PP}=\frac{\mathcal{T}_{\Sigma,\PP}}{\mathcal{I}_{\Sigma,\PP}}.\]  A point in pointed Torelli space corresponds to a triple \((C,Q,m)\) where \(C\)  is a smooth complex curve of genus \(g\) with a subset $Q\subset C$ of $n$  marked points and  \( m : H_1 (\Sigma\setminus \PP, \Z) \rightarrow H_1 (C\setminus Q, \Z)\) is an isomorphism that preserves the intersection form on both sides, and map the peripheral element around a point \(r\in \PP\) to the one around the point of \(Q\) corresponding to \(r\), up to the equivalence \((C,Q, m) \sim (C',Q', m') \) if there exists a biholomorphism \( h : (C,Q)\rightarrow (C',Q') \) such that \( m' = h_* \circ m\). The mapping class group of $(\Sigma,\PP)$ acts on \(\mathcal{S}_{\Sigma,\PP}\) by precomposition on the marking and the quotient map gives a covering map 
\begin{equation} 
\label{eq:torellicover}
\mathcal{S}_{\Sigma,\PP}
\rightarrow\mathcal{M}_{g,n}
\end{equation} 
with covering group formed by the elements of $\text{Aut}(H_1(\Sigma\setminus\PP))$ that preserve the intersection form and fix each element in its kernel. The latter coincides with the so called peripheral submodule -- denoted $\Pi_\PP\subset H_1(\Sigma\setminus\PP)$ --  since it is generated by peripheral classes at the points of $\PP$.

The Hodge bundle $\Omega\mathcal{M}_{g,n}\rightarrow \mathcal{M}_{g,n}$ can be pulled back to $\mathcal{S}_{\Sigma,\PP}$ by using the map \eqref{eq:torellicover} producing a bundle $\Omega\mathcal{S}_{\Sigma,\PP}\rightarrow \mathcal{S}_{\Sigma,\PP}$ whose fiber over a point $(C,Q,m)$ is identified with the set of meromorphic forms $\omega$ on $C$ having at worst simple poles lying in $Q$ (but not all elements of \(Q\) are necessarily poles). 


\begin{definition}
The period of $(C,Q,m,\omega)\in \Omega\mathcal{S}_{\Sigma,\PP}$ is the homomorphism \[\per (C,Q,m,\omega): H_1(\Sigma\setminus \PP,\mathbb{Z})\rightarrow \mathbb{C}\text{   defined by}\] \[\gamma\mapsto\int_{m(\gamma)}\omega.\] 
The period map on \(\Omega\mathcal{S}_{\Sigma,\PP}\) is the map   \[\per_{\Sigma,\PP}:\Omega\mathcal{S}_{\Sigma,\PP}\rightarrow \text{Hom}(H_1(\Sigma\setminus \PP,\mathbb{Z}),\mathbb{C}) \]defined by \((C,Q,m,\omega)\mapsto \per(C,Q,m,\omega)\).
\end{definition}

In section \ref{s:preliminary resutls} (or in \cite{CD} ) it is proven that $\per_{\Sigma,\PP}$ is a holomorphic submersion around any point in the subset  $\Omega^*\mrs_{\Sigma,\PP}$ of non-zero forms. The underlying foliation is therefore regular and holomorphic. To analyze the foliation it induces on the moduli space we are interested in understanding the topology of its fibers.
\begin{remark}\label{rem:GL2R invariance} For any $a\in\mathbb{C}^*$,  multiplication of meromorphic forms by $a$  defines a homeomorphism of $\Omega\mathcal{S}_{\Sigma,\PP}$ that sends $\per_{\Sigma,\PP}^{-1}(p)$ to  $\per_{\Sigma,\PP}^{-1}(a\cdot p)$ for any  $p\in\text{Hom}(H_1(\Sigma\setminus\PP);\mathbb{C})$. Whenever needed we will use this $\mathbb{C}^*$-action  to simplify the arguments. For instance we can use this freedom to restrict to the case that  all periods are real whenever they all lie in a real line in $\mathbb{C}$, or that some peripheral class has  period  $1\in\mathbb{C}$ (and therefore the image of the peripheral submodule contains $\mathbb Z$)
\end{remark}


\begin{theorem}[Periods in a real line in $\mathbb{C}$]\label{t:Rgn}
Let $(\Sigma,\PP)$ be a closed oriented surface $\Sigma$ of genus $g\geq 0$  with a set $\PP\subset \Sigma$ of $n\geq 3$ marked points and $p\in \text{Hom}(H_1(\Sigma\setminus\PP);\mathbb{R})$ be a homomorphism whose image is not contained in $\mathbb{Q}\otimes p(\Pi_\PP)$. Then,  the fiber \(\per_{\Sigma,\PP}^{-1}(p)\) is connected. 
\end{theorem}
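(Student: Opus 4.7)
The plan is to proceed by induction on the topological complexity $(g,n)$ (lexicographic in $g$, then $n$), using a suitable bordification of $\Omega\mrs_{\Sigma,\PP}$ as indicated in the introduction. By Remark~\ref{rem:GL2R invariance} I would first rescale so that $p$ takes values in $\R$; the hypothesis then supplies a class $\alpha_0\in H_1(\Sigma\setminus\PP,\Z)$ with $p(\alpha_0)$ rationally independent from $p(\Pi_\PP)$. Any $(C,Q,m,\omega)$ in the fiber then satisfies $\mathrm{Im}\,\omega=dh$ for a single-valued function $h$ on $C\setminus Q$ with logarithmic singularities at the poles, so $\omega$ decomposes $C$ into horizontal cylinders glued along critical level graphs through its zeros. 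The base case $(g,n)=(0,3)$ is immediate: on $\mathbb{P}^1$ with three marked points, a meromorphic form with simple poles is uniquely determined by its residues, and the fiber is a single orbit of the covering group of \eqref{eq:torellicover}.

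For the inductive step I would construct a bordification $\overline{\Omega\mrs}_{\Sigma,\PP}$ whose boundary strata are indexed by multicurves $\curvesystem$ disjoint from $\PP$; the stratum $\partial_{\curvesystem}$ parametrizes meromorphic forms on the (possibly disconnected) normalization $\Sigma_{\curvesystem}$ obtained by pinching $\curvesystem$, with two new simple poles of opposite residue $\tfrac{1}{2\pi i}p(\alpha)$ at the preimages of each node $\alpha\in\curvesystem$. The period map extends continuously to $\overline{\Omega\mrs}_{\Sigma,\PP}$, and the intersection of a boundary stratum with $\per_{\Sigma,\PP}^{-1}(p)$ decomposes as a product of period fibers on the pieces of $\Sigma_{\curvesystem}$, each of strictly smaller complexity. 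Connectedness then reduces to the two assertions: (a) every point of the fiber admits, within the fiber, a path to the boundary; and (b) the fiber intersected with the boundary is connected. Claim (b) follows from the inductive hypothesis applied componentwise, provided $\curvesystem$ is chosen so that $p(\alpha)\in\Q\cdot p(\Pi_\PP)$ for every $\alpha\in\curvesystem$ (this keeps the rational peripheral span unchanged on each piece) and so that some piece carries a class projecting to $\alpha_0$, thereby inheriting the irrationality condition.

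The main obstacle will be claim~(a). A closed horizontal trajectory of $\omega$ carries the nonzero real period $\int\mathrm{Re}\,\omega$ and so cannot be pinched while remaining in the fiber. Instead I would use the horizontal ``time translation'' that moves zeros of $\omega$ along level sets of $h$—a deformation manifestly tangent to the isoperiodic foliation—to bring two zeros into collision along a horizontal saddle connection, and then perform a Schiffer-type surgery cutting along a small loop enclosing the collapsed configuration. Such a loop is null-homologous in $C\setminus Q$, hence has zero period, so pinching it produces a node and lands on a boundary stratum. Showing that this can be carried out while controlling which multicurve $\curvesystem$ is obtained so that the constraints of (b) are met, and handling the degenerate configurations (forms with a single zero, without usable horizontal saddle connections, or where every admissible pinching violates the peripheral rationality condition), is where the bulk of the technical work will lie; iterating the surgery and exploiting the freedom coming from the $n\geq 3$ poles to manoeuvre the peripheral structure should make the inductive descent close up.
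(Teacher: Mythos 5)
Your overall blueprint (bordify, degenerate to the boundary, induct componentwise) is the right skeleton, and claim (a)'s mechanism via Schiffer surgery on a vanishing cycle is in the same spirit as the paper's. However, there are three substantive gaps.

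First and most importantly, your claim (b) does not actually establish connectedness of the boundary part of the fiber. The inductive hypothesis applied componentwise only shows that \emph{each individual} boundary stratum $\partial_{\curvesystem}\cap\per^{-1}(p)$ is connected; the boundary of the fiber is a union of such strata indexed by Torelli classes of curve systems, i.e. by the homological decoration ($E$-modules in the paper's terminology), and you still need a separate argument to connect \emph{different} strata to one another. The paper does exactly this with the chain of Lemmas~\ref{l:second degeneration}, \ref{l: second degeneration}, \ref{l: connecting E-module I} and~\ref{l: connecting E-module II}: one moves between $E$- and $E'$-boundary components with $|E|=|E'|=n-1$, and between different $E$-modules for a fixed $E$, by inserting auxiliary codimension-two strata and smoothing nodes in a controlled way. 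Nothing in your sketch addresses this, and it is where most of the work in Section~9 lives.

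Second, your proposal silently assumes the same inductive machinery goes through for $(g,n)=(1,3)$, but the paper isolates that case precisely because the scheme breaks there: an elliptic stable form with three poles, real periods, and zero residues at non-separating nodes can have at most one separating node, so the relevant boundary strata are pairwise disjoint and cannot be connected to each other within the boundary. The paper handles $(1,3)$ with a genuinely different argument (Section~\ref{s:(1,3)}): a complete combinatorial classification of marked forms with a single zero via ribbon graphs/chord diagrams (the ``octopus'' and ``butterflies'' forms), and explicit Schiffer variations in the interior to connect arm modules (Lemmas~\ref{LHO1}, \ref{LHO2}, \ref{final}). Without something playing that role, your induction cannot get off the ground, since $(1,3)$ is needed as a base case for the higher-genus three-pole case.

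Third, claims (a) and (b) are not coherently coupled. In (b) you require every $\alpha\in\curvesystem$ to satisfy $p(\alpha)\in\Q\cdot p(\Pi_\PP)$ and one piece to inherit the irrational class; but in (a) you only produce ``a small loop enclosing the collapsed configuration,'' which need not be separating (it could become a non-separating node), need not isolate the poles the way a spherical component must, and gives no control over which side of the resulting decomposition carries $\alpha_0$. The paper's first degeneration lemma (\ref{l: first degeneration}) is sharper: using the graph of horizontal cylinders and a sequence of Schiffer variations reducing the number of inversions, it targets a \emph{spherical} boundary component whose genus-zero side carries exactly two poles of nonzero residue sum. That structure is what makes the subsequent connectedness-of-boundary argument applicable, and it is not delivered by a generic ``collide two zeros and pinch'' move.
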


\begin{theorem}[Non $\mathbb{R}$-collinear periods]\label{t:Cgn}
Let $(\Sigma,\PP)$ be a closed oriented  surface of genus $g\geq 0$ with a set $\PP\subset \Sigma$ of $n\geq 3$ marked points and $p\in \text{Hom}(H_1(\Sigma\setminus\PP);\mathbb{C})$ be a homomorphism whose image is not contained in a real line in $\mathbb{C}$. Then,  the fiber \(\per_{\Sigma,\PP}^{-1}(p)\) is connected.  
\end{theorem}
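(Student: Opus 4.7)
The plan is to prove Theorem~\ref{t:Cgn} by induction on the pair $(g,n)$ with $n \geq 3$, running the induction simultaneously with Theorem~\ref{t:Rgn} so that each theorem is available at strictly smaller complexity when proving the inductive step of the other. The base case $(g,n) = (0,3)$ is immediate: three ordered points on $\mathbb{P}^1$ can always be normalized to $\{0,1,\infty\}$, the pointed Torelli cover is trivial, and the space of meromorphic forms with simple poles at $\{0,1,\infty\}$ is parametrized by two of the residues (the third being determined by the residue theorem), so the fiber of the period map is empty or a single point, hence connected.

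For the inductive step, I would work with a bordification $\mnc_{\Sigma,\PP}$ of the pointed Torelli space, modelled on the Deligne-Mumford compactification, whose boundary strata parametrize stable nodal degenerations of $(\Sigma,\PP)$. The period map extends continuously to this bordification outside the zero section, and each boundary stratum admits a description as a product of pointed Torelli spaces $\mathcal{S}_{\Sigma_i,\PP_i}$ associated to the components of the normalization, together with new marked points at the preimages of the nodes. The residues at those new points are determined by the restriction of the ambient period homomorphism to cycles surrounding the vanishing loops, and so are prescribed by $p$.

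Given two points $x_0, x_1 \in \per_{\Sigma,\PP}^{-1}(p)$, the strategy is: first, produce isoperiodic paths from each $x_i$ to a point in a suitably chosen boundary stratum by pinching a simple closed curve whose $p$-period vanishes (the non-$\mathbb{R}$-collinear hypothesis supplies a plentiful family of such curves, both separating and non-separating); next, combine these with the connectedness of fibers on the pointed Torelli spaces of the components of the normalization, supplied by the inductive hypothesis; and finally conclude that $x_0$ and $x_1$ are joined by a continuous path within $\per_{\Sigma,\PP}^{-1}(p)$. Components on which the restricted period homomorphism vanishes identically are treated separately: the corresponding fiber is a complex vector space of holomorphic forms, which is convex and hence connected.

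The main obstacle will be controlling the arithmetic nature of the restricted period data on each component of the normalization. It may happen that on some component the restricted periods become $\mathbb{R}$-collinear, so that Theorem~\ref{t:Cgn} does not apply inductively there and one must invoke Theorem~\ref{t:Rgn}; this in turn requires the image of the restricted period homomorphism to avoid the $\mathbb{Q}$-span of that component's (now augmented) peripheral periods. Ensuring this delicate arithmetic condition for sufficiently many choices of pinching curves, while retaining enough flexibility to reach any given pair of fiber points by isoperiodic deformations, is the core technical difficulty I expect will occupy most of the argument.
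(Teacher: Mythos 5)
Your proposal correctly identifies the overall inductive/bordification strategy and correctly anticipates the main technical danger (restricted periods becoming $\mathbb{R}$-collinear on a component of the normalization), but you leave that difficulty unresolved, whereas the paper organizes the argument precisely so that it never arises. The paper does \emph{not} degenerate along an arbitrary curve with vanishing $p$-period: it degenerates along a separating curve that cuts off a genus-zero subsurface containing \emph{all} the poles. Such a curve corresponds to a $\PP$-module $V\subset H_1(\Sigma\setminus\PP)$ with $H_1(\Sigma\setminus\PP)=\Pi\oplus V$; the associated boundary stratum has one component that is a genus-zero meromorphic form (whose isoperiodic fiber is always connected, Lemma~\ref{l: connectedness spherical case}) and one component that is a genus-$g$ \emph{holomorphic} form. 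The non-$\mathbb{R}$-collinearity hypothesis guarantees this decomposition exists and that the restriction $p_{|V}$ can be chosen to satisfy Haupt's conditions, so the holomorphic side is handled by the known connectedness result for abelian differentials (\cite{CDF2}). In particular Theorem~\ref{t:Rgn} is never invoked in the proof of Theorem~\ref{t:Cgn}, and the ``delicate arithmetic condition'' you worry about is bypassed rather than verified; likewise the induction runs on $n$ with $g$ fixed, with $n=2$ supplied externally by \cite{CD}, not on $(g,n)$ jointly as you propose.

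There is also a missing ingredient that your sketch cannot produce: after placing both $x_0$ and $x_1$ on $\PP$-boundary strata $\mathcal{B}_V(p)$ and $\mathcal{B}_{V'}(p)$, one must connect boundary strata indexed by \emph{different} admissible $\PP$-modules. This is the real content of the inductive step, and it hinges on two specific lemmas you have no analogue of: Lemma~\ref{l: connectedness slices}, which connects $\mathcal{B}_V(p)$ to $\mathcal{B}_{V'}(p)$ when $V+\Pi_E=V'+\Pi_E$ for some $E\subsetneq\PP$ (using the inductive hypothesis at lower $n$), and Lemma~\ref{l: pair of Haupt conditions}, an elementary but essential cohomological lemma producing intermediate admissible modules $V''$, $V'''$ so that two consecutive applications of Lemma~\ref{l: connectedness slices} bridge any two $V,V'$. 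Without this reduction to Haupt's conditions and the intermediate-module trick, the argument does not close; and your remark that a component with vanishing restricted period gives ``a complex vector space of holomorphic forms, which is convex and hence connected'' is not a step that occurs (such a component would carry the zero form, which is excluded from $\Omega^*$).
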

Theorems \ref{t:Rgn} and \ref{t:Cgn} immediately implies 
\begin{corollary}\label{c:surjection of pi1 of leaf}
If $L\subset \Omega^*\mathcal{M}_{g,n}$ is the leaf of $\mathcal{F}_{g,n}$ associated to a period $p\in H^{1}(\Sigma\setminus\PP,\mathbb{C})$ with either non $\R$-collinear image, or with some period outside $\mathbb{Q}\otimes p(\Pi_\PP)$,  then $$\pi_{1}(L)\rightarrow\text{Stab}(p)\subset \text{Aut}(H_{1}(\Sigma\setminus\PP))\text{  is surjective. }$$ 
\end{corollary}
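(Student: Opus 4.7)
The plan is to recognize the fiber $F:=\per_{\Sigma,\PP}^{-1}(p)\subset\Omega^*\mrs_{\Sigma,\PP}$ as a connected Galois cover of $L$ whose deck group is exactly $\text{Stab}(p)$; the desired surjection is then the standard monodromy surjection from $\pi_1$ of the base of a connected regular covering onto its deck group.

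First, I would observe that the covering group $G$ of \eqref{eq:torellicover}, namely the subgroup of $\text{Aut}(H_1(\Sigma\setminus\PP))$ preserving the intersection form and pointwise fixing $\Pi_\PP$, acts by precomposition on $\text{Hom}(H_1(\Sigma\setminus\PP),\mathbb{C})$, and that $\per_{\Sigma,\PP}$ is $G$-equivariant with respect to this action. Consequently, the preimage of the leaf $L$ in $\Omega^*\mrs_{\Sigma,\PP}$ decomposes as the $G$-invariant disjoint union
\[
\bigsqcup_{q\in G\cdot p}\per_{\Sigma,\PP}^{-1}(q),
\]
on which $G$ permutes the fibers according to its action on the orbit $G\cdot p$. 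Restricting the Torelli covering map to $F$ therefore yields a regular covering $F\to L$ whose deck transformations are given precisely by the elements of $G$ stabilizing $p$, i.e.\ by $\text{Stab}(p)$.

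Second, I would apply Theorem \ref{t:Cgn} when the image of $p$ is not $\mathbb{R}$-collinear, or Theorem \ref{t:Rgn} in the remaining case, after using Remark \ref{rem:GL2R invariance} to rescale by a unit complex number so that $p$ becomes real-valued (the hypothesis that the image of $p$ is not contained in $\mathbb{Q}\otimes p(\Pi_\PP)$ being invariant under such a rescaling). In both cases the conclusion is that $F$ is connected. Hence $F\to L$ is a connected regular covering, and the monodromy action of $\pi_1(L)$ surjects onto its deck group $\text{Stab}(p)$, which is the asserted map.

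There is no real obstacle here: the entire content is packaged in the connectedness theorems invoked; what remains is the standard covering-space dictionary, performed orbifold-wise to accommodate the orbifold structure of $\mathcal{M}_{g,n}$ and of $\Omega^*\mathcal{M}_{g,n}$.
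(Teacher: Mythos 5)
Your argument is correct and is precisely the implicit argument the paper refers to when it says Theorems \ref{t:Rgn} and \ref{t:Cgn} "immediately imply" the corollary: the fiber $\per_{\Sigma,\PP}^{-1}(p)$ is a regular (orbifold) covering of $L$ with deck group $\text{Stab}(p)\subset G$, and connectedness of the fiber gives surjectivity of the monodromy. The only observation worth keeping explicit is the one you already make at the end, namely that the $\mathbb{C}^*$-rescaling of Remark \ref{rem:GL2R invariance} is needed to bring the $\mathbb{R}$-collinear case into the literal hypotheses of Theorem \ref{t:Rgn}.
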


The mapping class group $\text{Mod}_{\Sigma,\PP}$ acts on source and target of the map $\per_{\Sigma,\PP}$ equivariantly, that is,  for every $(C,m, \omega)\in\Omega\mathcal{S}_{\Sigma,\PP}$ and $\varphi\in\text{Mod}_{\Sigma,\PP}$, 
$$\per_{\Sigma,\PP}(\varphi\cdot (C,m,\omega))=\per_{\Sigma,\PP}((C,m,\omega))\circ \varphi^{-1}_*.$$ The fibration induced by $\per_{\Sigma,\PP}$ thus induces a regular holomorphic foliation $\mathcal{F}_{g,n}$ on the quotient (moduli) space $\Omega^*\mathcal{M}_{g,n}$ called the isoperiodic foliation. 
Since the elements of $\text{Mod}_{\Sigma,\PP}$ act as the identity on the peripheral module $\Pi_R$ the map 
$$\Omega^*\mrs_{\Sigma,\PP}\rightarrow \text{Hom}(\Pi_R,\mathbb{C})$$ defined by $(C,Q,m,\omega)\mapsto \big(\Per_{\Sigma,\PP} (C,Q,m,\omega)\big)_{|\Pi_\PP}$ is invariant by the $\text{Mod}_{\Sigma,\PP}$  action and defines a holomorphic submersion on $\Omega^*\mathcal{M}_{g,n}$. Each level set is invariant by the isoperidic foliation and can be characterized by a vector $\alpha=(\alpha_1,\ldots, \alpha_n)\in\mathbb{C}^n$: $$\Omega^\alpha\mathcal{M}_{g,n}=\{(C,(q_1,\ldots,q_n),\omega): \int_{\pi_{q_i}}\omega =\alpha_i\}$$
The restriction of $\mathcal{F}_{g,n}$ to $\Omega^\alpha\mathcal{M}_{g,n}$  will be denoted by $\mathcal{F}^{\alpha}_{g,n}$. The $\mathbb{C}^*$ action  on $\Omega^*\mathcal{M}_{g,n}$ is by biholomorphisms that exchange the levels and preserves the foliation in each level. So, to analyze the leaf closures, when $n>0$, we can restrict the values in $\alpha$ to one of the three following cases depending on the closure $\overline{p(\Pi_R)}\subset\mathbb{C}$
\begin{itemize}
\item If it is cyclic we can suppose it is $\mathbb Z$
\item If it is a real line in $\mathbb C$ we can suppose it is $\mathbb{R}$
\item In any other case, we can suppose it contains $\mathbb{R}$. 
\end{itemize}

To analyze the dynamics of $\mathcal{F}_{g,n}$ we will apply the Transfer Principle of \cite{CDF2} to the present case: a leaf closure of $\mathcal{F}_{g,n}$ corresponds with an orbit closure of the $\text{Mod}_{\Sigma,\PP}$ action on $H^1(\Sigma\setminus \PP,\mathbb{C})$. With the aim of describing the orbit closures, we introduce some definitions: 
\begin{definition} \label{d:discrete factor}Given a period \(p\in H^1 (\Sigma\setminus\PP, \mathbb C )\), a discrete factor of \(p\) is a continuous surjective morphism \( \varphi : \mathbb C\rightarrow A\) to a Lie group such that \( \varphi \circ p (H_1 (\Sigma\setminus\PP, \mathbb Z))\) is discrete.  Given two periods \(p, q\in H^1 (\Sigma\setminus\PP, \mathbb C)\) having both \(\varphi \) as discrete factor, we say that they have the same image in \(\varphi\) if  \( \varphi \circ p (H_1 (\Sigma\setminus\PP, \mathbb Z))=\varphi \circ q (H_1 (\Sigma\setminus\PP, \mathbb Z))\). \end{definition}
Up to isomorphism of the target group $A$, the candidates to non-trivial discrete factors are
    $$a\Re+b\Im: \mathbb C\rightarrow \mathbb R \text{  for some }(a,b)\in\mathbb{R}^2\setminus (0,0).$$
\begin{definition}\label{d:V}
For any \( p \in H^1 (\Sigma\setminus \PP, \mathbb C)\), with cyclic peripheral periods $p(\Pi_\PP)= \mathbb{Z}$ and discrete factor  $\Im$  with image $\delta\mathbb{Z}\subset\mathbb{R}$ we define the class \( V(p) \in \mathbb R /\delta\mathbb{Z}\) as 
\[ V (p) := \Re P \cdot \Im P \text{ mod }  \delta\]
where $P:H_1(\Sigma,\mathbb{Z})\rightarrow\mathbb{C}$ is any lift  of the map $\overline{p}:H_1(\Sigma,\mathbb{Z})\rightarrow\mathbb{C}/\mathbb{Z}$ induced by $p$. 

The same definition can be applied to any $p$ with cyclic peripheral periods contained in the kernel of a non-trivial discrete factor. 
\end{definition}
The value of $V$ can be calculated directly by using the values of $p$ on a family of cycles of $\Sigma\setminus N$ that constitute a symplectic basis of $H_1(\Sigma,\mathbb Z)$. 

The following is a list of invariants under the action of the mapping class group $\text{Mod}_{\Sigma,\PP}$:
\begin{itemize}
    \item each peripheral period
    \item the image subgroup
    \item non-trivial discrete factors and their image groups
    \item the $V$-value when the  peripheral periods constitute a discrete submodule of the kernel of a discrete factor. 
\end{itemize}
\begin{definition}
 Given $(C,Q,\omega)\in\Omega\mathcal{M}_{g,n}$ we define its group of periods  
   $$\Lambda_{\omega}=\left\{\int_\gamma \omega\in \mathbb{C}: \gamma\in H_1(C\setminus Q)\right\}$$ and the peripheral period subgroup
    $$\Pi_{\omega}=\left\{\int_{\pi}\omega\in\mathbb{C}:\pi\in\Pi_Q\right\}.$$ 
\end{definition}

\begin{definition}
    Given $(C,Q,\omega)\in\Omega^{\alpha}\mathcal{M}_{g,n}$, a discrete factor of $(C,Q,\omega)$ is a discrete factor of $\per(C,Q,m,\omega)\in H^1(\Sigma\setminus\PP,\mathbb{C})$ where  $(C,Q, m,\omega)\in\Omega^{\alpha}\mathcal{S}_{\Sigma,\PP}$. If $\Pi_\omega=\mathbb{Z}$ and $\Im$ is a discrete factor of $(C,Q,\omega)$, define $V(\omega)=V(\per(C,Q,m,\omega))$.  
\end{definition}

\begin{theorem}
\label{t:leaf closures}
Let $g\geq 1$, $n\geq 2$ and  $(C_0,Q_0,\omega_0)\in \Omega^\alpha\mathcal{M}_{g,n}$ be a form with $n$ simple poles,  normalized via the action of $\text{GL}_2(\mathbb{R})$ on $\mathbb{C}$ to have $\mathbb{Z}\subseteq \Pi_{\omega_0}$ and if it is cyclic then $\Pi_{\omega_0}=\mathbb{Z}$. Suppose that if $\Lambda_{\omega_0}\subset \mathbb{R}$, then $\Lambda_{\omega_0}\not \subset \mathbb{Q}\otimes\Pi_{\omega_0}$. 

A point $(C,Q,\omega)\in\Omega^{\alpha}\mathcal{M}_{g,n}$  lies in $\overline{L}$, the closure of the isoperiodic leaf $L$ through $(C_0,Q_0,\omega_0)$ if and only if 
\begin{itemize}
\item any discrete factor of $(C_0,Q_0,\omega_0)$ is also a discrete factor of \((C,Q,\omega)\) and both have the same image in it. 
\item if $\Pi_{\omega_0}=\mathbb{Z}$ and $\Im:\mathbb{C}\rightarrow \mathbb{R}$ is a a discrete factor of $(C_0,Q_0,\omega_0)$, then \begin{equation*}  V((C,Q,\omega))= V(C_0,Q_0,\omega_0) .\end{equation*}
\end{itemize}
The saturated set $\overline{L}$ is a real analytic sub-orbifold and the restriction of the isoperiodic foliation $\mathcal{F}_{g,n}^\alpha$ to $\overline{L}$ is ergodic. If \(\Lambda\subset \mathbb C\) is a lattice, then the leaf \(L\) is closed and non algebraic.  
\end{theorem}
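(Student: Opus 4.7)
The strategy follows the transfer-principle plus Ratner scheme developed in \cite{CDF2, CD}, now in the presence of nontrivial peripheral residues $\alpha$. The hypotheses placed on $(C_0, Q_0, \omega_0)$ are precisely those under which Theorems \ref{t:Rgn} and \ref{t:Cgn}, together with Corollary \ref{c:surjection of pi1 of leaf}, are applicable to $p=\per(C_0,Q_0,m_0,\omega_0)$. I would first invoke the Transfer Principle of \cite{CDF2}: the connectedness of the fibers of $\per_{\Sigma,\PP}$ identifies the leaf closure $\overline{L}\subset \Omega^\alpha\mathcal{M}_{g,n}$ with the $\per_{\Sigma,\PP}$-preimage of the closure of the $\text{Mod}_{\Sigma,\PP}$-orbit of $p$ inside the affine subspace of $\text{Hom}(H_1(\Sigma\setminus \PP, \mathbb{Z}), \mathbb{C})$ cut out by $\alpha$. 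This reduces every assertion of the theorem to a statement about that orbit closure.

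For the necessity of the listed conditions, I would observe that each of the invariants — the peripheral periods (automatic on $\Omega^\alpha\mathcal{M}_{g,n}$), the image subgroup $p(H_1)\subset \mathbb{C}$, the discrete factors together with their images, and the $V$-value when $\Pi_\omega = \mathbb{Z}$ sits in the kernel of a discrete factor — is continuous and manifestly invariant under the $\text{Mod}_{\Sigma,\PP}$-action on period homomorphisms. The only non-obvious invariance is that of $V$; it follows because $V(p)$ can be computed on a symplectic basis of $H_1(\Sigma,\mathbb{Z})$ as the symplectic pairing of $\Re P$ and $\Im P$, and this pairing is preserved by the symplectic image of $\text{Mod}_{\Sigma,\PP}$ on $H_1(\Sigma,\mathbb{Z})$, modulo a shift by $\delta$ coming from changing lifts.

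For sufficiency, for the real analytic orbifold structure of $\overline{L}$, and for the ergodicity of $\mathcal{F}_{g,n}^\alpha$ on $\overline{L}$, I would apply Ratner's orbit-closure and measure-classification theorems to the action of the arithmetic image of $\text{Mod}_{\Sigma,\PP}$ in the ambient real algebraic group acting on the residue level set of $\text{Hom}(H_1(\Sigma\setminus \PP), \mathbb{C})$ modulo the image subgroup. The pointed Torelli part provides unipotent one-parameter subgroups generating the connected component, and the invariants of Theorem \ref{t:leaf closures} are precisely the generators of the algebra of continuous invariants of these unipotent flows; Ratner then forces the topological closure to coincide with their common level set, which is a homogeneous real analytic sub-orbifold on which the action is ergodic. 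Pulling back through the Transfer Principle yields all three assertions at once.

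Finally, when $\Lambda_{\omega_0}\subset \mathbb{C}$ is a lattice, every $\mathbb{R}$-linear functional $\mathbb{C}\to\mathbb{R}$ is a discrete factor of $p$, so the image and discrete-factor invariants already determine $p$ up to an arithmetic stabilizer in $\text{Mod}_{\Sigma,\PP}$; the orbit of $p$ is therefore closed in the period space, so $L$ is closed in $\Omega^\alpha \mathcal{M}_{g,n}$. Non-algebraicity then follows from a monodromy argument: an algebraic leaf would project to an algebraic subvariety of $\mathcal{M}_{g,n}$ of positive dimension, over which the $H_1$-monodromy would act nontrivially on periods, contradicting the isoperiodic constancy of $p$. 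The main obstacle I expect is the Ratner step: correctly identifying the algebraic envelope of the image of $\text{Mod}_{\Sigma,\PP}$ acting on $\text{Hom}(H_1(\Sigma\setminus \PP),\mathbb{C})/p(H_1)$ and proving that the list of invariants is exhaustive — the novelty compared to \cite{CDF2,CD} being the $\alpha$-dependence and the appearance of $V$ as a discrete modular invariant coming from the integer shift ambiguity of the lift $P$ of $\overline p$.
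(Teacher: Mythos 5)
Your high-level framework — transfer principle reducing leaf closures to $\text{Mod}_{\Sigma,\PP}$-orbit closures, followed by Ratner theory on the affine period space modulo the peripheral constraint — matches the paper's Proposition \ref{p: orbit closures} and the proof that follows it, and you correctly flag the identification of the algebraic envelope as the main technical burden (the paper handles it via Lemma \ref{l: reduction}, a case analysis on $\Lambda_\Pi$, and an explicit classification of the $\text{Aut}(H^1(\Sigma,\mathbb{R}))$-invariant subspaces inside $H^1(\Sigma,\mathbb{C})$). You use Ratner's measure classification for ergodicity where the paper simply cites Moore's theorem, a cosmetic difference.

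Your non-algebraicity argument, however, does not work. You argue that an algebraic leaf of positive dimension would carry nontrivial $H_1$-monodromy acting on periods, ``contradicting the isoperiodic constancy of $p$.'' But the isoperiodic constancy of the period homomorphism holds only on the pointed Torelli cover; in the moduli space $\Omega^\alpha\mathcal{M}_{g,n}$ the monodromy of a loop in $L$ acts on $p$ precisely through $\text{Stab}(p)\subset \text{Aut}(H_1(\Sigma\setminus\PP))$, onto which $\pi_1(L)$ surjects by Corollary \ref{c:surjection of pi1 of leaf}. When $\Lambda$ is a lattice this stabilizer is an arithmetic group of positive rank — large, not trivial — and it fixes $p$ by definition, so there is no contradiction: a closed leaf with big monodromy is entirely consistent with isoperiodicity. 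The paper instead observes that the closure of a Torelli-lift of $L$ in $\Omega^*_0\overline{\mathcal S}_{\Sigma,\PP}$ meets the $\PP$-boundary strata $\Omega\mathcal{B}_V(p)$ for $p$-admissible $\PP$-modules $V$, and that there are infinitely many such $V$ with distinct holomorphic volumes (see subsection \ref{ss: connecting R-bnd strata}); these cannot be identified by $\text{Mod}_{\Sigma,\PP}$, so the closure of $L$ in $\Omega^\alpha\overline{\mathcal M}_{g,n}$ meets a single boundary divisor in infinitely many connected components, which is impossible for an algebraic subvariety. You would need to replace your monodromy argument with something of this boundary-counting type.
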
 

The assumptions of Theorem \ref{t:leaf closures} do not allow the possibility for  \(\Lambda\) to be cyclic, in which case the leaf \(L\) is algebraic, as it is a Hurwitz space of finite covers of a  cylinder \( (\mathbb C^* , a\frac{dz}{z}) \) with \(a\in \mathbb C^*\).

We chose to state this result in such a synthetic form using discrete factors, but it can be made more precise. We describe the various possibilities in terms of the topological closures of periods and peripheral periods more in detail in  Corollary \ref{c: leaf closures}.

\subsection{Thanks}

The authors acknowledge the support from CY Initiative (ANR-16-IDEX-0008).
The second author acknowledges support from CNPq Projeto Universal 408687/2023-1 "Geometria das Equações Diferenciais Algébricas", from Programa PRONEX FAPERJ e CNPq (E-26/010.001270/2016)  “Métodos Geométricos em Equações Diferenciais Complexas”, from FAPERJ (E-26/210.418/2025) and from Coordenação de Aperfeiçoamento de Pessoal de Nível Superior - Brasil (CAPES) – Finance Code  001.

\section{Notations}

\begin{itemize}
\item \((\Sigma,\PP)\) is a closed oriented surface $\Sigma$  with a finite set $\PP\subset\Sigma$ of  marked points. Its type is $(g,n)$ where $g$ is the genus of $\Sigma$ and $n=\#\PP$.
\item $\text{Mod}_{\Sigma,\PP}$ group of isotopy classes of orientation preserving homeomorphism of $\Sigma$ fixing $\PP$ pointwise. 
\item $\mathcal{I}_{\Sigma,\PP}\subset \text{Mod}_{\Sigma,\PP}$ is the  pointed Torelli subgroup formed by elements acting trivially on $H_1(\Sigma\setminus \PP, \mathbb{Z})$. 
\item $\mathcal{T}_{\Sigma,\PP}$ Teichm\"uller space of the pair $(\Sigma,\PP)$ parametrizing compact Riemann surfaces with marked points and topologically marked by $(\Sigma,\PP)$. 
\item $\mathcal{M}_{g,n}=\frac{\mathcal{T}_{\Sigma,\PP}}{\text{Mod}_{\Sigma,\PP}}$ is the moduli space of curves of genus $g$ with $n$ marked points where (g,n) is the type of $(\Sigma,\PP)$.
\item $\mathcal{S}_{\Sigma,\PP}=\frac{\mathcal{T}_{\Sigma,\PP}}{\mathcal{I}_{\Sigma,\PP}}$ is the (pointed) Torelli space of $(\Sigma,\PP)$
\item  $\overline{\mathcal{T}}_{\Sigma,\PP} $ augmented Teichm\"uller space  of $(\Sigma,\PP)$ parametrizing stable curves with marked points topologically marked by a homotopy class of collapse maps defined on $(\Sigma,\PP)$.  
\item \(\overline{\mathcal{M}}_{g,n}=\frac{\overline{\mathcal{T}}_{\Sigma,\PP}}{\text{Mod}_{\Sigma,\PP}}\) compactification of Moduli space of type $(g,n)$. As a complex orbifold it is equivalent to the Deligne-Mumford compactification of $\mathcal{M}_{g,n}$. 
\item $\overline{\mathcal{S}}_{\Sigma,\PP} =\frac{\overline{\mathcal{T}}_{\Sigma,\PP}}{\mathcal{I}_{\Sigma,\PP}}$ bordification of Torelli space of $(\Sigma,\PP)$. 
\item $\overline{\mathcal T}_{\Sigma,\PP}^{sep}$ is the bordification with curves of compact type (stable forms only with separating nodes) 
\item \(\overline{\mathcal{M}}^{sep}_{g,n}=\frac{\overline{\mathcal{T}}^{sep}_{\Sigma,\PP}}{\text{Mod}_{\Sigma,\PP}}\) compact type bordification of Moduli space of type $(g,n)$.
\item \(\overline{\mathcal{S}}^{sep}_{\Sigma,\PP}=\frac{\overline{\mathcal{T}}^{sep}_{\Sigma,\PP}}{\mathcal{I}_{\Sigma,\PP}}\) compact type bordification of Torelli space of $(\Sigma,\PP)$.

\item Given $\mathcal{K}\subset \overline{\mathcal T}_{g,n}$ or any of its quotients, $\Omega\mathcal{K}\rightarrow \mathcal{K}$ denotes the bundle of stable forms on elements of $\mathcal{K}$ having (at worst) simple poles at marked points
\item $\Omega^*\mathcal{K}\subset \Omega\mathcal{K}$ is the subset of forms without any zero component.
\item $\Omega^*_0\mathcal{K}\subset \Omega^*\mathcal{K}$ is the subset of forms with zero residues at non-separating nodes. 
\item \(G\): \(P\)-labelled graph, \(V(G)\) set of vertices, \(L(G)\) set of half-legs, \(E(G)\) set of edges
\item \(\pi_r \in H_1 (\Sigma\setminus\PP, \mathbb Z)\) peripheral class around the the puncture $r\in\PP$.
\item $\Pi_E\subset H_1(\Sigma_g\setminus \PP)$  the submodule generated by the peripherals $\{\pi_e : e\in E\}$ around points of $E\subset \PP$
\item \(\Pi :=\Pi_\PP\) the peripheral submodule. 
\item \(\Lambda_\omega\subset \mathbb{C}\) the group of periods of a (stable) form i.e. integrals of \(\omega\) along smooth closed paths avoiding points with non-zero residue.
\item \(\Pi _{\omega}\subset\Lambda_{\omega} \) the subgroup of peripheral periods of  \(\omega\)
\item Given $(k_1,\ldots,k_l)\in\mathbb{N}_{>0}^l$ with $k_1+\cdots +k_l=2g-2+n$ denote  $\Omega\mathcal{M}_{g,n}(k_1,\ldots,k_l)$ the stratum of forms with $n$ simple poles at marked points and $l$ zeros of orders given by $(k_1,\ldots,k_l)$. 
\item Given a meromorphic stable form with simple poles $\omega$ having no zero component  and $\theta\in\mathbb{S}^1$, $\mathcal{G}_{\theta}$ denotes the directional (singular) geodesic foliation whose tangents are always $\theta$. The singular set of $\mathcal{G}_\theta$ is the support of the divisor of the form, plus the nodes.  
\item The extended period map is the continuous map  $$\percompact_{\Sigma,\PP}:\Omega^*_0\mnc_{\Sigma,\PP}\rightarrow \text{Hom}(H_1(\Sigma\setminus \PP,\mathbb{Z});\mathbb{C})$$ defined by integration on cycles in $\Sigma\setminus \PP$
\end{itemize}

\section{Strategy of the proof}
The proof of Theorems \ref{t:Rgn} and \ref{t:Cgn} follows by induction on the genus and the number of poles and are based in two previous papers that treat respectively the case of holomorphic forms (\cite{CDF2}) and of forms with two simple poles (\cite{CD}) plus finding isoperiodic connections via Schiffer variations. 

The base case for Theorem \ref{t:Cgn} is the case of genus zero, that can be treated directly in coordinates (see Lemma \ref{l: connectedness spherical case}).  For Theorem \ref{t:Rgn} the base case includes, both the case of genus zero and the case of genus one and three poles, contained in Section \ref{s:(1,3)}. It is treated separately for reasons that will become evident in the next paragraphs. The strategy for this case is first to join any point in the fiber isoperiodically with a point having a single zero (of order three, so the case of degree two covers has to be avoided to attain this step) and secondly to connect the different points with a single zero isoperiodically. Such a (homologically marked) form with a single zero and real periods can be codified with data associated to its horizontal saddle connection ribbon graph (see Section \ref{eq: combinatorial structure of a form}). The possible topological combinatorics for such ribbon graphs is limited to two cases when $(g,n)=(1,3)$ that can be distinguished by the presence/absence of a peripheral saddle connection.  The second step is attained by using the results in \cite{CD}, and a small collection of known isoperiodic connections between pairs of points obtained by carrying Schiffer variations along horizontal saddle connections (with appropriate choices of twins). The hypothesis on the periods in Theorem \ref{t:Rgn} is used in this last point to guarantee that such basic isoperiodic connections exist.

Fix some $g\geq 1$ and assume the inductive hypothesis, i.e. Theorem \ref{t:Rgn} (resp. \ref{t:Cgn}) is true up to genus $g-1$. Then run the following program for any given homomorphism $p:H_1(\Sigma\setminus\PP,\mathbb{Z})\rightarrow \mathbb{C}$ with image either not contained in a real line or not contained in $\mathbb{Q}\otimes p(\Pi_\PP)$: 

{\bf Step 1. Bordify $\per^{-1}_{\Sigma,\PP}(p)$} (see subsection \ref{ss:bordification}).  The Teichm\" uller space $\mathcal{T}_{\Sigma,\PP}$ admits a topological bordification $\overline{\mathcal{T}}_{\Sigma,\PP}$ formed by marked stable (nodal) curves of genus $g$ with $n$ marked points. It is stratified by the number of nodes of the underlying curves and each stratum is a complex manifold.  The action of $\text{Mod}_{\Sigma,\PP}$ preserves the stratification and the quotient is isomorphic to the Deligne-Mumford compactification $\overline{\mathcal{M}}_{g,n}$ of $\mathcal{M}_{g,n}$ where $g$ is the genus of $\Sigma$ and $n$ the cardinality of $\PP$. The added points, called the boundary, form a normal crossing divisor in $\overline{\mathcal{M}}_{g,n}$ at the orbifold chart level and the stratification coincides at this level with the stratification associated to such a divisor.  Quotienting $\overline{\mathcal{T}}_{\Sigma,\PP}$ by the pointed Torelli group  $\mathcal{I}_{\Sigma,\PP}$ produces a  stratified bordification $\overline{\mathcal{S}}_{\Sigma,\PP}$ of $\mathcal{S}_{\Sigma,\PP}$ and the natural map $\overline{\mathcal{S}}_{\Sigma,\PP}\rightarrow \overline{\mathcal{M}}_{g,n}$ is a topological branched cover over the boundary. The bundle $\Omega\overline{\mathcal{M}}_{g,n}\rightarrow \overline{\mathcal{M}}_{g,n}$ of stable  meromorphic forms with at worst simple poles at marked points, over the Deligne-Mumford compactification $\overline{\mathcal{M}}_{g,n}$ of $\mathcal{M}_{g,n}$ can be pulled back to a bundle $\Omega\overline{\mathcal{S}}_{\Sigma,\PP}\rightarrow\overline{\mathcal{S}}_{\Sigma,\PP}$. 

Any stable form in the boundary has a non-trivial local isoperiodic deformation space. Two conditions that guarantee that this local isoperiodic deformation space leaves the boundary are that the form has no zero components and the residue of the form at each non-separating node is zero. The bordification of $\per_{\Sigma,\PP}^{-1}(p)$ that we are interested in is its closure in the space  $\Omega^{*}_{0}\overline{\mathcal{S}}_{\Sigma,\PP}$ 
of forms in $\Omega\mathcal{S}_{\Sigma,\PP}$  having zero residues at all non-separating nodes and no zero components. In fact, as shown in the appendix of \cite{CD}, the local isoperiodic deformation space projects to a smooth complex manifold in the orbifold charts of the moduli space transverse to each boundary component of  $\Omega\overline{\mathcal{M}}_{g,n}$ passing through the point. The stratification of the boundary (defined by the number of nodes) induces a stratification of the bordification of the isoperiodic set and for each stratum of the ambient space passing through the point there is one isoperiodic component of the stratum that lies in it.  Around a point having only separating nodes, the local picture of the stratification is that of a normal crossing divisor. The picture changes by an  abelian ramified cover over the divisor when the curve underlying the form has at least one non-separating node (see the local model of this local branched cover in \cite{CDF2}[Section 4.4]).  The abelian ramified cover does not brake a nice property of the local stratification of a normal crossing divisor: a point in the codimension $k\geq 1$ stratum lies at the intersection of the closure of $k$ codimension one (local) connected components of the divisor. Any other local connected component of a non-open stratum accumulating the point has codimension $1\leq l\leq k$ and is \textit{precisely} the set of points that belong to the closure of $l$ of the $k$ codimension one components accumulating the point, and not more.  Moreover, the open stratum is locally connected at the point. 

To any stratified space  $X$ that is a locally abelian ramified cover over a normal crossing divisor we can define its dual boundary graph $\mathcal{C}(X)$. It has a vertex for each (global) connected component of the codimension one stratum, and a simplex between $k$ vertices for each connected component of the codimension $k$ stratum lying in the closure of the corresponding $k$ components. It is well known that the boundary complex associated to $\overline{\mathcal{T}}_{\Sigma,\PP}$ is isomorphic to  the \textit{curve complex}  $\mathcal{C}_{\Sigma,\PP}$ on a genus $g$ compact surface $\Sigma$ with $n$ marked points $\PP\subset \Sigma$ (see \cite{FM}{Chapter 4.1}). 

The closure of $\per_{\Sigma,\PP}^{-1}(p)$ in $\Omega^{*}_{0}\overline{\mathcal{S}}_{\Sigma,\PP}$ is also shown to be  stratified and a locally abelian ramified cover  over a normal crossing divisor. In particular,  the connectedness of  $\per_{\Sigma,\PP}^{-1}(p)$ is equivalent to that of its bordification. 

Moreover, the transversality condition allows to define a continuous map of complexes  \begin{equation}\label{eq:complex inclusion}\mathcal{C}(\per_{\Sigma,\PP}^{-1}(p))\rightarrow \mathcal{C}(\Omega\overline{\mathcal{S}}_{\Sigma,\PP})\end{equation} that associates to each component of an isoperiodic stratum the component of the ambient stratification where it sits.

For each different case of $p$ in Theorems \ref{t:Rgn} and \ref{t:Cgn} there is a subfamily of components of the codimension one stratum of the ambient space $\Omega\overline{\mathcal{S}}_{\Sigma,\PP}$ where we will be able to prove inductively that there is a single connected isoperiodic component of the isoperiodic stratum of codimension one associated to $p$. They are associated to a subfamily of strata that we call spherical boundary strata, because the simple closed curve representing each of them separates the surface in a component of genus zero (a sphere) with at least two of the poles, and a component of the same genus as $\Sigma$ with the rest of poles.  

When $p$ has periods not contained in a real line in $\mathbb{C}$, the spherical boundary strata that will play a central role are those where the spherical part contains all the poles, and hence the genus $g$ part is holomorphic.

When $p$ has periods in a real line, the previous situation cannot occur, as any holomorphic form has a lattice (in $\mathbb{C}$) contained in its group of periods. Therefore we consider spherical boundary strata where the spherical part misses just one of the poles, and the genus $g$ part has two poles.

The chosen components define a subfamily of vertices of $\mathcal{C}(\per_{\Sigma,\PP}^{-1}(p))$ that span a subcomplex that we denote by $\mathcal{C}'(\per_{\Sigma,\PP}^{-1}(p))$.

{\bf Step 2. Isoperiodic degeneration towards boundary points}. We will first prove that any point in $\per_{\Sigma,\PP}^{-1}(p)$ can be isoperiodically deformed in $\Omega_{0}^{*}\overline{\mathcal{S}}_{\Sigma,\PP}$ to some boundary point. For this aim we will use Schiffer variations along twin geodesics of the flat singular metric induced by any non-zero meromorphic differential on a smooth curve with some zero. 

{\bf Step 3.  Connectedness of the boundary.}  By using the inductive hypothesis, we will first show that any point in the boundary can be isoperiodically deformed (in the boundary) to some point of the types defined by the vertices of $\mathcal{C}'(\per^{-1}(p))$. 
Moreover,  the inductive hypothesis allows to prove that the restriction of the map \eqref{eq:complex inclusion} to the subcomplex $\mathcal{C}'(\per^{-1}(p))$ is injective at the level of the vertices. We will prove that it has connected image under the map \eqref{eq:complex inclusion}. 

The problem can be rephrased  in algebraic terms: the complex $\mathcal{C}(\Omega\overline{\mathcal{S}}_{\Sigma,\PP})$ is isomorphic to  $\mathcal{C}(\overline{\mathcal{S}}_{\Sigma,\PP})$  which in turn is isomorphic to  the quotient \begin{equation}\label{eq:quotient curve complex} \frac{\mathcal{C}_{\Sigma,\PP}}{\mathcal{I}_{\Sigma,\PP}}\end{equation}  of the curve complex $\mathcal{C}_{\Sigma,\PP}$ under the natural action of the group $\mathcal{I}_{\Sigma,\PP}$. Each $k$-simplex in \eqref{eq:quotient curve complex} is characterized by a $\mathcal{I}_{\Sigma,\PP}$-orbit of a family of $k$ disjoint essential simple closed curves $c=c_{1}\sqcup\ldots\sqcup c_{k}$ in $\Sigma\setminus\PP$. Whenever every $c_{i}$ separates $\Sigma\setminus\PP$ in two components, we can characterize the orbit of the $k$-simplex by an appropriate homological decoration of the dual graph corresponding to the curve system $c$. The decoration keeps track, in each component of $\Sigma\setminus\{ c\sqcup\PP\}$, of the homology of the component, the restriction of the intersection form, and the peripheral classes. Moreover it gives the relations between the two different peripheral classes in different components related by each curve $c_i$. 

When $c$ is just one simple closed curve defining a spherical boundary stratum, the boundary stratum is characterized by the following homological information according to the set of points $E\subset\PP$ that belong to the spherical part.

\begin{definition}
  Let $E\subset\PP\subset\Sigma$. An $E$-module is a submodule $M\subset H_1(\Sigma\setminus\PP)$ such that 
\begin{equation} H_1 (\Sigma\setminus\PP, \mathbb Z) = M+ \Pi _E\text{ and }M \cap \Pi_E = \mathbb Z \pi_E =\mathbb Z \pi_{P\setminus E}\text{ where }\pi_E := \sum_{e\in E} \pi_e.\end{equation} 
\end{definition}
As shown in Section \ref{ss: E-modules}, if we fix the set $E$, the set of $E$-modules is an affine space directed by the module 
\[ \text{Hom} \big( H_1 (\Sigma\setminus(\PP\setminus E));\frac{ \Pi_E}{ \mathbb Z \pi _E}\big).\]

In the case of the vertices of $\mathcal{C}'(\per_{\Sigma,\PP}^{-1}(p))$, each vertex is characterized by an $E$-module associated to the subset $E=\PP$ (for the case of non-$\mathbb{R}$-collinear periods) or a set $E$ with $\PP\setminus E$ consisting of one point (for the case of $\mathbb{R}$-collinear periods). It corresponds to the homology module of the component of nonzero genus in the partition $\Sigma\setminus\{c\sqcup\PP\}$ where $c$ is the separating curve that defines the vertex of $\mathcal{C}'(\per_{\Sigma,\PP}^{-1}(p))$. 

We will show that, there some basic isoperiodic paths between components associated to different $E$-modules. They generate all possible $E$-modules when $E$ misses at most one point of $\PP$. In this way we show that the image of $\mathcal{C}'(\per_{\Sigma,\PP}^{-1}(p))$ by the map \eqref{eq:complex inclusion} is connected. 

In the case $(g,n)=(1,3)$ with real periods $p$, Steps 1 and 2 are still true, but Step 3 is false. Indeed, in this case the boundary is necessarily disconnected, as an elliptic stable form with three poles, real periods and zero residues at non-separating nodes can have at most one separating node. Therefore there are no intersections between the different components corresponding to different $E$- modules. This is the main reason for which we need a special argument for this particular case.

As was mentionned in the overview, our main application of the connectedness of the fibers of the period map is that it permits to transfer  the dynamical properties of the action of modular group on the set of periods to properties satisfied by the isoperiodic foliation. We notably use this to compute the closures of the leaves, and to prove our ergodicity statements. The detailed analysis of how Ratner's theory might be applied to compute closures of orbits of the modular group on the set of periods is made in the last section of the paper, together with the proof of Theorem \ref{t:leaf closures}. 

\section{Preliminary results and objects}\label{s:preliminary resutls}
\subsection{Stable meromorphic forms with simple poles}
\begin{definition}
  A (possibly singular) connected complex curve $C$ with a set of $n$ marked distinct points $Q\subset C$ is said to be stable if its singularities are nodes and do not coincide with any of the marked points,  and the closure $\hat{C}_i$ of each component of $C\setminus \text{Sing}(C)$, called a part of $C$, has a finite group of automorphisms that fix the set of points $\hat{Q}_i$, defined as the union of the marked points of $Q$ that belong to $\hat{C}_i$ and the boundary points. The normalization of $(C,Q)$ is the (possibly disconnected) smooth curve with marked points $(\hat{C},\hat{Q})=\sqcup (\hat{C}_i,\hat{Q}_i)$. A stable curve $C$ is said of compact type if every node separates $C$ in two components. Otherwise $C$ is said to be of non-compact type. We will also be led to work with the so-called separating normalization of $C$, denoted $\hat{C}^s$ which follows the same definition of the normalization, but considering $C\setminus\text{Sing}^{sep}(C)$ where $\text{Sing}^{sep}(C)$ denotes the set of separating nodes of $C$. 
\end{definition}

The arithmetic genus of a stable curve is $g=h^1(C,\mathcal{O})$; this is the genus of the surface obtained after smoothening the nodes of the curve. When $C$ has $\delta$ nodes and its normalization has $\nu$ components of genera $g_1,\ldots, g_{\nu}$, the arithmetic genus satisfies (see \cite{Harris}[p. 48]) $$g=\sum_{i=1}^\nu (g_i-1)+\delta+1 .$$
\begin{definition}
 A stable meromorphic form with simple poles on a stable curve with  $n$ marked points $(C,Q)\in\overline{\mathcal{M}}_{g,n}$ is a meromorphic form $\omega_i$ with simple poles on each component  $(\hat{C}_i,\hat{Q}_i)$ of its normalization  $(\hat{C},\hat{Q})$ having at worst simple poles at the points of $\hat{Q}_i$ (that correspond either to points of $Q$ or to nodes), and, moreover, at each point of $C$ where there are two branches, the sum of the residues of the forms on the two branches is zero. The space of all such stable forms on $(C,Q)$ can be identified with the space of global sections of the line bundle $K_C(\sum_{q\in Q}q)$ over $C$. It is a vector space of dimension $2g+n-1$. given an indexed set $\alpha=\{\alpha_q\in\mathbb{C}\}_{q\in Q}$ we define $\Omega^{\alpha}(C,Q)$ the set of forms with peripheral period $\alpha_q$ at the point $q\in Q$. It is an affine space over the vector space of holomorphic forms on $C$. 
\end{definition}
  On any component of $\hat{C}$ the residue theorem holds for the restricted form,  telling us that the sum of residues in the component is zero. To be able to integrate $\omega$ along a path in $C$ it needs to avoid all poles of the restrictions to the components of $\hat{C}$. 

The order of $\omega$ at a smooth point $q\in C$ of the underlying curve $C$ is defined to be the $\text{ord}_q(f)$ where $\omega(z)=f(z)dz$ in a holomorphic coordinate $z:(C,q)\rightarrow \mathbb{C}$ around $q$. The order of $\omega$ at a node $q\in C$ is $$\text{ord}_q(\omega)=2+\text{ord}_q(\omega_{|C_1})+\text{ord}_q(\omega_{|C_2})$$ where $C_1$ and $C_2$ are the branches of $C$ at $q$. The condition defining $\omega$ imposes that the residues of the two branches of $\omega$ at a node sum up to zero and both forms have at worst simple poles. So whenever the residue is non-zero at a branch of a node $q$ we have \(\text{ord}_q(\omega)=0\).
 The order of the stable form at any point is clearly invariant by biholomorphism. As opposed to the case of meromorphic forms on smooth curves, a stable form can have a component of zeros (i.e. every point of the component is of infinite order) and not be zero everywhere. It suffices to have a component with isolated zeros to be a non-zero form.
 With these definitions the Riemann-Roch formula is still true for a stable form $\omega$ with no zero components: 
 \[\sum_{q\in C}\text{ord}_q(\omega)=2g-2\] where $g$ is the genus of $C$. 

  \begin{definition}
  $\Omega_{0}(C,Q)$  (resp. $\Omega_{0}^{\alpha}(C,Q)$) is the subset of forms in $\Omega(C,Q)$ (resp.  $\Omega^{\alpha}(C,Q)$) with zero residues at non-separating nodes of $C$ and 
    $\Omega_{0}^{*}(C,Q)$ (resp. $\Omega_{0}^{\alpha *}(C,Q)$) is the subset of forms in   $\Omega_{0}(C,Q)$ (resp. $\Omega^{\alpha}_{0}(C,Q)$) with no zero components (i.e. with isolated zeros). 
  \end{definition}

\subsection{Branched translation structure} \label{ss: singular translation}

In this section, we review classical material from the theory of translation structures. We refer to  \cite{boissy} for more on this topic. The tools in this section allow, among other things, to find twin paths for a given form with simple zeros. 

A non-zero meromorphic differential $(C,Q,\omega)$ with simple poles on a connected smooth curve $C$ induces a singular translation structure with singular set on the support of the divisor of the form.  The local charts of this translation structure are given by the germ obtained by integration of $\omega$ around any point. At a simple pole the form is locally conformally equivalent to the germ at $(\mathbb{C},0)$ of $\frac{adz}{z}$   Around a point of order $d\geq 0$ of $\omega$ the form is locally conformally equivalent to the exact form $d(z^{d+1})$ in $(\mathbb{C},0)$. 
In fact, the construction can be reverted to associate, to any branched translation structure on a surface of finite type $\Sigma\setminus \PP$ with non-trivial local holonomy around each point of $\PP$ a unique pair $(C,\omega)$ of complex structure $C$ on $\Sigma$ and a meromorphic form $\omega$ on $C$ with simple poles on $\PP$, by pulling back the pair $(\mathbb{C},dz)$ to the surface $\Sigma\setminus \PP$ via the charts of the translation structure, and showing that the we can glue the structure of a simple pole of the unique appropriate local monodromy at each point of $\PP$. Under this equivalence, the period homomorphism of the form corresponds precisely to the holonomy of the translation structure. This dictionary between forms and translation structures will be extensively exploited to construct isoperiodic deformations of forms, via the construction of isoholonomic translation structures. Let us analyze in more detail some further structures preserved by translations that we can pull back to $C$.

The above translation structure induces a flat singular metric on the surface, which is the flat metric of $\mathbb{C}$ pulled back by the charts of the translation structure. It can be described on $C$ by the form $\omega\otimes\overline{\omega}$. Its singular set lies is the support of the divisor $(\omega)$ of the form. 

The chart definition around a simple pole shows that the singularity of the metric  is given by a semi-infinite cylinder $\{z\in\mathbb{C}:\Im(z/2\pi i a)>0\}/2\pi i a\mathbb{Z}$ where (the period of a small circle $\gamma_r$ around the pole satisfies $\int_{\gamma_{r}}\omega=2\pi i a$). At a point  $q\in C$ where $\omega$ has order $d\geq 0$, the metric is equivalent to a conical point of angle $2\pi(d+1)$.  

The length of a piecewise smooth path \(\gamma: I \rightarrow C \) is defined by \( l(\gamma)=\int_I |\omega (\gamma '(t) )|dt\), and the distance between two points \(x\) and \(y\) is the infimum of the lengths of piecewise smooth paths that join \(x \) to \(y\). This metric space is locally isometric to a cone with angle an integer multiple of \(2\pi\). 

At a zero of \(\omega\), there is still a notion of angle between two emanating curves with non zero derivative in the translation chart. Geodesics are the curves that are plane geodesics in the flat coordinates outside the zeroes of the form, and when passing a zero, make an angle on both sides at least \(\pi\).

If the form has simple poles, this metric is complete, and the universal cover \(\widetilde{C}\) of \(C\) equipped with the lift \(\widetilde{\omega}\) of \(\omega\)
is then a complete \(\text{CAT}(0)\) space.

The radius of injectivity of \( (C,\omega) \) is then defined as the smallest real number \(r>0\) so that all geodesics of length \(r\) are embedded.   If the form has simple poles, the metric is complete, and the radius of injectivity is positive.

\subsection{Directional foliations induced by $(C,\omega)$}
\label{ss: singular flat metric and directional foliations}
The directional oriented geodesic foliation of $\mathbb{C}$ associated to a unitary vector $\theta\in\mathbb{S}^1\subset\mathbb{C}^*$  is invariant by translations, and therefore lifts to an oriented singular geodesic foliation $\mathcal{G}_\theta$ on $C$ that has saddle at each  zero of $\omega$,  with $2d$ local oriented separatrices each forming an angle of $\pi$ and opposite direction with the two adjacent separatrices. At a pole, of peripheral period $2\pi i a\in\mathbb{C}^*$, the oriented foliation $\mathcal{G}_{\theta}$ 
has 
\begin{itemize}
\item a center if $2\pi i a$ and $\theta$ are $\mathbb{R}$--collinear, 
\item a source if $\theta$ lies in the right half plane of $\mathbb{C}\setminus 2\pi i a\mathbb{R}$ where the side determined by the direction of $2\pi i a$ and
\item a node if $\theta$ lies in the other (left) half plane of  $\mathbb{C}\setminus 2\pi i a\mathbb{R}$.

\end{itemize}

\begin{definition} A generalized saddle connection of $\mathcal{G}_\theta$ is an oriented geodesic leaf that has $\alpha$-limit (resp. $\omega$-limit) formed by precisely one singular point of the underlying metric (a zero or a pole of the form) and at least one of both is a saddle (zero of the form). If both starting and endpoint are saddles (zeros of $\omega$) we simply call it a saddle connection. If moreover they coincide we call it a closed saddle connection at a zero of the form. 
\end{definition}

We define $ R(\mathcal{G}_{\theta})$ as the set of generalized saddle connections of $\mathcal{G}_\theta$.  The set $C\setminus R(\mathcal{G}_{\theta})$ is a $\mathcal{G}_{\theta}$-invariant open set in $C$. According to \cite[Proposition 5.5]{Tahar1} each of its connected components is in one of the following possibilities: 
\begin{itemize}
    \item a (possibly semi-infinite) cylinder of closed regular geodesics
    \item a minimal component
    \item a band of leaves that join two distinct poles
\end{itemize}

A semi-infinite cylinder of closed geodesics occurs at a pole only when the chosen direction is parallel to the direction of the peripheral period of the form at the pole.

If there is a  geodesic joining two (necessarily different) poles, then there is a band of geodesics joining the two poles. Then, either the form defines a cylinder $(\mathbb{C}^*,\frac{adz}{z})$, or, for each of the two poles, there is a generalized saddle connection reaching the pole. 

The semi-infite cylinders of closed geodesics can only occur for directions parallel to some peripheral period. 
If  all the peripheral periods at the poles are contained in a real line $\mathbb{R}\theta\subset \mathbb{C}$, then there is a semi-infinite cylinder around each pole. Therefore $R(\mathcal{G}_{\theta})$ has only saddle connections (between zeros of $\omega$). The possibility of bands joining poles is ruled out.

If moreover, all the periods on closed curves (not only the peripheral) lie in $\mathbb{R}\theta$ then $\mathcal{G}_{\theta}$ admits a real first integral-- namely $z\mapsto \int_{z_0}^z\text{Im}( \frac{\omega}{\theta})$ -- and the only possibility is the case of cylinders of closed geodesics. This case is impossible for holomorphic forms, as their periods always contain a lattice in $\mathbb{C}$ (due to the positivity of the volume and Riemann's relations). 

If $\omega$ is a stable form on a stable nodal curve $(C,Q)$ with marked points and no zero components, we have, on each component of its normalization, the translation structure and the directional foliations defined above.

At a node the metric is either a union of two semi-infinite half cylinders (with the same peripheral period) or a union of two disjoint conical points of angle multiple of $2\pi$ (if the residue of the node is zero) identifying the two singularities . 

\section{Boundary strata of Torelli space bordification}
\subsection{Bordifications of the pointed Torelli space and their stratification}
\label{ss:bordification}

Let \(\Sigma\) be a closed connected oriented surface of genus \(g\), and \(\PP\subset \Sigma\) be a finite subset of \(\Sigma\) of cardinal \(n\). The Teichm\"uller space \(\mathcal T _{\Sigma, \PP} \) is a complex manifold whose points parametrizes triples \( (C, Q, [f])\)
where \(C\) is a closed Riemann surface of genus \(g\) (the curve), \(Q\subset C\) is a finite subset of cardinal \(n\) (the marked points), and \([f]\) denotes the isotopy class of orientation preserving diffeomorphism \( f : (\Sigma, \PP) \rightarrow (C, Q)\) (the marking).

Recall from the Appendix in \cite{CD} and references therein that $\mathcal{T}_{\Sigma, \PP}$ can be bordified to a topological stratified space $\overline{\mathcal{T}} _{\Sigma, \PP} $ called the augmented Teichm\" uller space formed by homotopy equivalence classes of triples $(C,Q,[f])$ where  $(C,Q)$ is a stable curve of genus $g$ with $n$ marked points and $f:(\Sigma, \PP) \rightarrow (C,Q)$ is a collapse map, i.e. the preimage of each node is a simple closed curve and on the complement of this disjoint union of simple closed curves, it is a homeomorphism onto the complement of the nodes in $C$ sending marked points to marked points. The number of nodes of a stable curve determines the codimension of the stratum where it belongs to.  The open stratum is the dense connected set $\mathcal{T} _{\Sigma, \PP} $. 

For each isotopy class of a system of curves, i.e. a disjoint union $c=c_1\sqcup\cdots\sqcup c_k$ of essential\footnote{not boundary of a disc or a once punctured disc in $\Sigma\setminus\PP$} non isotopic simple closed curves $c_i$ on the punctured surface $\Sigma \setminus \PP$ there is a connected component of the stratum of codimension $k$ in the boundary, i.e. the complement of $\mathcal{T}_{\Sigma, \PP}$ in $\overline{\mathcal{T}}_{\Sigma, \PP}$ formed by marked stable curves whose collapse map is homotopic to the pinching $\Sigma\rightarrow \Sigma/\sqcup c_i$ of each simple closed curve to a distinct point. This stratum is denoted by \( \mathcal B_c\subset \mathcal T _{\Sigma, \PP}\).

It has a complex manifold structure that can be described by using the so-called attaching maps. Namely, associated to the system of curves \(c\), let us consider the surface \( \hat{\Sigma} \) obtained from the geometric completion \(\overline{\Sigma\setminus c}\) of \(\Sigma\setminus c\) by contracting any boundary component to a point. Inside \(\hat{\Sigma}\), one considers the finite subset \( \hat{\PP}\) obtained by taking all the points of \(\PP\) together with all the points obtained by contracting the boundary components of \(\overline{\Sigma\setminus c}\). Letting \( (\hat{\Sigma}_j, \hat{\PP}_j)\) be the connected components of the pointed surface \( (\hat{\Sigma}, \hat{\PP}) \), one then has a natural parametrization 
\[ \mathcal B_c  \simeq \prod _ j \mathcal T _{\hat{\Sigma}_j, \hat{\PP}_j} \]
From left to right, this isomorphism is defined by taking the normalization of a marked pointed curve \( (C, Q, f)\in \mathcal B_c\), and restricting the homeomorphism $\hat{f}:(\hat{\Sigma},\hat{\PP})\rightarrow (\hat{C},\hat{Q})$ to the components.  From right to left, it is just attaching map following the information given by $c$.

The previous isomorphism can be used to arrange the factors in another combination that will be useful for the purpose of extending the definition of the period map to a bordification of $\Omega\mathcal{T}_{\Sigma,\PP}$ as follows:  let $c=c^s\sqcup c^{ns}$ where $c^s$ correspond to the subset of separating curves of $c$. Consider the so-called separating normalization of $\Sigma$ associated to $c^s$, i.e. the surface \( \hat{\Sigma}^s \) obtained from the geometric completion \(\overline{\Sigma\setminus c^s}\) of \(\Sigma\setminus c^s\) by contracting any boundary component to a point. Inside \(\hat{\Sigma}^s\), one considers the finite subset \( \hat{\PP}^s\) obtained by taking all the points of \(\PP\) together with all the points obtained by contracting the boundary components of \(\overline{\Sigma\setminus c^s}\). Letting \( \{(\hat{\Sigma}^s_v, \hat{\PP}^s_v)\}_{v\in V}\) be the connected components of the pointed surface \( (\hat{\Sigma}^s, \hat{\PP}^s) \); $c^{ns}_v$ be  the subset of $c^{ns}$ of curves that lie in \( \hat{\Sigma}^s_v\setminus  \hat{\PP}^s_v\) and \(\mathcal{B}_{c^{ns}_v}\subset \overline{\mathcal{T}}_{\hat{\Sigma}^s_v, \hat{\PP}^s_v}\) be the boundary component corresponding to the non-separating curve system $c^{ns}_v\subset \hat{\Sigma}^s_v\setminus \hat{\PP}^s_v$, one then has a natural isomorphism 
\begin{equation} \label{eq:partial norm teich}
\mathcal B_c  \simeq \prod _ {v\in V}\mathcal B _{c^{ns}_v} 
\end{equation}
that associates to $f:(\Sigma,\PP)\rightarrow (C,Q)$, the restriction of the collapse map $\hat{f}^s:(\hat{\Sigma}^s,\hat{\PP}^s)\rightarrow (\hat{C}^s,\hat{Q}^s)$ to each component. 

The combinatorics of adjacency of the different boundary strata is codified by the curve complex $\mathcal{C}_{\Sigma, \PP}$, that has a vertex for each isotopy class of essential simple closed curve in $\Sigma\setminus \PP$, and we add a $k+1$ simplex to a finite number of vertices associated to simple closed curves $c_1,\ldots, c_k$ if their union forms a curve system up to isotopy. Sometimes it will be useful to work on a bordification that considers only stable curves with separating nodes, i.e. of compact type. The associated curve systems will be referred to as separating curve systems.  In this case we denote the space by $\overline{\mathcal{T}}^{sep}_{\Sigma, \PP}$. The corresponding adjacency combinatorics of boundary strata is codified by the separating curve complex $\mathcal{C}^{sep}_{\Sigma, \PP}$ having a similar definition as the curve complex, but imposing that all vertices correspond to separating curves in $\Sigma\setminus \PP$, i.e. the subcomplex of $\mathcal{C}_{\Sigma, \PP}$ spanned by vertices corresponding to separating curve systems. 

The group $\text{Mod}_{\Sigma, \PP}$ acts on $\overline{\mathcal{T}}_{\Sigma, \PP}$ by pre-composition on the collapse maps and preserves the boundary stratification--an element of $\text{Mod}_{\Sigma, \PP}$  sends curve systems to curve systems -- by possibly exchanging connected components of strata. The quotients 
\[ \overline{\mathcal{M}}_{g,n}=\frac{\overline{\mathcal{T}}_{\Sigma, \PP}} {\text{Mod}_{\Sigma, \PP} }\quad \text{ and its subset }\quad\overline{\mathcal{M}}^{sep}_{g,n}=\frac{\overline{\mathcal{T}}^{sep}_{\Sigma, \PP}} {\text{Mod}_{\Sigma, \PP}}.\] are respectively a (compact) orbifold isomorphic to the Deligne-Mumford compactification of $\mathcal{M}_{g,n}$ (see \cite{ACG}), and a (Zariski) open subset of the latter consisting of a bordification of $\mathcal{M}_{g,n}$ by adding isomorphism classes of stable curves with separating nodes. 
We consider the pointed Torelli group $\mathcal{I}_{\Sigma, \PP}$ defined by the exact sequence \begin{equation}\label{eq:pointedtorelligroup}1\rightarrow \mathcal{I}_{\Sigma, \PP}\rightarrow \text{Mod}_{\Sigma, \PP}\rightarrow \text{Aut}\big(H_1(\Sigma\setminus \PP,\mathbb{Z})\big),\end{equation} 
and define the partial bordifications of the space $\mathcal S _{\Sigma, \PP}$

\[ \overline{\mathcal{S}}_{\Sigma, \PP}=\frac{\overline{\mathcal{T}}_{\Sigma, \PP}} {\mathcal{I}_{\Sigma, \PP} }\quad \text{ and its subset }\quad\overline{\mathcal{S}}^{sep}_{\Sigma, \PP}=\frac{\overline{\mathcal{T}}^{sep}_{\Sigma, \PP}} {\mathcal{I}_{\Sigma, \PP}}.\]
The quotient boundary stratification has adjacency combinatorics given respectively by the quotient of $\mathcal{C}_{\Sigma, \PP}$ (resp. $\mathcal{C}^{sep}_{\Sigma, \PP}$) by the action of $\mathcal{I}_{\Sigma, \PP}$ on the complexes. A connected component of a boundary stratum of $\overline{\mathcal{S}}_{\Sigma, \PP}$ (resp.  of $\overline{\mathcal{S}}^{sep}_{\Sigma, \PP}$) is characterized by a pointed Torelli class of the corresponding curve system.  Hence, the combinatorics of adjacency of boundary strata is codified by  $\mathcal{C}_{\Sigma, \PP}/\mathcal{I}_{\Sigma, \PP}$ (resp.  $\mathcal{C}^{sep}_{\Sigma, \PP}/\mathcal{I}_{\Sigma, \PP}$) and each simplex corresponds to a pointed Torelli class of curve system in $\Sigma\setminus \PP$. 

\subsection{Homological characterization of some boundary strata $\overline{\mathcal{S}} _{\Sigma, \PP}$}

In this subsection we will prove that each pointed Torelli class of either a separating curve system (i.e. simplex of  $\mathcal{C}^{sep}_{\Sigma, \PP}/\mathcal{I}_{\Sigma, \PP}$) or a unique non-separating curve can be characterized by homological information induced by the curve system on $H_1(\Sigma \setminus \PP)$. It will notably allow to characterize homologically the associated strata of the bordification of Torelli space \(\overline{\mathcal{S}}_{\Sigma,\PP}\). The important concept enabling this characterization is the one of homologically marked labelled graph; even if conceptually quite natural and simple, the material needed for its definition is quite technical.


We will need to use decorated graphs associated to stable curves. We start by recalling the definition of a labeled graph from \cite[p. 98]{AC}. 
A graph is the datum $G$ of a non-empty finite set $V=V(G)$ called the vertices, a non-negative integer \(g_v\) for each $v\in V$, a finite set \(L=L(G)\) called the half legs of \(G\), a partition \(\mathfrak{P}\) of \(L\) into subsets with either one or two elements, and a subset \(L_v\subset L\) for every \(v\in V\) satisfying
\( L=\bigsqcup_{v\in V} L_v\). 
The elements of \(\mathfrak{P}\) with one element will be called legs of $G$ and those with two elements correspond to the edges \(E=E(G)\) of the graph. We also set \(l_v=|L_v|\). By a  \(\PP\)-labelled graph we mean the datum of a graph \(G\) with a bijection between the set of its legs and \(\PP\). Such a graph permits to codify the combinatorics of a stable curve with marked points, as follows. 

A \(\PP\)-pointed stable curve is a pointed stable curve \((C, Q)\) with an identification \(r\in \PP\mapsto q_r \in Q\) of the marked points with \(\PP\). In \cite[p. 98]{AC} the authors associate to such a \(\PP\)-pointed stable curve an $\PP$-labelled graph associated to the normalization $\hat{C}\rightarrow C$ of $C$. For our purposes we need just to define it at the level of the partial normalization $\hat{C}^{s}\rightarrow C$ that results from normalizing the \emph{separating} nodes of $C$. We will call such a normalization the separating normalization. It is the same as the full normalization if all nodes are separating.

Given a $\PP$-pointed genus \(g\) stable curve \( (C,Q=\{q_r\}_{r\in\PP})\) we associate a \(\PP\)-labelled graph \(G\) as follows: for its separating normalization \(\pi^s: \hat{C}^s\rightarrow C\) we let \(V(G)\) be the set of components of \(\hat{C}^s\), \(L(G)\) the set of points of \(\hat{C}^s\) that correspond either to nodes or to marked points. Two of those half edges correspond to an edge if they are mapped to the same point (a node) by \(\pi^s\). The indexing of the legs by \(\PP\) is the obvious one and we define \(g_v=\text{genus of } v\) and \(L_v\) the set of elements of \(L\) belonging to \(v\). By construction we have\[ g=\left(\sum_{v\in V(G)} g_v\right)+1-|V(G)|+|E(G)|.\] The fact that we only normalize separating nodes, implies that the graph $G$ is actually a tree.


In the sequel, we explain how to keep track of some homological information of stable pointed curves. We will use the following notation: denote by 
\begin{equation} \label{eq: psi geometric}\psi_{\Sigma, \PP}: \PP\rightarrow H_1 (\Sigma\setminus \PP, \mathbb Z)\end{equation} the map that sends an element \(r\in \PP\) to  the peripheral class $\pi_r$. A homological label of a \(\PP\)-labelled graph is a collection \[\{ (W_v,<,>_v, \psi_v)\}_{v\in V(G)}\]
where for each \(v\in V\)
\begin{itemize}
\item \(W_v\) is a \(\mathbb Z\)-module 
\item \(<,>_v\) is a \(\mathbb Z\)-linear anti-symmetric bilinear form,
\item \(\psi_v: L_v \rightarrow W_v\) is a map with image in the kernel of \(<,>_v\).
\end{itemize}
so that for each \(v\in V\), the triple $(W_v,<,>_v,\psi_v)$ is isomorphic to a triple of the form  $ (H_1 (\Sigma_v\setminus \PP_v, \mathbb Z), \cdot_v, \psi_{\Sigma_v, \PP_v}) $ where \((\Sigma_v, \PP_v)\) is a \(L_v\)-pointed curve of genus \(g_v\), \(\cdot_v\) is the intersection form on \(H_1(\Sigma_v\setminus \PP_v,\mathbb Z)\), and \(\psi_{\Sigma_v, \PP_v}\) is the map defined similarly as \eqref{eq: psi geometric}.

For each such labelled tree \(G\) we associate a triple \( (W(G), <,>, \psi) \) where \(W(G)\) is the module 
\[ W(G) :=\frac{\bigoplus_{v\in V(G)}W_v} {\{\psi_v(e)+\psi_{v'}(e')=0:e\in L_v,e'\in L_{v'}\text{ s.t. } e\cup e' \in E(G)\}},\] \(<,>\) is the anti-symmetric bilinear form induced by the forms \( <,>_v\) on each summand, and  \(\psi_G : \PP\rightarrow W(G)\) is the map that sends an element \(r\in \PP\) to the element \( \psi_{v(r)} (l_r) \) (here \( l_r \) is the half leg corresponding to \(r\) at the vertex \(v_r\)).

A homological $(\Sigma, \PP)$-marking of a homologically labelled \(\PP\)-labelled tree $G$ is an isomorphism 
\begin{equation}
\label{eq:homological marking}m:(H_1(\Sigma \setminus \PP, \mathbb Z),\cdot, \psi_{\Sigma, \PP})\rightarrow (W(G),<,>, \psi_G)
\end{equation}
that associates to each peripheral class of a puncture $r\in \PP$, the class in $W(G)$ of the half leg that corresponds to $r$.



There is a canonical homological \((\Sigma, \PP)\)-marking of the \(\PP\)-labeled tree \(G\) corresponding to an element $[(C,Q,f)]\in \overline{\mathcal{T}}_{\Sigma, \PP}$. Indeed, via a collapse map $f:(\Sigma,\PP)\rightarrow (C,Q)$ representing the point,  the connected components of the  separating normalization surface \((\hat{\Sigma}^s, \hat{\PP}^s) \) are in bijection with the set \(V(G)\) of vertices of \(G\); we denote by \( (\hat{\Sigma}^s_v, \hat{\PP}^s_v)\) the one corresponding to the vertex \( v\). The set of half legs at \(v\) is identified with the set of marked points of \(\hat{\Sigma}^s_v\), namely \(L_v \simeq \hat{\PP}_v\). 
The family \(\{ (H_1(\hat{\Sigma}^s_v \setminus \hat{\PP}^s_v, \mathbb Z), \cdot_v, \psi_v:=\psi_{\hat{\Sigma}^s_v, \hat{\PP}^s_v})\}_{v\in V(G)}\) is a homological label of the graph \(G\). Moreover, the use of Mayer-Vietoris exact sequences (obtained by cutting \(\Sigma\) along the separating curves that are collapsed to the nodes by $f$) shows that the natural inclusion maps \( H_1(\hat{\Sigma}^s_v \setminus \hat{\PP}^s_v, \mathbb Z)\rightarrow H_1(\Sigma \setminus \PP, \mathbb Z)\) can be glued to produce a homological \((\Sigma, \PP)\)-marking of \(G\):
\begin{equation}
\label{eq:homology decomposisiton separating syst}H_1(\Sigma\setminus\PP) \overset{m}{\cong}\frac{\bigoplus_{v\in V(G)}H_1(\Sigma_v\setminus \PP_v)} {\{\psi_v(e)+\psi_{v'}(e')=0:e\in L_v,e'\in L_{v'}\text{ s.t. } e\cup e' \in E(G)\}},\end{equation}
that preserves the intersecion products and the identifications of peripheral classes. This homological $(\Sigma,\PP)$-marking  $(G,m)$ does not depend on the isotopy class of $f$.  Moreover, for all elements in a stratum $\mathcal{B}_{c}$ it is the same.
\begin{proposition}
\label{p:partial norm torelli}
With the notations of the isomorphism \eqref{eq:partial norm teich} , let $[c]$ (resp. $[c^{ns}_v]$) be the $\mathcal{I}_{\Sigma,\PP}$ (resp. $\mathcal{I}_{\hat{\Sigma}^s_j,\hat{\PP}^s_v})$ class of the curve system $c$ (resp. $c^{ns}_v$) in $\Sigma\setminus\PP$ (resp. $\hat{\Sigma}^s_v\setminus \hat{\PP}^s_v$) and consider $(G,m)$ the $(\Sigma,\PP)$-marking of a labelled tree obtained from any element in $\mathcal{B}_{c}$. Then, the isomorphism \eqref{eq:partial norm teich} induces an isomorphism 
\begin{equation} \label{eq:partial norm Torelli}
\mathcal B_{[c]}  \simeq \prod _ {v\in V(G)} \mathcal B _{[c^{ns}_v]} 
\end{equation}
where $\mathcal B_{[c]}\subset \overline{\mathcal{S}}_{\Sigma,\PP}$ is the quotient of $\mathcal B_c$ by $\mathcal{I}_{\Sigma,\PP}$ and $\mathcal B_{[c^{ns}_j]}\subset \overline{\mathcal{S}}_{\hat{\Sigma}^s_j,\hat{\PP}^s_j}$ is the quotient of $\mathcal B_{c^{ns}_j}$ by $\mathcal{I}_{\hat{\Sigma}^s_v,\hat{\PP}^s_v}$ 
\end{proposition}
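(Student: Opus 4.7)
The plan is to push the Teichm\"uller-level biholomorphism \eqref{eq:partial norm teich} down to the Torelli quotient. The key is to identify the subgroup of $\mathcal{I}_{\Sigma,\PP}$ acting on $\mathcal{B}_c$ (modulo its kernel on $\mathcal{B}_c$) with the product $\prod_{v\in V(G)}\mathcal{I}_{\hat{\Sigma}^s_v,\hat{\PP}^s_v}$, so that the quotient of the left-hand side of \eqref{eq:partial norm teich} is the quotient of the right.

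First I would observe that $\mathcal{I}_{\Sigma,\PP}$ preserves the isotopy class of the separating system $c^s$. Indeed every element of $\mathcal{I}_{\Sigma,\PP}$ fixes each peripheral class, hence preserves the decorated tree $(G,m)$ attached to $c$ (legs and their identification with $\PP$, together with the homological data at every vertex). Since $G$ is a leaf-labelled tree and the marking $m$ rigidly records the partition of peripheral classes at each node, this forces the mapping class to preserve the isotopy class of every curve in $c^s$. In particular $\mathcal{I}_{\Sigma,\PP}$ stabilizes $\mathcal{B}_c$ setwise, and the classical restriction-after-cutting morphism from the stabilizer of $c$ in $\text{Mod}_{\Sigma,\PP}$ onto $\prod_{v} \text{Mod}_{\hat{\Sigma}^s_v,\hat{\PP}^s_v}$ (with kernel generated by Dehn twists about the curves of $c^s$, which act trivially on $\mathcal{B}_c$) restricts to a homomorphism
\[\rho : \mathcal{I}_{\Sigma,\PP} \longrightarrow \prod_{v\in V(G)} \text{Mod}_{\hat{\Sigma}^s_v,\hat{\PP}^s_v}.\]

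The heart of the proof is the equality
\[\rho(\mathcal{I}_{\Sigma,\PP}) = \prod_{v\in V(G)} \mathcal{I}_{\hat{\Sigma}^s_v,\hat{\PP}^s_v}.\]
For the inclusion ``$\subseteq$'' I would show that for each vertex $v$ the canonical map
\[H_1(\hat{\Sigma}^s_v\setminus \hat{\PP}^s_v,\mathbb{Z}) \hookrightarrow W(G) \overset{m^{-1}}{\cong} H_1(\Sigma\setminus \PP,\mathbb{Z})\]
is injective, so that triviality of $\varphi\in\mathcal{I}_{\Sigma,\PP}$ on $H_1(\Sigma\setminus \PP,\mathbb{Z})$ forces triviality of every restriction $\varphi_v$ on the vertex homology. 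The injectivity follows from Mayer--Vietoris applied iteratively along the edges of the tree. Cutting along one separating curve $c_i\in c^s$ with adjacent vertices $v,v'$ yields the short exact sequence
\[0 \to \mathbb{Z}\cdot[c_i] \to H_1(\hat{\Sigma}^s_v\setminus \hat{\PP}^s_v,\mathbb{Z})\oplus H_1(\hat{\Sigma}^s_{v'}\setminus \hat{\PP}^s_{v'},\mathbb{Z}) \to H_1(\Sigma\setminus \PP,\mathbb{Z}) \to 0,\]
where the first arrow identifies the two paired peripheral classes, producing precisely the relations of \eqref{eq:homology decomposisiton separating syst}. Because $G$ is a tree, no additional cycle of relations between vertex summands can arise, and iteration yields the desired injectivity. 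The reverse inclusion ``$\supseteq$'' is then immediate from \eqref{eq:homology decomposisiton separating syst}: a tuple $(\varphi_v)_v$ of pointed Torelli classes glues to a mapping class acting trivially on every summand of $W(G)$ and hence belonging to $\mathcal{I}_{\Sigma,\PP}$.

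Once the pointed Torelli groups are matched, the biholomorphism \eqref{eq:partial norm teich}, which arises from restricting collapse maps to the components of the separating normalization, is equivariant for the two group actions, and the induced map of quotients is the claimed isomorphism \eqref{eq:partial norm Torelli}. The main obstacle is the injectivity of each vertex summand into $W(G)$, which is precisely where the tree structure of $G$ (equivalently, the use of the \emph{separating} normalization) is essential: loops in the dual graph, produced by non-separating nodes, could create relations between vertex summands and break injectivity.
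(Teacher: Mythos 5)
Your overall strategy --- descend the Teichm\"uller-level product \eqref{eq:partial norm teich} to the Torelli quotient by matching, on each side, the group being divided out --- is the same route the paper takes, but two of your key steps are incorrect as stated.

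The first is the opening claim that ``$\mathcal{I}_{\Sigma,\PP}$ preserves the isotopy class of the separating system $c^s$.'' This is false in general: the Torelli group does not act trivially on the curve complex (for example, the Dehn twist about a separating curve $c'$ meeting $c$ essentially lies in $\mathcal{I}_{\Sigma,\PP}$ but does not preserve the isotopy class of $c$). Preservation of the decorated tree $(G,m)$ only shows that $\phi(c)$ is \emph{pointed Torelli equivalent} to $c$ (the content of Proposition~\ref{p:Torelli equivalence sesc systems}), which is strictly weaker than isotopic. The group you actually need is the stabiliser of $\mathcal B_c$ in $\mathcal{I}_{\Sigma,\PP}$, and the genuine input here is Lemma \ref{l:torelli preserving scs} of the paper, proved by induction on the number of curves: a stabilising Torelli element must preserve each $c_j$ individually with orientation. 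That is what gives you the vertex-wise restriction map $\rho$ on the correct domain and with no permutation of the vertices.

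The second is the claimed injectivity of $W_v\to W(G)$. This fails whenever some vertex $v'$ adjacent to $v$ is a tail of the tree carrying a single half-leg $e'$ and no marked point, because then $\psi_{v'}(e')=0$ in $W_{v'}$ (the boundary class of a once-punctured closed surface is null-homologous), so the relation $\psi_v(e)+\psi_{v'}(e')=0$ kills the non-zero class $\psi_v(e)$ in $W(G)$ --- precisely the phenomenon Remark \ref{rem:homol non-injectivity} warns about. Concretely, for $\Sigma$ of genus $2$, $\PP=\{r\}$, and $c$ a separating curve, the side not carrying $r$ is such a tail; pushing $r$ along a non-peripheral loop on the other side is a Torelli element of $(\Sigma,\PP)$ stabilising $c$ (because $\pi_r=0$ in $H_1(\Sigma\setminus\PP)$), yet its restriction to $\hat{\Sigma}^s_v$ acts non-trivially on $W_v$ (where $\pi_r\neq 0$). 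The tree structure of $G$ rules out relations looping through several vertices, but not this one-edge degeneration; your Mayer--Vietoris iteration therefore does not deliver injectivity, and the inclusion $\rho(\text{stabiliser})\subseteq\prod_v\mathcal{I}_{\hat{\Sigma}^s_v,\hat{\PP}^s_v}$ is not established for curve systems having such tail components.
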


\begin{proof}[Proof of Proposition \ref{p:partial norm torelli}] We fix a representative $c$ of the pointed Torelli class $[c]$ and consider each point $(C,Q,f)\in \mathcal{B}_c\subset \overline{\mathcal{T}}_{\Sigma,\PP}$ represented by an $f:(\Sigma,\PP)\rightarrow (C,Q)$ that collapses the curves of $c$. The action of $\mathcal{I}_{\Sigma,\PP}$ on $\mathcal{B}_c$ is then by orientation preserving homeomorphisms $\phi:(\Sigma,\PP)\rightarrow (\Sigma,\PP)$ that preserve the curve system $c$. The following Lemma gives some more precise information on this action:

\begin{lemma}
\label{l:torelli preserving scs}
 Let $c^s\subset \Sigma\setminus \PP$ be a curve system of separating simple closed curves. Suppose $\phi:(\Sigma,\PP)\rightarrow(\Sigma,\PP)$ is an orientation preserving homeomorphism such that $\phi(c^s)=c^s$, $\phi_{|\PP}=\text{Id}$   and $\phi_*:H_1(\Sigma\setminus\PP)\rightarrow H_1(\Sigma\setminus\PP)$ is the identity map. Then for each simple closed curve $c_j\subset c^s$, $\phi(c_j)=c_j$ with the same orientations. 
\end{lemma}
Assume Lemma \ref{l:torelli preserving scs} is true. Then the restriction of  $\phi$ to the components of $(\hat{\Sigma}^s,\hat{\PP}^s)$ define automorphisms on each component, and morover, an element of \begin{equation}\label{eq:prod of torelli of parts}
\prod_v\mathcal{I}_{\hat{\Sigma}_v^s,\hat{\PP}^s_v}
\end{equation} 
Reciprocally, an element of \eqref{eq:prod of torelli of parts} produces a unique element in $\mathcal{I}_{\Sigma,\PP}$ that preserves each separating curve $c_j$ of $c^s$. 
With this identification of the pointed Torelli groups, the map \eqref{eq:partial norm teich} is equivariant with respect to the actions on source and target thus inducing the desired isomorphism \eqref{eq:partial norm Torelli} on the quotients. 
\begin{proof}[Proof of Lemma \ref{l:torelli preserving scs}:] By induction on the number of curves of the curve system. Suppose first that $c$ is a single separating curve. The hypothesis on the image of $\phi$ already implies that $\phi(c)=c$.  The restriction of $\phi $ to each component of $\Sigma\setminus c$ induces a homeomorphism from one component to another component (maybe the same). We need to show that it is actually an automorphism of the component. Consider the decomposition \eqref{eq:homology decomposisiton separating syst}  induced by $c$. The restriction of $\phi$ to $\Sigma\setminus c$ induces a map in homology that might permute the factors of the decomposition.  If we show that each factor in homology is preserved we will be done.  If $\PP$ is empty then there are no relations in the quotient decomposition, and the sum is a direct sum of the two modules. As $\phi_{*}$ is the identity in homology, it is also the identity by restriction to each factor and we are done. If $\PP$ is nonempty, then, as $\phi$ is the identity restricted to $\PP$, we have that the peripheral $\pi_{r}$ is fixed by $\phi_{*}$. This implies that one component of $\Sigma\setminus c$ is preserved by $\phi$ and therefore both are. 
Suppose we have proved the statement of Lemma \ref{l:torelli preserving scs} for any separating curve system with at most $k-1\geq 1$ elements. Take a curve system $c$ with $k$ separating curves. To prove the inductive step it suffices to find an element $c_{k}$ of the curve system such that $\phi(c_{k})=c_{k}$ with the same orientation.  Indeed, we can then apply the inductive hypothesis to each component of $\Sigma\setminus c_{k}$ to conclude (as each component has at most $k-1$ separating curves that lie in $c\setminus c_{k}$ and $\phi_{*}$ restricted to the factors of the associated decomposition is the identity). As $c_{k}$ we take a component $(\Sigma_{v},\PP_{v})$ of $\Sigma\setminus c$ that has as unique boundary component in a curve $c_{k}$. If $\PP_{v}$ contains at least two points, then the peripheral class of a point $r\in\PP_{v}$ that does not correspond to $c_{k}$ is sent by $\phi_{*}$ to itself. This implies that $\phi$ restricted to   $\Sigma\setminus c_{k}$ preserves each component. Since it also sends the boundary component $c_{k}$ of  $\Sigma_{v}$ to some boundary component of $\Sigma_{v}$ the only option is $\phi(c_{k})=c_{k}$ with the same orientations. If $\PP_{v}$ is a set with a single point (corresponding to the boundary component $c_{k}$) then the decomposition \eqref{eq:homology decomposisiton separating syst} can be expressed as a direct sum of $H_{1}(\Sigma_{v}\setminus \PP_{v})\cong H_{1}(\Sigma_{v})$ with another module. As $\phi_{*}$ is the identity, every element in this (non-trivial) module is fixed by $\phi_{*}$. Therefore $\phi$ restricted to $\Sigma_{v}$ has image in $\Sigma_{v}$ and $\phi$ preserves one of the components of $\Sigma\setminus c_{k}$. Hence it preserves  each of the two components. 
\end{proof}
\end{proof}

\begin{remark}\label{rem:homol non-injectivity } The natural inclusion $\hat{\Sigma}^s_v\setminus \hat{\PP}^s_v\rightarrow \Sigma\setminus \PP$ does not always induce an injective map at the homological level. When it is injective,  the identification can be used to mark the corresponding part homologically with the image submodule in $H_1(\Sigma\setminus\PP)$. 
\end{remark}

We will use a pictorial representation for  a  homologically \((\Sigma, \PP)\)-marked labelled tree by drawing the  $\PP$-labeled tree, adding the module corresponding to each vertex (represented as a circle surrounding the module) and adding the information of the peripheral classes (belonging to the module of the corresonding vertex) at the end of each half leg belonging to the corresponding module. The classes corresponding to edges can be of the tree can be deduced from the ones of the half legs, since in each component, the sum of all half legs is the zero class.

\begin{example}
An example of the decomposition associated to a marking $f:(\Sigma,\PP)\rightarrow (C,Q)$ of a genus $g$ stable curve $C$ with two separating nodes, three parts of genus \(0,g-1\) and $1$ and seven  marked points $Q=\{q_1,\ldots, q_7\}$
is depicted in Figure \ref{fig:decomposition example}.

\begin{figure}[h]
     \centering
     \includegraphics[height=5cm]{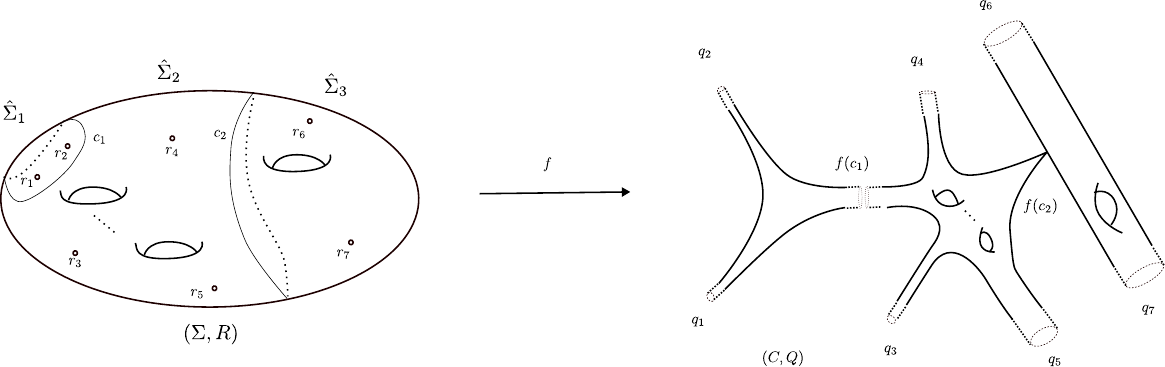}
     \caption{Geometric interpretation of the $(\Sigma,\PP)$-homological marking induced by a marking $f:(\Sigma,\PP)\rightarrow (C,Q)$}
     \label{fig:decomposition example}
 \end{figure}
Each curve $c_1,c_2\subset \Sigma\setminus \PP$ is collapsed by $f$ to a node of $C$. Each $\hat{\Sigma}_i$ is a connected component of $\Sigma\setminus \{c_1,c_2\}$ and contains a subset of points $\hat{R}_i\subset \PP$. Denote $W_i=H_1(\hat{\Sigma}_i\setminus \hat{\PP}_i)$ and $\pi_{r}\in W_i$ the peripheral class around $r\subset \hat{\PP}_i$. With these notations the $(\Sigma,\PP)$-homological marking is represented by the following decorated tree:
\begin{figure}[h]
     \centering
\begin{center}
\begin{tikzpicture}[node distance = {20mm},thick, main/.style = {draw,circle}]

\node[main] (1) [label=below:$0$]{$W_1$};
\node[main] (2) [right of = 1][label=below:$g-1$]{$W_2$};
\node[main] (3) [right of= 2][label=below:$1$]{$W_3$};
\node (4) [above left of = 1]{$\pi_{r_1}$};
\node (5) [below left of =1]{$\pi_{r_2}$};
\node (6) [above right of =3]{$\pi_{r_6}$};
\node (7) [below right of =3]{$\pi_{r_7}$};
\node (8) [above of =2]{$\pi_{r_4}$};
\node (9) [above left of =2]{$\pi_{r_3}$};
\node (10) [above right of =2]{$\pi_{r_5}$};
\draw (3) -- (6);
\draw (1) -- (2);
\draw (2) -- (3);
\draw (1) -- (4);
\draw (1) -- (5);
\draw (3) -- (7);
\draw (2) -- (9);
\draw (2) -- (10);
\draw (2) -- (8);

\end{tikzpicture}
\end{center}
\caption{The decorated tree associated to the marking $f$ of Figure \ref{fig:decomposition example} where $W_i:=H_1(\hat{\Sigma}_i\setminus \hat{\PP}_i)$}
     \label{fig:labelling}
 \end{figure}
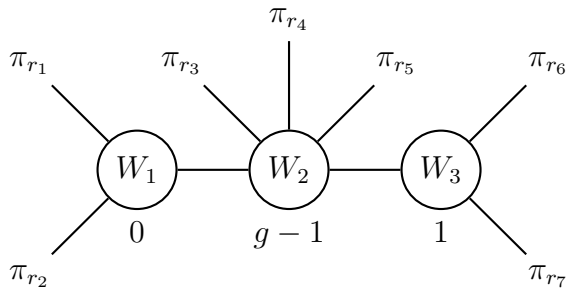
\end{example}

 Two homologically labelled graphs $G$ and $G'$ are isomorphic if there exists a homeomorphism of $\PP$-labelled graphs $\phi:G\rightarrow G'$ and a family of isomophisms $$\varphi_v:(W_v,<,>_v, \psi_v)\rightarrow (W'_{\phi(v)},<,>_{\phi(v)}, \psi_{\varphi_v}).$$ 
Two homologically \((\Sigma, \PP)\)-marked  $\PP$-labelled trees $(G,m)$ and $(G',m')$ are isomorphic if there exists an isomorphism $(\phi,\{\varphi_v\}_{v\in V(G)})$ of homologically marked $\PP$-labelled graphs between them such that in $H_1(\Sigma\setminus\PP)$ the following equations hold for each $v\in V(G)$
\begin{equation}\label{eq:same submodule}
m^{-1}(W_v)=(m')^{-1}(W'_{\phi(v)}) ,   
\end{equation} \begin{equation}\label{eq:compatibility for H_1 marking} m^{-1}(\varphi_v(\psi_v(l))=(m')^{-1}(\psi'_{\phi(v)}(\phi(l)))
\quad\forall l\in L(v)\text{ not belonging to an edge.}\end{equation} 

 The tree condition together with the fact that each module has one boundary relation, implies that condition \eqref{eq:compatibility for H_1 marking} is valid also for all $l\in L(v)$ and each $v\in V(A)$, whenever it is valid for the half legs that are not part of edges.
\begin{proposition}
\label{p:Torelli equivalence sesc systems}
Two curve systems $c$ and $c' $ on $\Sigma\setminus \PP$ formed by separating simple closed curves are pointed Torelli equivalent if and only if their induced $(\Sigma, \PP)$-homologically marked labeled trees are isomorphic. Moreover, any $(\Sigma,\PP)$-homologically marked labeled tree is isomorphic to one induced by some separating curve system $c$ in $\Sigma\setminus\PP$.
\end{proposition}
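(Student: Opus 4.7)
The plan is to handle the two directions of the equivalence separately, and then address the realization statement, proceeding by induction on the number of curves in the curve system.

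\textbf{Necessity.} If $c$ and $c'$ are related by an element $\phi\in\mathcal{I}_{\Sigma,\PP}$ with $\phi(c)=c'$, then $\phi$ permutes the connected components of $\Sigma\setminus c$ onto those of $\Sigma\setminus c'$ and fixes $\PP$ pointwise. This yields a natural isomorphism of underlying $\PP$-labelled trees together with induced maps $\varphi_v$ at the vertex level. The Torelli hypothesis $\phi_*=\text{Id}$ on $H_1(\Sigma\setminus\PP)$, combined with the Mayer--Vietoris identification \eqref{eq:homology decomposisiton separating syst}, immediately delivers the compatibility conditions \eqref{eq:same submodule} and \eqref{eq:compatibility for H_1 marking}.

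\textbf{Sufficiency.} I would first treat the base case of a single separating curve $c$. By the change-of-coordinates principle for surfaces (Farb--Margalit, Chapter 1), the isomorphism of $\PP$-labelled trees (with genus labels) can be realized by an orientation-preserving homeomorphism $\psi:(\Sigma,\PP)\to(\Sigma,\PP)$ with $\psi(c)=c'$, matching the given combinatorial isomorphism of components. The resulting $\psi_*$ preserves the Mayer--Vietoris decomposition by \eqref{eq:same submodule}, fixes each peripheral class around points of $\PP$ because $\psi_{|\PP}=\text{Id}$, and, by \eqref{eq:compatibility for H_1 marking}, restricts on each factor to an automorphism of $(W_v,<,>_v,\psi_v)$. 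Invoking the classical surjectivity of the mapping class group of a pointed surface onto the group of symplectic automorphisms of its homology fixing every peripheral class (a pointed refinement of Dehn--Magnus--Burau), we can find, in each component, a mapping class supported there whose action on homology inverts that of $\psi_*$. Composing $\psi$ with these corrections produces the desired element of $\mathcal{I}_{\Sigma,\PP}$ carrying $c$ to $c'$. The inductive step for $k\geq 2$ curves proceeds by picking a leaf vertex $v$ of $G$, applying the base case to its unique incident curve to align $c$ and $c'$ at that curve, and then appealing to the product decomposition of Torelli groups established in Proposition \ref{p:partial norm torelli} (Lemma \ref{l:torelli preserving scs}) to apply the inductive hypothesis within a single component containing strictly fewer curves.

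\textbf{Realization.} Given an abstract $(\Sigma,\PP)$-homologically marked labelled tree $(G,m)$, I would first construct any curve system $c_0\subset\Sigma\setminus\PP$ whose underlying $\PP$-labelled tree is isomorphic to $G$; existence of such a $c_0$ is purely topological and follows from cutting $\Sigma$ into pieces of prescribed genera with prescribed numbers of marked points. The homological marking $m_0$ that $c_0$ canonically carries may differ from the given $m$, but both mark the same underlying tree, so they differ by an automorphism of the homologically labelled tree. Applying the sufficiency direction with $c=c'=c_0$ but with two different markings (which is precisely the mechanism described above, realizing homology automorphisms via mapping classes supported in each component) adjusts $c_0$ to a new curve system $c$ realizing $m$.

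\textbf{Main obstacle.} The heart of the argument is the correction step in the sufficiency direction: upgrading a homeomorphism sending $c$ to $c'$ into an element of the \emph{pointed} Torelli group. This relies on the refined surjectivity statement that the mapping class group of a pointed surface $(\Sigma_v,\PP_v)$ maps onto the group of symplectic automorphisms of $H_1(\Sigma_v\setminus\PP_v)$ fixing peripheral classes; one has to be careful that the corrections performed in distinct components can be glued without reintroducing twisting along curves of $c'$, which is exactly what conditions \eqref{eq:same submodule} and \eqref{eq:compatibility for H_1 marking} encode.
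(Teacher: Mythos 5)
Your proposal is essentially correct and, for the necessity and sufficiency directions, follows the paper's route: the key tool in both is Lemma \ref{l:image of mod in homology automorphisms}, which you call a ``pointed refinement of Dehn--Magnus--Burau'' and which says that any automorphism of $H_1(\Sigma_v\setminus\PP_v)$ preserving the intersection form and fixing every peripheral class is induced by a homeomorphism that is the identity near the marked points.

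Two remarks on where your argument diverges from the paper. First, the induction on the number $k$ of curves in the sufficiency step is unnecessary overhead: once a homeomorphism $k$ with $k(c)=c'$ realizing the graph isomorphism is chosen, the paper uses the $\varphi_v$'s to lift $k_*$ to a family of automorphisms $M_v$ of the factors $H_1(\hat\Sigma_v\setminus\hat\PP_v)$ and then constructs \emph{simultaneously} on all components homeomorphisms $f_v$ with $(f_v)_*=M_v^{-1}$ (each the identity near $\hat\PP_v$, so they glue); there is no need to cut off a leaf vertex and recurse. Your scheme works but forces you to re-verify conditions \eqref{eq:same submodule}--\eqref{eq:compatibility for H_1 marking} for the restricted data on each smaller piece, which the paper avoids. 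Second, your realization argument is a genuinely different route from the paper's. The paper builds the curve system $c$ \emph{directly} by choosing, for each vertex $v$, a homological identification of $W_v$ with a concrete pointed surface via Corollary \ref{c: marking homologically smooth compact curves} and then gluing along boundary circles prescribed by the tree; by construction the resulting $c$ carries the given marking. You instead start from an arbitrary $c_0$ of the right combinatorial type and claim the two markings ``differ by an automorphism of the homologically labelled tree.'' As stated this glosses over the real content: what you need is an automorphism $M$ of $H_1(\Sigma\setminus\PP)$ preserving the intersection form and fixing every peripheral class that carries each submodule $m_0^{-1}(W_{0,v})$ onto the corresponding $m^{-1}(W_v)$; producing such an $M$ requires first choosing, vertex by vertex, isomorphisms between the abstract modules $W_v$ and the concrete ones $W_{0,v}$ that are compatible with the intersection forms, peripheral classes and edge relations, and only then invoking Lemma \ref{l:image of mod in homology automorphisms}. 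This can be done, but it is a nontrivial intermediate step that your write-up assumes rather than proves; the paper's direct gluing construction sidesteps it entirely.
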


\begin{proof}
Suppose first that there exists an element in the pointed  Torelli group $h:(\Sigma,\PP)\rightarrow(\Sigma,\PP)$ satisfying $h(c)=c'$. Then \(h\) induces a natural isomorphism from the \(\PP\)-labelled graph \(G\) associated to  \(c\) and the one \(G'\) associated to \( c'\). We abusively denote by \( \phi : V(G) \rightarrow V(G') \) and \( \phi : E(G)\rightarrow E(G')\) the induced bijections. Remark that the separating normalization and the full normalization coincide because the systems are of separating curves. Denoting by \((\hat{\Sigma}, \hat{\PP}) \) and \((\hat{\Sigma}', \hat{\PP}')\) the two models of normalization associated to the respective systems of curves \(c\) and \(c'\), we observe that the homeomorphism \(h\) also induces a homeomorphism \( \hat{h} : (\hat{\Sigma}, \hat{\PP}) \rightarrow (\hat{\Sigma}, \hat{\PP}') \), and in particular isomorphisms \( \varphi_ v : H_1 (\hat{\Sigma}_v\setminus \hat{\PP} _v, \mathbb Z) \rightarrow H_1 (\hat{\Sigma}_{\phi(v)} ' \setminus \hat{\PP}'_{\phi(v)}, \mathbb Z)  \) that preserve intersection forms and satisfies \( \varphi _ v \circ \psi_v = \psi _{\phi(v)} \circ \phi_{L(v)}\). 
If we denote by \(\iota_v\) (resp. \(\iota '_v\)) the inclusion \( \hat{\Sigma}_v \setminus \hat{\PP}_v\rightarrow \Sigma\setminus \PP\) (resp. \( \hat{\Sigma}_v' \setminus \hat{\PP}_v' \rightarrow \Sigma\setminus \PP\)), these isomorphisms satisfy \( (\iota_v ') _* \circ \varphi_v = (i_v )_*  \) and so promotes the isomorphism \(\phi \) of \(\PP\)-labelled graphs associated to \(c\) and \(c'\) to an isomorphim of homologically marked \( (\Sigma, \PP)\)-labelled graphs. 



Conversely, assume that we have an isomorphism  between the homologically marked \((\Sigma, \PP)\)-labelled graphs associated to \(c\) and \(c'\)  respectively. This means that we have an isomorphism of graph \(\phi : G \rightarrow G'\) together with a family of isomorphisms \( \varphi_ v : H_1 (\hat{\Sigma}_v\setminus \hat{\PP} _v, \mathbb Z) \rightarrow H_1 (\hat{\Sigma}_{\phi(v)} ' \setminus \hat{\PP}'_{\phi(v)}, \mathbb Z)  \) that preserve intersection forms and satisfy \( \varphi _ v \circ \psi_v = \psi _{\Phi(v)} \circ \phi_{L(v)}\) for each \(v\in V(G)\). Recording that \(\phi\) induces a bijection of the set of components of \(c\) to the one of \(c'\) (since those sets identify with the set of vertices of \(G\) and \(G'\) respectively), one can choose 
some  homeomorphism  $k:(\Sigma,\PP)\rightarrow (\Sigma,\PP)$ such that $k(c_i)=c_i'$ that induces the isomorphism $\phi$ at the level of labelled trees.
If $k_*$ is the identity at the level of $H_1(\Sigma\setminus\PP)$ we are done. We claim that, up to precomposing $k$ by some homeomorphism $f:(\Sigma,\PP)\rightarrow (\Sigma,\PP)$ that induces an automorphism of the homologically marked $(\Sigma,\PP)$-labelled tree $G$ induced by  $c$ we can suppose that $h=k\circ f$ acts trivially on $H_1(\Sigma\setminus\PP)$.

In order to construct $f$ we will construct  separately homeomorphisms \(\hat{f} _v \) of \(\hat{\Sigma}_v\) that are the identity on a neighborhood of \(\PP_v\). Such a family induces naturally a homeomorphism \(f\) of \(\Sigma\) that is the identity on the neighborhood of the union of \(\PP\) and \(c\). First remark that we can use the family of isomorphisms  $\{\varphi_v\}$ between the homologies of the components to lift the action of $k_*$ to a family of automorphisms $\{M_v\}$ where $M_v:H_1(\hat{\Sigma}_v\setminus \hat{\PP} _v)\rightarrow H_1(\hat{\Sigma}_v\setminus \hat{\PP}_v)$. If  the family of homeomorphisms \(\{f_v\}_{v\in V(G)} \)  satisfies $(f_v)_*=M^{-1}_v$ for each \(v\in V(G)\), then the homeomorphism \(f\) satisfies $(k \circ f)_*=id$. The following lemma guarantees the existence of the $f_v$'s.


\begin{lemma}\label{l:image of mod in homology automorphisms} Let $\PP\subset \Sigma$ be a finite set in a compact orientable surface $\Sigma$. An automorphism $M:H_1(\Sigma\setminus\PP)\rightarrow H_1(\Sigma\setminus \PP)$ satisfies $M=f_*$ for some orientation preserving diffeomorphism  $f:(\Sigma,\PP)\rightarrow(\Sigma,\PP)$ which is the identity in a neighborhood of $\PP$ if and only if $M$ preserves the intersection form and fixes every peripheral class.
\end{lemma}

\begin{proof}
The kernel $K$ of the intersection form in $H_1(\Sigma\setminus \PP)$ is precisely the peripheral module $\Pi=\Pi_R$. The quotient by $K$ carries an intersection product and it is isomorphic to $H_1(\Sigma)$ endowed with its usual intersection product, a symplectic group. The map $M$ induces an automorphism $\overline{M}$ on $H_1(\Sigma)$, a symplectic group. It is known that $\overline{M}$ is induced by some diffeomorphism  $\overline{f}:\Sigma\rightarrow \Sigma$.  Up to applying an isotopy we can suppose that $\overline{f}$ fixes every point in $\PP$. The composition $M\circ (\overline{f}_*)^{-1}$ induces a homomorphism that fixes every peripheral class and induces the identity on $H_1(\Sigma)$. Therefore it can be written as $\text{Id}+\psi$ where $\psi:H_1(\Sigma\setminus \PP)\rightarrow H_1(\Sigma\setminus \PP)$ is a homomorphism with image in \(\Pi\) and whose kernel contains \(\Pi\). On the other hand $\text{Id}+\psi$ can be realized by using the pure braid group acting on the marked points. 


Indeed, let \(\{f_t \}_{t\in [0,1]}\) be an isotopy of \(\Sigma\) from the identity to a map \(f=f_1\) that fixes each point of \(\PP\). For each  \(p\in \PP\), denote by \( \beta_ p \) the homology class of the closed curve \(\{f_t (p)\}_{t\in [0, 1]}\). Then the action of \( f \) on \(H_1 (\Sigma \setminus \PP) \) is given by \(\text{id} +\psi \) with 
\[ \psi (\gamma ) = \sum _p (\beta_p \cdot \gamma) \pi_p  \text{ for every } \gamma \in H_1(\Sigma\setminus \PP).\] 
Each homeomorphism of \(\Sigma\) that fixes each point of \(\PP\) is isotopic among isotopies fixing \(\PP\) to a homeomorphism which is the identity on a neighborhood of \(\PP\) so we are done. 
\end{proof}
To prove the last statement of Proposition \ref{p:Torelli equivalence sesc systems} we remark that for each vertex $v$ of a $(\Sigma,\PP)$-marking of a labeled tree we choose an isomporphism between the module $W_v$ and the homology of a marked surface $(\Sigma_v,\PP_v)$ that preserves the intersection forms and the correspondence of half legs and marked points. The boundary points corresponding to edges are blown up to form boundary circles. Gluing these circles according to the information of the tree provides a surface with marked points homeomorphic to $(\Sigma,\PP)$ endowed with a family of separating simple closed curves $c$, each obtained from an edge of the graph. By construction the $(\Sigma,\PP)$-homologically marked labeled tree associated to $c$ is isomorphic  to the initially given tree. 
\end{proof}

We finish this section with a useful tool enabling to lift points in  \(\overline{\mathcal M}^{sep}_{g,n}\) to elements of the Torelli space \(\mnc^{sep}_{\Sigma, \PP}\). 

Given \( (C,Q,[f])\in \mrs _{\Sigma, \PP}\), the marking \(f\) induces an isomorphism \( f_* : H_1 (\Sigma\setminus\PP) \rightarrow H_1(C\setminus\PP)\) that preserves the intersection form and maps peripheral class \(\pi_r\in H_1( \Sigma\setminus \PP)\) around a point \(r\in\PP\) to the peripheral class in \(H_1(C\setminus Q)\) around the corresponding point \(q_r\).

\begin{corollary} \label{c: marking homologically smooth compact curves}
    Given any point \((C,Q)\in \mathcal M_{g,n}\), and a linear isomorphism \( m: H_1 (\Sigma\setminus\PP) \rightarrow H_1(C\setminus Q)\) that maps the intersection form on \(H_1 (\Sigma\setminus\PP)\) to the one on \(H_1(C\setminus Q)\), and  the peripheral class \(\pi_r\in H_1( \Sigma\setminus \PP)\) around \(r\in\PP\) to the peripheral class \(\pi_{q_r}\in H_1(C\setminus Q)\) around the corresponding element \(q_r\in Q\), there exists a unique element \((C, Q, [f]) \in \mrs_{\Sigma, \PP}\) such that \(m = f_{*}\). \end{corollary}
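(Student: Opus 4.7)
The plan is to derive this as a direct consequence of Lemma \ref{l:image of mod in homology automorphisms}, together with the definition of the pointed Torelli space as a quotient of Teichm\"uller space by $\mathcal{I}_{\Sigma,\PP}$.

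For existence, I would first pick any marking $f_0 : (\Sigma,\PP) \rightarrow (C,Q)$, which exists since the forgetful map $\mathcal{T}_{\Sigma,\PP} \rightarrow \mathcal{M}_{g,n}$ is surjective on isomorphism classes. Then I would consider the composition
\[
M := (f_0)_*^{-1} \circ m \colon H_1(\Sigma\setminus\PP,\mathbb Z) \rightarrow H_1(\Sigma\setminus\PP,\mathbb Z).
\]
This is an automorphism because both $(f_0)_*$ and $m$ are isomorphisms preserving the intersection form, so $M$ preserves it as well. Moreover, since both $(f_0)_*$ and $m$ send $\pi_r$ to the peripheral class $\pi_{q_r}$, the automorphism $M$ fixes each peripheral class $\pi_r$. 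By Lemma \ref{l:image of mod in homology automorphisms} applied to $M$, there exists an orientation preserving diffeomorphism $\phi : (\Sigma,\PP) \rightarrow (\Sigma,\PP)$, which is the identity near $\PP$, with $\phi_* = M$. Setting $f := f_0 \circ \phi$, one gets an orientation preserving homeomorphism $f : (\Sigma,\PP) \rightarrow (C,Q)$ satisfying $f_* = (f_0)_* \circ \phi_* = (f_0)_* \circ M = m$, and thus $(C,Q,[f]) \in \mathcal{S}_{\Sigma,\PP}$ is the desired point.

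For uniqueness, suppose $(C,Q,[f])$ and $(C,Q,[f'])$ both satisfy $f_* = f'_* = m$ in $\mathcal{S}_{\Sigma,\PP}$. Then $g := (f')^{-1} \circ f : (\Sigma,\PP) \rightarrow (\Sigma,\PP)$ is an orientation preserving homeomorphism fixing $\PP$ pointwise, whose induced map on $H_1(\Sigma\setminus\PP,\mathbb Z)$ is
\[
g_* = (f')_*^{-1} \circ f_* = m^{-1} \circ m = \mathrm{id}.
\]
By definition of the pointed Torelli group \eqref{eq:pointedtorelligroup}, the isotopy class $[g]$ lies in $\mathcal{I}_{\Sigma,\PP}$. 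Since $f = f' \circ g$, the markings $f$ and $f'$ define the same point in the quotient $\mathcal{S}_{\Sigma,\PP} = \mathcal{T}_{\Sigma,\PP}/\mathcal{I}_{\Sigma,\PP}$. This proves the uniqueness assertion.

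There is no serious obstacle here: the proof is essentially an unraveling of definitions, with Lemma \ref{l:image of mod in homology automorphisms} providing the geometric realization of any $H_1$-automorphism fitting the hypothesis. The only mildly delicate point is to verify that the $H_1$-automorphism $M$ really does fix the peripheral classes, which comes directly from the hypothesis that both $(f_0)_*$ and $m$ send $\pi_r$ to $\pi_{q_r}$.
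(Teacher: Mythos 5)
Your proof is correct and is exactly the argument the paper has in mind: the result is labelled a corollary precisely because it is a direct unraveling of Lemma \ref{l:image of mod in homology automorphisms} together with the definitions of Teichm\"uller and pointed Torelli space, which is what you do. The only point worth making explicit is that the initial marking $f_0$ should be chosen to respect the labelling (i.e.\ $f_0(r)=q_r$ for each $r\in\PP$), so that $(f_0)_*(\pi_r)=\pi_{q_r}$ and hence $M$ really fixes each peripheral class rather than merely permuting them; such an $f_0$ exists by transitivity of the mapping class group on ordered tuples of marked points.
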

   
   \begin{definition}
  A point in $\mrs_{\Sigma,\PP}$ will sometimes be denoted by the triple $(C,Q,m)$ where $(C,Q)$ is a pointed Riemann surface of the same type as $(\Sigma,\PP)$ and  \( m: H_1 (\Sigma\setminus\PP) \rightarrow H_1(C\setminus Q)\) is an isomorphism as in Corollary \ref{c: marking homologically compact type curves} (the correspondence between the points of $\PP$ and $Q$ is given by the isomorphism $m$).
   \end{definition}
   For boundary points in $\overline{\mathcal{M}}^{sep}_{g,n}$ the analogous statement reads
\begin{corollary}\label{c: marking homologically compact type curves}
    Given a point  \((C,Q)\in \overline{\mathcal M}^{sep}_{g,n}\)  and a \((\Sigma,\PP)\)-homological marking $m$ of its homologically labelled $\PP$-labelled tree $G$ (as in \eqref{eq:homological marking}), there exists a unique $(C,Q,[f])\in\mnc^{sep}_{\Sigma,\PP}$ such that the homological $(\Sigma,\PP)$-marking induced by the collapse map $f:(\Sigma,\PP)\rightarrow (C,Q)$ is isomorphic to $(G,m)$.
\end{corollary}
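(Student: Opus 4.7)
The strategy is to combine the smooth-curve case (Corollary \ref{c: marking homologically smooth compact curves}) on each component of the separating normalization of $C$ with the product decomposition of boundary strata from Proposition \ref{p:partial norm torelli}, using Proposition \ref{p:Torelli equivalence sesc systems} to realize the combinatorics of $G$ as a geometric separating curve system on $(\Sigma,\PP)$.

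For existence, I would first apply the last assertion of Proposition \ref{p:Torelli equivalence sesc systems} to produce a separating curve system $c\subset\Sigma\setminus\PP$ whose associated $(\Sigma,\PP)$-homologically marked labelled tree is isomorphic to $(G,m)$. This supplies a bijection between the components $(\hat{\Sigma}^s_v,\hat{\PP}^s_v)$ of the separating normalization of $\Sigma$ along $c$ and the components $(\hat{C}^s_v,\hat{Q}^s_v)$ of the separating normalization of $C$, together with a matching of half-legs across each edge. Transporting the local piece of $m$ through this isomorphism yields, for each vertex $v$, an isomorphism
\[ m_v : H_1(\hat{\Sigma}^s_v\setminus\hat{\PP}^s_v) \longrightarrow H_1(\hat{C}^s_v\setminus\hat{Q}^s_v)\]
preserving intersection forms and peripheral classes. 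Corollary \ref{c: marking homologically smooth compact curves} then provides a unique class $[\hat f_v]\in\mathcal{S}_{\hat{\Sigma}^s_v,\hat{\PP}^s_v}$ realizing $m_v$. Choosing representatives $\hat f_v$ and gluing them along the boundary circles following the edge data of $G$ produces a collapse map $f:(\Sigma,\PP)\rightarrow(C,Q)$; the decomposition \eqref{eq:homology decomposisiton separating syst} shows that the homological $(\Sigma,\PP)$-marking induced by $f$ is isomorphic to $(G,m)$, as desired.

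For uniqueness, suppose $(C,Q,[f])$ and $(C,Q,[f'])$ both induce the marking $(G,m)$. They lie in the same stratum $\mathcal{B}_{[c]}\subset\overline{\mathcal{S}}^{sep}_{\Sigma,\PP}$, and under the isomorphism \eqref{eq:partial norm Torelli} of Proposition \ref{p:partial norm torelli} they correspond to tuples in $\prod_{v}\mathcal{S}_{\hat{\Sigma}^s_v,\hat{\PP}^s_v}$. The hypothesis that both induce $(G,m)$ forces the component markings $\hat f_v$ and $\hat f'_v$ to realize the same isomorphism $m_v$, so the uniqueness in Corollary \ref{c: marking homologically smooth compact curves} gives $[\hat f_v]=[\hat f'_v]$ for each $v$; therefore $[f]=[f']$ in $\overline{\mathcal{S}}^{sep}_{\Sigma,\PP}$.

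The main subtlety I expect is the bookkeeping for the gluing: any two ways of identifying the boundary circles of the $\hat{\Sigma}^s_v$ compatible with the edge data of $G$ differ by Dehn twists about the separating curves of $c$, and these lie in the pointed Torelli group $\mathcal{I}_{\Sigma,\PP}$, so the resulting class in $\overline{\mathcal{S}}^{sep}_{\Sigma,\PP}$ is independent of the choices made. It is essential here that all components of $c$ are separating; Dehn twists about non-separating curves are not in $\mathcal{I}_{\Sigma,\PP}$, which is precisely why one restricts to $\overline{\mathcal{M}}^{sep}_{g,n}$ for this lifting statement.
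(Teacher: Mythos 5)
Your argument matches the paper's proof: both use the last statement of Proposition \ref{p:Torelli equivalence sesc systems} to realize $(G,m)$ by a separating curve system $c$, apply Corollary \ref{c: marking homologically smooth compact curves} on each component of the separating normalization, and glue along the edge data; uniqueness likewise reduces to the smooth-curve statement per component. Your remark that the gluing is well-defined up to Dehn twists about separating curves (which lie in $\mathcal{I}_{\Sigma,\PP}$) is a helpful clarification of a point the paper leaves implicit.
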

\begin{proof}
 To prove existence of the collapse map $f$, use the last statement of Proposition \ref{p:Torelli equivalence sesc systems} to produce a separating curve system $c\subset\Sigma\setminus\PP$ that induces a \((\Sigma,\PP)\)-homologically marked tree $(G',m')$ that is isomorphic to the given $(G,m)$ for $(C,Q)$. A choice of isomorphism between $(G,m)$ and $(G',m')$ can be realized by a topological marking in each vertex separately by using Corollary \ref{c: marking homologically smooth compact curves}. Moreover, the union of the markings of the parts induces a well defined collapse map (that collapses precisely $c$). 
 Given two collapse maps $f,f':(\Sigma,\PP)\rightarrow (C,Q)$ inducing isomorphic $(\Sigma,\PP)$-homologically marked trees, Proposition \ref{p:Torelli equivalence sesc systems} proves that they are equivalent by an element of the pointed Torelli group $\mathcal{I}_{\Sigma,\PP}$, thus proving that they define the same point in $\mnc^{sep}_{\Sigma,\PP}$. 
\end{proof}
A point in $\mnc^{sep}_{\Sigma,\PP}$ will be defined by simply presenting the $(\Sigma,\PP)$-homologically marked labeled tree of the $\PP$-labeled tree associated to the underlying curve that has in each vertex a submodule of $H_1(\Sigma\setminus \PP)$.






\subsection{Spherical boundary components and modules} \label{ss: E-modules}
In this section we introduce a family of boundary strata of $\mnc^{sep}_{\Sigma,\PP}$ that are especially useful to prove the inductive step in Theorems \ref{t:Rgn} and \ref{t:Cgn}. They are associated to separating simple closed curves in $\Sigma\setminus \PP$ that separate the surface in part of genus zero with at least two marked points and another of genus $g$ with the rest of marked points. We will call such a boundary stratum a spherical boundary stratum. Let us analyze the $(\Sigma,\PP)$-marking associated to such strata. 

Given a subset $E\subset \PP$ we denote $\Pi_E\subset H_1(\Sigma\setminus \PP)$  the submodule generated by the peripherals $\{\pi_e : e\in E\}$ around points of $E$. We have 
\begin{equation}
\label{eq:peripheral ex seq}
0\rightarrow\Pi_E\rightarrow H_1(\Sigma\setminus\PP) 
\rightarrow \frac{H_1(\Sigma\setminus\PP)}{\Pi_E}\cong H_1(\Sigma\setminus(\PP\setminus E))\rightarrow 0 .
\end{equation}

\begin{definition}\label{d: E-module} An \( E\)-module is a submodule \( M \subset H_ 1 (\Sigma\setminus\PP, \mathbb Z) \) such that \begin{equation}\label{eq:Emodule} H_1 (\Sigma\setminus\PP, \mathbb Z) = M+ \Pi _E\text{ and }M \cap \Pi_E = \mathbb Z \pi_E =\mathbb Z \pi_{P\setminus E}\text{ where }\pi_E := \sum_{e\in E} \pi_e.\end{equation} 
\end{definition}
Whenever we take a simple closed curve $c$ in \(\Sigma\setminus\PP\) that separates the surface in a disc marked by the points of \(E\), and a genus \(g\)-domain marked with the points of \( \PP \setminus E\): the homology of this latter defines a \(E\)-module $M_c\subset H_1 (\Sigma\setminus\PP, \mathbb Z)$.  By Proposition \ref{p:Torelli equivalence sesc systems} applied to this special case we deduce that two separating simple closed curves $c,c'$ in $\Sigma\setminus\PP$ defning the $E$-modules $M_c$ and $M_{c'}$ respectively are pointed Torelli equivalent if and only if $M_c=M_{c'}$. In other words, we can associate the module $M_c$ to the  vertex associated to the class of $c$ in  $\mathcal{C}^{sep}(\Sigma, \PP)/\mathcal{I}_{\Sigma,\PP}$, and talk about the boundary stratum $\mathcal{B}_{M_c}=\mathcal{B}_{c}$ of  $\mnc^{sep}_{\Sigma,\PP}$  corresponding to the module $M_c$. In fact, by Corollary \ref{c: marking homologically compact type curves}, any $M$ satisfying \eqref{eq:Emodule} can be written as $M=M_c$ for some simple closed curve $c$ that separates the surface in a component of genus zero (the spherical component) containing the points of $E$ and another component of genus $g$ with the rest of points.

\begin{definition}
Given a subset $E\subset \PP$, with at least two points and  an $E$-module $M\subset H_1(\Sigma\setminus\PP)$ we define $\mathcal{B}_M\subset \mnc^{sep}_{\Sigma,\PP}$ as the spherical boundary stratum formed by marked stable curves that have $M$ and $\Pi_E$ as the only factors in their associated $(\Sigma,\PP)$-homologically marked labelled tree. We will sometimes refer to $\mathcal{B}_M$ as an
    $E$-boundary component.
\end{definition}

For a $E$-boudary     component characterized by  an $E$-module \(M\subset H_1(\Sigma\setminus\PP)\) the associated diagram has two vertices one of which has genus zero, $|E|$ legs and $\Pi_E$ as the module marking the part,  and the other has $|\PP\setminus E|$ legs and genus $g$ with homology marked by $M$.  See a case with $E=\{r_1,r_2,r_3\}$ and $n=5$ with the notations introduced: 
\begin{center}
\begin{tikzpicture}[node distance = {20mm},thick, main/.style = {draw,circle}]
\node[main] (1) [label=below:$0$]{$\Pi_E$};
\node[main] (2) [right of = 1][label=below:$g$]{$M$};
\node (3) [above left of = 1]{$\pi_{r_1}$};
\node (4) [below left of =1]{$\pi_{r_3}$};
\node (5) [left of =1]{$\pi_{r_2}$};
\node (6) [above right of =2]{$\pi_{r_4}$};
\node (7) [below right of =2]{$\pi_{r_5}$};

\draw (2) -- (7);
\draw (2) -- (6);
\draw (1) -- (2);
\draw (1) -- (3);
\draw (1) -- (4);
\draw (1) -- (5);

\end{tikzpicture}
\end{center}
\begin{definition} The quotient \(E\)-module $\overline{M}$ associated to an $E$-module \(M\) is by definition its quotient by \( \mathbb Z \pi_E=\mathbb Z \pi_{\PP\setminus E}\), it fits in \(H_1 (\Sigma\setminus\PP) /\mathbb Z \pi_E\)
and is naturally isomorphic to \( H_1 (\Sigma\setminus(\PP\setminus E))\).
\end{definition}

One can retrieve the \(E\)-module \(M\) from its associated quotient \(E\)-module \(\overline{M}\): \(M\) is the preimage of \(\overline{M}\) by the quotient map \( H_1 (\Sigma\setminus\PP)\rightarrow H_1 (\Sigma\setminus\PP)/ \mathbb Z \pi _E\). 

Thanks to the exact sequence \eqref{eq:peripheral ex seq}, the set of \(E\)-modules (or equivalently of quotient \(E\)-modules) has the structure of an affine space directed by the module 
\[ \text{Hom} \big( H_1 (\Sigma\setminus(\PP\setminus E));\frac{ \Pi_E}{ \mathbb Z \pi _E}\big) \]
\section{Isoperiodic foliations}
\subsection{The Hodge bundle}
 Let us denote by $$\Omega\overline{\mathcal{M}}_{g,n}\rightarrow\overline{\mathcal{M}}_{g,n}$$ the bundle whose fiber over a point $(C,Q)$ is the vector space of meromorphic stable forms on the marked stable curve $(C,Q)$ having at worst simple poles on the $n$ distinct ordered points of $Q$. In other words the space of meromorphic sections of the twisted line bundle $K_C(-(q_1+\ldots+q_{n}))$. A point in $\Omega\overline{\mathcal{M}}_{g,n}$ will be denoted by $(C,Q,\omega)$. When $(\omega)_{\infty}=q_1+\ldots+q_{n_1}$ we will omit $Q$ and write  $(C,\omega)$.

\subsection{The period map}
The pull back of the bundle $\Omega\overline{\mathcal{M}}_{g,n}$  by the (branched) cover $\overline{\mathcal{S}}_{\Sigma,\PP}\rightarrow \overline{\mathcal{M}}_{g,n}$ will be denoted  $$\Omega\overline{\mathcal{S}}_{\Sigma,\PP}\rightarrow\overline{\mathcal{S}}_{\Sigma,\PP} .$$
 A point in $\Omega\overline{\mathcal{S}}_{\Sigma,\PP}$ will be denoted by $(C,Q,[f],\omega)$ where $[f]$ denotes the pointed Torelli class of the marking $f$. 
 
 \begin{definition}
Given a subset $\mathcal{K}$ of $\overline{\mathcal{S}}_{\Sigma,\PP}$ or any of its  quotients, the set $\Omega^{*}\mathcal{K}$ denotes the subset of forms of $\Omega\mathcal{K}$ that have isolated zeros (i.e. no zero components) and $\Omega_{0}^{*}\mathcal{K}$ the subset of  $\Omega^{*}\mathcal{K}$ having zero residues at non-separating nodes.  
\end{definition}
 
 We want to describe the set of forms in $\Omega\overline{\mathcal{S}}_{\Sigma,\PP}$ for which all integrals on classes of $H_1(\Sigma\setminus\PP)$ are well defined complex numbers that coincide with those of some form on a \textit{smooth} curve. A problem that arises is that stable forms can have non-zero residues at nodes, and this does not allow to integrate any path passing through the node. We will show that this problem can be by-passed by taking representatives in homology classes that avoid the separating nodes, but when the node is non-separating there are classes in $H_{1}(\Sigma\setminus\PP)$ that do not admit a representative that does not intersect the node. For instance, take the class of a closed curve with basepoint at the node, starting on one branch and ending at the other. We therefore restrict our attention to a union of strata where we can integrate on some representative in every class of $H_1(\Sigma\setminus\PP)$, namely $$\Omega_0\overline{\mathcal{S}}_{\Sigma,\PP}=\{(C,Q,[f],\omega)\in\Omega\overline{\mathcal{S}}_{\Sigma,\PP}: \text{Res}_q(\omega)=0\quad\forall \text{ non-separating node } q \text{ of } C\}.$$
Unfortunately this set is neither open (its interior is formed by stable forms on curves having only separating nodes) nor closed (it is dense) in $\Omega\overline{\mathcal{S}}_{\Sigma,\PP}$.  Let us prove that we can extend the period map to it. 

Remark that if $c$ is a system of curves, each of whose elements is a \emph{non-separating curve} in $\Sigma\setminus \PP$ we can extend the definition of the map $\per_{\Sigma,\PP}$ to $\Omega_0\mathcal{B}_{[c]}$ with exactly the same definition: 
$$H_1(\Sigma\setminus\PP)\ni[\gamma]\mapsto\int_{f(\gamma)}\omega=:\percompact(C,Q,[f],\omega)([\gamma])\in\mathbb{C}
$$
When $c\subset \Sigma\setminus \PP$ is a curve system that contains separating curves, write $c=c^{s}\sqcup c^{ns} $   where $c^{s}$ correspond to the separating curves of $c$,  and let $(G,m)$ be the $(\Sigma,\PP)$- marking of a labelled tree corresponding to any element in the boundary stratum $\mathcal{B}_{[c]}\subset \mnc_{\Sigma,\PP}$ corresponding to the Torelli class of $c$.   The isomorphism of Proposition \ref{p:partial norm torelli} allows to define a separating normalization map at the level of stable forms by restriction of the forms to the components
\begin{equation}\label{eq:normaliza separantes}\Omega_0\mathcal B_{[c]}\rightarrow\prod_{v\in V(G)}\Omega_0\mathcal{B}_{[c^{ns}_v]}\subset\prod_{v\in V(G)}\Omega_0\overline{\mathcal{S}}_{\hat{\Sigma}^{s}_v,\hat{\PP}^s_v}
    \end{equation}
 The map \eqref{eq:normaliza separantes} is not surjective (the zero residue sum condition at nodes imposes a residue relation at pairs of poles of different components of the separating normalization that correspond to a node).

   On the $v$'th factor of the target space, the associated system of curves has only non-separating curves, and we have a well defined period map $\percompact_{\hat{\Sigma}_v^s,\hat{\PP}^s_v}$ with image in  $\text{Hom}(H_1(\hat{\Sigma}^s_v\setminus\hat{\PP}^s_v),\mathbb{C})$. Consider the map \begin{equation}
     \label{eq:sum periods of parts}   
\sum_{v\in V(G)}\percompact_{\hat{\Sigma}_v^s,\hat{R}^s_v}:\prod_{v\in V(G)}\Omega_0\mathcal{B}_{[c^{ns}_v]}\rightarrow\text{Hom}\left(\bigoplus_{v\in V(G)} H_1(\hat{\Sigma}^s_v\setminus\hat{\PP}^s_v),\mathbb{C}\right).\end{equation}

    We claim that the composition of the map \eqref{eq:normaliza separantes} with the map \eqref{eq:sum periods of parts} induces an element in $\text{Hom}(H_1(\Sigma\setminus\PP),\mathbb{C}$). 
    Indeed, if $(C,Q,[f],\omega)\in\Omega_0\mathcal{B}_{[c]}$ we know that at each separating node, the sum of the residues of the two branches is zero. Denote by $(\hat{C}^s_v,\hat{Q}^s_v,[\hat{f}^s_v],\hat{\omega}^s_v)$ the $v$'th component of the image of that point by the map \eqref{eq:normaliza separantes} in $\Omega_0\mathcal{B}_{[c^{ns}_v]}$ and $$p_v=\percompact_{\hat{\Sigma}^s_v,\hat{\PP}^s_v}(\hat{C}^s_v,\hat{Q}^s_v,[\hat{f}^s_v],\hat{\omega}^s_v).$$  To each separating curve $c^s_j\in c^s$ there corresponds a pair of points $n_{j1}$ and $n_{j2}$  in $\hat{R}^s\subset \hat{\Sigma}^s$ lying in different connected components of $\hat{\Sigma}^s$. Let $v_{j1}$ (resp. $v_{j2}$) be such that  $n_{j1}\in \hat{\Sigma}^s_{v_{j1}}$ (resp. $n_{j2}\in \hat{\Sigma}^s_{v_{j2}}$). The residue condition can be read as follows: for the two peripherals $\pi_{n_{j1}}\in H_1(\hat{\Sigma}^s_{v_{j1}}\setminus \hat{R}^s_{v_{j1}})$ and $\pi_{n_{j2}}\in H_1(\hat{\Sigma}^s_{v_{j2}}\setminus \hat{R}^s_{v_{j2}})$ we have 
    $$(\sum_{v\in V(G)}p_v)(\pi_{n_{j1}}+\pi_{n_{j2}})=p_{v_{j1}}(\pi_{n_{j1}})+p_{v_{j2}}(\pi_{n_{j2}})=\int_{\hat{f}^s_{v_{j1}*}(\pi_{n_{j1}})}\hat{\omega}^s_{v_{j1}}+\int_{\hat{f}^s_{v_{j2}*}(\pi_{n_{j2}})}\hat{\omega}^s_{v_{j2}}=0$$
    Therefore, supposing there are $k\geq 1$ separating curves in $c$, the map $\sum_v p_v$ induces a homomorphism on $$\frac{\oplus H_1(\hat{\Sigma}^s_v\setminus\hat{R}^s_v)}{\{\pi_{n_{j1}}+\pi_{n_{j2}}=0: j=1,\ldots,k\}}\rightarrow\mathbb{C}$$
   Its composition with the isomorphism $m$ determined by the $(\Sigma,\PP)$ -marking (defined in \eqref{eq:homology decomposisiton separating syst} and which is independent of the point of $\mathcal{B}_{[c]}$) gives the desired period homomorphism $$p=:\percompact_{\Sigma,\PP}(C,Q,[f],\omega): H_1(\Sigma\setminus \PP)\rightarrow \mathbb{C}$$ that satisfies $p([\gamma])=\int_{f_{*}([\gamma])}\omega$ for all closed paths $\gamma$ in $\Sigma\setminus \{c^s,\PP\}$.

\begin{definition}
The period map on $\Omega_0\overline{\mathcal{S}}_{\Sigma,\PP}$ is the map $$\percompact_{\Sigma,\PP}:\Omega_0\overline{\mathcal{S}}_{\Sigma,\PP}\rightarrow \text{Hom}(H_1(\Sigma\setminus\PP);\mathbb{C})$$
sending each point to its period homomorphism. 
\end{definition}

The period map $\percompact_{\Sigma,\PP}$ is globally continuous, and holomorphic in restriction to any stratum of the boundary stratification of $\Omega_0\overline{\mathcal{S}}_{\Sigma,\PP}$.

In the sequel, we will also use the following notation.

\begin{definition} Given  $p\in\text{Hom}(H_1(\Sigma\setminus\PP);\mathbb{C})$ and a subset $\mathcal{K}\subset \Omega_0\overline{\mathcal{S}}_{\Sigma,\PP}$ we define $$\mathcal{K}(p)=\{(C,Q, m,\omega)\in\mathcal{K}: \percompact_{\Sigma,\PP}(C,Q, m,\omega)=p\},$$ i.e. the set of elements in $\mathcal{K}$ having periods $p$. 
\end{definition}

\subsection{Local description of isoperiodic sets}\label{ss:local description} We recall two results proven in the Appendix of \cite{CD} that describe the local properties of a fiber of the period map at a point with no zero components: 

\begin{theorem}\label{t:local structure}
The local fiber of the period map in $\Omega_{0}\overline{\mathcal{S}}_{\Sigma,\PP}$  at a point $(C,Q,[f],\omega)\in \Omega^{*}_0\overline{\mathcal{S}}_{\Sigma,\PP}$ with no zero components and zero residues at non-separating nodes, projects to the orbifold chart of $\Omega\overline{\mathcal{M}}_{g,n}$ as a complex manifold transverse to every (smooth) component of the boundary divisor through the point. Therefore, it is  an abelian ramified cover of a normal crossing divisor in $(\mathbb{C}^{2g+n-3},0)$ having precisely one component of codimension one in each boundary component of codimension one of the ambient space through the point. \end{theorem}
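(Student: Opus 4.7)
The plan is to reduce the statement to a computation in local plumbing coordinates, with the core input being that the period map $\per_{\Sigma,\PP}$ remains a holomorphic submersion onto $\mathrm{Hom}(H_1(\Sigma\setminus\PP),\mathbb{C})$ on every stratum of the boundary.

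\textbf{Local coordinates.} Classical plumbing produces an orbifold chart of $\overline{\mathcal M}_{g,n}$ around $(C,Q)$ of the form $\Delta^k\times U$, where $k$ is the number of nodes of $C$, $U$ is an open polydisc parametrizing deformations of the components of the normalization with their marked points, and the $i$-th coordinate $t_i$ is the plumbing parameter of the $i$-th node. The boundary divisor is then the normal crossing configuration $\bigcup_i\{t_i=0\}$. Pulling back the Hodge bundle gives a local orbifold chart of $\Omega\overline{\mathcal M}_{g,n}$ in which a fiber of $\per_{\Sigma,\PP}$ has expected complex dimension $2g+n-3$.

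\textbf{Submersion on every stratum.} The heart of the proof is to show that the restriction of $\per_{\Sigma,\PP}$ to every intersection $\bigcap_{i\in I}\{t_i=0\}$, $I\subset\{1,\dots,k\}$, remains a holomorphic submersion onto the full target $\mathrm{Hom}(H_1(\Sigma\setminus\PP),\mathbb{C})$. On the open stratum this is the classical submersion obtained from the Stokes / Serre duality pairing, already invoked earlier in the paper and in the appendix of \cite{CD}. On a boundary stratum I would factor the map through the partial normalization given by Proposition \ref{p:partial norm torelli}, expressing the stable form as a tuple of stable forms on pointed surfaces of smaller complexity, to which the smooth-case submersion applies componentwise. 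The zero-residue hypothesis at non-separating nodes is essential here: it makes the form extend holomorphically across each such node, so that cycles in $H_1(\Sigma\setminus\PP)$ crossing it have unambiguous periods compatible with the identification \eqref{eq:homology decomposisiton separating syst}. Reassembling the component submersions via this identification preserves surjectivity onto the full target. I expect this step to be the main obstacle, since one has to check that the contribution of each non-separating vanishing cycle adds exactly the degree of freedom needed for the reassembled target to be all of $\mathrm{Hom}(H_1(\Sigma\setminus\PP),\mathbb{C})$ and not a proper hyperplane.

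\textbf{Transversality and abelian ramified cover.} Given the submersion property on every stratum, transversality of the local fiber to each hyperplane $\{t_i=0\}$ is a standard linear-algebra consequence: if $\ker d\per_{\Sigma,\PP}$ were contained in $\{dt_i=0\}$, the restriction of $\per_{\Sigma,\PP}$ to $\{t_i=0\}$ would have the same kernel, forcing a rank drop, contrary to both maps being submersions onto the same target. The implicit function theorem then realizes the projected fiber as a smooth complex submanifold of dimension $2g+n-3$ meeting $\bigcup_i\{t_i=0\}$ in a normal crossing divisor with exactly one local component per ambient boundary component through the point. Finally, the fiber in $\Omega_0\overline{\mathcal S}_{\Sigma,\PP}$ is a topological cover of its image in the orbifold chart of $\Omega\overline{\mathcal M}_{g,n}$, whose monodromy around $\{t_i=0\}$ is the Dehn twist along the vanishing cycle of the $i$-th node. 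For separating nodes this twist lies in $\mathcal I_{\Sigma,\PP}$ and produces no monodromy; for non-separating ones it acts as a Picard--Lefschetz transvection on $H_1(\Sigma\setminus\PP)$, hence non-trivially, but all such transvections commute since the vanishing cycles are pairwise disjoint. This yields the announced abelian ramified cover structure.
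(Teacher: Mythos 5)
Your outline captures the right ingredients -- plumbing coordinates, submersivity of the period map on each stratum, transversality via linear algebra, and the abelian monodromy coming from Dehn twists along disjoint vanishing cycles -- but it has a genuine gap at its central step.

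The problem is the jump from ``$\per_{\Sigma,\PP}$ is a holomorphic submersion on every stratum'' to ``the implicit function theorem realizes the projected fiber as a smooth complex submanifold''. What you have established is that $\per_{\Sigma,\PP}$ is globally continuous and stratum-wise holomorphic and submersive. That is strictly weaker than being a holomorphic submersion on a full plumbing chart $\Delta^k\times U$, and it is not enough: a map that is a submersion on each stratum of a normal crossing stratification and merely continuous across the strata can have level sets with corners along the divisor, and no implicit function theorem applies. The content of the theorem is precisely that, in plumbing coordinates, the period map extends to a single holomorphic map near the boundary point, and this is what must be proved, not assumed. The analytic mechanism is: if $t_i$ is the plumbing parameter at a node with vanishing cycle $a_i$ and $\gamma$ is a cycle with $\gamma\cdot a_i\neq 0$, then $\int_\gamma\omega_t$ has the form $\mathrm{Res}_{a_i}(\omega)\,\log t_i + (\text{holomorphic in }t_i)$. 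The zero-residue condition at non-separating nodes kills the $\log$ term, which is exactly what makes the period map holomorphic across $\{t_i=0\}$ for those nodes; at separating nodes no condition is needed since every class of $H_1(\Sigma\setminus\PP)$ has a representative disjoint from the vanishing cycle, so no $\log$ term ever arises. You cite the zero-residue hypothesis, but only to justify that periods are unambiguously defined on the fixed nodal curve -- you do not use it to establish holomorphy of the period map in $t_i$, which is the step that makes the rest of the argument legitimate.

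Two smaller points. First, your transversality argument as written only treats transversality to individual hyperplanes $\{t_i=0\}$; transversality to each hyperplane does not by itself imply transversality to the deeper strata $\bigcap_{i\in I}\{t_i=0\}$ (think of $\{z_1=z_2\}$ meeting $\{z_1=0\}$, $\{z_2=0\}$ in $\mathbb{C}^3$). You do have submersivity on every stratum available, and the same linear-algebra argument applied stratum by stratum gives transversality to each $\bigcap_{i\in I}\{t_i=0\}$; you should say so explicitly. Second, for the abelian cover at the end you should check that the monodromy (a transvection $x\mapsto x+(x\cdot a_i)a_i$) actually preserves the fiber of the period map, i.e.\ preserves $p$. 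This holds exactly because $p(a_i)=0$, which is again the zero-residue condition at non-separating nodes -- so the same hypothesis that makes the period map holomorphic is the one that makes the monodromy act within the fiber.
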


\begin{corollary}\label{c:closure}
Let $(C,Q,m,\omega)\in\Omega^{*}_0\mrs_{\Sigma,\PP}$ be a form of period $p\in\text{Hom}(H_1(\Sigma\setminus\PP);\mathbb{C})$. Then, the closure of $\Omega\mrs_{\Sigma,\PP}(p)$ in $\Omega^{*}_{0}\mnc_{\Sigma,\PP}$ (resp. $\Omega^{*}_{0}\mnc^{c}_{\Sigma,\PP})$ is the set $\Omega^{*}_{0}\mnc_{\Sigma,\PP}(p)$ (resp. $\Omega^{*}_{0}\mnc^{c}_{\Sigma,\PP}(p)$). The sets $\Omega\mrs_{\Sigma,\PP}(p)$ and $\Omega^{*}_{0}\mnc_{\Sigma,\PP}(p)$ (resp. $\Omega^{*}_{0}\mnc^{c}_{\Sigma,\PP}(p)$) have the same number of connected components
\end{corollary}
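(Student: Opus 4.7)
The plan is to read off Corollary \ref{c:closure} directly from Theorem \ref{t:local structure}, which already encodes all the needed local geometry of the period map at a point with no zero components and vanishing residues at non-separating nodes.

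First I would establish the easy inclusion $\overline{\Omega\mrs_{\Sigma,\PP}(p)}\subseteq \Omega^{*}_{0}\mnc_{\Sigma,\PP}(p)$ (and the compact-type analogue) by invoking the continuity of $\percompact_{\Sigma,\PP}$ on $\Omega^{*}_{0}\mnc_{\Sigma,\PP}$ established in the previous subsection: $\Omega^{*}_{0}\mnc_{\Sigma,\PP}(p)=\percompact_{\Sigma,\PP}^{-1}(p)\cap \Omega^{*}_{0}\mnc_{\Sigma,\PP}$ is closed in $\Omega^{*}_{0}\mnc_{\Sigma,\PP}$.

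For the reverse inclusion, I would fix an arbitrary boundary point $x=(C,Q,m,\omega)\in \Omega^{*}_{0}\mnc_{\Sigma,\PP}(p)$ and apply Theorem \ref{t:local structure} to it. The local fiber $F_x$ of $\percompact_{\Sigma,\PP}$ at $x$ is, in an orbifold chart, an abelian ramified cover of a normal crossing divisor in $(\mathbb{C}^{2g+n-3},0)$, in particular a connected complex manifold. Its intersection with the open stratum $\Omega\mrs_{\Sigma,\PP}$ is the complement in $F_x$ of the preimage of the boundary divisor, hence an open, connected and dense subset of $F_x$ whose points lie in $\Omega\mrs_{\Sigma,\PP}(p)$. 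This gives $x\in\overline{\Omega\mrs_{\Sigma,\PP}(p)}$ and proves equality.

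For the count of connected components I would consider the natural map
\[ \pi_{0}\bigl(\Omega\mrs_{\Sigma,\PP}(p)\bigr)\longrightarrow \pi_{0}\bigl(\Omega^{*}_{0}\mnc_{\Sigma,\PP}(p)\bigr) \]
induced by inclusion, and show it is a bijection. The key technical claim is that for every connected component $K\subset \Omega^{*}_{0}\mnc_{\Sigma,\PP}(p)$, the intersection $K\cap \Omega\mrs_{\Sigma,\PP}(p)$ is nonempty and connected. Nonemptiness follows from the density established above. Connectedness is the main step: given two points $y,y'\in K\cap \Omega\mrs_{\Sigma,\PP}(p)$, one picks a path inside $K$ joining them (using local path-connectedness of the stratified space), covers it by finitely many local fibers $F_{x_i}$ at points $x_i$ on the boundary, and in each $F_{x_i}$ uses the fact that $F_{x_i}\cap \Omega\mrs_{\Sigma,\PP}$ is connected and dense in $F_{x_i}$ to perturb the path off the boundary. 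From this claim surjectivity is immediate (each $K$ receives some component) and injectivity follows because two components of $\Omega\mrs_{\Sigma,\PP}(p)$ mapping to the same $K$ would both sit inside the connected set $K\cap \Omega\mrs_{\Sigma,\PP}(p)$, forcing them to coincide. The compact-type variant is argued identically, since Theorem \ref{t:local structure} applies verbatim after replacing $\mnc_{\Sigma,\PP}$ by $\mnc^{sep}_{\Sigma,\PP}$.

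The only nontrivial point is the local deformation step, but it is routine once the transversality and the abelian ramified cover structure of $F_x$ from Theorem \ref{t:local structure} are in hand; no new input beyond that theorem is required.
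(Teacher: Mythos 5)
Your proposal is correct and follows essentially the same route as the paper: both inclusions of the closure identity are derived from Theorem \ref{t:local structure}, and the equality of $\pi_0$'s rests on the fact that the boundary locus has real codimension two in the local fiber and therefore cannot locally disconnect. The paper's own proof is just more terse---it invokes directly that the boundary does not locally separate $\Omega^{*}_{0}\mnc_{\Sigma,\PP}(p)$, while you spell out the path-covering and perturbation mechanism, which is really the standard proof of that topological fact. One small imprecision worth flagging: the local fiber $F_x$ is not, in general, a ``connected complex manifold''---an abelian cover of a ball branched along a normal crossing divisor can acquire quotient singularities. What matters, and remains true, is that $F_x$ is a connected normal analytic space in which the branch locus is a complex-codimension-one subvariety, so its complement is open, dense and connected; this is exactly what both your perturbation step and the paper's ``does not locally separate'' phrase require. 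Also, the $x_i$ covering your path need not all lie on the boundary (parts of the path may already be in $\Omega\mrs_{\Sigma,\PP}(p)$); at interior points the local picture is just a ball in the fiber, which causes no difficulty.
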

\begin{proof}
Apply Theorem \ref{t:local structure} to any point in the closure to prove that the sets coincide. The local structure of abelian ramified cover of normal crossing divisor implies that the boundary in $\Omega^{*}_{0}\mnc_{\Sigma,\PP}(p)$ do not locally separate $\Omega^{*}_{0}\mnc_{\Sigma,\PP}(p)$. Since $\Omega\mrs_{\Sigma,\PP}(p)$ is the complement of the boundary points, we conclude. The same argument applies for $\Omega^{*}_{0}\mnc^{c}_{\Sigma,\PP}(p)$.
\end{proof}

\begin{definition}
Two points $(C,Q,m,\omega)$ and $(C',Q',m',\omega')$ in $\Omega\mrs_{\Sigma,\PP}$ (resp. $\Omega\mnc_{\Sigma,\PP}$) are said to be isoperiodically equivalent and denoted $(C,Q,m,\omega)\sim (C',Q',m',\omega')$ if there exists an isoperiodic path joining them in $\Omega\mrs_{\Sigma,\PP}$ (resp. $\Omega\mnc_{\Sigma,\PP}$)
 \end{definition}

 Corollary \ref{c:closure} implies that the two equivalence relations restricted to  $\Omega\mrs_{\Sigma,\PP}$ coincide. 
 
As part of the tools for its proof, and some of the results in this paper, we recall the definition of the period coordinates on strata of meromorphic differentials with simple poles. 

The space $\Omega\mathcal{M}_{g,n}$ is stratified by the properties of the zeros and poles of the forms. Apart from the stratum of zero forms, each stratum is determined by a partition the number $2g-2-l$, where $0\leq l\leq n$ is the number of simple poles, into a sum $n_1+\ldots+n_k$ of positive integers. The stratum corresponding to forms with $n$ simple poles and the partition $(n_1,\ldots,n_k)$, will be denoted by $\Omega\mathcal{M}_{g,n}(n_1,\ldots,n_k)$. It is a complex suborbifold.
The stratum with the maximal number $n$ of poles and the maximal number of zeros i.e. each $n_i=1$ and $k=2g-2+n$, is open and called the generic stratum. The stratum with a single zero and $l$ poles, $\Omega\mathcal{M}_{g,n}(2g-2+n)$, is called the minimal stratum. 

Around a point $(C,Q,\omega)\in\Omega\mathcal{M}_{g,n}(n_1,\ldots,n_k)$ with $n$ simple poles, the stratum  is locally parametrized by the complex vector space of dimension $2g+(n-1)+(k-1)$ \begin{equation}\label{eq:period coord on strata}\text{Hom}(H_1(C\setminus Q, Z;\mathbb{Z}),\mathbb{C})\end{equation} (see \cite{boissy}) where $Z\subset C$ is the set of zeros of $\omega$. The construction of those coordinates around $(C,Q,\omega)$ is carried by using the Gauss-Manin connection to identify the homology group of the central curve $(C,Q)$ with the homology group of a nearby point $(C',Q')$, and apply integration of $\omega'$ on any relative homology class to define the associated (period) homomorphism that corresponds to the coordinates of the point. In these coordinates the restriction of the period map to the stratum is just the linear projection 
$$\text{Hom}(H_1(C\setminus Q, Z;\mathbb{Z}),\mathbb{C})\rightarrow \text{Hom}(H_1(C\setminus Q;\mathbb{Z}),\mathbb{C}).$$
which is submersive with fiber of dimension $k-1$. This local model can be lifted to any covering of $\Omega\mathcal{M}_{g,n}$ and implies, in particular,  that the map $\per_{\Sigma,\PP}$ is a submersion around $(C,Q,[f],\omega)\in\Omega^*\mrs_{g,n}$ with fibers transverse to every non-generic stratum. In particular the intersection of a fiber of $\per_{\Sigma,\PP}$ with the minimal stratum is an isolated set of points. 

A similar stratification for $\Omega\overline{\mathcal{M}}_{g,n}$-- that substratifies the boundary stratification-- can be defined by using the stratification on each of the parts of the normalization of a given point. It is used to prove the local properties of the isoperiodic foliation on boundary points presented in Theorem \ref{t:local structure}, but we will not use it in the sequel. 

The relative period coordinates around points in the generic stratum give us coordinates in the ambient space $\Omega\mathcal{M}_{g,n}$ and allow us to locally parametrize isoperiodic sets, but for points in non-generic strata it will be convenient to have local parametrizations of isoperiodic sets. This can be done by using the interpretation of forms as branched translation structures, and  Hurwitz spaces of coverings on discs. The basic idea is to deform the atlas of the branched structure locally around branch points. 

Choose $(C,Q,\omega)\in \Omega^*\mathcal{M}_{g,n}(n_1,\ldots, n_k)$ and denote $(L,(C,Q,\omega))$ the germ of isoperiodic set (leaf) through $(C,Q,\omega)$. Consider the local $n_i+1$-branched cover $\phi_i:(C,z_i)\rightarrow(\mathbb{C},0)$ defined by integration of $\omega$ around the zero $z_i\in C\setminus Q$ of order $n_i$.    As shown in the appendix in \cite{bps}  we can construct a  germ of continuous  map \begin{equation}
    \label{eq:hurwitz parametrization o isoperiodic}
\big(H(\phi_1)\times\cdots\times H(\phi_{d_k}),(\phi_1,\ldots,\phi_k)\big)\rightarrow (L,(C,Q,\omega))
\end{equation}
where each $H(\phi_i)$ is a Hurwitz space of coverings of degree $d_i$ over a disc $\phi_i(D_i)$ up to an equivalence in the boundary that allows to glue each element with the translation surface defined by $\omega$ on $C\setminus (\cup D_i)$ to obtain a branched translation structure on a closed marked surface of genus $g$.  Since we can keep the marked points and a marking of the homology by avoiding the discs $D_i$, the gluing preserves all integrals over cycles of $H_1(C\setminus Q;\mathbb{Z})$ so the image of any point is in $L$. 

By \cite{bps}[Lemma A7, p. 439], each $H(\phi_i)$ can be parametrized by the space of polynomials $R_a(z)=z ^{d_i}+a_{d_i-1}z^{d_i-1}+\ldots+a_0$ with $\sum a_j=0$ having critical values in the unit disc. The point with all coordinates $a_j=0$ corresponds to the initial point. Any choice on each Hurwitz space of a point that has $n_i$ distinct critical values $\{v_1,\ldots,v_{n_i}\}$ (therefore determining simple critical points on some set $Z_i$) has as image in $L$ a point with simple zeros.  On the other hand, the set of critical values determines the values of the $a_i$'s in the parameter space.

Using the last idea, we can relate some particular isoperiodic deformations with particular deformations of branched coverings constructed by a cut and paste procedure called Schiffer variations.

The Schiffer variation is a local continuous  deformation of branched translation structures that preserves the holonomy. We then interpret it as a deformation of homologically marked stable form. We refer the reader to Section 5.1 of the paper \cite{CDF2} for the related history and definitions. It allows to define isoperiodic paths in $\Omega\overline{\mathcal{M}}_{g,n}$ (and its covers) by deforming any stable meromorphic form having an isolated zero or a node continuously and isoperiodically. It is defined by a continuous surgery for any given family of twin paths, i.e. distinct paths $\gamma_1,\gamma_2, \ldots,\gamma_k$ with parameter space $[0,1)$  of a stable meromorphic form $(C,\omega)$, that are disjoint apart from their initial value, where they coincide, and such that the $k$ paths in $\mathbb{C}$ defined by integration \begin{equation}\label{eq:integral} t\mapsto\int_{\gamma_{i|[0,t]}}\omega\quad\text{ are non-constant and coincide for }\quad i=1,\ldots,k.\end{equation}
Condition \eqref{eq:integral} imposes that the starting point is either an isolated zero of $\omega$ or a node of zero residue and the paths $\gamma_i$ do not lie in a zero component of the form $\omega$. The associated local deformation will be considered as a continuous deformation of the underlying  branched translation structure continuously in terms of the variable $t$ by doing the following surgery for each fixed time $t\in [0,1)$: order $\gamma_1,\ldots,\gamma_k$ in the cyclic ordering at the zero. Slit each oriented $\gamma_{i}([0,t])$ and glue the left side of the slit of $\gamma_{i}([0,t])$ with the right side of the slit of $\gamma_{i+1}([0,t])$ (where we write $\gamma_{k+1}=\gamma_1$) by respecting the values of the function given in \eqref{eq:integral}. This reorganizes the angle sectors at the different endpoints that appear after the surgery. It also produces a family of paths in the surface obtained after the surgery that. indeed, by construction, all points $\gamma_i(t)$ for $i=1,\ldots, k$ form a unique point in the obtained (possibly nodal) surface. When we invert the orientation of the obtained paths they form twin paths (possibly starting at a singular point). Applying the Schiffer variation to these twin paths recovers the original flat surface. In Figures \ref{fig:schiffer_regular}, \ref{fig:schiffer_one_closed_twin}, \ref{fig:schiffer_two_closed_twins}, \ref{fig:non_sep_shiffer}, \ref{fig:schiffer_twins_2_2_poles} and  \ref{fig:twins_2_one_pole} there are several examples of such construction with two twins. 

The Schiffer variation applied to families of small parallel geoedsic segments of the same length starting at a zero or a node allow to continuously deform a form by preserving the holonomy (or the period homomorphism for branched translation structures) and some further properties. Some of the possibilities are:
\begin{itemize}
\item  change the position of the branching value locally to any point without changing its order (by choosing the full set of twins at branch point) 
\item split a branch point into several branch points in any combination that is compatible with the total angle condition and change the position of the branching values locally. (by choosing subfamilies of twins at the branch point)
\item smoothing a node of zero residue but without zero components of a stable form (by choosing two small twins that lie in the two different branches of the node)
\item smoothing a node of non-zero residue (inverse operation of the one described in Figure \ref{fig:schiffer_twins_2_2_poles})
\end{itemize}

Under the light of the properties of the  map \eqref{eq:hurwitz parametrization o isoperiodic}, this implies that all the neighbouring points in the isoperiodic set $(L,(C,Q,\omega))$ can be attained by Schiffer variations regardless of the stratum of $(C,Q,\omega)$ in $\Omega^*\mathcal{M}_{g,n}$. Morover, we can also deform locally from one boundary stratum of $\Omega^*_0 
\overline{\mathcal{M}}_{g,n}$ to a neighbouring stratum by smoothing nodes isoperiodically. 

One advantage of Schiffer variations is that they allow to define not only local deformations, but also long isoperiodic paths that change the number of zeros, and also allow to reach stable forms with some nodes. Let us analyze some examples of those phenomena that will be useful in the sequel. 

In Figures \ref{fig:schiffer_regular}, \ref{fig:schiffer_one_closed_twin}, \ref{fig:schiffer_two_closed_twins} and \ref{fig:non_sep_shiffer} we develop the classical case (see \cite{CDF2}) of two twins whose endpoints are not at poles. Remark that, when the endpoints of some distinct $\gamma_i$'s coincide there might be two discs formed by sectors that are glued at a point. In this case we obtain a stable form with a node of zero residue.
\begin{figure}[h]
     \centering
     \includegraphics[height=5cm]{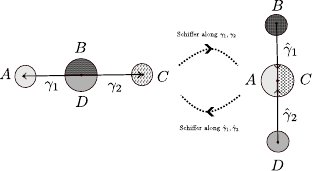}
     \caption{Schiffer variations along two paths with distinct endpoints at saddle points. If $\{B,D\}=\{A,C\}$ it joins two forms in the same stratum. If $A,C>2\pi$ and $B$ or $D$ are $2\pi$ the resulting form has one saddle less.}
     \label{fig:schiffer_regular}
 \end{figure}
 \begin{figure}[h]
     \centering
     \includegraphics[height=5cm]{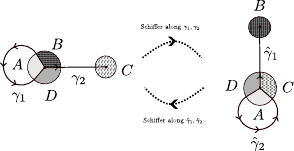}
     \caption{Schiffer variations along two twin paths, one of which is closed. If $B=C$ it joins two forms in the same stratum}
     \label{fig:schiffer_one_closed_twin}
 \end{figure}
 \begin{figure}[h]
     \centering
     \includegraphics[height=3cm]{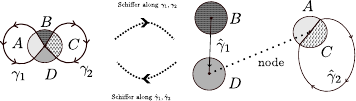}
     \caption{The Schiffer variation along two twin closed paths of zero intersection at a regular point leads to a pair of twins at a node with zero residue. One of the twins is closed and starts and ends at the same branch of the node}
     \label{fig:schiffer_two_closed_twins}
 \end{figure}

 \begin{figure}[h]
     \centering
 \includegraphics[height=4cm]{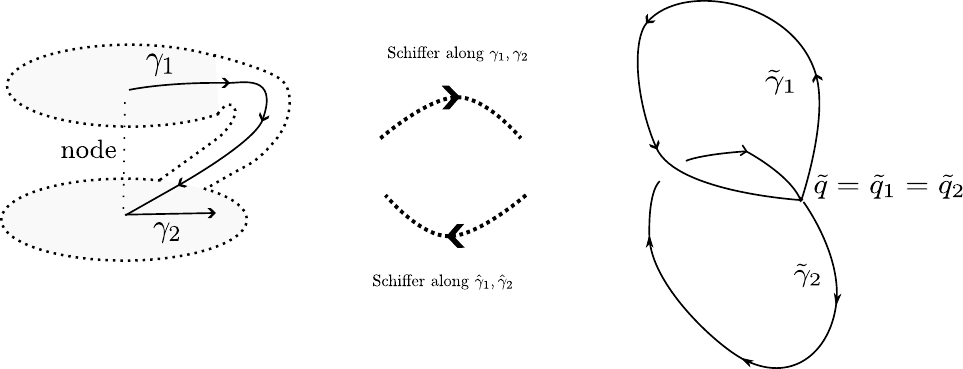}
     \caption{The Schiffer variation along two twin closed paths of $\pm 1$ intersection at a regular point leads to a pair of twins at a node with zero residue. One of the twins is closed and starts and ends at different branches of the node.}
     \label{fig:non_sep_shiffer}
 \end{figure}
\begin{figure}[h]
     \centering
     \includegraphics[height=7cm]{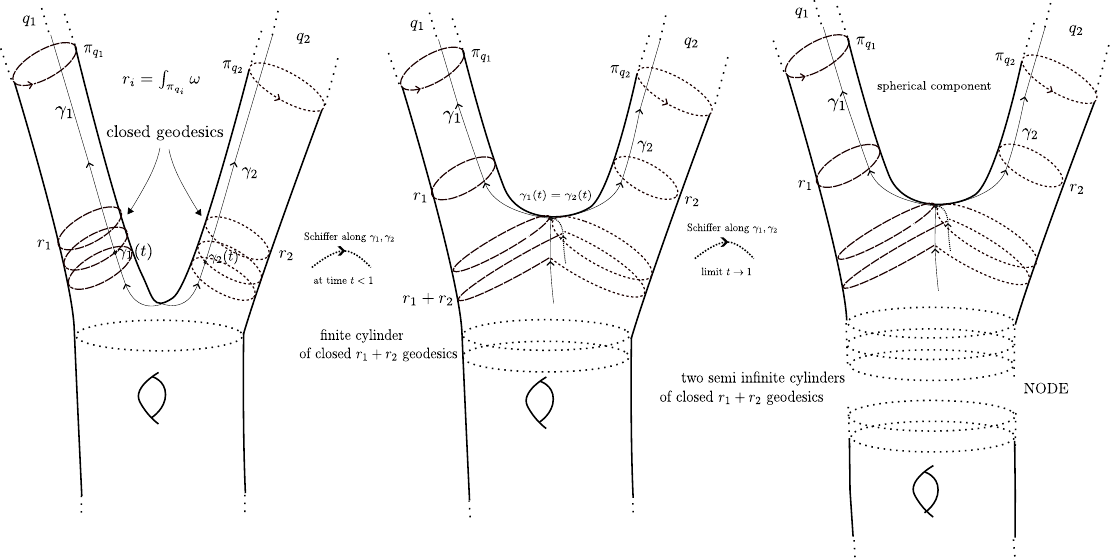}
     \caption{Schiffer variations along pairs of twins leading to two distinct poles}
     \label{fig:schiffer_twins_2_2_poles}
 \end{figure}
\begin{figure}[h]
     \centering
     \includegraphics[height=7cm]{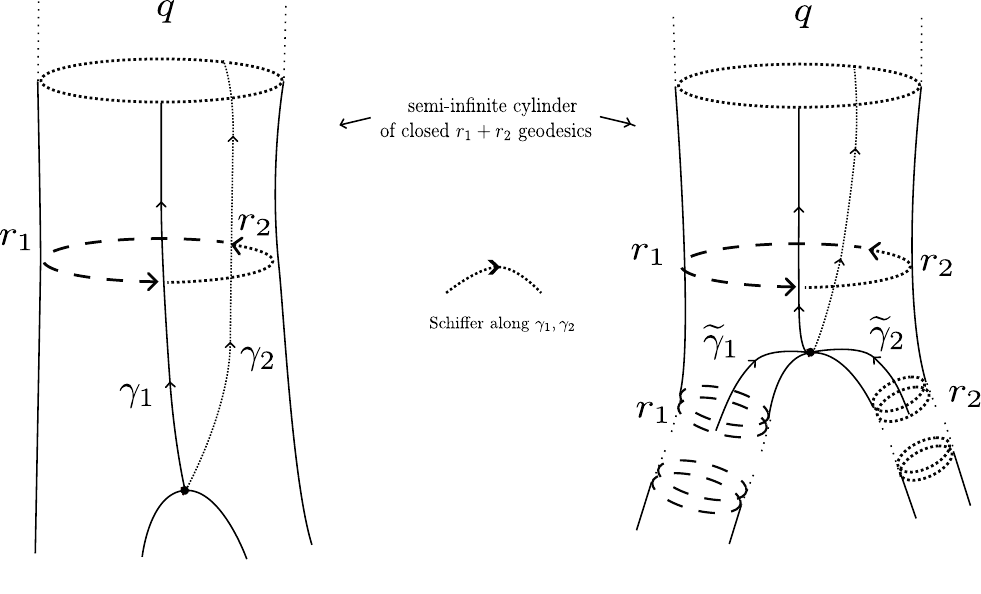}
     \caption{Schiffer variations along a pair of twins leading to one pole}
     \label{fig:twins_2_one_pole}
 \end{figure}

In Figure \ref{fig:schiffer_twins_2_2_poles} we develop the Schiffer variations along two semi-infinite geodesic twins $\gamma_1$ and $\gamma_2$ starting at a saddle and leading to distinct simple poles $q_1$ and $q_2$ of peripheral periods $r_1$ and $r_2$ respectively. If the peripheral periods satisfy $\mathbb{R}r_1=\mathbb{R}r_2$ then necessarily $r_1/r_2>0$. Indeed, the oriented  directional foliation determined by the peripheral period $r_1$ has a cylinder of closed geodesics around both $q_1$ and $q_2$.  On the other hand, the angle of the geodesic twins with the given oriented foliation is constant. This implies that the orientation of the geodesic in both cylinders is the same, or, in other words, $r_1/r_2>0$. In particular we deduce that $r=r_1+r_2$ does not vanish. 

For each large enough $t$ consider the union of the two closed geodesic around the poles $q_1$ and $q_2$ passing through $\gamma_1(t)$ and $\gamma_2(t)$ respectively. As we do the Schiffer variation the pairs of closed curve become each a single closed curve having developed image the complex number $r$ for the new developing map, and their union form a cylinder with modulus tending to \(+\infty\) as \(t\) tends to \(1\).
In the limit the cylinder converges to a node. On one side of the node we have a meromorphic differential on a genus zero curve with three poles having peripheral periods \(r_1, r_2\) and \( -r\). On the other side we have a curve of genus $g$ with a pole less (the two poles of period \(r_1\) and \(r_2\) have been replaced by a single pole of period \( r\), the other set of peripheral periods remains unchanged).

In Figure \ref{fig:twins_2_one_pole} we develop the Schiffer variation along two semi-infinite geodesic twins $\gamma_1$ and $\gamma_2$ of a stable form $(C,\omega)$ of type $(g,n)$ starting at a saddle and leading to a fixed pole $q\in C$. The peripheral period $r$ of $q$ is partitioned into a sum of two numbers $r=r_1+r_2$ each corresponding to the integral of $\omega$ along the peripheral curve in one of the sectors defined by the germs of twins at the pole. As the parameter of the Schiffer variation tends to infinity the resulting translation surface has two cylinders of peripheral period $r_1$ and $r_2$ respectively, whose height tends to infinity. In the limit they correspond each to a node of the limit nodal curve of peripheral period $r_1$ and $r_2$ respectively having each a  branch in a genus zero component with three poles. Let us analyze the topology of the limit nodal curve.  Let $\gamma=\gamma_2^{-1}\star\gamma_1$ be the concatenation of the paths in the surface. 
If $\gamma$ separates the surface in two components of types $(g_1,n_1+1)$ and $(g_2,n_2+1)$ respectively, where \( g_1+g_2=g\) is the genus of \(C\) and \(n=n_1+n_2\) is the number of poles of $\omega$, then the Schiffer variation produces a (possibly non-stable) nodal curve with two separating nodes, whose normalization has a part of type $(0,3)$ containing two of the points corresponding to the nodes with peripheral periods $-r_1$ and $-r_2$ respectively,  and two parts of type $(g_1, n_1+1)$ and $(g_2,n_2+1)$ respectively each containing a point corresponding to a node (of peripheral period $r_1$ and $r_2$ respectively). If one of the parts has non-trivial group of automorphisms -- which in the present context corresponds to having genus zero and precisely two marked points-- the corresponding point  in the moduli space of stable forms is the stabilization of the obtained form on the nodal surface. In the latter case, the stable curve has less than two nodes.
If $\gamma$ is a non-separating simple closed curve in $C$, then the Schiffer variation produces a curve with two non-separating nodes that together separate the surface in two components, one of type $(0,3)$ containing two points corresponding to distinct nodes of peripheral period $r_1$ and $r_2$ and another of type $(g,n+1)$ having two points corresponding to the node.

For more details and examples of deformations that include nodal curves, we refer to \cite{CDF2}. If the form is marked, the surgery allows to follow the marking along the surgery. 

\subsection{Review of known connected components in isoperiodic Torelli spaces} 

\begin{lemma}[Genus zero case]\label{l: connectedness spherical case}
Let $\Sigma$ be a sphere  and $\PP\subset\Sigma$ a set with $n$ points. For any period homomorphism \(p \in H^1 (\Sigma\setminus\PP , \mathbb C)\), the natural map \[\Omega\mrs_{\Sigma,\PP}(p)\rightarrow  \mathcal M_{0,n}\] to the moduli space of configurations of \(n\) marked distinct points in the sphere  is a biholomorphism; in particular, the isoperiodic set \(\Omega\mrs_{\Sigma,\PP}(p)\) is connected and non-empty.  
\end{lemma}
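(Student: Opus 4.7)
The plan is to reduce everything to an explicit formula on $\mathbb P^1$. The key observation is that in genus zero the pointed Torelli space agrees with the moduli space, so the statement amounts to saying that a meromorphic form with simple poles on $\mathbb P^1$ is uniquely determined by (and exists for every admissible choice of) its residues. Those residues are in turn exactly the data of the peripheral periods, which in genus zero determine the entire period homomorphism.

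First I would argue that $\mathcal S_{\Sigma,\PP} = \mathcal M_{0,n}$ when $\Sigma$ is a sphere. Any mapping class in $\mathrm{Mod}_{\Sigma,\PP}$ fixes $\PP$ pointwise, hence fixes each peripheral class $\pi_r$. Since $H_1(\Sigma\setminus\PP,\mathbb Z)$ is generated by the $\pi_r$ (with the single relation $\sum_{r\in\PP} \pi_r = 0$), the whole mapping class group acts trivially on $H_1(\Sigma\setminus\PP,\mathbb Z)$. Therefore $\mathcal I_{\Sigma,\PP} = \mathrm{Mod}_{\Sigma,\PP}$ and the Torelli space coincides with $\mathcal M_{0,n}$. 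Under this identification, specifying a period $p\in H^1(\Sigma\setminus\PP,\mathbb C)$ is the same as specifying the collection $\{p(\pi_r)\}_{r\in\PP}$ subject to $\sum_r p(\pi_r) = 0$; equivalently, prescribing residues $a_r := p(\pi_r)/(2\pi i)$ satisfying the residue sum relation.

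Next I would construct an explicit inverse to the forgetful map $\Omega\mrs_{\Sigma,\PP}(p) \to \mathcal M_{0,n}$. Given $(C,Q)$, after possibly moving infinity off $Q$ by a Möbius transformation, pick an affine coordinate $z$ on $C\simeq\mathbb P^1$ in which $Q = \{q_1,\dots,q_n\}$ lies in $\mathbb C$, and set
\[
\omega := \sum_{r\in\PP} \frac{a_r}{z - q_r}\,dz.
\]
This meromorphic form has simple poles at the $q_r$ with residues $a_r$, and the constraint $\sum a_r = 0$ ensures that there is no pole at infinity. Uniqueness follows from $H^0(\mathbb P^1,K) = 0$: any other candidate differs by a holomorphic differential on $\mathbb P^1$, which vanishes. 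The formula depends holomorphically on $(q_1,\dots,q_n)$, so this defines a holomorphic section $\mathcal M_{0,n} \to \Omega\mrs_{\Sigma,\PP}(p)$.

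Finally I would check that integrating this $\omega$ along a peripheral loop around $q_r$ recovers $2\pi i\,a_r = p(\pi_r)$, so the constructed form indeed lies in the prescribed fiber. Combined with the uniqueness statement, this shows that the forgetful map is a holomorphic bijection with a holomorphic inverse, hence a biholomorphism. Connectedness and non-emptiness then follow from the fact that $\mathcal M_{0,n}$ is a smooth irreducible (hence connected, non-empty) complex variety. I do not anticipate a serious obstacle: every step is formal once the identification $\mathcal S_{\Sigma,\PP} = \mathcal M_{0,n}$ is in place and one recalls that $\mathbb P^1$ carries no non-zero holomorphic differentials.
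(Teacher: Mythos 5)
Your proof is correct and takes essentially the same approach as the paper: both produce the explicit form $\sum_j \alpha_j\, dz/(2\pi i(z-z_j))$ on $\mathbb{P}^1$ and invoke its uniqueness. You additionally make explicit the observation that $\mathcal S_{\Sigma,\PP}=\mathcal M_{0,n}$ in genus zero (since the peripheral classes generate $H_1(\Sigma\setminus\PP)$ and are fixed pointwise by the mapping class group), which the paper leaves implicit in the phrasing of the lemma.
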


\begin{proof}
Given a configuration of \(n\) distinct points \(z_1, \ldots, z_n\) on the Riemann sphere, and numbers \(\alpha_1, \ldots, \alpha_n \in \mathbb C\) with zero sum, there is a unique meromorphic differential on the Riemann sphere having simple poles at the \(z_i\)'s, with peripheral periods given by the \(\alpha_i\)'s: this is the form 
\[ \frac{1}{2\pi i}\sum_j\frac{\alpha_j dz}{z-z_j} . \]  
\end{proof}

\begin{definition}
Let $\Sigma$ be a compact oriented surface. The volume of $p\in H^1(\Sigma,\mathbb{C})$ is defined as the intersection number $$\text{vol}(p)=\Re(p)\cdot\Im (p)$$ 
in the (symplectic) cohomology group $H^1(\Sigma,\mathbb{R})$.
\end{definition}

The following criterion will be useful to detect non empty boundary strata in the closure of the fiber of \(p\):

\begin{lemma}\label{l: Haupt}
  Let $g\geq 1$ be the genus of $\Sigma$ and $n$ be the cardinality of $\PP$. A cohomology class \( p \in H^1 (\Sigma\setminus\PP, \mathbb C)\setminus 0 \) not vanishing on any peripheral class is the period of a meromorphic form with simple poles  \textit{ on a smooth curve} in any of the following cases: 
\begin{itemize}
 \item if \(n=0\),  \(g=1\) and \( \text{vol} (p) >0 \) 
 \item if \(n=0\), $g\geq 2$ and \( \text{vol} (p) > \text{vol} \left(\mathbb C / p (H_1(\Sigma,\mathbb Z)) \right) \).\footnote{if \(p (H_1(\Sigma_g,\mathbb Z))\) is not discrete, then \(\text{vol} \left(\mathbb C / p (H_1(\Sigma,\mathbb Z)) \right) =0\) by convention.}
    \item if \(n=2\) and \( p(\Pi) \neq p (H_1(\Sigma\setminus\PP), \mathbb Z))\).
    \item If $n\geq 3$ and $p (H_1(\Sigma\setminus\PP))\subset \mathbb{C}$ is not a cyclic submodule.
\end{itemize}
\end{lemma}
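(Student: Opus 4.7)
The first two cases constitute the classical Haupt theorem. In genus one, a period $p$ with positive volume sends a symplectic basis of $H_1(\Sigma, \mathbb Z)$ to two $\mathbb R$-linearly independent complex numbers, so $\Lambda := p(H_1(\Sigma, \mathbb Z))$ is a lattice and $(\mathbb C/\Lambda, dz)$ realizes $p$. In genus $g \geq 2$, the volume inequality is the Haupt-Kapovich theorem. The $n = 2$ case is established in \cite{CD}, and the genus zero case with $n \geq 3$ is immediate from Lemma \ref{l: connectedness spherical case}, since any prescribed set of residues summing to zero on the sphere is realized by $\frac{1}{2\pi i}\sum_j \frac{\alpha_j dz}{z - z_j}$.

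The remaining case $n \geq 3$, $g \geq 1$ will be treated by induction on $n$ via a plumbing-and-smoothing construction. First pick indices $i \neq j$ with $\alpha := p(\pi_i) + p(\pi_j) \neq 0$. Such a pair must exist: otherwise, applying the equation to the pairs $\{1,2\}, \{1,3\}, \{2,3\}$ would give $p(\pi_1) = -p(\pi_2) = -p(\pi_3)$ and $p(\pi_2) = -p(\pi_3)$, which forces each $p(\pi_k) = 0$, contrary to the nonvanishing on peripherals. Separate $(\Sigma, \PP)$ along a simple closed curve $c$ into a three-pointed sphere $\Sigma_0$ containing $q_i$ and $q_j$, and a genus-$g$ piece $\Sigma_g$ containing the remaining $n - 1$ marked points together with the node.

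On $\Sigma_0$, Lemma \ref{l: connectedness spherical case} gives a meromorphic differential with peripheral periods $p(\pi_i), p(\pi_j), -\alpha$. On $\Sigma_g$, viewed as a surface with $n - 1$ marked points, we apply the lemma inductively: the case $n - 1 = 2$ falls under the $n = 2$ statement, and $n - 1 \geq 3$ under the induction hypothesis. The residues match at the node (both equal to $\pm \alpha$), so the two forms glue into a stable meromorphic differential on the nodal curve with period $p$. Since $\alpha \neq 0$, the node carries nonzero residue, and the isoperiodic Schiffer variation of Figure \ref{fig:schiffer_twins_2_2_poles}, run in reverse, then provides a smoothing of this node within an isoperiodic family, producing a form on a smooth curve with period $p$.

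The main obstacle is verifying that the restricted period class $p|_{M_c}$ on $\Sigma_g$ satisfies the appropriate hypothesis: noncyclicity of $p|_{M_c}(M_c)$ when $n - 1 \geq 3$, or $p|_{M_c}(\Pi_{M_c}) \neq p|_{M_c}(M_c)$ when $n - 1 = 2$. One argues by contradiction: if every admissible choice of $\{i, j\}$ failed, then combining the resulting cyclicity statements for varying pairs, together with the fact that any finitely generated subgroup of $\mathbb Q c$ is cyclic, would force all periods of $p$ to lie in a common cyclic subgroup of $\mathbb C$, contradicting the noncyclicity of $p(H_1(\Sigma \setminus \PP))$. The case where $p$ vanishes on all genus cycles but is noncyclic on peripherals requires a separate, more combinatorial argument, choosing the pair $\{i, j\}$ so that the two "independent" peripheral periods remain on $\Sigma_g$.
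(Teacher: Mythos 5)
Your proof proposal takes a genuinely different route from the paper's, and the choice of decomposition turns out to be the crux of the difference in difficulty.

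The paper degenerates along a curve that puts all but one pole on the sphere: with $E=\{2,\dots,n\}$ it produces a sphere with $n$ poles (peripherals $p(\pi_2),\dots,p(\pi_n)$ and a node of period $p(\pi_1)$) glued to a genus~$g$ curve with exactly \emph{two} poles. This reduces the problem in one step to the $n=2$ case of the lemma, which is the already-established Theorem~\ref{2poles}, plus the genus-zero Lemma~\ref{l: connectedness spherical case}. No induction on $n$ is needed. The only thing to verify is that there exists an $E$-module $M$ on which $p$ escapes $\mathbb Z p(\pi_1)$, which is a short affine-space argument since $p(H_1(\Sigma\setminus\PP))\neq\mathbb Z p(\pi_1)$ and the set of $E$-modules is an affine space directed by $\text{Hom}\bigl(H_1(\Sigma\setminus(\PP\setminus E));\Pi_E/\mathbb Z\pi_E\bigr)$.

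Your dual choice — sphere with only two original poles plus the node, genus~$g$ piece with $n-1$ marked points — forces an induction on $n$ and makes the verification of the inductive hypothesis substantially harder, and that verification is where your argument has a real gap. You need to choose not merely the pair $\{i,j\}$ but also a specific $\{i,j\}$-module $M_c$ (there is an entire affine space of them) so that $p|_{M_c}$ is noncyclic (if $n-1\geq 3$) or satisfies the two-pole hypothesis (if $n-1=2$). Your sketch argues only over the choice of pair $\{i,j\}$, and even there the "combining the resulting cyclicity statements" step is not justified: if $p(M_c)$ is cyclic, say $p(M_c)=\mathbb Z c$, one only obtains $p(H_1(\Sigma\setminus\PP))=\mathbb Z c + \mathbb Z p(\pi_i)+\mathbb Z p(\pi_j)$, which is perfectly consistent with noncyclicity of $p$. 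You acknowledge the gap yourself in the final sentence by deferring the case where $p$ vanishes on genus cycles to "a separate, more combinatorial argument" that you do not supply. So while the plumbing-and-smoothing mechanics (match residues at the node, smooth via the inverse Schiffer variation of Figure~\ref{fig:schiffer_twins_2_2_poles}) and the first three bullets are fine, the inductive step is not closed. Switching to the paper's "large sphere, two-pole genus piece" decomposition would eliminate both the induction and the gap.
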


\begin{proof} The statements on meromorphic forms can be found in \cite{CFG}. We include some other references that use the present context and a short proof of the fourth item.
The first item is obvious. The second item is a theorem due to Haupt, see \cite{Haupt} and \cite{Kapovich, CDF2} for alternative proofs. The third item has been proved in \cite{CD} in the case \(n=2\). 
    
    Let us now assume \(n\geq 3\). By hypothesis we have \( p (H_1(\Sigma\setminus\PP ,\mathbb Z) ) \neq \mathbb Z p (\pi_1) \). Let \( E=\{2,\ldots, n\} \); we can find a \(E\)-module \(M\) in restriction to which \(p\) does not take values in \( \mathbb Z p (\pi_1)\). So the restriction of \(p\) to \(M\) is the period of a meromophic differential with two poles marked by \(\pi_1\) and \(\pi_E\) by case \(n=2\), see \cite{CD}. We glue this differential at the pole \(\pi_E=-\pi_1\) with a form on the sphere with \(n\) poles marked by \(\pi_1, \pi_2,\ldots, \pi_n\) constructed via Lemma \ref{l: connectedness spherical case}. The obtained point lies in a boundary stratum of $\Omega\mnc_{\Sigma,\PP}$ and has period homomorphism $p$. By Theorem \ref{t:local structure} there are points in the local fiber of $\percompact$ around that point that lie outside the boundary.

    \end{proof}

When \( \text{vol} (p) = m \text{vol} \left(\mathbb C / p (H_1(\Sigma,\mathbb Z)) \right) \) for some $m\geq 1$, the holomorphic forms of periods $p$ are branched coverings of degree $m$ over the  elliptic curve \(\mathbb C / p \big(H_1(\Sigma,\mathbb Z)\big)\). Of course $m=1$ can only occur for $\Sigma$ of genus $g=1$. 
A similar phenomenon occurs for the case with some simple poles: If $p(H_1(\Sigma\setminus\PP))$ is cyclic and there exists $m\in \mathbb{Z}$ such that $p(H_1(\Sigma\setminus\PP))=\frac{1}{m}p(\Pi)$, the meromorphic form of period $p$ is a degree $m$ branched covering of the form $(\mathbb{P}^1,\frac{dz}{z})$. Again, the case $m=1$ can occur only for genus zero. In the next two theorems the cases of degree $m=2$ are excluded.

\begin{theorem}[Holomorphic case, \cite{CDF2}]
    \label{t:isoperiodic sets of holo diff}  Let $\Sigma$ be a compact oriented surface. If $p\in H^1(\Sigma,\mathbb{C})$ has $\text{vol}(p)>0$ and either $\Sigma$ is of genus one or \( \text{vol} (p) > 2\, \text{vol} \left(\mathbb C / p (H_1(\Sigma,\mathbb Z)) \right) \), then $\Omega\mrs(\Sigma)(p)$ is connected (and non-empty). 
\end{theorem}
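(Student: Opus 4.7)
The plan is to prove Theorem \ref{t:isoperiodic sets of holo diff} by induction on the genus $g$ of $\Sigma$, following the same general strategy outlined in Section 3 of the paper (bordify, degenerate to the boundary, connect the boundary components). Non-emptiness is immediate from Haupt's criterion (Lemma \ref{l: Haupt}): the volume hypothesis $\text{vol}(p) > 0$ handles genus one, while $\text{vol}(p) > 2\,\text{vol}(\mathbb C / p(H_1(\Sigma,\mathbb Z)))$ handles higher genus.

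For the base case $g=1$, any holomorphic differential on an elliptic curve is a constant multiple of the unique (up to scale) holomorphic form, and the Torelli space $\mrs(\Sigma)$ is identified with the upper half plane through the period map. The fiber $\Omega\mrs(\Sigma)(p)$ is identified with the single point $(\mathbb C / p(H_1(\Sigma,\mathbb Z)), dz)$ with its tautological marking, provided $\text{vol}(p) > 0$ (so the lattice is non-degenerate); hence it is connected.

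For the inductive step, I would fix $g \geq 2$ and assume the theorem for all genera $g' < g$. The first substep is to bordify: consider the closure $\overline{\Omega\mrs(\Sigma)(p)}$ inside $\Omega^*_0 \mnc(\Sigma)$. By the local description (Theorem \ref{t:local structure} and Corollary \ref{c:closure}), this closure is a locally abelian ramified cover of a normal crossing divisor, and connectedness of the interior is equivalent to connectedness of its closure. The second substep is to isoperiodically degenerate every smooth point to the boundary using Schiffer variations along short geodesic twins: as the twin parameter tends to its endpoint, one can produce a pair of twins terminating at a common point, giving a node of zero residue (Figures \ref{fig:schiffer_two_closed_twins} and \ref{fig:non_sep_shiffer}); for holomorphic forms with sufficiently large volume, one can always arrange the terminating configuration to produce a \emph{separating} node splitting off a genus one piece. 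The third substep is to connect the spherical-type boundary strata — which in the holomorphic setting are separating strata characterized by splittings $H_1(\Sigma, \mathbb Z) = M_1 \oplus M_2$ with $M_1$ and $M_2$ symplectically orthogonal summands of genera $1$ and $g-1$ respectively. For each such splitting one obtains a product $\Omega\mrs_1(p|_{M_1}) \times \Omega\mrs_{g-1}(p|_{M_2})$; by the inductive hypothesis (applied to $M_2$, where the volume condition $\text{vol}(p) > 2\,\text{vol}(\mathbb C / p(H_1(\Sigma)))$ propagates to guarantee the hypotheses for the genus $g-1$ piece), each such stratum is connected whenever non-empty.

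The main obstacle, as in the meromorphic case treated in the paper, will be the last step: showing that the image in the quotient curve complex $\mathcal C(\Sigma)/\mathcal I_{\Sigma}$ of those boundary components of $\overline{\Omega\mrs(\Sigma)(p)}$ corresponding to genus-one-splitting separating curves is connected. This requires exhibiting enough basic isoperiodic paths between boundary components associated to different orthogonal splittings $M$ of $H_1(\Sigma,\mathbb Z)$. Concretely, one shows that any two such $M, M'$ are connected by a finite chain of elementary modifications (swapping one symplectic basis vector at a time), and realizes each elementary modification as an isoperiodic path passing through a codimension-two stratum (a curve with two separating nodes). The volume inequality is used precisely to guarantee that these intermediate strata are non-empty — this is where the coefficient $2$ in $\text{vol}(p) > 2\,\text{vol}(\mathbb C / p(H_1(\Sigma)))$ enters, since passing through a node reduces available volume by the volume of the genus one summand, and Haupt's theorem must still apply to the remaining piece.
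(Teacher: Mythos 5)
This theorem is imported verbatim from the first paper of the series, \cite{CDF2}; the present paper contains no proof of it and uses it as a black box (see the bracketed attribution in the theorem's label), so there is no ``paper's own proof'' here to compare your attempt against. That said, your proposed outline does track the global strategy of \cite{CDF2} and of this paper's Section~3 (bordify, degenerate via Schiffer variations, connect boundary strata), so let me assess it on its merits.

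The genuine gap is in your third substep. You write that when one degenerates to a splitting $H_1(\Sigma,\mathbb Z)=M_1\oplus M_2$ with $M_1$ of genus one, ``the volume condition \(\mathrm{vol}(p)>2\,\mathrm{vol}(\mathbb C/p(H_1(\Sigma)))\) propagates to guarantee the hypotheses for the genus $g-1$ piece.'' This does not follow. What one has is additivity, \(\mathrm{vol}(p)=\mathrm{vol}(p|_{M_1})+\mathrm{vol}(p|_{M_2})\), but nothing controls either summand individually, and the quantity \(\mathrm{vol}(\mathbb C/p(M_2))\) may have no relation at all to \(\mathrm{vol}(\mathbb C/p(H_1(\Sigma)))\) (the restriction of a dense period can become a lattice, for instance). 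So after a generic degeneration the restriction $p|_{M_2}$ may fail Haupt's conditions, and the inductive hypothesis is not applicable to the genus $g-1$ stratum. The real work is to show that among all admissible splittings one can always \emph{choose} $M_1$ (equivalently, the isoperiodic deformation) so that both restrictions $p|_{M_1}$ and $p|_{M_2}$ do satisfy Haupt's conditions, and likewise for the chain of elementary modifications connecting two such splittings. This is precisely the role played in the present paper by Lemma~\ref{l: pair of Haupt conditions} (which, given two periods, produces a common integral cohomology class making both perturbed periods satisfy Haupt) and the surrounding manipulations of $\PP$-modules in Section~\ref{ss: connecting R-bnd strata}; the analogous lemma in \cite{CDF2} is what carries the corresponding argument there. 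Without an explicit statement and proof of such a pairing lemma, the inductive step of your sketch does not close. The rest of the outline---identification of the base case $g=1$ with the tautological point of the Jacobian, use of Corollary~\ref{c:closure} to pass between a fiber and its bordification, realisation of boundary strata by symplectic decompositions and Lemma~\ref{l: attaching map}---is sound in principle.
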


\begin{theorem}[Two poles case]{\cite{CD}}
    \label{2poles} Let $\Sigma$ be a genus $g$ compact surface and $\PP\subset\Sigma$ a subset of two points. If $p\in\text{Hom}(H_1(\Sigma\setminus\PP),\mathbb{C})$ has non-zero peripheral periods and \(p (H_1(\Sigma\setminus\PP), \mathbb Z))\) is neither \(p(\Pi) \) nor \(\frac{1}{2}p(\Pi) \) then $\Omega\mrs_{\Sigma,\PP}(p)$ is connected and non-empty. 
\end{theorem}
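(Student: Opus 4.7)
The statement is from the authors' previous paper \cite{CD}, so presumably the in-paper ``proof'' is a citation. Nevertheless, let me sketch an approach following the general philosophy laid out in Section 3.

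The plan is to induct on the genus $g$. The base case $g=0$ is immediate from Lemma \ref{l: connectedness spherical case}. For $g=1$, the isoperiodic fiber can be analyzed in explicit period coordinates: a meromorphic form with two simple poles on an elliptic curve $E_\tau = \mathbb{C}/(\mathbb{Z}+\tau\mathbb{Z})$ is determined by $\tau$, the positions of the two poles, and a residue; the period data cuts this out as a transcendental subvariety which the hypothesis $p(H_1) \neq \frac{1}{2}p(\Pi)$ keeps connected, ruling out the degree-$2$ cover obstruction.

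For the inductive step, I would follow the three-step strategy of Section 3. \textbf{Step 1:} Bordify $\per_{\Sigma,\PP}^{-1}(p)$ inside $\Omega^*_0 \overline{\mathcal S}_{\Sigma,\PP}$ using the material of Section 4 and the local submersion statement of Theorem \ref{t:local structure}; by Corollary \ref{c:closure} the bordification has the same number of connected components as the open fiber. \textbf{Step 2:} Use Schiffer variations along pairs of twin geodesic segments (as in Figures \ref{fig:schiffer_regular}--\ref{fig:twins_2_one_pole}) to isoperiodically deform any smooth point of the fiber to a point in the spherical boundary. The relevant spherical strata here are the ones where a separating curve $c$ bounds a disc containing both punctures (so $E=\PP$): on the spherical side one gets a genus-$0$ form with two simple poles of prescribed residues and one additional node, and on the other side one gets a \emph{holomorphic} form on a genus-$g$ curve with a single marked point (the node).

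\textbf{Step 3:} Each such boundary component is labelled by an $E$-module $M \subset H_1(\Sigma\setminus \PP)$ with $E = \PP$, and the holomorphic form on the genus-$g$ side is parametrised by its period $p|_M$. The hypothesis $p(H_1(\Sigma\setminus\PP))\neq \frac{1}{2}p(\Pi)$ is exactly the condition ensuring that for every such $E$-module $M$ we have $\operatorname{vol}(p|_M) > 2\operatorname{vol}(\mathbb{C}/p(M))$ (when the image is a lattice) or $\operatorname{vol}(p|_M) > 0$ more generally, so that Theorem \ref{t:isoperiodic sets of holo diff} gives connectedness of each individual boundary component. The remaining task, which is the main obstacle, is to show that the different boundary components, indexed by $E$-modules which form a torsor over $\mathrm{Hom}(H_1(\Sigma\setminus(\PP\setminus E)); \Pi_E/\mathbb{Z}\pi_E)$, all lie in one connected piece of the bordification. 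For this I would exhibit a small family of basic isoperiodic paths (again realised by Schiffer variations across nodes with small closed twins, as in Figures \ref{fig:schiffer_two_closed_twins} and \ref{fig:non_sep_shiffer}) that move between adjacent $E$-modules and generate the full directing module. The main obstacle is precisely verifying that these elementary transitions generate the full $E$-module affine space under the constraints imposed by $p$; once that is done, the induced subcomplex of $\mathcal{C}(\per^{-1}(p))$ has connected image in $\mathcal C(\overline{\mathcal S}_{\Sigma,\PP})$ and the fiber is connected.
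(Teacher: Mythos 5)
Your proposal correctly recognizes that the in-paper ``proof'' is a citation to \cite{CD}, and it correctly identifies the general inductive philosophy: bordify, degenerate to spherical boundary strata, and use lower-complexity connectedness together with some ``connect the different boundary components'' argument. But the sketch has a genuine structural gap and a factually incorrect translation of the hypothesis.

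The structural gap: your Step 2 proposes to always degenerate into an $E=\PP$-boundary stratum, where the spherical part carries both poles and the genus-$g$ part is holomorphic. This is available only when $p$ is \emph{not} $\R$-collinear. The hypothesis of Theorem~\ref{2poles} (image not equal to $p(\Pi)$ or $\frac{1}{2}p(\Pi)$) does not exclude $\R$-collinear periods — for instance $p(\pi_1)=1$, $p(\pi_2)=-1$, with some other period irrational in $\R$ satisfies the hypothesis and has image in $\R$. For such $p$, every $\PP$-module $M$ satisfies $\operatorname{vol}(p|_M)=0$, so the associated boundary stratum $\mathcal B_M$ carries no holomorphic form of positive genus, by the Riemann bilinear relations. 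Your Step 2/Step 3 machinery is therefore vacuous on a nonempty slice of the hypothesis. The paper itself is explicit about this dichotomy in the strategy section: the $E=\PP$-strata are used only in the non-$\R$-collinear case, while the real-collinear case requires $E$-strata missing one pole (genus-$g$ part with two poles), and these do not exist for $n=2$. So a genuinely different argument is needed for the two-pole real-collinear case (in \cite{CD} this is handled by a separate combinatorial analysis, analogous in spirit to the octopus/butterflies analysis the present paper carries out for $(1,3)$).

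Secondly, even restricting to the non-$\R$-collinear case, your claim that the hypothesis $p(H_1)\neq\frac12 p(\Pi)$ is ``exactly the condition ensuring that for every such $E$-module $M$ we have $\operatorname{vol}(p|_M) > 2\operatorname{vol}(\mathbb{C}/p(M))$'' is false. The set of $\PP$-modules is an affine space over $\operatorname{Hom}(H_1(\Sigma),\Pi)$, and varying $M$ changes $p|_M$ by a cohomology class valued in the peripheral periods; this can make $\operatorname{vol}(p|_M)$ negative, zero, or can violate the degree condition. Identifying which $\PP$-modules are $p$-admissible and showing that enough of them are connected is precisely the content of Lemma~\ref{l: pair of Haupt conditions} and the argument in subsection~\ref{ss: connecting R-bnd strata}; it is not automatic from the global hypothesis. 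Your final acknowledged ``main obstacle'' is real, but you should also add the two points above to the list of things your sketch does not yet handle.
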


\section{Connected components of some boundary strata of isoperiodic forms}

\subsection{Attaching map for compact type boundary strata in Torelli bordification}


\begin{lemma}\label{l: attaching map}
\label{p:connected isoperiodic strata}
    Let $(\Sigma,\PP)$ be of type $(g,n)$ and $[c]$ be a pointed Torelli class of a separating curve system  $c\subset \Sigma\setminus \PP$.   Then for every $p\in\text{Hom}(H_1(\Sigma\setminus\PP);\mathbb{C})$ one has 
    \[ \Omega\mathcal{B}_{[c]}(p) \simeq \prod _{v\in V(G)} \Omega \mathcal S _{ \hat{\Sigma}_v, \hat{\PP}_v } (p_v) \]
    where \(\simeq \) is a biholomorphism, \(G\) is the \((\Sigma,\PP)\) marking of a \(\PP\)-labelled graph associated to the boundary stratum \(\mathcal{B}_{[c]}\), and \(p_v = (\iota_v )^* p \) with \(\iota_v: \hat{\Sigma}_v \setminus \hat{\PP}_v\rightarrow \Sigma \setminus \PP\) is the natural inclusion of a component of the (separating) normalization of \((\Sigma,\PP)\) along \(c\). In particular, the stratum \(\Omega \mathcal B_{[c]}(p) \) is non-empty (resp. connected) iff each of the isoperiodic moduli spaces \( \Omega \mathcal S_{\hat{\Sigma}_v, \hat{\PP}_v}(p_v)\) are non-empty (resp. connected). 
\end{lemma}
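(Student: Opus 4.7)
The plan is to lift the base-level biholomorphism provided by Proposition \ref{p:partial norm torelli} to the bundle of stable forms, then restrict to fibers of the period map. First I would observe that since every curve of $c$ is separating, one has $c^{ns}_v = \emptyset$ for each vertex $v\in V(G)$, so $\mathcal{B}_{[c^{ns}_v]} = \mathcal{S}_{\hat{\Sigma}_v,\hat{\PP}_v}$ and Proposition \ref{p:partial norm torelli} yields
\[\mathcal{B}_{[c]} \simeq \prod_{v\in V(G)} \mathcal{S}_{\hat{\Sigma}_v,\hat{\PP}_v}.\]
In particular, since all nodes are separating, $\Omega_0\mathcal{B}_{[c]}=\Omega\mathcal{B}_{[c]}$ and the period map $\percompact_{\Sigma,\PP}$ is defined on the whole stratum.

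Next I would lift this biholomorphism to the Hodge bundle. By definition, a stable form with simple poles on a point of $\mathcal{B}_{[c]}$ is a tuple $(\omega_v)_{v\in V(G)}$ of meromorphic forms on the components of the normalization, each with at worst simple poles at the marked points and nodes, subject to the constraint that at each edge $e = \{e_v,e_{v'}\}\in E(G)$ one has $\mathrm{Res}_{e_v}(\omega_v)+\mathrm{Res}_{e_{v'}}(\omega_{v'})=0$. This gives a natural holomorphic identification of $\Omega\mathcal{B}_{[c]}$ with the subspace of $\prod_v \Omega\mathcal{S}_{\hat{\Sigma}_v,\hat{\PP}_v}$ cut out by the residue compatibility at every edge.

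The key step is then to match this residue condition with the compatibility of period homomorphisms coming from the homological marking. Using the decomposition \eqref{eq:homology decomposisiton separating syst}, a homomorphism $p\in\mathrm{Hom}(H_1(\Sigma\setminus\PP);\mathbb{C})$ corresponds bijectively to a collection $(p_v)_{v\in V(G)}$ with $p_v = (\iota_v)^* p$ satisfying $p_v(\pi_{e_v})+p_{v'}(\pi_{e_{v'}})=0$ on every edge. Since the peripheral period at a branch of a node equals $2\pi i$ times the residue of the form on that branch, the constraint that the tuple $(\omega_v)$ glues to a stable form is \emph{equivalent} to requiring each $\omega_v$ to have peripheral period $p_v(\pi_{e_v})$ at the node corresponding to $e_v$. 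Hence once each $\omega_v\in\Omega\mathcal{S}_{\hat{\Sigma}_v,\hat{\PP}_v}(p_v)$, the residue conditions are automatic.

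Putting these steps together, the biholomorphism restricts to
\[\Omega\mathcal{B}_{[c]}(p) \simeq \prod_{v\in V(G)} \Omega\mathcal{S}_{\hat{\Sigma}_v,\hat{\PP}_v}(p_v),\]
and the non-emptiness and connectedness statements follow at once from the product structure. The main obstacle is a conceptual one rather than technical: one must carefully verify that the algebraic residue-sum condition defining stable forms is captured \emph{exactly}, without overcounting, by the quotient relations in \eqref{eq:homology decomposisiton separating syst}, so that no spurious constraint or freedom appears when passing from the product of $\Omega\mathcal{S}_{\hat{\Sigma}_v,\hat{\PP}_v}(p_v)$ to $\Omega\mathcal{B}_{[c]}(p)$.
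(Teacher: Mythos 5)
Your proof follows the same route as the paper: restrict the normalization map \eqref{eq:normaliza separantes} (equivalently, lift the base biholomorphism of Proposition \ref{p:partial norm torelli} to the bundle of stable forms) to the fiber over $p$, and note that the residue compatibility at each node is then automatic because $p$ descends through the quotient in \eqref{eq:homology decomposisiton separating syst}. You actually spell out the surjectivity step more explicitly than the paper, whose proof leaves the ``residue condition is automatic on the fiber'' observation implicit in its final sentence.
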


\begin{proof} As $c$ has no curves that are non-separating the separating normalization coincides with the full normalization and we use the notations of the latter. Moreover, the holomorphic map \eqref{eq:normaliza separantes} can be rewritten in a simpler way:
\begin{equation}\label{eq:normalization in Torelli 2}
\Omega\mathcal{B}_{[c]}\rightarrow \prod _{v\in V(G)} \Omega \mathcal S_{\hat{\Sigma}_v, \hat{\PP}_v}
\end{equation}

By Proposition \ref{p:Torelli equivalence sesc systems}, this map is injective, and hence an isomorphism onto its image.  

Since an element \( (C,Q,m,\omega)\in \Omega\mathcal B_c \) has period homorphism  \(p\) iff the periods of the pull back \( \iota_{C,v}^* \omega\) by the natural injective holomorphic map \(\iota_{C,v} = \hat{C}_v\setminus \hat{Q}_c \rightarrow C\setminus Q\) have period homomorphism \(p_v\)  for all \(v\in V(G)\), the restriction of the map \eqref{eq:normalization in Torelli 2} to $\Omega\mathcal{B}_c(p)$ induces the desired isomorphism.
\end{proof}

\subsection{A connectedness property for boundary strata of isoperiodic forms with one node, non-separating case}
\label{ss:boundary strata of non-separating type}
Let \([c]\) be a Torelli class of non separating simple closed curve \(c\) in \(\Sigma\setminus \PP\), and \(p\in H^1 (\Sigma\setminus \PP,\mathbb C)\) that contains the homology class of \(c\) in its kernel. It determines an isoperiodic set in a the boundary stratum \(\Omega_0 \mathcal B_{[c]}(p) \subset \Omega_0\overline{\mathcal S}_{\Sigma, \PP}(p) \). 

We denote by \(\hat{p}: H_ 1 (\hat{\Sigma}\setminus \hat{\PP}, \mathbb Z) \rightarrow \mathbb C\) the period \( \hat{p}:= p \circ \iota \), where \( \iota: \hat{\Sigma}\setminus \hat{\PP}\rightarrow \Sigma \setminus \PP\) is the natural immersion.  As in Lemma \ref{l: attaching map}, one can define a normalization map 
\begin{equation}\label{eq: normalisation map} \Omega _{0} \mathcal B_{[c]}(p) \rightarrow \Omega \mathcal S _{\hat{\Sigma}, \hat{\PP}} (\hat{p})   \end{equation} 
that sends an element \( (C, Q, m,\omega) \in \Omega_{0}  \mathcal B_{[c]}(p)\) to the element \( (\hat{C}, \hat{m}, \hat{\omega})\in \Omega \mathcal S _{\hat{\Sigma}, \hat{\PP} } (\hat{p})\) with \(\hat{C}\) the normalization of \(C\), \(\hat{m}\) the isotopy class of the homeomorphism \( \hat{f}: (\hat{\Sigma}, \hat{\PP}) \rightarrow (\hat{C}, \hat{Q}) \) naturally induced by a map \( (\Sigma, \PP) \rightarrow ( C, Q) \) pinching \(c\) in the class of \(m\), and where \(\hat{\omega} := \iota_C ^* \omega\) with \(\iota: \hat{C} \setminus \hat{Q} \rightarrow C \setminus Q\) is the natural holomorphic immersion.

\begin{remark}
    Notice that if \( n_1,n_2\in \hat{\PP}\) are the two points obtained by contracting the two  components of the boundary of \( \overline{\Sigma\setminus c}\), then  \(\hat{p}(\pi_{n_{1}})=0=\hat{p}(\pi_{n_{2}}) \). 
\end{remark}

\begin{remark}
    Notice that  \(f\) is well defined up to precomposing it by a homeomorphism \(h\) of \((\Sigma, \PP)\) fixing \(c\) and whose isotopy class belongs to the Torelli group of \((\Sigma, \PP)\). This map \(h\) produces a homeomorphism \(\hat{h} \) of \((\hat{\Sigma},\hat{\PP})\). The action of \(\hat{h}\) on \(H^1(\hat{\Sigma}\setminus \hat{\PP}, \mathbb Z) \) is equal to the identity. So the map \eqref{eq: normalisation map} is well-defined.
\end{remark}

We denote by \( (\overline{\Sigma}, \overline{\PP})\) the pointed surface defined by 
\[ \overline{\Sigma} := \hat{\Sigma} \text{ and } \overline{\PP}:= \hat{\PP} \setminus \{n_1,n_2\}.\]
Since the period \(\hat{p}\) vanishes on \( \pi_{n_1}\) and \(\pi_{n_2}\), there exists a  homomorphism \(\hat{p} : H_1(\overline{\Sigma} \setminus \overline{\PP} , \mathbb Z) \rightarrow \mathbb Z) \) such that \( \hat{p} \) is the composition of the natural inclusion \( H_1 (\overline{\Sigma} \setminus \overline{\PP}, \mathbb Z)\rightarrow  H_1 (\hat{\Sigma} \setminus \hat{\PP}, \mathbb Z)\) with \(\hat{p}\), and one can define a forgetful map 
\begin{equation} \label{eq: forgetful map} \Omega \mathcal S _{\hat{\Sigma},\hat{\PP}} (\hat{p})  \rightarrow \Omega \mathcal S_{\overline{\Sigma}, \overline{\PP}}(\overline{p}) .  \end{equation}
that sends a point \( (\hat{C}, \hat{Q}, \hat{m}, \hat{\omega})\in \Omega \mathcal S _{\hat{\Sigma},\hat{\PP}} (\hat{p})\) to the point 
\((\overline{C}, \overline{Q}, \overline{m}, \overline{\omega})\in \Omega \mathcal S_{\overline{\Sigma}, \overline{\PP}}(\overline{p}) \) defined by 
\(\overline{C}:= \hat{C}\), \(\overline{Q} = \hat{Q}\setminus \{q_1,q_2\}\) where \(q_1,q_2\) are the two points of \(\hat{C}\) marked by \(n_1\) and \(n_2\) respectively, \(\ \overline{m}\) is the obvious map induced by \(\hat{m}\),  and 
\(\overline{\omega}\) is the holomorphic extension of \(\hat{\omega} \) on \( \overline{C}\setminus \overline{\PP}\).

\begin{proposition} 
\label{p:connectedness of non compact boundary stratum}For every period \(p\in H^1 (\Sigma\setminus \PP, \mathbb C ) \) vanishing on the homology class of \(c\), the map 
\[ f : (C,Q,m,\omega) \in \Omega_0 \mathcal B _ c (p) \rightarrow (\overline{C}, \overline{Q}, \overline{m}, \overline{\omega} ) \in \Omega \mathcal S _{\overline{\Sigma}, \overline{\PP}} (\overline{p}) \]
obtained as the composition of the map \eqref{eq: normalisation map} with the map \eqref{eq: forgetful map}, sends every connected component of \(\Omega \mathcal B _ c (p)\) onto a connected component of \(\Omega \mathcal S _{\overline{\Sigma}, \overline{\PP}} (\overline{p})\). 
\end{proposition}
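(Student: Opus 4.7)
The plan is to show that $f$ is a continuous surjection that is locally trivial with connected fibers, from which it will follow that every connected component of the source is sent onto a connected component of the target. Given a target point $y=(\overline{C},\overline{Q},\overline{m},\overline{\omega})$, a preimage in $\Omega_0 \mathcal B_c(p)$ is constructed by choosing an ordered pair of distinct points $(q_1,q_2)\in(\overline{C}\setminus\overline{Q})^2\setminus\Delta$ --- the two branches of the non-separating node marked respectively by $n_1$ and $n_2$ --- together with a compatible homological marking of the resulting nodal curve $C=\overline{C}/(q_1\sim q_2)$. The pushed-forward form $\omega$ automatically has zero residue at the node since $\overline{\omega}$ extends holomorphically at $q_1$ and $q_2$, giving a point in $\Omega_0 \mathcal B_c(p)$ with period $p$.

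To see that the fiber over $y$ is connected, I would argue that the marking $m:H_1(\Sigma\setminus\PP)\to H_1(C\setminus Q)$ is determined, modulo the pointed Torelli action, by the data $(q_1,q_2,\overline{m})$ together with a choice of lift of the class $[c]\in H_1(\Sigma\setminus\PP)$ to a cycle through the node of $C$. A Mayer-Vietoris argument on $\Sigma=\overline{\Sigma\setminus c}/\partial$, together with the hypothesis $p([c])=0$, identifies such lifts modulo the Torelli group action. Hence the fiber $f^{-1}(y)$ is identified with the configuration space $\mathrm{Conf}_2(\overline{C}\setminus\overline{Q})$ of ordered pairs of distinct points on the connected smooth surface $\overline{C}\setminus\overline{Q}$, which is path-connected. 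Surjectivity of $f$ is clear from the same construction, since any pair of distinct points is admissible.

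Local triviality over the target would follow from the universal family structure: around any target point, the family of smooth curves $\overline{C}_u$ varies continuously, and any chosen pair $(q_1^0,q_2^0)$ on $\overline{C}_y$ extends to a continuously varying family $(q_1(u),q_2(u))$, providing a local section of $f$ (this can be verified via the local structure of isoperiodic sets in Theorem \ref{t:local structure}, applied on both sides of the map, since opening a node of zero residue is an isoperiodic deformation of the source). Local triviality yields the path-lifting property for $f$. Combining this with surjectivity: if $X_0$ is a connected component of the source, then for any point $y_1$ in the component of the target containing $f(X_0)$, a path from some $f(x_0)\in f(X_0)$ to $y_1$ lifts to a path in the source starting at $x_0\in X_0$, whose endpoint lies in $X_0$ (since $X_0$ is path-connected) and maps to $y_1$. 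Thus $f(X_0)$ equals the entire target-component, as required.

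The main obstacle I anticipate is the fiber description: verifying that the marking $m$ is canonically determined up to the pointed Torelli action by $(q_1,q_2,\overline{m})$ plus the period data. The subtlety is that the Dehn twist around $c$ acts on $H_1(\Sigma\setminus\PP)$ by $\gamma\mapsto\gamma+(\gamma\cdot[c])[c]$, which is not the identity (since $[c]$ has non-zero intersection with cycles transverse to $c$), so this twist does \emph{not} lie in the pointed Torelli group. Consequently, distinct lifts of $[c]$ to a cycle through the node of $C$ could a priori yield genuinely different points in $\mathcal S_{\Sigma,\PP}$. The hypothesis $p([c])=0$ is the key to absorbing this ambiguity; a careful homological bookkeeping must confirm that $\mathrm{Conf}_2(\overline{C}\setminus\overline{Q})$ really is the full fiber and that no additional discrete components appear.
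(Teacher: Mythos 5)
Your overall reduction is sound: the statement follows once you establish a path-lifting property for $f$, because the image of a connected component of the source is then open, closed and connected in the target, and connected components of $\Omega\mathcal{S}_{\overline{\Sigma},\overline{\PP}}(\overline{p})$ are path-connected. The paper uses this same reduction. However, the two technical claims you rest on are not established, and the second represents the actual work of the proof.

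On the fiber description: the fiber of $f$ over $(\overline{C},\overline{Q},\overline{m},\overline{\omega})$ is \emph{not} $\mathrm{Conf}_2(\overline{C}\setminus\overline{Q})$. It consists of pairs $((q_1,q_2),[\overline{\kappa}])$ where $q_1\neq q_2$ and $[\overline{\kappa}]$ is a relative homology class of paths from $q_1$ to $q_2$ in $\overline{C}\setminus\overline{Q}$ whose period is $p(b)$, for a fixed class $b$ dual to $[c]$; the class $[\overline{\kappa}]$ is needed to pin down the Torelli marking $m$ of the nodal curve. For a fixed $(q_1,q_2)$ the set of admissible classes, when nonempty, is a torsor over $\ker(\overline{p})\subset H_1(\overline{C}\setminus\overline{Q})$, which is in general a nontrivial discrete group. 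The Mayer--Vietoris argument you invoke to identify all lifts modulo Torelli does not hold: markings sending $b$ to $[\overline{\kappa}]$ and to $[\overline{\kappa}]+[\sigma]$ with $[\sigma]\in\ker(\overline{p})\setminus\{0\}$ differ by an automorphism of $H_1(\Sigma\setminus\PP)$ that is not the identity, hence not in $\mathcal{I}_{\Sigma,\PP}$. (The Dehn-twist ambiguity you worry about is, by contrast, harmless: $T_c$ acts trivially on the boundary stratum $\mathcal{B}_c$ of the augmented Teichm\"uller space.) Fortunately, your final paragraph uses only path-lifting and surjectivity, not fiber connectedness, so this misdescription is not by itself fatal.

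The genuine gap is the claim that $f$ is locally trivial, which you justify only by appealing to ``universal family structure'' and Theorem \ref{t:local structure}. Neither of those produces a continuous local section of $f$. The obstruction is geometric: a small isoperiodic deformation $(\overline{C}_t,\overline{\omega}_t)$ of the target is realized by a surgery supported in an $\varepsilon$-neighbourhood of the zero set of $\overline{\omega}_{t_0}$; if the branch points $(q_1,q_2)$ of the node lie inside that neighbourhood, there is no canonical way to carry them and the class $[\overline{\kappa}]$ through the surgery while respecting the constraint $\int_{\overline{\kappa}}\overline{\omega}_t=p(b)$. The paper's proof is almost entirely devoted to this: the Archipelago lemma selects a scale $r$ at which the zeros cluster into well-separated islands, and a geodesic-displacement argument (flowing the endpoints in a direction avoiding zeros) pushes $(q_1,q_2)$ outside all islands \emph{before} the surgery takes effect. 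Even after all this, one obtains only a \emph{weak} lifting property --- one lifts $\gamma\circ\theta$ for a monotone surjective reparametrization $\theta$, not $\gamma$ itself --- indicating that $f$ is likely not a locally trivial fibration at all. That idea and the accompanying quantitative estimates are missing from your proposal, and without them the lifting step does not go through.
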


\begin{proof}
 We claim that the map \(f\) has the following property: for any continuous path \(\gamma : [0,1] \rightarrow \Omega \mathcal S_{\overline{\Sigma}, \overline{\PP}  } (\overline{p} ) \), and any \((C_0,Q_0 , m_0,\omega_0)\in f^{-1} (\gamma(0))\), there exists a monotone surjective continuous map \(\theta: [0,1]\rightarrow [0,1]\) such that the path \(\gamma\circ \theta \) can be lifted to a path \(\eta: [0,1]\rightarrow \Omega \mathcal B _ c (p)\) satisfying \( f\circ \eta= \gamma \circ \theta\) and \(\eta(0)= (C_0,Q_0, m_0,\omega_0)\). This (weak) lifting property of paths implies the lemma since connected components of \(\Omega \mathcal S_{\overline{\Sigma}, \overline{\PP} } (\overline{p})\) are path connected (they are manifolds). 

 Let us now prove the claim. Let \(a\in H_1(\Sigma\setminus \PP, \mathbb Z)\) be the homology class of \(c\), and  \( b\in H_1 (\Sigma\setminus\PP) \) be a class such that \( a\cdot b= 1\). Given \( (\overline{C}, \overline{Q}, \overline{m}, \overline{\omega}) \in \Omega \mathcal S_{\overline{\Sigma}, \overline{\PP} } (\overline{p})  \), elements of \( f^{-1} (\overline{C}, \overline{Q}, \overline{m}, \overline{\omega}) \) are in correspondance with the moduli of homology classes \(\overline{\kappa}\) with {\bf different} fixed extremities in \( \overline{C}\setminus \overline{Q} \), satisfying 
 \begin{equation} \label{eq: isoperiodic constraint} \int _{\overline{\kappa}} \overline{\omega} = p ( b) . \end{equation}
 Indeed, given such a homology class \(\overline{\kappa}\), one associates a pointed nodal curve \((C,Q)\) by identifying the endpoints of \(\overline{\kappa}\) and by letting \(Q\) being the image of \(\overline{Q}\) after the identification of the points, a meromorphic form \(\omega\) on \(C\) which is the extension of the naturally defined form  \(\overline{\omega}\) outside the node, and the marking \(\overline{m}\) is defined by the class of pinching maps \( (\Sigma, \PP) \rightarrow (C, Q) \) that pinch \(c\) to the node (and is a homeomorphism outside \(c\)), send the homology class \(b\) to the homology class of the image of \( \overline{\kappa}\) in \(C\) (this is a closed loop) and satisfy that the induced map\( (\overline{\Sigma}, \overline{\PP})\rightarrow (\overline{C}, \overline{Q} )  \) belongs to the Torelli class \(\overline{m}\). Such markings exist, and are unique up to the group generated by the Torelli group and the Dehn twist around the curve \(c\), which stabilizes the stratum \(\mathcal B_c\). So the element \((C, Q,m,\omega) \) is well-defined and by construction its period is \(p\). Reciprocally, given an element \((C,Q, m,\omega) \in  f^{-1} (\overline{C}, \overline{Q}, \overline{m}, \overline{\omega}) \), let \(\kappa: [0,1]\rightarrow C\setminus Q\) be a representative of \(p(b) \) such that \( \kappa(t)\) is equal to the node exactly for \( t=0\) and \(t=1\). Then \(\kappa\) lifts to the normalization \(\hat{C}\simeq \overline{C}\) as a path \(\overline{\kappa}\) with  fixed extremities the two distinct points \(q_1,q_2\in \hat{C}\) coming from the node. The homology class of \(\hat{\kappa}\) with fixed extremities is well-defined in \(\overline{C}\), and we are done.

We denote by \( \gamma(t) = (\overline{C_t}, \overline{Q_t} , \overline{m}_t , \overline{\omega}_t)\), and by \( r_0>0 \) the infimum of the injectivity radii of the surfaces \( (\overline{C_t},\overline{\omega}_t)  \) for \(t\in [0,1]\) (see subsection \ref{ss: singular translation}). In particular, for any \(t\in [0,1]\) and any point \(x\in \overline{C_t}\), the form \(\overline{\omega}_t\) is exact in restriction to the open ball of radius \(B_{\overline{\omega}_t} (x, r_0) \) and the primitive of \(\overline{\omega}_t\) on this ball that vanishes at \(x\) induces a surjective map onto the euclidian disc of radius \(r_0\) centered at the origin in the complex line.  

By uniform continuity of \(\gamma\), for any \(\varepsilon >0\), there exists a time \(\tau>0\), such that for every \(t\in [0,1]\) and \(h\in [0, \inf (\tau, 1- t)]\), the surface \( (\overline{C}_{t+h}, \overline{\omega}_{t+h})\) is obtained from \( (\overline{C}_t, \overline{\omega}_t)\) by performing a surgery on the \(\varepsilon \)-neighborhood of the zero set \(Z(\overline{\omega} _{t_0})\) of \(\overline{\omega}_{t_0} \). We will fix  \(\varepsilon = \frac{r_0 }{ C ^{(2g-2+n) ^2 +1}} \) in the sequel where \(C\) is a large constant (at least \(\geq 2\)) to be determined later on.

Let \(t_0\in [0,1]\) and \(\overline{\kappa}_{t_0}\) be any homology class with fixed distinct extremities in \( \overline{C}_{t_0}\). We will show that one can find a non decreasing continuous surjective map \(\theta : [t_0, t_0 +\varepsilon ]\rightarrow [t_0, t_0+\varepsilon ] \) and a path \( \eta : [t_0, t_0 +\varepsilon ]\rightarrow \Omega \mathcal B_a(p) \) such that \( f\circ \eta = \gamma \circ \theta\).  Applying this inductively to the intervals of a sufficiently thin subdivision of the unit interval, this will prove the claim. 

We define \(\theta \) as follows: \( \theta (t) = t_0\) if \(t\in [t_0, t_0 +\varepsilon/ 2] \) and \( \theta (t) =  2t - (t_0 +\varepsilon/2)\) if \( t\in [t_0 +\varepsilon/ 2, t_0 +\varepsilon ]\). We will now construct a family of homology classes with fixed distinct extremities \(\overline{\kappa}_t\) on \( \overline{C}_{\theta(t) }\), for \( t\in [t_0, t_0 +\varepsilon]\) that satisfies the isoperiodic constraint \eqref{eq: isoperiodic constraint}.

In the interval \([t_0, t_0 +\varepsilon/ 2] \), this family consists in a deformation of \(\overline{\kappa}_{t_0}\) into a family  of homology classes \(\{\overline{\kappa}_t\}_{t\in [t_0, t_0 +\varepsilon/ 2]}\) with fixed distinct extremities on the \textit{same} curve \(\overline{C}_{t_0}\), the objective being that after this deformation the two extremities of \(\overline{\kappa}_{t_0+\varepsilon /2}\) are sufficiently far away from the zero set of \(\overline{\omega}_{t_0}\). We use the following  

\begin{lemma}[Archipelago lemma]
There exists \( r \in [\frac{r_0}{C^{(2g-2+n)^2}} , r_0  ]\) such that one can cover the zero set \(Z(  \overline{\omega}_{t_0}) \) of \(\overline{\omega}_{t_0}\) by a finite family of balls \( B( v , r/C ) \subset \overline{C}_{t_0}\), \(v\in V\subset \overline{C}_{t_0} \), whose centers are separated in \( (\overline{C}_{t_0}, \overline{\omega}_{t_0})\) by a distance larger that \(r\). 
The balls \( B(v, 2r/ C) \), \(v\in V\), will be called the archipelagos. Any point which does not belong to any archipelago is \(\varepsilon \)-distant to the zero set of \(\overline{\omega}_{t_0}\).
\end{lemma}

\begin{proof} Since there are no more than \( 2g-2+n\) zeroes, the pigeon holes principle says that there exists \(r\) of the form \( r= r_0 / C ^{k} \), for \( k\) an integer in between \(0\) and \( (2g-2+n)^2 - 1\) so that the interval \( (r/C, r] \) does not contain any number of the form \( d(v, v') \) for \( v,v'\) two distinct elements in \(Z(\overline{\omega}_{t_0})\).  Choose such a number. The relation on \(Z(\overline{\omega}_{t_0})\) defined as \( v\sim v'\) iff \( d(v,v') \leq r/C\) is then an equivalence relation, for if \( d(v,v') \leq r/C\) and \(d(v',v'') \leq r/C\), then \( d(v, v'')\leq 2r/C\) by triangular inequality, and since \( d(v,v'')\notin (r/C, r]\), this implies that \( d(v,v'')\leq r/C\).  A subset \(V\subset Z(\overline{\omega}_{t_0})\) of representatives of this equivalence relation satisfies the conclusion of the first statement. Now, since the zero set \(Z(\overline{\omega}_{t_0})\) is contained in the union of the balls \( B(v, r/C)\), any point that does not belong to any archipelago, have a  distance to the zero set of \(\overline{\omega}_{t_0}\) bounded from below by \(r/C\geq r_0 / C^{(2g-2+n)^2+1}=\varepsilon\).
\end{proof}

Let \(\overline{\kappa}_{t_0}^\pm\) the two distinct extremities of \(\overline{\kappa}_{t_0}\), and let \( \Theta \) be a direction which is different from any integral of \(\overline{\omega}_{t_0}\) in between \(\overline{\kappa}_{t_0} \) and a zero of \(\overline{\omega}_{t_0}\). The two unit speed geodesics \(\alpha^\pm \) issuing from \(\overline{\kappa}_{t_0}^\pm\) in the \(\Theta\) direction do never cross a zero of \(\overline{\omega}_{t_0}\) and are defined for every positive time.  These geodesics have three properties: 
\begin{enumerate}
    \item after entering an archipelago, they leave it before the time \(4r/C\),  
    \item the time they spend in between two archipelagos is at least \( r- 4r/C\), 
    \item \(\alpha ^+ (s)\neq \alpha ^- (s) \) for any \(s\geq 0\).
\end{enumerate}
The first property is a consequence of the fact that the primitive of \(\overline{\omega}_{t_0}\) can be defined on each archipelago and is mapped onto an euclidean ball of radius \(2r/C\), and so the image of a connected component of the intersection of \(\alpha^\pm\) with a given archipelago are euclidean unit speed geodesic segments that cannot spend a time exceeding \(4r/C\) in that ball. The second property comes from the fact that points of \(V\) are separated by a distance of \(r\). The third from the fact that \(\alpha ^\pm \) have the same direction and different origin. 

The first and second property imply that the density of time that either  \(\alpha ^+\) of \(\alpha ^- \) spend in an archipelago is bounded from above by \( 2\frac{4r/C}{r- 4r/C}=\frac{8}{C-1} \). So if \( C>9\) there exists a time \(s_0\) so that none of \( \alpha^+ (s_0)\) nor \(\alpha^- (s_0)\) does  belong to an archipelago. For \(t\in [t_0, t_0+\varepsilon /2]\), we define \(\overline{\kappa}_t \) as the homology class with fixed extremities 
\[ \overline{\kappa}_t ^\pm  = \alpha^ \pm \left(t_0+ (t-t_0) \frac{2s_0}{\varepsilon}\right)\]
on \(\overline{C}_{t_0}\), which is a continuous deformation of \(\overline{\kappa}_{t_0}\). By construction, the extremities of \( \overline{\kappa}_{t_0+\varepsilon/2}\) and distant from the zero set of \(\overline{\omega}_{t_0}\) by an amount of \(\varepsilon\).

We are now in a position to  define \(\overline{\kappa}_t\) for \(t\in [t_0+\varepsilon /2, t_0+\varepsilon ]\). We first take a representative of \(\overline{\kappa}_{t_0+\varepsilon/2}\) on \(\overline{C}_{t_0}\) that does not cross the archipelagos. Since the translation surfaces \( (\overline{C}_{\theta(t)}, \overline{\omega}_{\theta(t)} ) \) for \(t\in [t_0+\varepsilon/2, t_0+\varepsilon]\) are obtained from \((\overline{C}_{t_0}, \overline{\omega}_{t_0} )\) by a surgery inside the archipelagos, we can define the homology class \(\overline{\kappa}_t\) with fixed extremities as being given by \(\overline{\kappa}_{t_0+\varepsilon/2}\)  in the complementary to the archipelagos. This completes the proof of the lemma.
\end{proof}

\section{Isoperiodic forms with a single zero and real periods on smooth curves}

\subsection{Combinatorial structure of a form with real periods}\label{eq: combinatorial structure of a form} 

A useful tool to understand the structure of forms with real periods is the notion of ribbon graph. This is a graph together with the data of a cyclic orientation of the set of germs of edges getting out of a vertex for each vertex of the graph. A graph embedded in an oriented surface has a natural structure of ribbon graph, and reciprocally, any ribbon graph can be embedded in a germ of oriented surface in a unique way up to orientation preserving homeomorphism. There are some particular cycles (left and right cycles) on such a ribbon graph that will play an important role in the sequel: the left (resp. right) cycles are the concatenation of positive edges obtained by turning left (resp. right) at each vertex. In the sequel we will consider special kind of oriented ribbon graphs: those having an orientation of their edges, in such a way that at each vertex, the edges are successively in and out in the cyclic ordering. A riemannian ribbon graph is a ribbon graph together with a riemannanian metric on its edges. 

Let \( (C, Q, \omega) \in\Omega\mathcal{M}_{g,n}\) be a meromorphic \(1\)-form with $n$ simple poles on a smooth genus $g$ curve. If the periods of $(C,Q,\omega)$ are real, i.e. $$\Lambda_{\omega}=\left\{ \int_a\omega: a\in H_1(C\setminus Q,\mathbb{Z})\right\} \text{ is a subgroup of } \mathbb{R},$$ then the oriented real foliation $\mathcal{G}:=\mathcal{G}_{1}$ of $C$ associated to $1\in\mathbb{S}^1$ (see subsection \ref{ss: singular flat metric and directional foliations}) admits a non-constant real analytic first integral $C\rightarrow \mathbb{R}$, given by $z\mapsto\int_{z_0}^z\Im \omega$. 
The union of saddle connections of a the foliation \(\mathcal G\) has the structure of an oriented riemannian  ribbon graph, that we denote \( R(\mathcal G)\), and that we call the saddle connection graph. It is contained in the pre-image of the finite set of critical values of the first integral \( z \mapsto \int_{z_0} ^z \Im \omega\). 
The saddle connection graph is oriented, and carries a riemannian structure, the one induced by the square of the modulus of the meromorphic form.

A simple pole of a form with real periods is said to be positive or negative if the period of the form on a positive loop around the pole has the corresponding sign.  

The existence of the first integral then implies that there are no minimal components in $C\setminus R(\mathcal{G})$. The only possibility left in this case for each component of $C\setminus R(\mathcal{G})$ is a cylinder of closed geodesics. Moreover, every local separatrix at a zero extends to a saddle connection. The graph $R(\mathcal{G})$ is precisely the union of all of them and the number of vertices and edges of $R(\mathcal{G})$ is uniquely determined by the orders of the zeros. For instance, if the form $\omega$ has a single zero then the saddle connection graph has $2g-1+n$ closed saddle connections at one vertex. Moreover, in this case, all of the cylinders in the complement of the saddle connection graph have infinite volume. Indeed, the existence of a component of closed geodesics of finite volume would imply that the saddle lies in the boundary of both boundary components of the annulus of closed geodesics, and therefore it would be possible to construct a closed geodesic that is orthogonal to $\mathcal{G}$ at each point. The integral of $\omega$ along this geodesic would have non-zero imaginary part, contrary to the current hypothesis.

 Next we remark that the boundary component of any of the semi-infinite cylinders describes a cycle in the ribbon graph $R(\mathcal{G})$ that respects the orientation of the foliation. Indeed,  each saddle connection will have a pole of positive peripheral period on its left and a pole of negative peripheral period to its right. If we start at the saddle and follow the closeby (closed) leaves of the pole on the left (resp. right), the separatrix that we follow after reaching the saddle is the adjacent one on the side of the pole of positive (resp. negative) peripheral period. This is precisely the definition of left (resp. right) cycle on a given ribbon graph. Reciprocally, each left (resp. right) cycle in the ribbon graph corresponds to a unique positive (resp. negative) pole of the form. Homologically this relation can be stated as follows: each peripheral class in $H_1(C\setminus Q,\mathbb{Z})$ can be written uniquely as a sum \begin{equation}[s_1]+\dots+[s_k]\text{ or } -([s_1]+\ldots+[s_k])\label{eq:peripheral relation}\end{equation} where the ordered sequence $(s_1,s_2,\ldots,s_k)$ of pairwise distinct oriented saddle connections represents a cycle in the ribbon graph. The sign chosen in \eqref{eq:peripheral relation} is positive if the cycle always follows an adjacent saddle connection to the left, and negative if it follows the adjacent saddle connection to the right. The set of classes $$\{[s]\in H_1(C\setminus Q, \mathbb{Z}): s\text{ is a saddle connection of } \mathcal{G}\}$$ has $2g-1+n$ distinct elements.  Two saddle connections can only intersect at the saddle. The intersection of any two of them, $[s_i]\cdot [s_j]$ is zero if the union of separatrices of $s_i$ lie on one side of the union of separatrices of $s_j$ (and reciprocally) and $\pm 1$ depending on their relative orientations if they are alternated.  In the next lemma we determine conditions that allow to reconstruct a meromorphic form starting from a ribbon graph.  
\begin{lemma}\label{l:marked ribbon graph}
Let  $\mathcal R$ be an orientable Riemannian ribbon graph with a single vertex, $2k$ oriented  half-edges, having alternated orientations in the cyclic ordering and $n$ cycles (in total, to the left and to the right). Then, up to biholomorphism, there exists a unique meromorphic differential $\omega$ with $n$ simple poles, a single zero and real periods on a smooth complex curve $C$ of genus $g=\frac{(k-1)-n}{2}+1$ such that   $\mathcal{R}=R(\ker(\Re\omega))$. 

Moreover, for any choice of classes $c_1,\ldots, c_{2g-1+n}\in H_1(\Sigma\setminus\PP,\mathbb{Z})$ satisfying
the following conditions with respect to the set of edges $\{s_1,\ldots, s_{2g-1+n}\}$ of $\mathcal{R}$:
\begin{itemize}
    \item $c_i\cdot c_j=[s_i]\cdot [s_j]$  for every $i,j\in\{1,\ldots, 2g-1+n\}$.
\item for each sequence $i_1,\ldots,i_l$ such that $(s_{i_1},\ldots,s_{i_l})$ forms a left (resp. right) cycle in $\mathcal R$, then  $c_{i_1}+\ldots+c_{i_l}$ (resp.  $-(c_{i_1}+\ldots+c_{i_l})$) is the class of a positively oriented peripheral curve, 
\end{itemize}
there exists a unique element \( (C, m, \omega)\in\Omega \mathcal S _{\Sigma,\PP}\) lifting \((C, \omega) \in \mathcal M_{g,n}\) which is such that \( m _* c_i = [s_i]\) for each \( i = 1,\ldots, 2g-1+n\). 

\end{lemma}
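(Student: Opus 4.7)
The strategy is to reconstruct the translation structure from $\mathcal R$ by attaching flat cylinders. Concretely, I would thicken $\mathcal R$ via its ribbon structure to a topological surface in which $\mathcal R$ is embedded with complement equal to $n$ open discs, one per left/right cycle. To each such disc I would glue a semi-infinite Euclidean cylinder whose circumference equals the length of the bounding cycle and whose gluing respects the alternating in/out orientations of the half-edges at the vertex. The outcome is a singular translation surface with one cone point of total angle $2\pi k$ at the vertex (since the $2k$ alternating half-edges organize into $k$ angles of $2\pi$) together with $n$ semi-infinite cylindrical ends. Reading this back as a meromorphic form, the cone point is a zero of order $k-1$ and each cylindrical end is a simple pole; the Euler-characteristic identity $V-E+F=1-k+n=2-2g$ yields the stated genus $g = (k+1-n)/2$ and Riemann-Roch is consistent ($2g-2+n = k-1$).

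The periods of the resulting form $\omega$ are real because $C\setminus Q$ deformation retracts onto $\mathcal R$ (the cylinders retract onto their boundary cycles) and each edge of $\mathcal R$ integrates to a real number. The saddle connection graph of $\ker\Re\omega$ equals $\mathcal R$ since the semi-infinite cylinders contain no saddle connections and every separatrix issuing from the zero is an edge of $\mathcal R$. Uniqueness up to biholomorphism follows because any $(C',\omega')$ realizing $\mathcal R$ as its saddle connection graph admits a cylinder decomposition with cylinders of the prescribed circumferences, yielding an isometry of translation surfaces compatible with $\mathcal R$ and hence a biholomorphism.

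For the marked refinement, the deformation retract $C\setminus Q \simeq \mathcal R$ onto a one-vertex graph with $k=2g-1+n$ edges identifies $[s_1], \ldots, [s_{2g-1+n}]$ as a $\mathbb Z$-basis of $H_1(C\setminus Q, \mathbb Z)$. I would then check that the hypotheses force the $c_i$'s to be a basis of $H_1(\Sigma\setminus\PP, \mathbb Z)$: the cycle-sum condition places $\Pi_\PP$ inside $\langle c_1, \ldots, c_k\rangle$ (with matching peripherals), and modulo $\Pi_\PP$ the equality of intersection matrices forces the images $\bar c_i$ to generate $H_1(\Sigma, \mathbb Z)$ by a discriminant comparison (a proper finite-index sublattice would have discriminant $>1$ for the induced symplectic form, contradicting unimodularity on the $C$-side). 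Hence the linear map $c_i \mapsto [s_i]$ extends to an isomorphism $H_1(\Sigma\setminus\PP, \mathbb Z) \to H_1(C\setminus Q, \mathbb Z)$ that preserves the intersection form (by hypothesis) and matches peripheral classes (by the cycle condition). Corollary \ref{c: marking homologically smooth compact curves} then yields the unique marked lift $(C,m,\omega) \in \Omega\mathcal S_{\Sigma,\PP}$ with $m_*c_i = [s_i]$.

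The main obstacle I anticipate is verifying that the alternating orientations together with the ribbon structure assemble consistently into a translation atlas with the correct cone angle at the unique vertex and with left (resp. right) cycles becoming positive (resp. negative) peripheral loops. The secondary delicate point is the discriminant step in the marked part, which is what actually upgrades the intersection-matrix hypothesis into the statement that the $c_i$'s span all of $H_1(\Sigma\setminus\PP, \mathbb Z)$.
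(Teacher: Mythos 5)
Your proposal matches the paper's proof essentially step for step: gluing semi-infinite flat cylinders along the left/right cycles of $\mathcal R$, extracting the cone point of angle $2\pi k$ (hence a zero of order $k-1$), computing the genus via Euler characteristic, using the retraction of $C\setminus Q$ onto $\mathcal R$ for the homology basis and the reality of periods, and finally invoking Corollary~\ref{c: marking homologically smooth compact curves} for the marked lift. The unimodular-discriminant argument you give for why the $c_i$'s span $H_1(\Sigma\setminus\PP,\mathbb Z)$ is a welcome extra detail (the paper simply asserts the marking exists), though to fully close it you would also want to observe that the $n$ cycle-sums of the $c_i$'s must be $n$ \emph{distinct} peripherals — this follows because the only relation in $\Pi_\PP$ is $\sum_r\pi_r=0$ and the $n$ cycle-sums telescope to zero, forcing each peripheral to appear exactly once.
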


\begin{proof}
To construct the compact oriented surface with $n$ punctures we add a flat semi-infinite cylinder to each side of each cycle in $\mathcal R$, having the perimeter of the boundary that corresponds to the sum of the lengths of the saddle connections that form it.  The given surface is endowed with a translation structure on the complement of $\mathcal R$ and it extends as a translation structure to the complement of the vertex and the $n$ punctures. Direct calculation via a triangulation shows that the oriented surface obtained by filling the punctures has Euler characteristic $(n+1)-k$, which gives the desired genus $g=\frac{k-n-1}{2}+1$.  The ribbon graph $\mathcal R$ is a deformation retract of the punctured surface. Its homology group is therefore isomorphic to $H_1(\mathcal R)$, which is an abelian group freely generated by  the classes of the $k=2g+n-1$ saddle connections. On the other hand we can extend the  complex structure to the punctures to define a compact Riemann surface, that we call $C$. The translation structure induces a holomorphic 1-form $\omega$ with isolated singularities at the punctures. Each cylinder corresponds to a simple pole. As the vertex has finite angle $2\pi k$ around it, the meromorphic form has necessarily a point of order $k-1$ at the vertex. The pair $(C,\omega)$ has the desired properties. Each of the leaves of the vertical foliation in $C$ cuts $R$ in a unique point. This allows to define the retraction $C\setminus (\omega)_{\infty}\rightarrow \mathcal R$. 

If $(C',\omega')$ is another form realizing a ribbon graph isomorphic to $\mathcal R$, we can construct a biholomorphism between $(C,\omega)$ and $(C',\omega')$ by using the fibers of the retractions  $C\setminus{(\omega)_{\infty}}\rightarrow \mathcal R$ and $C'\setminus{(\omega')_{\infty}}\rightarrow \mathcal R$ defined by following the leaves of the vertical foliation on $C$ and $C'$, and the fibers of the horizontal foliation on $C$ and $C'$. 

One can uniquely find a homological marking \(m\) permitting to define the lift of \((C,\omega)\in \Omega \mathcal M_{g,n} \) satisfying the second part of the statement with the use of Corollary \ref{c: marking homologically smooth compact curves}. \end{proof}



\subsection{Chord diagrams of ribbon graphs of saddle connections}

The set of germs of separatrices at a vertex  of the saddle connection graph $R(\mathcal{G})$ associated to a meromorphic differential with simple poles and real periods is partitioned into those that get out from their corresponding vertex (forming a set denoted \(S^{out} \)) and those that get in their corresponding vertex (forming a set denoted \(S^{in} \)). 

In the case of a form in the minimal stratum (i.e. having a single zero of multiple order), denoted $\Omega\mathcal M_{g,n}(2g-2+n)$, the combinatorial structure of $R(\mathcal{G})$  can be completely understood by the alternating cyclic ordering on the set \( S^{out} \cup S^{in} \) of germs of separatrices, together with the bijection \begin{equation}\label{eq:involutive corr} S^{out}\rightarrow  S^{in}\end{equation} between them that associates to each separatrix in $S^{in}$ the separatrix in $S^{out}$ that corresponds to the other end of the saddle connection. 

To represent the saddle connection graph we will mostly  use a chord diagram as follows: on an abstract oriented  circle, boundary of an oriented disc, we consider a point for each germ of separatrix of \( S^{out} \cup S^{in} \), ordered in the cyclic ordending of the separatrices. The correspondence \eqref{eq:involutive corr} can be interpreted in the finite set of points of the circle.  For each element of \(S^{in}\) we draw an oriented chord in the disc starting at the corresponding point of the circle and ending at the point of the circle corresponding to the element of \(S^{out}\) via \eqref{eq:involutive corr}.  The intersection between the homology classes of the saddle connections in the ribbon graph - described just before the statement of Lemma  \ref{l:marked ribbon graph}) - can be read in the chord diagram by looking at the intersections of the chords inside the disc: it is zero if they do not intersect and $\pm 1$ if they do, where the sign corresponds to the same sign of the intersection of the oriented chords.  Moreover, each left cycle (resp. right cycle) in the original ribbon graph -corresponding to a peripheral class of positive (resp. negative) pole-  can be obtained in the chord diagram by taking a cycle in the union of the chords with the circle, where we follow the direction of a chord, and, each time we reach an endpoint of a chord, we turn to the right (resp. left) either in the circle (if we arrive from a chord) or on a new chord, if we arrive from the circle. 
In other words, peripherals of positive poles are obtained by considering the cycles turning to the right and peripherals of negative poles by turning to the left, each time we reach a point of intersection of a chord with the circle. This is a consequence of the change in orientation of the saddle connections. 

 Therefore  to each element in $\Omega\mathcal M_{g,n}(2g-2+n)$ with real period, we associate a unique chord diagram having $(2g-2+n)+1$ chords, $n$ cycles and a length associated to each chord.  All examples of combinatorics with four chords are shown in Figures \ref{fig:ribbon  graphs} and \ref{ribbon}. Thanks to Lemma \ref{l:marked ribbon graph} the oriented chord diagram together with a data of lengths completely determines the meromorphic form up to biholomorphism.

\begin{figure}
     \centering
     \includegraphics[scale=0.6]{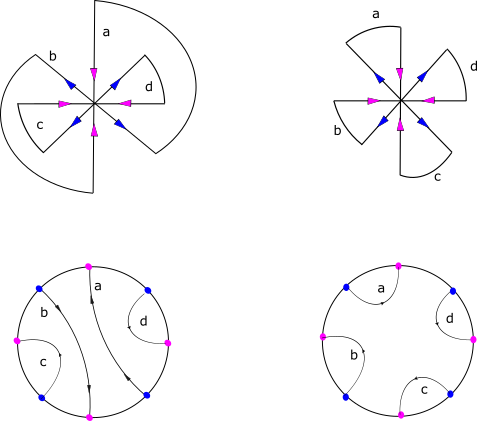}
     \caption{Ribbon graphs and their corresponding chord diagrams in the case of four chords corresponding to forms of type $(0,5)$}
     \label{fig:ribbon  graphs}
 \end{figure}

When we want to characterize the point in the Torelli cover $\Omega\mrs_{\Sigma,\PP}$ of $\Omega \mathcal M_{g,n}$ we need to keep the information on the homology classes associated to each saddle connection (chord) and their intersections. Indeed, by the characterization given in Lemma \ref{l:marked ribbon graph}, the homological marking is completely determined by this information. 

The chord diagram associated to a homologically marked form $(C,m,\omega)\in\Omega\mrs_{\Sigma,\PP}$ with real periods and a single zero  is the chord diagram of $(C,\omega)\in \Omega\mathcal M_{g,n}$ with the following extra information
\begin{itemize}
    \item To each chord we associate the homology class in the homology group of the reference surface $H_1(\Sigma\setminus\PP)$ determined by the class of the  corresponding saddle connection on the surface under the marking.
    \item The intersection of two homology classes  corresponding to two chords coincides with the (signed) intersection of the chords inside the disc.
\end{itemize}

\begin{remark}\label{rem:marking of chords} Whenever we have the information of the period homomorphism $p: H_1(\Sigma\setminus\PP)\rightarrow \mathbb{R}$ of the form, the homology classes of the saddle connections already give the information on the lengths via $p$, so we omit the lengths from the diagram. 
\end{remark}

In Figures \ref{fig:ribbon  graphs} and \ref{ribbon} we find two couples of examples  of saddle connection graphs and their associated chord diagrams. To define a point in $\Omega\mathcal{M}_{g,n}$we need to associate a positive real number to every chord. To define a lift to $\Omega\mrs_{\Sigma,\PP}$ we also need to associate a homological class to each chord and preserve the right relations between those, the peripheral classes, and their intersections. 
\subsection{Schiffer variations connecting forms with a single zero} 

In this section, we are interested in finding isoperiodic deformations (in the form of Schiffer variations)  that join different forms with a single zero. They correspond to the combinatorics of twins in Figure \ref{fig:schiffer_one_closed_twin} with $B=C=2\pi$: two twins forming an angle of $2\pi$ such that the shortest saddle connection among both does not return inside the $2\pi$ angle.

In order to define and analyze the effect of the Schiffer variation on the form,  we select two saddle connections in the same direction of distinct lengths that form an angle $2\pi$. Call the germ of the longer saddle connection {\bf long}, and the germ of the shorter saddle connection {\bf short}. In between {\bf long} and {\bf short}, there is a germ in an opposite direction. We call this third germ {\bf central}. In order to get another form on a smooth surface, we assume that the germs \({\bf short} \) and \({\bf central} \) belong to different saddle connections. The Schiffer variation changes the saddle connections  in the way described in Figure \ref{fig:schiffer_chord}. Let us analyze the effect on the chord diagrams.\\

Consider the chord diagram of the saddle at which the Schiffer variation is performed. The saddle connection containing the \({\bf short} \) germ divides the circle into two intervals: one of the intervals contains the germs \({\bf central} \) and \({\bf long}\). These two germs are located at one of the extremities of the interval. 

\begin{figure}
     \centering
     \includegraphics[scale=1]{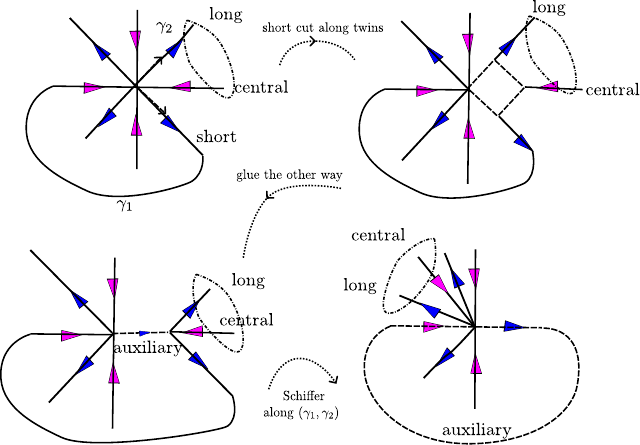}
     \caption{Combinatorial effect of the Schiffer variation along a pair of saddle connections $\gamma_1,\gamma_2$ forming initial angle $2\pi$ where $\gamma_1$ is shorter than $\gamma_2$.}
     \label{fig:schiffer_chord}
 \end{figure}

To obtain the chord diagram after the surgery, we take  the group of two germs \( \{ \text{{\bf central}, {\bf long}}\} \) of the initial diagram and place them in the same order at the other end of the circle interval between the extremities of the short saddle connection which contains the germs. This defines a new cyclic ordering \(S^{out}\cup S^{in} \) of germs of saddle connections on the chord diagram without changing the correspondence between the sets \( S^{out}\) and \(S^{in}\) (see Figure \ref{fig:fake_to_fake}). 

\begin{figure}[h]
     \centering
     \includegraphics[height=5cm]{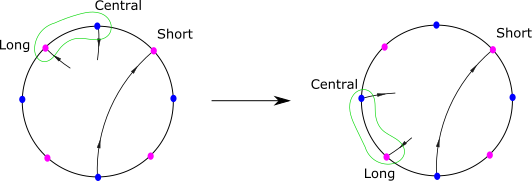}
     \caption{An effect of a Schiffer variation on a chord diagram: the group of two germs \( \{ \text{{\bf central}, {\bf long}}\} \) is moved to the opposite extremity of the interval that contains them.}
     \label{fig:fake_to_fake}
 \end{figure}

Let us further analyze the effect on the homology classes of the saddle connections. Denote by  
\[ [{\bf long}],\ [{\bf central}],\ [{\bf short}] \text{ and the other ones, } [{\bf other_k}]\text{'s},\]
the original set of homology classes of saddle connections. 
 
\begin{lemma}\label{l: homology classes of saddle connections}
If the couple of germs \( \{ \text{{\bf central}, {\bf short}}\} \) and \( \{ \text{{\bf central}, {\bf long}}\} \) are not the extremities of a same saddle connection, the homological classes of saddle connections after the Schiffer variation are  transformed (using the identifications of the homology groups given by the Gauss-Manin connection along the deformation as in  \eqref{eq:period coord on strata})   according to the following formulae
\[ [{\bf long}] \rightarrow [{\bf long}] - [{\bf short}] , \ [{\bf central }]\rightarrow [{\bf central }] +[{\bf short}] ,\] 
\[ [{\bf short}] \rightarrow [{\bf short}] , \text{ and } [{\bf other_k}]\rightarrow [{\bf other_k}]\] 
\end{lemma}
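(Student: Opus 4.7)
The plan is to follow the continuous Schiffer deformation $\{(C_t,Q_t,\omega_t)\}_{t\in[0,|{\bf short}|]}$ in $\Omega\mrs_{\Sigma,\PP}$ and read off, under the Gauss--Manin identification of $H_1(C_t\setminus Q_t;\mathbb{Z})$ with $H_1(C_0\setminus Q_0;\mathbb{Z})$, the class of each saddle connection of the final form. Recall from subsection \ref{ss:local description} that at time $t$ the surgery slits the two twin segments of length $t$ and reglues the opposite sides, so it is supported in an arbitrarily small tubular neighborhood $U_t$ of these segments; outside $U_t$ the flat structure is literally unchanged and the Gauss--Manin transport can be realised as the identity.

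First I would verify the equalities $[{\bf other}_k]\to[{\bf other}_k]$ and $[{\bf short}]\to[{\bf short}]$. For each ${\bf other}_k$, the associated saddle connection uses at the unique zero a pair of germs distinct from the three germs ${\bf short}$, ${\bf central}$, ${\bf long}$, and therefore admits a representative disjoint from $U_t$ in its interior, so its class is preserved. The same holds for ${\bf short}$: by the hypothesis of the lemma, the saddle connection in the ${\bf short}$ direction after the surgery still closes up to a loop whose germs are the same as those of the original ${\bf short}$ and whose middle portion lies outside $U_t$, so its class is unchanged.

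Second, I would analyse the new ${\bf long}$ and ${\bf central}$ saddle connections by tracing them across $U_t$. By the slitting-and-regluing description, the saddle connection issuing from the ${\bf central}$ germ first traverses the translated copy of the ${\bf short}$ slit as a geodesic of length $|{\bf short}|$, then exits $U_t$ along the continuation of the old ${\bf central}$ saddle connection and returns to the zero; parallel transport back to $(C_0,\omega_0)$ identifies this loop, up to homotopy in $C_0\setminus Q_0$, with the concatenation ${\bf short}\star{\bf central}$, yielding the class $[{\bf central}]+[{\bf short}]$. Symmetrically, the new ${\bf long}$ saddle connection has length $|{\bf long}|-|{\bf short}|$ and is homotopic in $C_0\setminus Q_0$ to the concatenation of the old ${\bf long}$ with the reverse of the old ${\bf short}$, producing $[{\bf long}]-[{\bf short}]$.

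The main obstacle is the detailed bookkeeping inside $U_t$: one must identify exactly which edge of the ${\bf short}$ slit is glued to which edge of the ${\bf long}$ slit and track orientations so that the signs in the above concatenations come out correctly. The hypothesis of the lemma is precisely what ensures that the three saddle connections remain distinct after the surgery and that the homotopies above are not obstructed by an early return to the zero. A convenient self-check is that the periods of the transformed classes reproduce the lengths $|{\bf short}|$, $|{\bf long}|-|{\bf short}|$ and $|{\bf central}|+|{\bf short}|$ of the new saddle connections, as required by the isoperiodicity of the Schiffer variation.
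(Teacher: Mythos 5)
Your proposal is correct in outline but takes a genuinely different route from the paper. You track the cycles \emph{directly} through the surgery by localizing the cut-and-paste to a tubular neighborhood $U_t$ of the slits, arguing that disjoint saddle connections (and hence their classes) are carried along by Gauss--Manin unchanged, and then tracing the new \textbf{long} and \textbf{central} leaves through $U_t$ to exhibit them explicitly as concatenations $\textbf{long}\star\textbf{short}^{-1}$ and $\textbf{short}\star\textbf{central}$. This requires the orientation and side-of-slit bookkeeping you acknowledge, and to make it airtight you would also need to argue that the parallel transport can be carried out across the whole deformation, including the degenerate endpoint where the twins have length $|\textbf{short}|$ and the stratum is re-entered.

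The paper sidesteps all of that bookkeeping by exactly the observation you offer at the end as a ``self-check.'' After a small perturbation (legitimate because Gauss--Manin is locally constant on the set of forms with real periods in the minimal stratum), the period homomorphism $p$ can be taken \emph{injective}. One then reads from the chord-diagram picture (Figure~\ref{fig:schiffer_chord}) the lengths of the new saddle connections: $p([\textbf{long}])-p([\textbf{short}])$, $p([\textbf{central}])+p([\textbf{short}])$, $p([\textbf{short}])$ and the $p([\textbf{other}_k])$'s. Since the Schiffer variation is isoperiodic and $p$ is injective, a cycle's period pins it down uniquely, so each new class is forced to be the claimed linear combination without ever choosing sides of slits or tracking orientations. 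Your geometric argument, if you carried out the $U_t$-analysis, would produce the same conclusion; the trade-off is that the explicit approach gives a clearer mental picture of \emph{why} the concatenations arise, while the injectivity argument is shorter and avoids any risk of a sign error. What your sketch is missing to be self-contained is precisely the promotion of the ``self-check'' from sanity check to the actual engine of the proof (or, alternatively, completing the $U_t$-bookkeeping you outline).
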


This formula will be useful to understand the change of marking before and after surgery.
 
 \begin{proof} 
 The Schiffer variation defined at one point can be extended to neighbouring points in the set of forms with real periods in the minimal stratum by using the twins defined by the (short) saddle connection and the corresponding (long) twin lying at angle $2\pi$. The relation between homology classes is constant up to small deformation in the set of forms with real periods in the minimal stratum as was shown via the Gauss-Manin connection in the definition of the period coordinates on strata (with values in the cohomology group  \eqref{eq:period coord on strata}). 
 
 Up to a small deformation of the lenghts of the saddle connections,  we can suppose that the period homomorphism $p$ of the form is injective. The initial ribbon graph has lengths given by 
 \[ p([{\bf long}]),\ p([{\bf central}]),\ p([{\bf short}]) \text{ and the other ones, } p([{\bf other_k}])\text{'s},\]
 The ribbon graph obtained after  the Schiffer variation along the given pair of twins (see Figure \ref{fig:schiffer_chord})  has lengths given by
 \[ p([{\bf long}])-p([{\bf short}]),\ p([{\bf central}])+p([{\bf short}]),\ p([{\bf short}]) \text{ and } p([{\bf other_k}])\text{'s},\]
 
 Since Schiffer variations preserve the periods of cycles in the homology of the punctured surface, the injectivity of the period homomorphism $p$ implies the  relation of the homology classes is the one in the statement of the lemma. 
 \end{proof}

 \section{Elliptic differentials with three simple poles and real periods}\label{s:(1,3)}

In this section we prove Theorem \ref{t:Rgn} in the case \(g=1\) and \(n=3\), which is the most difficult case, namely we establish

\begin{theorem}\label{real13}
Let $(\Sigma,\PP)$ be of type $(g,n) = (1,3)$ and \(p\in \text{Hom}(H_1(\Sigma\setminus\PP);\mathbb{R})\)   be a homomorphism with non-zero period on each peripheral class and whose image is not contained in $\mathbb{Q}\otimes p(\Pi)$. Then,  the isoperiodic set $\Omega\mrs_{\Sigma,\PP}(p)$ (fiber of  \(\per_{\Sigma,\PP}\) over $p$)   is connected. 
\end{theorem}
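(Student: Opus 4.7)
The plan is to follow the strategy outlined in Section~3 for this special case, in two steps. First, reduce any point of $\Omega\mrs_{\Sigma,\PP}(p)$ by an isoperiodic path to a form in the minimal stratum $\Omega\mathcal M_{1,3}(3)$ with a single zero of order three. Second, prove that all such minimal-stratum forms with the given period lie in a single connected component of $\Omega\mrs_{\Sigma,\PP}(p)$ by a hands-on combinatorial analysis of their horizontal saddle connection ribbon graphs, supplemented by excursions into the boundary strata of $\Omega^*_0\mnc_{\Sigma,\PP}$ where previous low-complexity results apply. Since the general inductive scheme breaks for $(1,3)$, this second step is the technical heart of the argument.

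For the reduction to the minimal stratum, note that for $(g,n)=(1,3)$ the Riemann--Roch count gives $\sum_z \mathrm{ord}_z(\omega)=2g-2+n=3$, so the minimal stratum is the locus of forms with a single zero of order three. Starting from any $(C,Q,m,\omega)\in\Omega\mrs_{\Sigma,\PP}(p)$ with more than one zero, I apply Schiffer variations along pairs of twin horizontal geodesic segments joining two distinct zeros (see Figure~\ref{fig:schiffer_regular}), collapsing the two zeros into one while staying in the fiber over $p$. Because $p$ is real, the horizontal foliation $\mathcal{G}=\mathcal{G}_1$ admits a real first integral, so the complement of the horizontal saddle connection graph consists of semi-infinite cylinders and suitable twin pairs between distinct saddles always exist. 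The only trap along this iteration would be being caught as a degree-$2$ cover of a cylinder $(\mathbb C^*, a\,dz/z)$, and that locus is precisely what the hypothesis $p(H_1(\Sigma\setminus\PP))\not\subset \mathbb Q\otimes p(\Pi)$ excludes, since for such a cover the full period module would equal $\tfrac12\, p(\Pi)\subset \mathbb Q\, p(\Pi)$.

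For connectedness inside the minimal stratum, Lemma~\ref{l:marked ribbon graph} encodes each marked form in $\Omega\mrs_{\Sigma,\PP}(3)$ of period $p$ by its chord diagram (with $k=2g-1+n=4$ chords arranged into $3$ cycles), the $p$-lengths of the chords, and a compatible homology marking. For $(1,3)$ there are only two topological types of such diagrams, distinguished by the presence or absence of a peripheral saddle connection (a cycle of length one). Within each type I use the short--long Schiffer variation of Section~\ref{eq: combinatorial structure of a form}, whose effect on the marking is given by Lemma~\ref{l: homology classes of saddle connections}, to connect any two markings of the same diagram; the assumption $p(H_1(\Sigma\setminus\PP))\not\subset\mathbb Q\otimes p(\Pi)$ provides enough incommensurabilities of lengths to guarantee the existence of such short--long pairs. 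To pass between the two topological types I degenerate isoperiodically by shrinking a peripheral or closed saddle connection to zero length, reaching a boundary stratum of $\Omega^*_0\mnc_{\Sigma,\PP}(p)$ carrying a form with one separating node whose two parts have types $(0,n_2)$ and $(1,n_1)$ with $n_1\in\{1,2\}$ and $n_2\in\{3,4\}$; Lemma~\ref{l: connectedness spherical case} applied to the spherical piece together with Theorem~\ref{2poles} (or its trivial $(1,1)$ counterpart) applied to the elliptic piece shows this stratum is nonempty and connected, and Theorem~\ref{t:local structure} then lets one smooth the node back into the minimal stratum in the other topological type.

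The main obstacle, as emphasized in Section~3, is that the general inductive mechanism is unavailable for $(1,3)$. Indeed, an elliptic stable form with three simple poles, real periods, and zero residues at non-separating nodes admits at most one separating node, so the boundary of $\per^{-1}_{\Sigma,\PP}(p)$ in $\Omega^*_0\mnc_{\Sigma,\PP}$ is a disjoint union of $E$-boundary components which never meet in codimension-$2$ strata. The usual ``chain of codimension one strata through codimension two intersections'' argument therefore fails, and connectedness of the full fiber cannot be reduced to connectedness of the boundary in the standard way. The delicate point is to verify that the short--long Schiffer moves together with the single boundary-passage described above really do link every pair of markings of the same period: this requires matching, for each accessible $E$-module, a specific configuration of twins in the minimal stratum, and the hypothesis on $p$ is used crucially and repeatedly to guarantee these twins have the required incommensurable lengths.
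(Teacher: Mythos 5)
Your proposal correctly identifies the two-step outline (reduce to a single zero, then connect inside the minimal stratum via Schiffer variations and boundary excursions), and correctly diagnoses why the general inductive scheme fails for $(1,3)$: the boundary of the fiber is a disjoint union of $E$-boundary components. However, the proposal leaves a genuine gap precisely at the point you yourself flag as ``the delicate point'': you assert, without argument, that short--long Schiffer variations supplemented by a single boundary pass connect any two markings of the same diagram type. This is not obvious — it is the theorem. The set of markings with the given period $p$ is indexed (for the octopus type) by arm modules, which form an affine space over $\text{Hom}(H_1(\Sigma),\Pi_E/\mathbb Z\pi_E)$; a priori there is no reason the elementary Schiffer moves and a single degeneration should act transitively on this affine space. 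The paper's proof is structured around exactly this: Lemma~\ref{l:same arm module} shows that a boundary pass connects two octopodes with the \emph{same} arm module, Lemma~\ref{lho} shows one can always reach a large head octopus, Lemma~\ref{LHO1} computes that a short--long Schiffer variation (which passes through the butterflies type) changes the arm module by exactly $a^*$ for some $a$ in the module, and Lemmas~\ref{LHO2} and~\ref{final} exploit the hypothesis $p(H_1(\Sigma\setminus\PP))\not\subset\mathbb Q\otimes p(\Pi)$ through a careful density and auxiliary-module argument to generate all translations of the affine space. None of this is reproduced or sketched in your proposal, so the heart of the argument is missing.

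Two smaller points. First, your mechanism attribution is reversed relative to the paper: in the paper the boundary degeneration connects octopodes with the \emph{same} arm module, while the Schiffer variation through the butterflies type is what \emph{changes} the arm module; you describe the boundary pass as the device for switching between the two diagram types, which is not how the paper uses it. Second, in the reduction to a single zero you propose Schiffer variations along ``twin horizontal geodesic segments joining two distinct zeros''; if the zeros sit on different level sets of the real first integral there are no horizontal saddle connections between them. The paper (Lemma~\ref{rigid}) instead first deforms within the stratum so that the relative periods between zeros are \emph{non-real}, and then uses the shortest geodesic between distinct zeros (in whatever direction it points) together with its angle-$2\pi$ twin; this also rules out the twin being closed. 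Finally, the boundary case $(1,n_1)$ with $n_1=1$ you allow cannot occur, since an elliptic component carrying a nonzero holomorphic form cannot have real periods.
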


To prove Theorem \ref{real13} we can use Remark \ref{rem:GL2R invariance} and number the peripheral classes $\pi_1, \pi_2$ and $\pi_3$ in $H_1(\Sigma\setminus\PP)$ in such a way that 
\begin{equation}\label{eq:normalized p}p\in \text{Hom}(H_1(\Sigma\setminus\PP);\mathbb{R})\text{  satisfies } p(\pi_2)\leq p(\pi_1) <0 \text{ and } p(\pi_3)>0 .
\end{equation}

A first step consists in reducing the number of different zeros:
\begin{lemma}\label{rigid} Any elliptic form $(C,\omega)\in\Omega \mathcal M_{1,3}$ with three simple poles whose periods $\Lambda_{\omega}$ are contained in $\mathbb{R}$ but not contained in half the group of peripheral periods $$\frac{1}{2}\Pi_{\omega}$$
is isoperiodically connected to a form with a single zero. 
\end{lemma}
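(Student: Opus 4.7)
The plan is to join $(C,\omega)$ to a form in the minimal stratum $\Omega\mathcal{M}_{1,3}(3)$ by a sequence of isoperiodic Schiffer variations, each strictly reducing the number of distinct zeros. First I would exploit that, because $\omega$ has real periods, the horizontal directional foliation $\mathcal{G}_1$ admits a real first integral $z\mapsto \int_{z_0}^z \Im\omega$; consequently $\mathcal{G}_1$ has no minimal component, the complement of its singular set is the disjoint union of the three semi-infinite cylinders associated to the three simple poles, and every separatrix at every zero extends to a complete horizontal saddle connection. In particular, the boundary of each cylinder is a closed cyclic concatenation of horizontal saddle connections whose total length is the absolute value of the peripheral period of the corresponding pole.

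Second, if $(C,\omega)$ is not already in $\Omega\mathcal{M}_{1,3}(3)$, there is a zero $z$ of $\omega$ of order $d\geq 1$, so the cone angle at $z$ is $2\pi(d+1)\geq 4\pi$ and there are at least two horizontal separatrices in the positive horizontal direction at $z$. I would select two such separatrices as twin paths and perform the Schiffer variation of Figure \ref{fig:schiffer_regular} along them, increasing the length parameter continuously until one of the twins first reaches its endpoint zero. According to the angle combinatorics described with Figure \ref{fig:schiffer_regular}, a suitable choice of twins at this moment produces an isoperiodic deformation whose net effect is to make $z$ regular while merging the two endpoint cone points, so that the number of distinct zeros strictly decreases. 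Since there are at most three distinct zeros (total multiplicity $2g-2+n=3$), iterating this reduction at most twice delivers a form in $\Omega\mathcal{M}_{1,3}(3)$ in the same isoperiodic fiber as $(C,\omega)$.

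The main obstacle is guaranteeing that at each step some Schiffer variation along horizontal twins truly realizes a strict decrease in the number of distinct zeros, rather than merely permuting configurations within the same stratum. This is the case whenever the cylinder boundary cycles are not all single closed saddle connections. If, on the contrary, every boundary cycle of every cylinder consisted of a single closed horizontal saddle connection based at a single vertex, then the ribbon graph described by Lemma \ref{l:marked ribbon graph} would be forced into a very rigid symmetric configuration: the peripheral periods would each equal $\pm$ the length of a single saddle connection, and the resulting chord diagram would identify $(C,\omega)$ with a double cover (via the elliptic involution) of a meromorphic form on $\mathbb{P}^1$ with three simple poles. In that case every period would be a half-integer combination of the peripheral periods, yielding $\Lambda_\omega\subseteq \tfrac12\Pi_\omega$, which is precisely excluded by the hypothesis of the lemma. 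Ruling out this degenerate configuration — a short case analysis on the chord diagrams of forms with real periods — is the technical heart of the argument, and once it is excluded the iterative reduction scheme above terminates in the minimal stratum as required.
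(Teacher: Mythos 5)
Your proposal takes a genuinely different route from the paper's, and in its present form it has real gaps.

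The paper's argument first perturbs $(C,\omega)$ within its stratum (using relative period coordinates) so that the integrals of $\omega$ between pairs of distinct zeros are \emph{non-real}, pairwise distances are distinct, and are not periods. It then takes the shortest geodesic $\gamma$ between distinct zeros and the geodesic $\sigma$ emanating at angle $2\pi$, and splits into three cases: if $\sigma$ ends at a third saddle it is longer, so the twin-in-$\sigma$ ends at a regular point and the Schiffer variation drops a saddle; if $\sigma$ were shorter it would be closed, contradicting the non-real integral; and if $\gamma$ and $\sigma$ have equal lengths joining the same pair, the Schiffer variation degenerates to a stable form with a non-separating node. The perturbation to non-real integrals is exactly what prevents the critical twins from being horizontal. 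By contrast, you work entirely with horizontal separatrices, all of which are saddle connections whose endpoints are zeros; as one of your growing twins hits a zero the other may simultaneously hit a zero or return to the start, and there is no mechanism in your write-up to rule these collisions out. This is not a technicality: it is precisely the scenario that forces the paper to pass through the boundary of the bordification.

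More seriously, you do not address the nodal degeneration at all, and that is where the hypothesis $\Lambda_\omega\not\subset\frac{1}{2}\Pi_\omega$ actually does its work. In the paper, once a stable form with a single non-separating node of zero residue and no zeros outside the node is reached, one normalizes to a genus-$0$ form with five marked points, takes a minimizing geodesic of length $L$ from the simple zero to the other marked point, and extends it to length $2L$; the hypothesis is invoked to show the extended geodesic cannot be closed (else $\Lambda_\omega\subset\frac{1}{2}\Pi_\omega$), producing the twins that smooth the node to a single-zero form. You instead invoke the hypothesis to exclude a ``double cover'' chord-diagram configuration in which every cylinder boundary is a single closed saddle connection — but that combinatorial type is not the obstruction the proof encounters, and (counting the $3+k$ saddle connections of a form with $k$ zeros against the three semi-infinite cylinder boundaries) it is not even clear the configuration you describe can occur. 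Fixing the proposal would require handling the nodal limit explicitly and relocating the use of the hypothesis there; as written the proposal is not a proof.
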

\begin{proof}
We claim  that if $\Lambda_\omega\not\subset\Pi_{\omega} $ then after a finite number of Schiffer variations we can reach either a form with a single zero or a stable form with one non-separating node with zero residue and no zeros outside the node. This means that the order of the form at the node is two.  The hypothesis of the Lemma is then used to join the latter to a form with a single zero. Let us prove this last statement first: after normalization of the non-separating node we get a form in $\Omega\mathcal M _{0,5}$ with precisely three poles (all simple) and two extra marked points corresponding to the node, one of which is a simple zero and the other a regular point. Its period homomorphism has image \(\Pi_\omega\). Choose a minimizing geodesic starting at the zero and ending at the other marked point. Let $L$ be its length, and follow the same  geodesic for a second distance of $L$. The minimizing property of the initial geodesic implies that the first moment where it can come back to the starting point is $2L$. By construction and the fact that  $\Lambda_{\omega}\subset \mathbb{R}$, then $\Lambda_\omega$ is the subgroup of \(\mathbb C\) generated by $L$ and \(\Pi_\omega\). If we suppose that following the geodesic for distance $2L$ we get a closed geodesic, we would have \(\Lambda_\omega\subset \frac{1}{2}\Pi_{\omega}\) contrary to assumption. Hence the other endpoint of the geodesic of length \(2L\) is a not the zero of the form. In terms of the original stable form, we have found two twins leaving a node: one is defined by the first half of the geodesic of length $2L$ and the other by the second half; the Schiffer variation along these twins leads to a form on a smooth curve (as shown in Figure \ref{fig:non_sep_shiffer}). Moreover, by construction the form has a single zero .

Let us prove the claim. 

A form in $\Omega \mathcal M_{1,3}$ can have up to  three simple zeros. Such form can be isoperiodically deformed in its stratum by using relative period coordinates to guarantee  that the pairwise distances between the zeros are different, not equal to a period of any closed curve and the integrals between the zeros are non-real numbers. Consider $\gamma$ the shortest geodesic between distinct zeros. Take the geodesic $\sigma$ leaving at angle $2\pi$ of $\gamma$.  

If its endpoint is a different saddle, then $\sigma$  is necessarily a saddle connection longer than $\gamma$. The Schiffer variation along $\gamma$ and its twin in $\sigma$  has a saddle less than the initial form (see text of Figure \ref{fig:schiffer_regular}). In this case we can restart the argument by choosing the shortest geodesic $\gamma$ between the two remaining zeros, and the saddle connection $\sigma$ as before.

If $\sigma$ is shorter than $\gamma$, then it is necessarily closed, which would imply that the integral of the form along $\gamma$ is real, contrary to assumption. The third and last possibility is that $\gamma$ and $\sigma$ have the same length and therefore join the same pair of distinct saddles.  Applying a Schiffer variation along these twins we reach a stable form on a curve with a node of zero residue (corresponding to the period of the cycle defined by the concatenation of one of the twins and the inverse of the other) It is precisely the situation that we obtain after smoothing a node. If we suppose that the cycle is separating, we reach a contradiction. Indeed, if it separates the three poles in  two groups, then the residue theorem necessarily fails on one of the sides, and if it does not separate the poles, then on one part we have a holomorphic differential with real periods, which is not possible due to the Riemann relations. Therefore, the node is non-separating. The next step is to carry a Schiffer variation to show that we can reach a form without zeros outside the node. For this aim we choose a geodesic between the remaining zero and the node. Its twin cannot end at the node, since otherwise all periods of the form would lie in $\Pi_\omega$, against assumption. An argument with exactly the same steps as when we joined the first zeros allows to conclude that we can reach a stable form with a separating node and no zeros outside the node. \end{proof}

Lemma \ref{rigid} implies that to prove Theorem \ref{real13} it suffices to connect elliptic forms with three simple poles and with a single zero (of order $3$) by isoperiodic deformations.
Denote by $ \Omega \mathcal M_{1,3}(3)$ the stratum of moduli space of elliptic differentials with a single zero of order $3$ and $ \Omega \mathcal S_{\Sigma,\PP}(3)$ its Torelli cover.

\begin{lemma} \label{types} If $(C,\omega)\in\Omega \mathcal{M}_{1,3}(3)$ has real periods and two negative peripheral periods, then the ribbon graph is one of the two possibilities of Figure \ref{ribbon}. They can be distinguished by the presence or absence of a peripheral curve among the saddle connections.
\end{lemma}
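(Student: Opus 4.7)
The proof proceeds by combinatorial enumeration of the saddle connection graph $R(\mathcal{G})$ using the framework developed in Section \ref{eq: combinatorial structure of a form}. Since $(C,\omega)$ lies in $\Omega\mathcal M_{1,3}(3)$, the unique zero has order $3$, hence cone angle $8\pi$, and $2\cdot(3+1)=8$ germs of separatrices emanate cyclically from the vertex, alternating between outgoing and incoming with respect to the horizontal foliation $\mathcal G$. As the periods are real and there is a single zero, the remarks preceding Lemma \ref{l:marked ribbon graph} show that there are no minimal components and no finite-area cylinders, so every local separatrix extends to a saddle connection. Consequently $R(\mathcal G)$ has exactly $2g-1+n=4$ edges, and the combinatorics of the graph is encoded in a chord diagram with $4$ chords on $8$ marked alternating points of the circle.

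The plan is then to count the faces. Using the rule recalled in the text before Lemma \ref{l:marked ribbon graph} that every saddle connection has a positive pole on its left and a negative pole on its right, the boundary of the semi-infinite cylinder around each pole corresponds either to a left-cycle (positive pole) or a right-cycle (negative pole). The assumption that exactly one peripheral period is positive forces all four left-sides of the edges to assemble into a single left-cycle of length $4$. The four right-sides are then partitioned between the two negative poles into two right-cycles whose positive integer lengths sum to $4$, and the only possibilities are $(1,3)$ and $(2,2)$. A right-cycle of length $1$ is by definition a single saddle connection whose right side alone bounds the cylinder around the pole; equivalently, its homology class equals (up to sign) the peripheral class of that negative pole, i.e., it is a peripheral saddle connection. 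In case $(2,2)$, every left- or right-cycle has length at least two, so no saddle connection is peripheral. This establishes the claimed dichotomy on the presence of a peripheral saddle connection.

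Finally, to match the two pictures of Figure \ref{ribbon}, I would show that each of the two numerical profiles admits a unique realization by a chord diagram up to rotation and orientation-preserving relabelling of the circle. The rigidity of the left-cycle of length $4$, which forces the right-turn walk on the chord diagram to visit each chord exactly once, sharply restricts the admissible chord pairings; combined with the prescribed length pattern on the opposite side (a single loop-chord with three remaining chords cyclically linked in case $(1,3)$, and two crossing pairs of chords in case $(2,2)$), a short case analysis pins down a single chord diagram per case. This enumeration is the main obstacle: one must be careful to respect the alternating outgoing/incoming labelling of germs at the vertex, and to identify chord diagrams that differ only by symmetries of the underlying ribbon graph on the torus. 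Once carried out, these checks produce precisely the two ribbon graphs of Figure \ref{ribbon}.
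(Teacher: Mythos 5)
Your argument is correct, and it is arguably cleaner than the one in the paper. The paper's own proof is by brute-force enumeration: it lists all chord diagrams on four chords in Figures~\ref{fig:ribbon graphs} and~\ref{ribbon} and simply observes that exactly two of them have genus one and three cycles. You instead derive the dichotomy structurally: since the Residue Theorem forces exactly one positive pole, there is a single left cycle using all four edges, and the two negative poles partition the four right sides into cycles of lengths $(1,3)$ or $(2,2)$; the length-one right cycle is precisely a closed saddle connection homologous to a peripheral class. This gives a genuine reason for the dichotomy (and immediately explains the ``peripheral saddle connection'' criterion stated in the lemma), rather than reading it off a finite list. What your approach buys is insight; what the paper's figure-based enumeration buys is that the final identification step is already visibly complete. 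That final step is the one place where your write-up stops short: after establishing the two right-cycle profiles you still must verify that each profile is realized by a unique alternating chord diagram up to ribbon-graph isomorphism. You flag this as a case analysis without carrying it out. It is indeed a small finite check --- in the $(1,3)$ case the monogon pins one chord to two adjacent marked points and the single-left-cycle constraint then determines the remaining three chords up to symmetry, and in the $(2,2)$ case the two crossing pairs of non-crossing chords are forced --- but as written the step is asserted rather than proved. Either carry out this short enumeration or, as the paper does, appeal explicitly to the exhaustive list of four-chord diagrams in Figures~\ref{fig:ribbon graphs} and~\ref{ribbon}. One last small point: you correctly compute $2(d+1)=8$ germs at the order-$3$ zero, which (via Lemma~\ref{l:marked ribbon graph} with $2k=8$, $n=3$) is the right count, even though the passage in Section~\ref{ss: singular flat metric and directional foliations} has a misprint ($2d$ should read $2(d+1)$); worth noting if you cite that passage.
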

\begin{proof}
 By the Residue Theorem, $\omega$ has a positive peripheral period at the third marked point, and hence $\omega$ has precisely three poles. The ribbon graph of $\omega$ has four chords. In Figures \ref{fig:ribbon  graphs} and \ref{ribbon} we find all possible combinatorics with four chords. The only cases corresponding to genus \(1\) and \(3\) poles correspond to Figure \ref{ribbon}. They are distinguished by the fact that one has a saddle connection that is a peripheral curve, and the other not.
\end{proof}



 \begin{figure}
\includegraphics[scale=0.6]{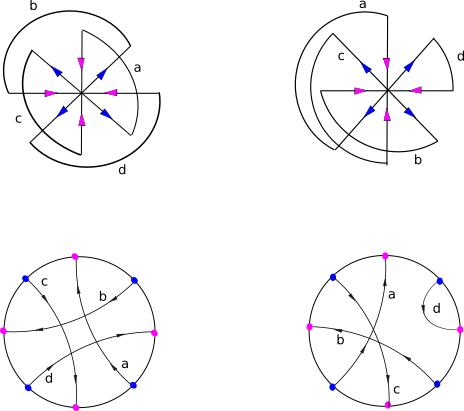}
\caption{Ribbon graphs and their corresponding chord diagrams in the case of four chords corresponding to forms of type $(1,3)$. On the left: saddle diagram and chord diagram of the butterflies (headless) form \(B_p (a,c\,|\,b,d)\) . On the right: saddle diagram and chord diagram of the octopus (head) form \(O_p (a,b,c\,|\,d)\).}
\label{ribbon}
\end{figure} 
Let us identify the homological markings of the two types of forms with a single zero in the spirit of Lemma \ref{l:marked ribbon graph} for forms with two negative peripheral periods that, for convenience, we suppose are the first two marked points:
\begin{itemize}

\item \textbf{Octopus.}  We say that a form $(C,\omega)\in \Omega \mathcal M_{1,3}(3)$ with real periods and two negative peripheral periods is of type "octopus" if its saddle connection graph and chord diagram correspond to some pair of the right in Figure \ref{ribbon}. 


Given a lift $(C,m,\omega)\in \Omega \mathcal S_{\Sigma,\PP}(3)$ we denote by $d\in H_1(\Sigma\setminus\PP)$ the peripheral class of the saddle connection (that we call the head) and $(a,b,c)$ the classes corresponding to the other saddle connections written in the cyclic ordering, following the inwards separatrices after that of $d$ (as in Figure \ref{ribbon}). From the diagrams on the right of  the same Figure we deduce: 

\begin{itemize}
\item[(i)] $a\cdot b = b\cdot c = c\cdot a =1,$ and  $a\cdot d = b\cdot d = c\cdot d = 0$;
\item[(ii)] $\{a+b+c,d\} =\{- \pi_1, -\pi_2\}$;
\item[(iii)] If $p=\per(C,m,\omega)$ we have $p(a), p(b), p(c), p(d)>0$. \\
\end{itemize}


Reciprocally, if we are given a homomorphism $p:H_1(\Sigma\setminus\PP)\rightarrow \mathbb{R}$ and elements $a,b,c,d\in H_1(\Sigma\setminus\PP)$ satisfying $(i),(ii)$ and $(iii)$, then, by Lemma \ref{l:marked ribbon graph} there exists a unique octopus form $(C,m,\omega)\in \Omega\mrs_{\Sigma,\PP}(3)$ with $p=\per(C,m,\omega)$ and whose set of cyclically ordered  homology classes of saddle connections are \(a,b,c,d\), with \(d\) being the head. This octopus  will be denoted by $O_p(a,b,c \, |\, d)$.


As we will perfom Schiffer variations along twins contained in the saddle connection graph, we will need to compare the lengths of the saddle connections. This leads us to introduce two natural subclasses of the set of marked octopodes. If $p(a+b+c) \le p(d)$ (namely, \(d= -\pi_2\), see \eqref{eq:normalized p}) we call the octopus large head octopus (denoted $LHO_p(a,b,c\,|\, d)$); otherwise (namely, \(d=-\pi_1\), see \eqref{eq:normalized p}), we call it the small head octopus (denoted $SHO_p(a,b,c\,|\, d)$).
\\

\item \textbf{Butterflies.} We say that a form $(C,\omega)\in \Omega \mathcal M_{1,3}(3)$ with a single zero, real periods and two poles with negative peripheral period  is of type "butterflies" if none of the saddle connections is a peripheral curve up to sign. By Lemma \ref{types} its chord diagram corresponds to the one on the left in Figure \ref{ribbon}.  Given a lift $(C,m,\omega)\in\Omega\mrs_{\Sigma,\PP}$, we consider the homology classes $a,b,c,d\in H_1(\Sigma\setminus \PP)$ of the saddle connections  in the cyclic order  defined by their incoming separatrices (see Figure \ref{ribbon}, left) we have the following properties:
\begin{itemize}
\item [(i)]$a\cdot b = b\cdot c = c\cdot d = d\cdot a = 1,$ and $a\cdot c = b \cdot d = 0$;
\item [(ii)]$a+c =  -\pi_1, \, b+d =- \pi_2,\, a+b+c+d = \pi_3$;
\item[(iii)] If $p=\per(C,m,\omega)$ we have $p(a), p(b), p(c), p(d)>0$. 
\end{itemize}
Notice that, as opposed to the case of octopodes, there is an indeterminacy in the choice of \( a,b,c,d \): the choice \( (a',b',c',d') = (c,d,a,b)\) is also possible. However, this is the only indeterminacy. 
 
Given \( a,b,c,d\in H_1 (\Sigma\setminus \PP)\) satisfying properties (i), (ii) and (iii), Lemma \ref{l:marked ribbon graph} shows there exists a unique 
butterflies form 
$(C,m,\omega)\in \Omega\mrs_{\Sigma,\PP}(3)$ with $p=\per(C,m,\omega)$ and whose set of cyclically ordered  homology classes of saddle connections are \(a,b,c,d\). For convenience, we denote this form  $B_p(a,c \, |\, b, d)$. \\
 \end{itemize}

 \begin{definition} \label{d: arm module}
     For any octopus $O_p(a,b,c |d)$ the $\mathbb{Z}$-module $M=\mathbb{Z}a+\mathbb{Z}b+\mathbb{Z}c\subset H_1(\Sigma\setminus\PP)$ 
     is called its arm module. This is an $E$-module with $E=\{\pi_1,\pi_3\}$ if it is small head octopus and $E=\{\pi_2,\pi_3\}$ if it is a large head octopus.  
 \end{definition}
Recall from Section \ref{ss: E-modules} that any $E$-module $M$ represents a spherical boundary component in $\mnc_{\Sigma,\PP}$ and that $\sim$ denotes the isoperiodic equivalence relation (see Corollary \ref{c:closure}). The next lemma relates octopus forms with the same arm module. 

\begin{lemma}\label{l:same arm module}
If $O_p(a,b,c\,|\,d)$ and $O_p(a',b',c'\,|\,d)$ are two octopodes having  the same arm-module $M$ (and conseqeuntly the same head), then  $$O_p(a,b,c\,|\,d)\sim O_p(a',b',c'\,|\,d).$$
\end{lemma}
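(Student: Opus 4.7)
The strategy is to realize the change of basis $(a,b,c)\to(a',b',c')$ of the common arm module $M$ by a finite sequence of Schiffer variations performed at the single zero of each intermediate octopus, each of which provides a continuous isoperiodic path to the next. The cyclic intersections $a\cdot b = b\cdot c = c\cdot a = 1$ combined with the fixed sum $a+b+c = -d$ force the radical of the intersection form restricted to $M$ to be exactly $\mathbb Z d$. Hence the images $(\bar a,\bar b)$ and $(\bar a',\bar b')$ in the rank-two quotient $M/\mathbb Z d$ are symplectic bases, and the two triples differ by a unique $T\in SL_2(\mathbb Z)$. It therefore suffices to realize a generating family of elementary transvections of $SL_2(\mathbb Z)$ as Schiffer variations preserving the head $d$.

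\emph{Schiffer variations preserving $d$.} The zero of $\omega$ has order $3$, and the $8$ germs of separatrices (two for each of $a,b,c,d$) are arranged cyclically at equal angles $\pi$, alternating incoming and outgoing. Pairs of germs in the same direction at angle $2\pi$ are exactly those two positions apart, with the intermediate (central) germ pointing in the opposite direction. By Lemma \ref{l: homology classes of saddle connections}, such a pair produces a Schiffer variation provided the central germ and each twin belong to distinct chords. Inspection of the octopus chord diagram (Figure \ref{ribbon}, right) shows that the pairs which do not alter $d$ are precisely those whose twins $\{x,y\}\subset\{a,b,c\}$ and central $z\in\{a,b,c\}\setminus\{x,y\}$ all lie on three distinct arms. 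For $p(x)<p(y)$, Lemma \ref{l: homology classes of saddle connections} gives
\[ x\mapsto x,\quad y\mapsto y-x,\quad z\mapsto z+x,\quad d\mapsto d, \]
which descends in $M/\mathbb Z d$ to an elementary transvection of the symplectic basis. The three choices of central produce a system of transvections generating $SL_2(\mathbb Z)$.

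\emph{Euclidean reduction.} Given the $T\in SL_2(\mathbb Z)$ of the first paragraph, both the direct Schiffer moves above and their inverses (also realized as arm-only Schiffer moves of the resulting form, with different twin choices) are available, so one may factor $T$ as a word in these generators. Applied in order, each move strictly modifies the triple $(p(a),p(b),p(c))$ while preserving both the positivity and the sum $-p(d)$; the greedy prescription of which move to apply next is dictated by the classical Euclidean algorithm on the positive simplex of triples (subtract the shortest arm period from the longest). Finitely many such moves connect $(a,b,c)$ to $(a',b',c')$ through a chain of octopodes, yielding the desired isoperiodic equivalence $O_p(a,b,c\,|\,d)\sim O_p(a',b',c'\,|\,d)$.

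\emph{Main obstacle.} The delicate technical verification is that each arm-only Schiffer move does produce again an octopus with the same head $d$, rather than a butterflies form (Figure \ref{ribbon}, left) or a degenerate stable form. This amounts to tracing the chord-diagram transformation rule of Figure \ref{fig:fake_to_fake} in each of the three cases (central $=a$, $b$, or $c$) and checking that $d$ persists as a peripheral petal while the three arm chords remain mutually intersecting inside the disc. The pitfall of a new arm accidentally becoming a peripheral class (i.e.\ equal to $\pm d$) is excluded because $M\cap \Pi = \mathbb Z d$ and the $SL_2(\mathbb Z)$-action on $M/\mathbb Z d$ is faithful. Should a continuous Schiffer deformation momentarily leave the minimal stratum, Corollary \ref{c:closure} ensures that the corresponding isoperiodic path still lies in $\Omega\mrs_{\Sigma,\PP}$ and establishes the equivalence.
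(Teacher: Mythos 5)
Your strategy --- realize the change of basis directly by arm-only Schiffer variations rather than by degeneration --- is genuinely different from what the paper does, and it is worth flagging what goes wrong.

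There is a basic error at the outset. You write ``the fixed sum $a+b+c=-d$'', but this is false for an octopus: property (ii) in the paper reads $\{a+b+c,\,d\}=\{-\pi_1,-\pi_2\}$, so $a+b+c$ and $d$ are the \emph{two distinct} negative peripheral classes, not negatives of one another. Consequently the radical of the intersection form restricted to $M=\mathbb Z a+\mathbb Z b+\mathbb Z c$ is $\mathbb Z(a+b+c)=\mathbb Z\pi_E$, not $\mathbb Z d$; in fact $d$ is not even an element of $M$, since $M\cap\Pi_E=\mathbb Z\pi_E$ and $d$ is the other peripheral. So the quotient you should be working with is $M/\mathbb Z(a+b+c)$, and your later justification of the ``main obstacle'' (``excluded because $M\cap\Pi=\mathbb Z d$'') inherits the same confusion. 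This is repairable by replacing $\mathbb Z d$ with $\mathbb Z\pi_E$ throughout, but as written the setup is wrong.

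The more serious gap is the ``Euclidean reduction'' step. You assert that a greedy subtraction algorithm on the positive arm periods connects any two triples $(a,b,c)$ and $(a',b',c')$ lifting bases of $M/\mathbb Z\pi_E$, but the Schiffer moves available at a given octopus are constrained by the inequalities among $p(a),p(b),p(c)$, and the positivity constraints define only a simplex of admissible triples. You give no argument that the walk visits the specific target basis. Crucially, the statement of Lemma~\ref{l:same arm module} carries \emph{no} irrationality hypothesis on $p$ --- unlike Lemmas~\ref{LHO2} and~\ref{final}, whose proofs genuinely need the irrationality to make the Euclidean-type arguments converge. Your proof, as sketched, would require such a hypothesis and so does not cover the full generality of the lemma.

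The paper sidesteps all of this by degenerating: a single Schiffer variation along a pair of twins orthogonal to the head $d$ carries any octopus to the $E$-boundary stratum $\Omega\mathcal B_M$, where the genus-one part has two poles and a single zero. Connectedness of $\Omega^*\mathcal B_M(p)$ then follows from the two-poles theorem (Theorem~\ref{2poles}) together with the genus-zero case (Lemma~\ref{l: connectedness spherical case}) via the attaching map Lemma~\ref{p:connected isoperiodic strata}, and Corollary~\ref{c:closure} transports the conclusion back to the smooth locus. This is an inductive argument that requires no case analysis on chord-diagram combinatorics and no hypothesis on the arithmetic of the periods. If you want to keep a direct, ``stay on smooth octopodes'' approach, you would have to establish: (i) that every arm-only Schiffer move preserves the octopus shape with head $d$ (you flag this but do not verify it); and (ii) a transitivity result for the resulting action under positivity constraints, which is precisely the kind of difficulty the degeneration argument is designed to avoid.
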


\begin{proof} Firstly, for any octopus with associated $E$-module $M$ (the arm module) we first construct an isoperiodic path in $\Omega\mnc^{sep}_{\Sigma,\PP}$  converging to a point in the spherical boundary stratum $\Omega\mathcal{B}_{M}$. For this aim, choose the pair of geodesic twins in the purely imaginary  direction $i\in\mathbb{C}$ that form angles $\pi/2$ with the separatrices of the saddle connection associated to the head $d$. By construction, both twins converge to the same pole and together form a separating simple closed curve having a cylinder on one side. As shown in Figure \ref{fig:twins_2_one_pole}, the limit stable form has a separating node with a component of genus one with two poles and a single zero on one side, marked by $M$, and a component of genus zero with three poles on the other, marked by $\Pi_E$. 

This means that the limit form belongs to the boundary stratum $\Omega\mathcal{B}_{M}$ and has period $p$. Since the limit form has two poles and a single zero on the genus one part, the zero has order two. The local degree of the integral of the form is at least three at that point.  This implies that the form cannot be associated to a branched double cover of $\mathbb{P}^1$ and hence its period satisfies the conditions of Theorem \ref{2poles}. 

Secondly, by Lemma \ref{p:connected isoperiodic strata} applied using  Theorem \ref{2poles} and Lemma \ref{l: connectedness spherical case} to the parts, the set $\Omega^*\mathcal{B}_M(p)$ is connected. Therefore the two given  octopodes  lie in the same connected component of $\Omega\mnc^{sep}_{\Sigma,\PP}(p)$. By Corollary  \ref{c:closure} they also lie in the same connected component of $\Omega\mrs_{\Sigma,\PP}(p)$. 
   
\end{proof}

\begin{corollary}\label{c:change order}
$O_p(a,b,c\,|\,d)\sim O_p(c,a,b\,|\,d)\sim O_p(b,c,a\,|\,d)$. 
\end{corollary}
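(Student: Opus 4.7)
The plan is to apply Lemma \ref{l:same arm module} directly. The key observation is that cyclically permuting the triple $(a,b,c)$ to $(c,a,b)$ or to $(b,c,a)$ produces marked octopus forms sharing the same head $d$ and the same arm module $M=\mathbb{Z}a+\mathbb{Z}b+\mathbb{Z}c$, so the conclusion of Lemma \ref{l:same arm module} applies to any pair among the three.

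First I would check that $O_p(c,a,b\,|\,d)$ and $O_p(b,c,a\,|\,d)$ are indeed well-defined as marked octopodes with period $p$. This amounts to verifying that, if $(a,b,c,d)$ satisfies the defining conditions (i)--(iii) of an octopus, then so do $(c,a,b,d)$ and $(b,c,a,d)$. Condition (i) is the symmetric statement $a\cdot b = b\cdot c = c\cdot a = 1$ together with $d$ being orthogonal to each of $a,b,c$, so it is invariant under any permutation of $(a,b,c)$; condition (ii) only sees the sum $a+b+c$ and the element $d$; and condition (iii), the positivity of the periods, is symmetric as well. Lemma \ref{l:marked ribbon graph} then provides unique marked octopus forms $O_p(c,a,b\,|\,d), O_p(b,c,a\,|\,d) \in \Omega\mrs_{\Sigma,\PP}(3)$.

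Second, by Definition \ref{d: arm module} the arm module $M$ depends only on the $\mathbb{Z}$-span of $\{a,b,c\}$, which is insensitive to the labeling order. Hence all three octopodes have the same arm module and the same head $d$, and Lemma \ref{l:same arm module} yields the two stated isoperiodic equivalences.

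No step here is delicate: the statement is purely a bookkeeping consequence of Lemma \ref{l:same arm module}, the substantive work (degeneration to the spherical boundary stratum $\Omega\mathcal{B}_M$ via Schiffer variations along twin geodesics in the purely imaginary direction, combined with Theorem \ref{2poles} on the genus one part and Lemma \ref{l: connectedness spherical case} on the spherical part) having already been carried out in the proof of that lemma.
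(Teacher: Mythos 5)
Your approach is correct and matches the paper's intent: the corollary is an immediate consequence of Lemma \ref{l:same arm module}, since cyclic permutations of the triple $(a,b,c)$ preserve the arm module $M=\mathbb{Z}a+\mathbb{Z}b+\mathbb{Z}c$ and the head $d$, and condition (iii) and the sum in condition (ii) are manifestly unchanged.

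One sentence in your write-up overclaims, though: you assert that condition (i) ``is invariant under any permutation of $(a,b,c)$.'' This is false, because the intersection pairing is antisymmetric. A transposition such as $(a,b,c)\mapsto(b,a,c)$ would require $b\cdot a=1$, whereas $b\cdot a=-(a\cdot b)=-1$; similarly for the other transpositions. Only the two non-trivial cyclic permutations $(c,a,b)$ and $(b,c,a)$ preserve the string of equalities $a\cdot b=b\cdot c=c\cdot a=1$. Since those are exactly the permutations appearing in the corollary (and the ones you actually check), the conclusion stands, but the parenthetical justification should be corrected to ``invariant under cyclic permutations.'' This matters: a non-cyclic reordering does not produce a marked octopus with the same data and would need a Dehn-twist-type argument, not merely Lemma \ref{l:same arm module}.
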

We improve the statement of Lemma \ref{rigid} in the following lemma:   

\begin{lemma}\label{lho}
If a form in $\Omega \mrs_{\Sigma,\PP}$ with real periods has periods outside the \(\mathbb Q\)-vector space generated by its peripheral periods, it can be isoperiodically deformed to a large head octopus form.
\end{lemma}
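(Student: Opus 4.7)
The plan is to combine Lemma \ref{rigid} with a sequence of explicit Schiffer variations among single-zero forms. Since $\tfrac12\Pi_\omega \subset \mathbb Q\otimes\Pi_\omega$, the hypothesis $\Lambda_\omega\not\subset \mathbb Q\otimes\Pi_\omega$ implies the assumption of Lemma \ref{rigid}, so we may reduce to an elliptic form with a single zero of order three. After normalizing peripheral periods as in \eqref{eq:normalized p}, Lemma \ref{types} classifies this form as either a butterflies $B_p(a,c\,|\,b,d)$, a small head octopus $SHO_p(a,b,c\,|\,d)$, or a large head octopus $LHO_p(a,b,c\,|\,d)$. It remains to show, in the first two cases, that an isoperiodic deformation leads to a large head octopus.

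The main tool will be the Schiffer variation along a pair of twin germs at mutual angle $2\pi$ at the single zero, whose combinatorial effect on the chord diagram is described in Figure \ref{fig:fake_to_fake} and whose effect on homology classes is given by Lemma \ref{l: homology classes of saddle connections}. Additional flexibility comes from Corollary \ref{c:change order} (cyclic permutation of the arms of an octopus) and Lemma \ref{l:same arm module} (isoperiodic equivalence of octopodes with the same head and the same arm module). The hypothesis on $p$ guarantees generic length inequalities among the chords of the diagram, which will be needed to ensure that the germs selected for each Schiffer move belong to distinct saddle connections.

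For a butterflies, the relations $a+c=-\pi_1$ and $b+d=-\pi_2$ allow us to select a Schiffer variation whose resulting chord diagram contains a saddle connection whose homology class equals $\pm\pi_i$; by Lemma \ref{l:marked ribbon graph} the resulting form is an octopus, to which the next step then applies. For a small head octopus with $d=-\pi_1$, the strict inequality $p(a+b+c) = -p(\pi_2) > -p(\pi_1) = p(d)$ implies that at least one arm is strictly longer than the head, and after cyclic relabelling we may assume this arm is $c$. Since arm-arm Schiffer variations alone preserve the head (Lemma \ref{l: homology classes of saddle connections}), to swap the role of the two negative peripherals we must pass through an intermediate configuration, most conveniently a butterflies obtained by a first Schiffer move that rearranges the cyclic order as in Figure \ref{fig:fake_to_fake}; the preceding step then rebuilds an octopus, this time with head $-\pi_2$.

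The main obstacle is orchestrating these Schiffer moves so that their short, central and long germs always belong to three distinct saddle connections (the hypothesis of Lemma \ref{l: homology classes of saddle connections}), and so that the resulting chord diagram has the expected combinatorial type rather than collapsing to a stable form with a node. Both issues are controlled by the hypothesis $\Lambda_\omega\not\subset \mathbb Q\otimes \Pi_\omega$: it produces enough $\mathbb Q$-linear incommensurability among the saddle connection lengths and the peripheral periods to guarantee the strict length inequalities needed and to rule out the accidental coincidences that would either trivialize a Schiffer variation or send it to the boundary of $\Omega\mrs_{\Sigma,\PP}(p)$.
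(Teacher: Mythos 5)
Your proposal outlines the right starting point (reduce to single-zero forms via Lemma \ref{rigid}, classify via Lemma \ref{types}) but goes wrong at two decisive places, and it omits the key mechanism that actually powers the paper's argument.

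First, your claim that for a small head octopus ``the strict inequality $p(a+b+c) > p(d)$ implies that at least one arm is strictly longer than the head'' is false. The inequality $p(a)+p(b)+p(c)>p(d)$ only guarantees that some arm exceeds $p(d)/3$; it is perfectly possible that all three arms are shorter than the head. In fact, the shape of the argument for small head octopodes depends crucially on which of the two cases holds: if some arm is \emph{shorter} than the head, a single Schiffer variation using $d$ as the long germ and that arm as the short germ yields a butterflies directly; but if \emph{all} arms are longer than the head, no single Schiffer move suffices. In the latter case the paper runs a Euclidean-algorithm type iteration: using two rationally independent arms $a,b$ (supplied by the hypothesis $\Lambda_\omega\not\subset\mathbb Q\otimes\Pi_\omega$) as alternating long/short germs and keeping $c$ central, each step shrinks one of $p(a),p(b)$ while leaving the other and their irrational ratio unchanged, so both converge to zero; after finitely many steps some arm is shorter than the head and the first case applies. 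Your proposal never mentions this iterative shrinking and therefore cannot handle this case. Your appeal to ``enough $\mathbb Q$-linear incommensurability'' is a statement of the resource, not of how it is spent.

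Second, the butterflies-to-large-head step is left essentially unargued. You say a Schiffer variation produces ``an octopus, to which the next step then applies,'' but without controlling whether the resulting octopus has a large or small head you risk an infinite loop between butterflies and small head octopodes. The paper's Part 2 is precisely the enumeration of the at most four octopodes reachable from a butterflies by a single Schiffer variation, showing that the resulting head is either $a+c=-\pi_1$ or $b+d=-\pi_2$; when no Schiffer move lands on a large head, one deduces $p(b),p(d)\le p(a),p(c)$, picks a rationally independent pair (again using the hypothesis), and runs the Euclidean iteration once more, this time $q$ steps with $0<p(a-qb)<p(b)$, to force a large head octopus on the final move. You would need to reproduce this case analysis and the termination argument; as written, the proposal stops at the point where the real work begins.
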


\begin{proof} By Lemma \ref{rigid} it suffices to prove that every form with a single zero can be isoperiodically connected to a large head octopus form. The first part of the proof is to show that every small head octopus form is isoperiodically connected to butterflies form, the second part of the proof is to show that every butterflies form is isoperiodically connected to a large head octopus form.   \\

\textit{Part 1: connecting small head octopodes to butterflies.}  

Consider a small head octopus \(SHO_p(a,b,c \, | \, d)\). 
 Firstly, consider the case when there exists a saddle connection among $a,b,c$ with period shorter than the period of the head $d$. Up to applying Corollary \ref{c:change order} we can suppose that this saddle connection is the one opposite to $d$ in the chord diagram, i.e. $c$ and $p(d)>p(c)$. The Schiffer variation using $d$ as long saddle connection, $c$ as short saddle connection and $b$ as central connection is a form with a single zero that does not have a peripheral saddle connection. It is therefore a form of type butterflies.  \\

Secondly, consider the case when the periods of $a,b$ and $c$ are all longer than the period of $d$. By the assumption of the lemma there exist two connections in the set \(\{a,b,c\}\) whose lengths are rationally independent. Thanks to Corollary \ref{c:change order} we can assume that these two connections are \(a\) and \(b\). Perform a series of Schiffer variations with \(a\) and \(b\) as long and the short saddle connections a in Lemma \ref{l: homology classes of saddle connections} (depending on their lengths) and \(c\)  the central saddle connection. Note that such Schiffer variations do not permute the connections \(\{a,b,c\}\). Moreover thanks to Lemma \ref{l: homology classes of saddle connections}, at each step one of the periods of \(a\) or \(b\) diminishes and the other stays constant and both stay strictly positive and with irrational quotient. Moreover, the sequence of points  in \(\mathbb{R}^2\) constructed in this way converges to the origin \((0,0)\). Hence, after following this sequence for long enough both saddle connections have period shorter than the head and we can apply the first case.  
 \\

\textit{Part 2: connecting butterflies to large head octopodes.} Take a form of type butterflies \(B_p(a,c \, | \, b,d)\) and assume ,without loss of generality, that $p(a+c) \ge p(b+d)$. One can reach at most 4 different octopodes in one Schiffer variation (see Figure \ref{symoct8}):
\begin{itemize}
\item using saddle connections \(a\) and \(d\). If \(p(a)<p(d)\), using \(a\) as the short connection, \(d\) as the long connection, and \(c\) as the central connection, one can reach an octopus $O_p(d-a, b,a \,|\, a+c)$ with head \((a+c)\). If \(p(d)<p(a)\), using \(d\) as the short connection, \(a\) as the long connection, and \(b\) as the central connection, one can reach an octopus $O_p(c,a-d,d \,| \, b+d)$ with head \((b+d)\);
\item using saddle connections \(a\) and \(b\). If \(p(a)<p(b)\), using \(a\) as the short connection, \(b\) as the long connection, and \(c\) as the central connection, one can reach an octopus $O_p(d,b-a,a \,| \,a+c)$ with head \((a+c)\). If \(p(b)<p(a)\), using \(b\) as the short connection, \(a\) as the long connection, and \(b\) as the central connection, one can reach an octopus $O_p(a-b,c,b \,| \,b+d)$ with head \((b+d)\);
\item using saddle connections \(b\) and \(c\). If \(p(b)<p(c)\), using \(b\) as the short connection, \(c\) as the long connection, and \(d\) as the central connection, one can reach an octopus $O_p(a,c-b,b \,| \, b+d)$ with head \((b+d)\). If \(p(c)<p(b)\), using \(c\) as the short connection, \(b\) as the long connection, and \(a\) as the central connection, one can reach an octopus $O_p(b-c,d,c \, | \, a+c)$ with head \((a+c)\);
\item using saddle connections \(c\) and \(d\). If \(p(c)<p(d)\), using \(c\) as the short connection, \(d\) as the long connection, and \(a\) as the central connection, one can reach an octopus $O_p(b,d-c,c \, | \, a+c)$ with head \((a+c)\). If \(p(d)<p(c)\), using \(d\) as the short connection, \(c\) as the long connection, and \(b\) as the central connection, one can reach an octopus $O_p(c-d,a,d \, | \, b+d)$ with head \((b+d)\).
\end{itemize}

\begin{figure}
\includegraphics[scale=.7]{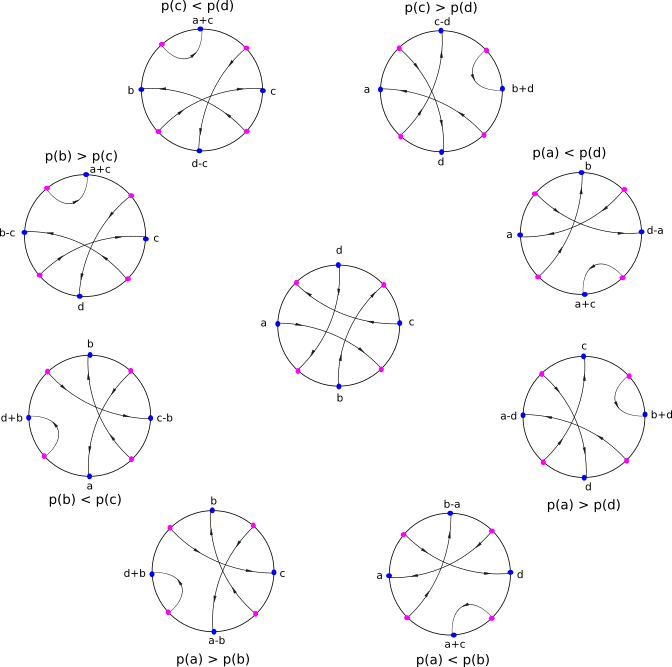}
\caption{Possible octopodes that can be reached from a butterflies form.}
\label{symoct8}
\end{figure}

There are two types of the head that we can obtain in this manner: \(a+c\) and \(b+d\), which correspond to the large head octopodes and the small head octopodes, respectively. If a large head octopus can be reached, then the statement of the lemma is confirmed. If no large head octopus can be reached using the Schiffer variation listed above, then it follows that $p(b),p(d)\le p(a),p(c)$. Note that it is not possible that all four pairs $\left(p(a), p(b)\right), \left(p(a), p(d)\right), \left(p(c), p(b)\right), \left(p(c), p(d)\right)$ are rationally dependent because of the assumption of the lemma. Without loss of generality, assume that the pair $\left(p(a), p(b)\right)$ is rationally independent.\\

By assumption, \(p(b)<p(a)\); using several Schiffer variations along the pair $a$ and $b$ as long and short connections, respectively, and \(c\) as the central connection, we reach another couple of butterflies with marking  \(B_p(a-q b,c+q b \, |\,  b,d)\) where \(q \in \mathbb Z^+\) is chosen such that \(0<p(a-q b) <p(b)\). Using the fact that \(p(a-q b)<p(b)\), we use a Schiffer variation with \(a-q b\) as the short connection, \(b\) as the long connection, and \(c\) as the central connection to go to an octopus $O_p(d, (q+1) b -a,a - q b\,  | \, a+c)$  which is a large head octopus. 
\end{proof}


Before going further, recall that the arm modules of large head octopodes are \(E=\{\pi_2,\pi_3\}\)-modules (see definition \ref{d: arm module} and subsection \ref{ss: E-modules}); namely those are the sub-modules \(M\subset H_1 (\Sigma\setminus\PP) \) that satisfy \(M \cap \Pi = \mathbb Z \pi_1\) and \(H_1 (\Sigma\setminus\PP) = M+ \Pi\). Notice that given such a \(E\)-module, the restriction of the map \(H_1(\Sigma\setminus\PP)\rightarrow H_1(\Sigma) \) induces a surjective map \( i_M : M \rightarrow H_1( \Sigma) \) whose kernel is \(\mathbb Z \pi_1\), and in particular an isomorphism between the quotient \(M/\mathbb Z\pi_1\) and the homology group \(H_1(\Sigma)\). Given any pair \(M,M'\) of \( E\)-modules, there exists a unique linear form \( \varphi \in H_1 (\Sigma) ^*\) such that \(M'= M+\varphi\), where
\begin{equation} \label{eq: M'} M+\varphi  = \{x+ \varphi (i_M (x)) \pi_2 \,|\, x\in M\}. \end{equation}
This defines a free and transitive action of \(H_1 (\Sigma)^*\) on the set of \(E\)-modules, and gives it a structure of an affine space directed by \(H_1(\Sigma)^*\). It is a particular case of the description in subsection \ref{ss: E-modules}, and in this case we can exploit the symplectic structure defined by the intersection product on $H_1(\Sigma)$:  if \(x\in M\), we will denote by \( x^* \in H_1 (\Sigma)^*\) the cohomology class defined by intersection: \( x^* ( y) = i_M(x) \cdot y  \) for all \(y\in H_1(\Sigma)\). Also given \( \varphi \in H^1 (\Sigma)^*\), we denote by \( \varphi^* \in H_1 (\Sigma) \) the cycle such that \( \varphi (x) = \varphi^* \cdot x\) for any \(x\in H_1(\Sigma)\).  

\begin{definition} Two arm modules \(M\) and \(M'\) of LHO forms are said to be (isoperiodically) connected (and denoted by \(M'\sim M\)) if some LHO with an arm module \(M\)  is isoperiodically connected to some LHO with an arm module \(M'\). 
\end{definition}

By Lemma \ref{l:same arm module}, $M\sim M'$ implies that all octopodes with arm modules $M$ or $M'$ having the same period homomorphism lie in the same isoperiodic connected component. In particular being connected is an equivalence relation in the set of arm modules of LHO's with the same period homomorphism. 

In the following lemmas we will be connecting two arm modules which should be understood as finding and connecting two representatives of each module.  

For simplicity, we will assume in the sequel that \( p (\pi_1) = -1\), condition that we can achieve by multiplying \(p\) by a positive constant  (using Remark \ref{rem:GL2R invariance}). Given a \( E=\{\pi_2, \pi_3\}\)-module \(M\), we denote by \( p_M  \in H^1 (\Sigma_1, \mathbb R/\mathbb Z) \simeq \text{Hom} (M/ \mathbb Z \pi_1, \mathbb R/ \mathbb Z)\) the morphism 
\begin{equation} \label{eq: pM} p_M (x +\mathbb Z \pi_1 ) := p(x) \, \text{mod}\, \mathbb Z  \text{ for every } x+\mathbb Z \pi_1 \in M/\mathbb Z\pi_1 .\end{equation}

By Lemmas \ref{l:same arm module} and  \ref{lho}, proving Theorem \ref{real13} is equivalent to proving that arm modules of large head octopodes are connected under the conditions of Theorem \ref{real13}.  We do it in several steps:

\begin{enumerate}
\item we show that we can connect the arm module \(M = \mathbb  Z a+\mathbb Z b +\mathbb Z c\) of the form \(LHO_p(a,b,c \, | \, d)\)   to another LHO with arm module \(M+a^*\) (Lemma~\ref{LHO1});
\item using the previous step, we prove that an arm module \(M\) of a LHO can be connected to the arm module \(M+\phi\) whenever \(p(\phi^*) \notin \mathbb Q /\mathbb Z\)  (Lemma~\ref{LHO2});
\item we conclude the proof of Theorem \ref{real13} by showing that any two LHO can be connected if the image of the period map is not contained in \(\mathbb Q \otimes p(\Pi)\)  (Lemma~\ref{final}). 
\end{enumerate}


Let us start with establishing the first step:
\begin{lemma}\label{LHO1}
Consider $LHO_p(a,b,c \, | \, d)$ with an arm module $M$ generated by $a,b,c$. Then there exists a concatenation of a finite number of Schiffer variations that connects this octopus to a LHO with arm module  $M' = M+ a^*$.
\end{lemma}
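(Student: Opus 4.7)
The strategy is to realize the shift $M \to M + a^*$ as a composition of two Schiffer variations passing through an intermediate butterflies form.

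First, using Lemma \ref{l:same arm module}, I replace the given LHO with an isoperiodically equivalent one in the same arm module whose periods satisfy $p(c) + 2p(b) > p(d)$. Next, I apply the Schiffer variation with short saddle connection $b$, long saddle connection $d$, and central saddle connection $c$: since $p(b) < p(d)$ always holds in an LHO and the octopus chord diagram accommodates this angular configuration, the variation is admissible. By Lemma \ref{l: homology classes of saddle connections} the new homology classes of saddle connections are $a$, $b$, $c+b$, $d-b$, and the intersection pattern
\[
a\cdot b = b\cdot(c+b) = (c+b)\cdot(d-b) = (d-b)\cdot a = 1, \quad a\cdot(c+b) = b\cdot(d-b) = 0
\]
together with $a + (c+b) = -\pi_1$ and $b + (d-b) = -\pi_2$ identifies the result as the butterflies form $B_p(a,\,c+b \mid b,\,d-b)$.

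Then, from this butterflies, I apply a second Schiffer variation with short $d-b$, long $c+b$, and central $b$; the admissibility condition $p(d-b) < p(c+b)$ is exactly the inequality arranged in the first step. Lemma \ref{l: homology classes of saddle connections} again gives new arms $a$, $c + 2b + \pi_2$, $d-b$ with head $d$, forming an LHO (after reordering the cyclic arrangement to match the LHO intersection convention $+1$). Using $d = -\pi_2$ the arm module simplifies:
\[
\mathbb{Z}a + \mathbb{Z}(c + 2b + \pi_2) + \mathbb{Z}(d-b) \;=\; \mathbb{Z}a + \mathbb{Z}(b + \pi_2) + \mathbb{Z}(c - \pi_2).
\]
On the other hand, by the definition $a^*(y) = i_M(a)\cdot y$ together with the intersection values $a\cdot b = 1$ and $a\cdot c = -1$, the explicit formula $M + a^* = \{x + a^*(i_M(x))\pi_2 : x\in M\}$ yields the same module, so the target LHO indeed has arm module $M + a^*$.

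\textbf{Main obstacle.} The delicate point is the feasibility of the inequality $p(c) + 2p(b) > p(d)$ within $M$. Since the normalization $p(a) + p(b) + p(c) = 1$ forces $p(c) + 2p(b) \leq 2$, the direct two-step construction above handles only the range $|p(\pi_2)| < 2$. For larger head periods, one must iterate the LHO--butterflies--LHO passage through other intermediate arm modules and track the cumulative shift in the affine space of arm modules, verifying that it can be made to equal $a^*$ exactly; this combinatorial bookkeeping is the most intricate part of the proof.
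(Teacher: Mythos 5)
Your two-step calculation (octopus $\to$ butterflies $B_p(a,\,c+b\mid b,\,d-b)$ via short $b$, long $d$, central $c$; then butterflies $\to$ octopus via short $d-b$, long $c+b$, central $b$) is algebraically correct: the intersection numbers, the peripheral relations, and the identification of $\mathbb{Z}a+\mathbb{Z}(b+\pi_2)+\mathbb{Z}(c-\pi_2)$ with $M+a^*$ all check out. This is a symmetric variant of the route the paper actually takes (the paper uses short $c$, long $d$, central $b$, landing on $B_p(a,\,b+c\mid d-c,\,c)$), and both first Schiffer variations are always admissible because $p(d)>1>p(b),p(c)$ in an LHO. The difference arises at the second step, where a length inequality is needed.

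The genuine gap, which you acknowledge but do not close, is the admissibility of the second Schiffer variation when the head is long. Your proposed remedy — first invoking Lemma~\ref{l:same arm module} to rechoose generators of $M$ so that $p(c)+2p(b)>p(d)$, then iterating ``LHO $\to$ butterflies $\to$ LHO through other intermediate arm modules'' — has two problems. First, Lemma~\ref{l:same arm module} only lets you change generators of the \emph{same} module $M$; if you replace $a$ by some $a'\in M$ to move the periods around, the two-step Schiffer produces arm module $M+(a')^*$, not $M+a^*$, unless $a'\in a+\mathbb{Z}\pi_1$. But then $p(a')$ is pinned to the unique representative of $p(a)\bmod 1$ in $(0,1)$, so the inequality $p(b')-p(a')>p(d)-1$ is not freely arrangeable and can fail even when $p(d)<2$. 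Second, the sketch of iterating through intermediate arm modules and ``tracking the cumulative shift'' is exactly the bookkeeping the paper avoids: the paper instead iterates Schiffer variations \emph{within the butterflies stage}, replacing $B_p(a,\,b+c\mid d-c,\,c)$ by $B_p(a,\,b+c\mid d-c-k(b+c),\,c+k(b+c))$ for the integer $k$ with $0<p(d)-p(c)-k(p(b)+p(c))<p(b)+p(c)$, and only then performs the final outward Schiffer to reach $O_p((k+1)(b+c)+c-d,\,a,\,d-c-k(b+c)\mid d)$ with arm module $M+a^*$. That one-parameter reduction (plus a separate treatment of the borderline equality case, which your proposal also omits) handles all head lengths at once and keeps $a$ fixed throughout, so the shift is exactly $a^*$.

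So: your approach is sound as far as it goes and is structurally a mirror image of the published one, but as written it only covers a range of head periods and does not establish the lemma. To repair it, replace the ``rechoose generators then iterate through arm modules'' idea by the paper's device of pre-shortening the long twin inside the butterflies stage via repeated inward Schiffer variations.
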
 

\begin{proof} 

Since the octopus has a large head, let us perform a Schiffer variation along the head \(d\) and the opposite arm \(c\) along their ingoing twins, which results in butterflies with marking  \(B_p(a, b+c| d-c, c)\), see Lemma \ref{l: homology classes of saddle connections}. Consider the cases below - for graphical proof, see Figure~\ref{213}:

\begin{figure}
\includegraphics[scale=.65]{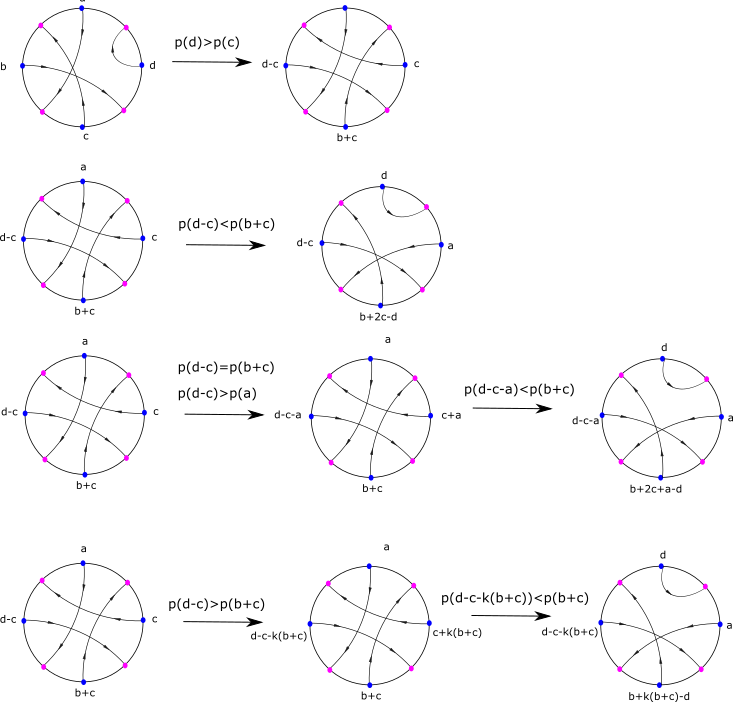}
\caption{Graphical proof of Lemma \ref{LHO1}.}
\label{213}
\end{figure}

\begin{enumerate}
\item First, let us consider the case when \(p(d)-p(c) < p(b)+p(c)\). It follows that we can perform a Schiffer variation along the outward twins \(d-c\) and \(b+c\) which  results in an octopus with a marking \(O_p(b+2c-d, a ,d-c \, | \, d)\), and arm module 
\[ \mathbb Z (b+2c-d)+\mathbb Z a +\mathbb Z (d-c) = M+ a^* . \]
 \item Consider the case \(p(d)- p(c) = p(b)+p(c)\); since \(p(d)-p(c) >p(a)\), perform a Schiffer variation along \(a\) and \(d-c\) which results in \(B_p(a, b+c \, | \, d-c-a,c+a)\). Now \(p(d)-p(c)-p(a) < p(b)+p(c)\), so as in the first case, performing the Schiffer variation along the outward twins \(d-c-a\) and \(b+c\)  leads to the large head octopus \(O_p(b+2c+a - d, a, d-c-a \, | \, d)\) whose arm module is 
 \[ \mathbb Z (b+2c+a-d ) +\mathbb Z a + \mathbb Z (d-c-a) = M+a^*.\]
\item Now consider that \(p(d)-p(c) > p(b)+p(c)\). Let us perform \(k\) Schiffer variations along the inwards twins of \(d-c\) and \(b+c\), that results in the butterfly form \(B_p(a,b+c \, | \, d-c-k (b+c), c+k (b+c))\). This can be done with \(k \in \mathbb Z^+\) is such that \(0< p(d)-p(c)-k(p(b)+p(c)) < p(b)+p(c)\). After this we can perform a Schiffer variation along the outward twins \(d-c-k (b+c)\) and \(b+c\), which results in the large head octopus \(O_p\left((k+1)(b+c) + c- d, a, d-c-k(b+c) \, |\, d\right)\), whose arm module is 
\[\mathbb Z ((k+1)(b+c) + c- d) +\mathbb Z a + \mathbb Z (d-c-k(b+c)) = M+a^*. \]
\end{enumerate}
\end{proof}


\begin{lemma}\label{LHO2}
Let \(M,M'\) be two \(\{2,3\}\)-modules such that  \( M'= M+\psi\) with \(\psi \in H_1 (\Sigma_1) ^*\). If \( p _M (\psi ^*) \notin \mathbb Q /\mathbb Z  \) then \(M\) and \(M'\) are arm modules of large head octopodes and \(M'\sim M\). 
\end{lemma}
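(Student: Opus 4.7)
The plan is to reach $M'$ from $M$ by iterating Lemma \ref{LHO1} a total of $|m|$ times in a single direction $\bar{v}^*\in H^1(\Sigma)$, where $\bar{v}\in H_1(\Sigma)$ is primitive and $\psi^* = m\bar{v}$ with $m\in\mathbb{Z}\setminus\{0\}$. Under the identification of $H^1(\Sigma)$ with $H_1(\Sigma)$ via the symplectic form this reads $\psi = m\bar{v}^*$. The hypothesis $p_M(\psi^*)\notin\mathbb{Q}/\mathbb{Z}$ translates immediately to the key irrationality $\alpha := p_M(\bar{v})\in\mathbb{R}/\mathbb{Z}$, since $p_M(\psi^*) = m\alpha$; this is the only place where the hypothesis will enter the argument.

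Setting $M_k := M + k\bar{v}^*$ for $k\in\{0,\ldots, m\}$ (so $M_0 = M$ and $M_m = M'$), the affine shift formula $p_{M_k}(\bar{w}) = p_M(\bar{w}) + k(\bar{v}\cdot\bar{w})\,p(\pi_2)\bmod \mathbb{Z}$ gives $p_{M_k}(\bar{v}) = \alpha$ at every step, since $\bar{v}\cdot\bar{v}=0$. I would then construct at each $M_k$ a large head octopus whose first arm lifts $\bar{v}$, as follows: take the unique lift $v_k\in M_k$ with $p(v_k) = \{\alpha\}\in (0,1)$; choose any $\bar{b}\in H_1(\Sigma)$ with $\bar{v}\cdot\bar{b}=1$; use the density of the orbit $\{j\alpha\}_{j\in\mathbb{Z}}$ in $\mathbb{R}/\mathbb{Z}$ (Weyl equidistribution) to find $j$ so that the canonical lift $b_k\in M_k$ of $\bar{b}+j\bar{v}$ has period in the subinterval $(0, 1-p(v_k))$; and set $c_k := -\pi_1 - v_k - b_k$, for which $p(c_k) = 1 - p(v_k) - p(b_k) > 0$ and the intersections $v_k\cdot b_k = b_k\cdot c_k = c_k\cdot v_k = 1$ follow automatically. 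This produces $O_p(v_k, b_k, c_k \mid -\pi_2)$ as an LHO with arm module $M_k$; in particular both $M$ and $M'$ are arm modules of LHOs, establishing the existence claim in the lemma.

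Applying Lemma \ref{LHO1} to $O_p(v_k, b_k, c_k \mid -\pi_2)$ produces an isoperiodically equivalent LHO with arm module $M_k + v_k^* = M_k + \bar{v}^* = M_{k+1}$, where the identity $v_k^* = \bar{v}^*$ reflects that the shift depends only on the projection of $v_k$. By Lemma \ref{l:same arm module} this target LHO is isoperiodically equivalent to the constructed $O_p(v_{k+1}, b_{k+1}, c_{k+1}\mid -\pi_2)$; chaining these equivalences for $k=0,\ldots, m-1$ yields $M\sim M'$. For $m<0$, the same construction applies with $\bar{v}$ replaced by $-\bar{v}$, still primitive and with $p_M(-\bar{v}) = -\alpha$ still irrational. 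The main delicate point is the octopus-existence step: the constraints $p(v_k), p(b_k), p(c_k)>0$ combined with the forced sum $p(v_k)+p(b_k)+p(c_k)=1$ confine the period triple to an open $2$-simplex, while $p(b_k)\bmod 1$ is restricted to an orbit of the rotation $j\mapsto j\alpha$. The irrationality of $\alpha$ makes this orbit dense in $\mathbb{R}/\mathbb{Z}$ and thus guarantees the interval $(0, 1-p(v_k))$ can be hit, which is exactly where the hypothesis $p_M(\psi^*)\notin\mathbb{Q}/\mathbb{Z}$ is required.
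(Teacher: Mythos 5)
Your argument is correct and is essentially the paper's proof: you extract the primitive generator $\bar v$ of $\psi^*$, construct an LHO at each $M_k = M + k\bar v^*$ by lifting $\bar v$ and using density of the irrational rotation to locate the second arm in $(0,1-p(v_k))$, and chain Lemma~\ref{LHO1} across $k$, the key invariance $p_{M_k}(\bar v)=\alpha$ coming from $\bar v\cdot\bar v = 0$. The paper presents this as a primitive case followed by a separate chaining step (writing $\psi = u\psi_0$ and noting $p_{M+v\psi_0}(\psi_0^*) = p_M(\psi_0^*)$), but the octopus construction, the density argument, and the invariance computation are the same as yours, so the difference is purely organizational.
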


\begin{proof}
 We first consider the case when \(\psi\) is primitive. 
 
There is a unique lift \(a\in M \) of \( \psi ^* \in M\slash \mathbb Z\pi_1 \simeq H_1(\Sigma_1) \) such that \( p (a) \in [0,1)\). Note that in fact \( p (a) \neq 0\) since \(p(a) \, \text{mod}\, \mathbb Z = p_M (\psi^*) \) is irrational. 

Let \( \beta_0 \in H_1 (\Sigma_1 )\simeq M/\mathbb Z \pi_1\) be such that \(\psi^* \cdot \beta_0 =1\). For \(k\in \mathbb Z\), let \(\beta= \beta_0+ k \psi^*\). We have \(\psi^* \cdot \beta = 1\) as well, and \( p_M (\beta)=p_M (\beta_0) + k p_M (\psi^*) \). Since \(p_M(\psi^*)\in \mathbb R/\mathbb Z\) is irrational, the set of all \( p_M (\beta) \) for \(k\in \mathbb Z\) is dense in \(\mathbb R/\mathbb Z\), hence there exists \(k\in \mathbb Z\) such that \( p_M(\beta) \in (0, 1- p(a) )\,\text{mod}\, \mathbb Z\). If \(b\in M\) is the unique lift of \(\beta\in M/\mathbb Z \pi_1\), we thus have \(p(b) \in (0, 1- p(a))\). Defining \(c\in M\) by the relation \( a+b+c = -\pi _1\), and \(d=-\pi_2\), we have that:
\begin{itemize}
\item \(a \mod \pi _1= \psi^*\);
\item \(M=\mathbb Z a +\mathbb Z b +\mathbb Z c\) 
\item \( a+b+c = - \pi_1\)
\item \(a\cdot b = b \cdot c = c\cdot a = 1\);
\item \(p(a), p(b), p(c) > 0 \);
\end{itemize}
The last three items and Lemma \ref{l:marked ribbon graph} show that there exists a unique large head octopus marked by \( LHO_p (a,b,c\,|\, d)\). The first two items and Lemma \ref{LHO1} show that \(M\) and \(M+\psi^*\) are connected, so we are done.

If \(\psi\) is not primitive, we write \( \psi = u \psi_0 \) where \(\psi_0\in H_1(\Sigma_1)^*\) is primitive and \(u\) is a positive integer. The trick here is that, by the definitions \eqref{eq: M'} and \eqref{eq: pM}, if we denote by \( \gamma= p (\pi_2)\,\text{mod}\,\mathbb Z\), we have  for any integer \(v\)
\[ p_{M+ v \psi_0} = p_M - v\psi _0 \gamma ,\] 
so that 
\[ p_{M+v\psi_0} (\psi_0^*) =p_M (\psi_0^* ) - v\psi_0 (\psi_0^*) \gamma= p_M (\psi_0^*) .  \]
In particular \( p_{M+ v \psi_0 } (\psi_0^*) \in \mathbb R/\mathbb Z\) is irrational (since its \(u\)-multiple is so). So the previous reasoning shows that \(M+ v\psi_0\) is connected to \( M+(v+1)\psi_0\) for each integer  \(v\). As a consequence 
\[M\sim M+\psi_0 \sim \ldots \sim  M+(u-1)\psi_0 \sim  M+ u \psi_0= M+\psi.\] \end{proof}





\begin{lemma}\label{final} Any two arm modules \(M\) and \(M'\) of large head octopodes are connected if the image of the period map is not contained in the rational space \(\mathbb Q \otimes p(\Pi) \) generated by the peripheral periods. \end{lemma}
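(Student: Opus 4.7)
\textbf{Plan for Lemma~\ref{final}.} The two arm modules \(M\) and \(M'\) differ by a unique vector \(\varphi \in H_1(\Sigma)^*\), so that \(M' = M+\varphi\) in the affine structure introduced just before Lemma~\ref{LHO1}. My strategy is to connect \(M\) to \(M'\) by applying Lemma~\ref{LHO2} at most twice: directly when \(p_M(\varphi^*) \notin \mathbb{Q}/\mathbb{Z}\), and through an intermediate arm module \(M_1 = M+\psi\) otherwise. In the latter case the goal is to produce \(\psi\in H_1(\Sigma)^*\) for which simultaneously
\[ p_M(\psi^*)\notin \mathbb{Q}/\mathbb{Z} \quad \text{ and } \quad p_{M_1}\big((\varphi-\psi)^*\big)\notin \mathbb{Q}/\mathbb{Z}, \]
so that \(M\sim M_1\sim M_1+(\varphi-\psi)=M'\) follows from two applications of Lemma~\ref{LHO2}.

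To produce such a \(\psi\), I normalize \(p(\pi_1) = -1\) and set \(\gamma_0 := p(\pi_2)\in \mathbb{R}\), so that \(\mathbb{Q}\otimes p(\Pi)=\mathbb{Q}+\mathbb{Q}\gamma_0\). I fix a symplectic basis \(\alpha,\beta\) of \(H_1(\Sigma)\simeq M/\mathbb{Z}\pi_1\) and lift it to elements \(a,b\in M\) with real periods \(\tilde A:=p(a)\) and \(\tilde B:=p(b)\). The hypothesis that \(p(H_1(\Sigma\setminus \PP))\not\subset \mathbb{Q}+\mathbb{Q}\gamma_0\) translates — after subtracting an appropriate peripheral class, whose period lies in \(\mathbb{Z}+\mathbb{Z}\gamma_0\subset \mathbb{Q}+\mathbb{Q}\gamma_0\) — into the existence of integers \(r,s\) with
\[ r\tilde A + s\tilde B \;\notin\; \mathbb{Q}+\mathbb{Q}\gamma_0. \]
I then take the \(\psi\in H_1(\Sigma)^*\) uniquely determined by \(\psi^*:=r\alpha+s\beta\in H_1(\Sigma)\) via the symplectic pairing.

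The verification relies on the identity \(p_{M+\psi}(\eta)=p_M(\eta)-\psi(\eta)\gamma\) (with \(\gamma:=\gamma_0 \bmod \mathbb{Z}\)) obtained inside the proof of Lemma~\ref{LHO2}. The first condition is immediate: \(p_M(\psi^*)=r\tilde A+s\tilde B \bmod \mathbb{Z}\), which is irrational mod \(\mathbb{Z}\) because \(r\tilde A+s\tilde B\) lies outside \(\mathbb{Q}+\mathbb{Q}\gamma_0 \supset \mathbb{Q}\). The second condition expands to
\[ p_{M_1}\big((\varphi-\psi)^*\big) \;=\; p_M(\varphi^*)\,-\,p_M(\psi^*)\,-\,\psi(\varphi^*)\,\gamma, \]
a real lift of which has the shape \(Q-(r\tilde A+s\tilde B)+N\gamma_0\) with \(Q\in \mathbb{Q}\) (we are in the case \(p_M(\varphi^*)\in\mathbb{Q}/\mathbb{Z}\)) and \(N:=-\psi(\varphi^*)\in \mathbb{Z}\); were this lift rational, then \(r\tilde A+s\tilde B\) would itself lie in \(\mathbb{Q}+\mathbb{Q}\gamma_0\), contradicting the choice of \(r,s\). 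This last step is the delicate one I expect to be the main obstacle: the transport formula \(p_{M+\psi}=p_M-\psi\cdot\gamma\) inevitably introduces an integer multiple of \(\gamma_0\) that could a priori cancel the irrational part, and it is precisely the full strength of the hypothesis — non-containment in the \emph{whole} rational span \(\mathbb{Q}+\mathbb{Q}\gamma_0\), not merely outside \(\mathbb{Q}\) — that rules this accidental cancellation out.
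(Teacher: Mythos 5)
Your argument is correct and follows the paper's proof in all essential respects: you reduce to the case \(p_M\) of the difference class is rational mod \(\mathbb{Z}\), then construct an auxiliary arm module by perturbing with a class whose \(p_M\)-image lies outside all of \((\mathbb{Q}\otimes p(\Pi))/\mathbb{Z}\), and verify that the transport formula \(p_{M+\psi}=p_M-\psi\,\gamma\) only shifts by elements of \(\mathbb{Z}\gamma_0\subset\mathbb{Q}\otimes p(\Pi)\), so irrationality survives both applications of Lemma~\ref{LHO2}. The only cosmetic difference is that your intermediate module is \(M+\psi\) (perturbing from \(M\)) while the paper uses \(M'+\varphi=M+\psi+\varphi\) (perturbing from \(M'\)); these are symmetric choices and both yield the two-step chain.
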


\begin{proof}
Write \(M' = M + \psi\) with \(\psi \in H_1(\Sigma_1)^*\). If \( p _M (\psi ^*) \notin \mathbb Q /\mathbb Z  \), then, by Lemma \ref{LHO2}, \(M\) is connected to \(M'\), and the proof is complete. So let us now assume that \( p _M (\psi ^*) \in \mathbb Q /\mathbb Z  \). Notice that 
\[ p(H_1(\Sigma_{1,3^*})) = p (\Pi) + p (M) \]
so that, by the hypothesis of the Lemma,  \( p (M) \) is not contained in \(\mathbb Q \otimes p (\Pi) \). Hence, there exists  a form \( \varphi \in H_1(\Sigma_1)^*\) such that \( p_M (\varphi ^* ) \notin (\mathbb Q \otimes p (\Pi))/\mathbb Z\). Let us define the auxiliary \(\{\pi_2,\pi_3\}\)-module 
\[ M'' = M' +\varphi = M +\psi + \varphi. \]
We have \( p_M ((\psi + \varphi) ^* ) \notin \mathbb Q \otimes p (\Pi)/\mathbb Z\) as well, so \(M\sim M''\) by Lemma \ref{LHO2}. Now,
\[p_{M'} (\varphi^* )= p_M ( \varphi^* ) - \psi (\varphi^*) \gamma \notin \mathbb Q \otimes p(\Pi) /\mathbb Z\]
and so in particular \(p_{M'}(\varphi^*) \notin \mathbb Q / \mathbb Z\). Hence Lemma \ref{LHO2} shows that \( M' \sim M ''\). \end{proof}

\section{The case of generic real period homomorphisms}

In this section we prove Theorem \ref{t:Rgn} for pairs $(\Sigma,\PP)$ of all values of \( (g,n) \). 
The cases \(n=2\)  and \( (g,n) = (1,3) \) are already proved, see respectively Theorems \ref{2poles}, and \ref{real13}.  

Recall that a boundary stratum of codimension one of $\Omega\mnc^{sep}_{\Sigma,\PP}$ defined by a Torelli class of a separating curve that separates the surface in a component of genus zero and another of the same genus as $\Sigma$ is called a spherical boundary stratum. The type of a spherical boundary stratum is the subset of marked point $E\subset\PP$ that belong to the component of genus zero.  A spherical boundary stratum of type $E$ is called an $E$-boundary stratum.

\subsection{Isoperiodic degeneration to some spherical boundary component}

\begin{lemma} \label{l: first degeneration}
Let $(\Sigma,\PP)$ be of type \( (g,n) \) with $n\geq 3$. Any form in $\Omega\mrs_{\Sigma,\PP}$ with at least three simple poles and real periods is isoperiodically equivalent to a stable form on spherical boundary component of $\Omega\mnc_{\Sigma,\PP}$  that has two poles on the spherical part, whose peripheral period sum is non-zero. 
\end{lemma}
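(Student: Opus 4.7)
The plan is to produce the required boundary form by applying a single Schiffer variation along a pair of semi-infinite geodesic twins issuing from a common zero of $\omega$ and terminating at two distinct simple poles whose peripheral periods have a common sign. As explained in the discussion around Figure~\ref{fig:schiffer_twins_2_2_poles}, this surgery defines an isoperiodic path in $\Omega_0\mnc_{\Sigma,\PP}$ whose endpoint at infinite twin length lies in a spherical boundary component with exactly the two chosen poles on the spherical part.

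First I would use the residue theorem to choose the two poles. Since $p$ takes real values and is non-zero on each of the $n\geq 3$ peripheral classes $\pi_1,\ldots,\pi_n$, and $\sum_i p(\pi_i)=0$, at least two of these values share a common sign; fix such a pair $q_1,q_2$ with peripheral periods $r_1,r_2$, and note that $r:=r_1+r_2\ne 0$ is automatic. Second, I would choose a direction $\theta\in\mathbb S^1$ close to the vertical $i$, generic enough to avoid the countably many directions in which $\omega$ has a $\theta$-saddle connection. By the analysis of subsection~\ref{ss: singular flat metric and directional foliations}, combined with the fact that all peripheral periods are real, $\mathcal G_\theta$ sees each positive pole as an attractor and each negative pole as a source (up to replacing $\theta$ by $-\theta$), and every forward $\theta$-separatrix from a zero of $\omega$ terminates at some positive pole.

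The crux of the argument is to exhibit a zero $z_0$ admitting two forward $\theta$-separatrices that flow to two distinct positive poles; these two separatrices will be twins since they share a basepoint and direction. To produce such a zero I would run a basin argument: because $p$ is real the horizontal foliation $\mathcal G_1$ has a global real first integral and admits no minimal component, and for $\theta$ a sufficiently generic small perturbation of the vertical direction, $\mathcal G_\theta$ inherits this property so that almost every forward $\theta$-orbit terminates at a positive pole. The basins of attraction of the distinct positive poles are then non-empty disjoint open sets (each containing at least the cylinder around its pole) whose union is of full measure. If there are at least two positive poles, the topological boundary between two such basins is non-empty and, away from the poles, is swept out by $\theta$-separatrices. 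Any such separatrix has a zero as its backward limit, and choosing this zero as $z_0$ (renaming the two same-sign poles as $q_1$ and $q_2$ if necessary) provides the desired twin pair $\gamma_1,\gamma_2$.

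Finally I would apply the Schiffer variation along $\gamma_1$ and $\gamma_2$, parametrized by $t\in[0,+\infty)$: this yields a continuous isoperiodic path in $\Omega_0\mnc_{\Sigma,\PP}$ along which the cylinder produced by the surgery has modulus tending to infinity as $t\to+\infty$. The limit is a stable form with a single separating node, whose spherical component carries exactly the two original poles $q_1,q_2$ of periods $r_1,r_2$ together with the node (of peripheral period $-r$), while the other component is of genus $g$ and has $n-1$ poles, the two poles of period $r_1,r_2$ being replaced by a single pole of period $r=r_1+r_2\ne 0$. This is precisely the type of spherical boundary component required. The main obstacle in the proof is the basin argument in the third paragraph: one must rigorously exclude the possibility that the relevant boundary contains no $\theta$-separatrix from a zero, which reduces to ruling out minimal components of $\mathcal G_\theta$ (and, more subtly, recurrent boundary orbits) for a suitable perturbation $\theta$ of the vertical direction, using the strong structural constraints imposed by the real first integral of $\mathcal G_1$.
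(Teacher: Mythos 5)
Your route is genuinely different from the paper's. The paper works entirely with the horizontal foliation $\mathcal G_1$ (where the real-periods hypothesis gives a global first integral $\varphi=\int\Im\omega$): it forms the graph of horizontal cylinders, takes a path between two positive poles of minimal inversion number, shows that number is one, and then performs a sequence of \emph{intermediate} Schiffer variations across the shorter of two adjacent cylinders until two semi-infinite cylinders of distinct positive poles share a boundary zero; the pair of vertical separatrices from that zero then furnish the twins. Your proposal instead perturbs the direction to a generic $\theta$ near $i$ and tries to locate the twins on the original form directly via a basin argument, with no intermediate Schiffer variations. If it works, it is more economical; what the paper's combinatorial argument buys is that it never has to discuss basins, minimality, or boundary structure of invariant sets.

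The gap you flag yourself is a real one as written. The assertion that $\mathcal G_\theta$ ``inherits'' the absence of minimal components from $\mathcal G_1$ is not a perturbation fact and is not justified: for a general translation surface the foliation $\mathcal G_\theta$ for generic $\theta$ is precisely where one expects minimal components. However, the gap has a clean fix that uses the real-periods hypothesis head on, and it is this fix (rather than a perturbation heuristic) that should carry the argument. Since all periods of $\omega$ lie in $\mathbb R$, the primitive $\varphi=\int\Im\omega$ of the imaginary part is globally well defined, and along any unit-speed leaf of $\mathcal G_\theta$ one has $\omega(\dot\gamma)=\theta$, hence $\frac{d}{dt}\varphi(\gamma(t))=\Im\theta$. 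If $\Im\theta>0$ this is a strictly positive constant, so $\varphi$ is a strict Lyapunov function for the $\theta$-flow. This immediately rules out closed leaves, recurrence, and minimal components, and shows every non-separatrix forward orbit escapes to a positive pole (the only places where $\varphi\to+\infty$). With that in hand, a connectedness argument (the closures of the basins of the $\geq 2$ positive poles cover $C$, so two of them meet) produces a backward $\theta$-separatrix on a shared boundary; following it to its terminal zero, the two outgoing prongs adjacent to it go to two distinct positive poles and give the required twin pair. So the proposal is sound modulo replacing the ``inherits this property'' step with the Lyapunov argument and making the basin-boundary bookkeeping explicit.

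One small technical point worth keeping: you do still need $\theta$ generic (no $\theta$-saddle connections between zeros), otherwise the two adjacent outgoing prongs at the chosen zero might terminate at zeros rather than poles; and the observation that twins to two same-sign poles automatically give a non-zero period sum (so the node is a genuine simple pole) is built in, since you chose $q_1,q_2$ with $r_1,r_2$ of the same sign from the start.
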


\begin{proof}
Let \( p \in H^1 (\Sigma\setminus\PP, \mathbb R)\) be the period homomorphism of a marked form \((C,\omega,m) \in \Omega\mrs_{\Sigma,\PP}\) with at least three simple poles and real periods. 

We denote by \(\varphi : C \setminus Q \rightarrow \mathbb R\) an integral of the imaginary part of \(\omega\), where \(Q\subset C\) is the set of poles of \(\omega\). Up to performing Schiffer variations on the zeros of \(\omega\), we can assume that the critical values of \(\varphi\) are all distinct and simple. It is a first integral for the horizontal foliation $\mathcal{G}_1$ induced by $\omega$ on $C$. Its leaves are either closed or saddle connections (see Section \ref{ss: singular flat metric and directional foliations}). Every connected component of the complement of the levels of zeros and poles of $\omega$ is a cylinder by closed geodesic leaves. Every such cylinder \(A\subset C\) of \((C,m,\omega)\) ends on the top at a unique zero/pole \(z^+(A)\) of \(\omega \) (since the critical values of \(\varphi\) are simple)  and at the bottom at a unique zero/pole \( z_- (A)\). The value \( \varphi(z_+(A)) - \varphi (z_-(A)) \) (it might be infinite if one of \( z_\pm (A)\) is a pole) is the height of the cylinder. 

Assume first that there are at least two distinct  poles with positive peripheral period (i.e. two positive poles). Each positive pole defines a semi-infinite cylinder. We claim that up to  applying a finite number of Schiffer variations, we fall in the situation where the boundary of two semi-infinite cylinders corresponding to distinct positive poles intersect. Assuming this claim, we can find a couple of twin geodesics leaving the zero, orthogonal to $\mathcal{G}_1$,  leaving the zero in the boundary of both cylinders, and joining them to the two distinct poles. The Schiffer variation along those twins leads in the limit to a point in thespherical boundary component with the desired properties (see Figure \ref{fig:schiffer_twins_2_2_poles}).

To prove the claim, define the oriented graph of horizontal cylinders as follows; its vertices are the zeros and poles of \(\omega\), and for each horizontal cylinder \(A\) we glue an oriented edge between \( z_- (A)\) and \( z_+(A)\) (see Figure \ref{fig:graph of cylinders})

 \begin{figure}
\includegraphics[scale=.7]{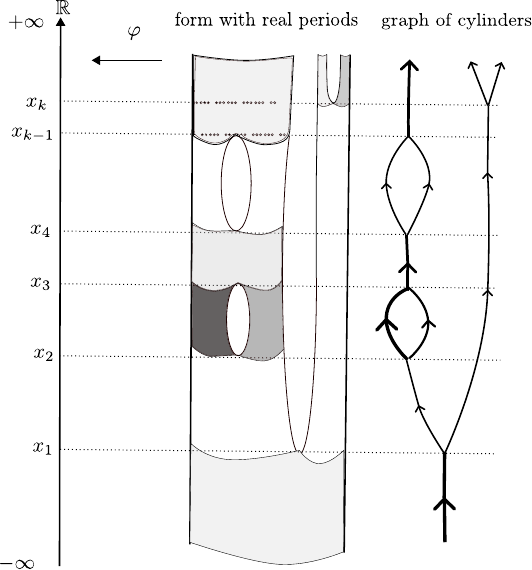}
\caption{Example of the graph of cylinders associated to a meromorphic form of genus two with four simple poles and real periods (three positive poles and one negative) }\label{fig:graph of cylinders}
\end{figure}

A path in the graph of horizontal cylinders is a sequence of oriented or anti-oriented edges such that the end point of an edge equals the starting point of the next one. Such a sequence will be denoted by \( A_1^{\varepsilon_ 1} \ldots A_s^{\varepsilon_s}\), where the \(A_k\)'s are cylinders, and the \(\varepsilon_k\)'s are sign \(\pm 1\). So we have that \( z_{\varepsilon_k}(A_k) = z_{-\varepsilon_{k+1}} (A_{k+1})\) for every~\(k\). 

Given a path in the graph linking two distinct positive poles, we call inversion a sequence of two consecutive edges of the paths with opposite orientations, namely two consecutive cylinders \(A_k\) and \(A_{k+1}\) such that \( \varepsilon_{k+1}= -\varepsilon_k\). Since the extremities of the path are positive poles, the path begins in the negative direction, and ends in the positive one. So the number of inversions is odd. 

Take a path in the graph between two  distinct positive poles having the minimal number of inversions. We claim that this number is one. Indeed, if not, there would exist two consecutive edges, the first one being positive and the second negative. At the vertex in between the two edges starts a positive path going to some positive pole. The concatenation of this latter with one of the part of the original path before or after the vertex leads to a path between distinct positive poles with fewer inversions, leading to a contradiction. Such a path will be of the form 
\[ A_1 ^{-1} \ldots A_r^{-1} A_{r+1}\ldots A_s.\] 

We are now going to reduce the number \(s\) of edges of such a one-inversion path to two, by enabling to perform Schiffer variations on \((C,m,\omega)\). First, observe that if a one-inversion path is not injective, one can reduce its number of edges by cutting the part in between two equal values. In particular, such a path passes through a pole only at its extremities. 

Assume that there are at least three edges in such an injective one-inversion path. 
 One of the  cylinders \( A_r\) and \( A_{r+1}\) might be of infinite height, but not both. Moreover, since the function \(\varphi\) has distinct critical values, the height of both cylinders is distinct.  Take the one with smaller height; assume for simplicity that this is \(A_{r+1}\). 
 Let \(\gamma\) be a geodesic segment starting at \(p=z_-(A_r)=z_-(A_{r+1})\)  which goes across \(A_{r+1}\), crosses the intersection between the common boundaries of \( A_{r+1}\) and \( A_{r+2}\) in a regular point, and enters in \(A_{r+2}\) by walking a small distance. Notice that the direction of the geodesic \(\gamma\) has positive imaginary part. Let \(\gamma'\) be the twin of \(\gamma\) at \(p\); \(\gamma'\) enters \(A_r\) and stays in the interior of \( A_r\) appart at its origin \(p\). Perform the Schiffer variation using the pair of twins \(\{\gamma, \gamma'\}\). Appart from the union \( \gamma\cup \gamma'\) the differential \((C',m',\omega')\) after the surgery is isomorphic to \( (C, m, \omega)\). In particular, the cylinders \( A_1,\ldots, A_{r-1}, A_{r+3},\ldots, A_s\) are not affected by the Schiffer variation: the cylinders they define in \(C'\) are denoted by \( A_1 ',\ldots, A_{r-1}', A_{r+3}',\ldots, A_s'\). 
 
 \begin{figure}
\includegraphics[scale=.8]{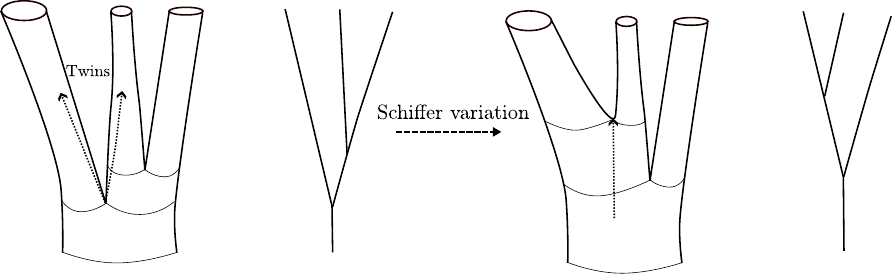}
\caption{Change in cylinder structure induced by Schiffer variation along a pair of twins going to distinct poles}\label{fig:schiffer on diagrams}
\end{figure}
 
 Denote by \( A _r ' \subset C' \) (resp. \( A_{r+2} ' \subset C'\))   the cylinder corresponding to the top of \( A_r \) (resp. \(A_{r+2}\)) under the partial identification between \(C\) and \(C'\). The two cylinders \( A_r '\) and \( A_{r+2}'\) intersect in one point (corresponding to the ends of \(\gamma\) and/or \(\gamma'\)). So in the graph of horizontal cylinders of \(C'\), the path \(A_1', \ldots, A_r', A_{r+2}', \ldots, A_s'\) links two distinct poles of \(\omega'\); its number of edges is reduced by one. 
 
 After a finite number of Schiffer variations of this type, we end at a marked differential satisfying the claim.    

The case where there are at least two distinct negative poles is analogous. 
\end{proof}


\subsection{Connecting various types of spherical boundary components}

\begin{lemma}\label{l:second degeneration}
Let $(\Sigma,\PP)$ be of type $(g,n)$ with \(n\geq 3\) and  \( p \in H^1 (\Sigma\setminus\PP, \mathbb R)\) be such that \( p (H_1(\Sigma\setminus\PP, \mathbb Z))\) is not contained in \(\mathbb Q \otimes p( \Pi)\). Moreover assume that the connectedness Theorem \ref{t:Rgn} is true for any pair $(\Sigma',\PP')$ of type $(h,m)$ with $m< n$. Then, given \(F\subset E\subset \PP\) with \(E\) of cardinality \(n-1\), any form of period $p$ in an \(F\)-boundary component can be isoperiodically connected to some form on a \(E\)-boundary component. 
\end{lemma}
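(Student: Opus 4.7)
My plan is to proceed by induction on \(k = |E| - |F|\). When \(k=0\) we have \(F=E\) and there is nothing to prove. For the inductive step, fix any \(r\in E\setminus F\); I shall show that any form on the \(F\)-boundary can be isoperiodically connected to some form on the \((F\cup\{r\})\)-boundary, and iterating with \(F\) replaced by \(F\cup\{r\}\) then reduces \(k\) and completes the argument.

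The key input is the factorization of the \(F\)-boundary component given by Proposition~\ref{p:partial norm torelli} and Lemma~\ref{l: attaching map}:
\[
\Omega\mathcal{B}_{[c_F]}(p)\ \simeq\ \Omega\mathcal{S}_{\hat{\Sigma}_0,\hat{\PP}_0}(p_0)\ \times\ \Omega\mathcal{S}_{\hat{\Sigma}_1,\hat{\PP}_1}(p_1),
\]
in which the second factor is the genus \(g\) component carrying \(n-|F|+1\leq n-1\) marked points, including a marked point \(n_1\) coming from the original node. The inductive hypothesis (Theorem~\ref{t:Rgn} for this smaller value of \(m\)) applies to this factor; the required condition on \(p_1\) transfers from the condition on \(p\), because any cycle witnessing that the image of \(p\) escapes \(\mathbb{Q}\otimes p(\Pi)\) must lie entirely in the genus \(g\) component---the sphere contributes only peripheral values, which lie in \(p(\Pi)\). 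Together with Corollary~\ref{c:closure}, this yields connectedness of the closure \(\Omega^*_0\overline{\mathcal{S}}_{\hat{\Sigma}_1,\hat{\PP}_1}(p_1)\). Within that connected set I deform the genus \(g\) factor, keeping the spherical factor fixed, to a point lying in the \(\{r,n_1\}\)-boundary stratum of \(\overline{\mathcal{S}}_{\hat{\Sigma}_1,\hat{\PP}_1}\): the codimension-one stratum whose new sub-sphere carries precisely the two marked points \(r\) and \(n_1\).

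Pulling this path back to \(\Omega^*_0\overline{\mathcal{S}}_{\Sigma,\PP}(p)\) produces an isoperiodic path from the initial form to a two-node stable form whose three components are, in a chain: the original sphere marked by \(F\), a new sub-sphere marked by \(\{r\}\) and carrying the two nodes, and the remaining genus \(g\) component marked by \(\PP\setminus(F\cup\{r\})\). By Theorem~\ref{t:local structure}, this two-node point lies transversally at the intersection of the closures of \(\mathcal{B}_{[c_F]}\) and \(\mathcal{B}_{[c_{F\cup\{r\}}]}\), so smoothing the node separating the two spheres (which amalgamates them into a single sphere marked by \(F\cup\{r\}\)) is a local isoperiodic move landing in \(\mathcal{B}_{[c_{F\cup\{r\}}]}\), completing the inductive step.

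The main obstacle I foresee is verifying non-emptiness of the target \(\{r,n_1\}\)-boundary at period \(p_1\); for moderate values of \(|F|\) this follows by combining Lemma~\ref{l: connectedness spherical case} applied to the three-pole new sub-sphere with Lemma~\ref{l: Haupt} on the remaining sub-genus-\(g\) component (which carries \(n-|F|\) poles). The delicate case is \(|F|=n-2\), in which the sub-genus-\(g\) component has only two poles and one must invoke the two-pole item of Lemma~\ref{l: Haupt}; here one has to exploit the hypothesis on \(p\) once more to check that the non-peripheral periods inherited by \(p_1\) restricted to the sub-surface avoid the degeneracy that lemma excludes.
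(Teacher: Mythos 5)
Your proof is correct and follows essentially the same route as the paper's: factor the \(F\)-boundary stratum into a spherical factor and a genus-\(g\) factor via Lemma \ref{l: attaching map}, transfer the genericity hypothesis on \(p\) to the genus-\(g\) period (formally: if \(p(M_F)\subset\mathbb{Q}\otimes p(\Pi)\) then \(p(H_1(\Sigma\setminus\PP))=p(M_F)+p(\Pi_F)\subset\mathbb{Q}\otimes p(\Pi)\), a contradiction), apply the inductive Theorem \ref{t:Rgn} together with Corollary \ref{c:closure} to reach a further spherical sub-degeneration of the genus-\(g\) factor, and finally smooth the node between the two resulting spheres. The only difference from the paper is granularity: the paper carries out the whole transfer \(F\to E\) in a single move, choosing an \(E\)-module \(M_E\subset M_F\) so that the middle sphere of the intermediate two-node curve carries all of \(E\setminus F\), whereas you iterate one puncture at a time. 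The ``delicate case'' \(|F|=n-2\) you flag is indeed resolved by the two-pole item of Lemma \ref{l: Haupt}, and in fact its hypothesis holds automatically for \emph{any} \(\{r,n_1\}\)-module \(M'\): since \(H_1(\hat{\Sigma}_1\setminus\hat{\PP}_1)=M'+\Pi_{\{r,n_1\}}\) and \(p(\Pi_{\{r,n_1\}})\subset p(\Pi)\), an inclusion \(p(M')\subset\mathbb{Q}\otimes p(\Pi)\) (in particular \(p|_{M'}(M')=p(\Pi_{M'})\)) would force \(p(M_F)\subset\mathbb{Q}\otimes p(\Pi)\), contradicting the transferred genericity.
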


\begin{proof}
Denote by $r_n$ the pole of $\PP\setminus E$, $F=\{r_1,\ldots,r_{k}\}$ and $E\setminus F=\{r_{k+1},\ldots,r_{n-1}\}$ with $k<n-1$ . Consider a form $\omega$ in the \(F\)-boundary stratum that  corresponds to the data of a \(F\)-module \(M_F\), namely a decomposition 
\[ H_1 (\Sigma\setminus\PP) =\Pi_F + M_F\text{ with } M_F\cap \Pi_F = \mathbb Z\pi_F .\]
See section \ref{d: E-module}. 
The associated $\PP$-labelled graph is: 
\begin{center}
\begin{tikzpicture}[node distance = {20mm},thick, main/.style = {draw,circle}]
\node[main] (1) [label=below:$0$] {$\Pi_F$};
\node[main] (2) [right of = 1][label=below:$g$]{$M_F$};
\node (3) [above left of = 1]{$\pi_{r_1}$};
\node (4) [below left of =1]{$\pi_{r_k}$};
\node (5) [left of =1]{$\vdots$};
\node (6) [above of =2]{$\pi_{r_{k+1}}$};
\node (7) [right of =2]{$\pi_{r_{n-1}}$};
\node (8) [above right of = 2]{$\vdots$};
\node (9) [below right of =2]{$\pi_{r_n}$};

\draw (2) -- (9);
\draw (2) -- (8);
\draw (2) -- (7);
\draw (2) -- (6);
\draw (1) -- (2);
\draw (1) -- (3);
\draw (1) -- (4);
\draw (1) -- (5);

\end{tikzpicture}
\end{center}

Remark that $M_F$ contains as many peripheral classses as points in $\PP\setminus F$-- i.e. at most $n-1$--  and that $p(M_F)$ is not contained in the rational space generated by those classes, since otherwise the map $p$ has image contained in $\mathbb{Q}\otimes p(\Pi)$, contrary to assumption. 
We can find a \(E\)-module \(M_E\) that is contained in \( M_F\) and we have $M_F=M_E+\Pi_{E\setminus F}$. By construction, $\pi_{r_n}\in M_E$ and the image $p(M_E)$ cannot be contained in $\mathbb{Q}p(\pi_{r_n})$ by the same reason as before.  The following graph is therefore $p$-admissible: 
\begin{center}
\begin{tikzpicture}[node distance = {20mm},thick, main/.style = {draw,circle}]
\node[main] (1) [label=below: $0$]{$\Pi_F$};
\node[main] (2) [right of = 1][label=below: $0$]{$\Pi_{E\setminus F}$};
\node[main] (10) [right of =2][label=below: $g$]{$M_E$};
\node (3) [above left of = 1]{$\pi_{r_1}$};
\node (4) [below left of =1]{$\pi_{r_k}$};
\node (5) [left of =1]{$\vdots$};
\node (6) [above left of =2]{$\pi_{r_{k+1}}$};
\node (7) [above of =2]{$\cdots$};
\node (8) [above right of = 2]{$\pi_{r_{n-1}}$};
\node (9) [right of =10]{$\pi_{r_n}$};
\draw (10) -- (9);
\draw (2) -- (8);
\draw (2) -- (10);
\draw (2) -- (7);
\draw (2) -- (6);
\draw (1) -- (2);
\draw (1) -- (3);
\draw (1) -- (4);
\draw (1) -- (5);

\end{tikzpicture}
\end{center}
\noindent thus representing a point in the associated boundary stratum with two nodes and period $p$. 

The inductive hypothesis applied to the restriction of \(p_{|M_F}\) and Lemma \ref{p:connected isoperiodic strata} show that we can isoperiodically connect the two points in the boundary depicted in the previous two diagrams. 

Next, by smoothening the node beween the two genus zero parts, we obtain a stratum with diagram
\begin{center}
\begin{tikzpicture}[node distance = {20mm},thick, main/.style = {draw,circle}]
\node[main] (1) [label=below: $0$]{$\Pi_E$};
\node[main] (2) [right of = 1][label=below: $g$]{$M_E$};
\node (3) [above left of = 1]{$\pi_{r_1}$};
\node (4) [below left of =1]{$\pi_{r_{n-1}}$};
\node (5) [left of =1]{$\vdots$};
\node (6) [right of =2]{$\pi_{r_n}$};

\draw (2) -- (6);
\draw (2) -- (6);
\draw (1) -- (2);
\draw (1) -- (3);
\draw (1) -- (4);
\draw (1) -- (5);

\end{tikzpicture}
\end{center}

\noindent which corresponds to a point in a $E$-boundary component.
\end{proof}

In the sequel, we will be interested in the \(E\)-boundary components with \(E\) of cardinality \(n-1\).

\begin{lemma}\label{l: second degeneration}
Let $(\Sigma,\PP)$ be of type $(g,n)$ with \(n\geq 3\), and \( p \in H^1(\Sigma\setminus\PP, \mathbb R)\) be such that \( p (H_1(\Sigma\setminus\PP, \mathbb Z))\) is not contained in \(\mathbb Q \otimes p( \Pi)\). Given \(E, E'\subset \PP\) two subsets of cardinality \(n-1\), any point of period $p$ in a \(E\)-boundary component can be isoperiodically connected to some \(E'\)-boundary component.   
\end{lemma}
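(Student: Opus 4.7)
The plan is to connect the given $E$-boundary point to an $E'$-boundary point via an intermediate codimension-two boundary stratum whose closure contains both types. Writing $\{r\} = \PP \setminus E$, $\{r'\} = \PP \setminus E'$, and $F = E \cap E'$, the case $E = E'$ reduces to the case $E \neq E'$ by choosing any $F \subsetneq E$; otherwise $|F| = n - 2$. The construction splits according to whether $n \geq 4$ (so $|F| \geq 2$) or $n = 3$ (so $|F| = 1$).

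\textbf{Case $n \geq 4$.} I consider the three-vertex linear stable curve with a genus-$0$ component $A$ carrying the $n-2$ peripherals of $F$ and one node to $B$; a genus-$0$ component $B$ carrying the single peripheral $\{r'\}$ and two nodes (to $A$ and to $C$); and a genus-$g$ component $C$ carrying $\{r\}$ and one node to $B$. The configuration is stable precisely when $|F| \geq 2$. Smoothing the $A$--$B$ node merges $A$ and $B$ into a genus-$0$ piece carrying $F \cup \{r'\} = E$, yielding an $E$-boundary component; smoothing the $B$--$C$ node merges $B$ and $C$ into a genus-$g$ piece carrying $\{r, r'\}$, yielding an $F$-boundary component. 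Exchanging the roles of $r$ and $r'$ produces a mirror codimension-two stratum whose closure contains both an $E'$-boundary and an $F$-boundary. By Theorem \ref{t:local structure} I isoperiodically deform the initial $E$-boundary point into the codimension-two stratum, then smoothen the $B$--$C$ node to reach an $F$-boundary form. By Lemma \ref{l: attaching map} the $F$-boundary's isoperiodic fiber decomposes as a product whose genus-$g$ factor is the isoperiodic fiber of a $(g, 3)$ setup; the inductive hypothesis (Theorem \ref{t:Rgn} applied at $n' = 3 < n$) delivers connectedness of that factor, so one may move within the $F$-boundary to a point near the mirror stratum, degenerate into it, and smoothen the appropriate node to reach an $E'$-boundary point.

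\textbf{Case $n = 3$.} Here $|F| = 1$ makes the three-vertex construction unstable. The case $g = 1$ is the base case covered by Theorem \ref{real13}. For $g \geq 2$ I substitute a codimension-two stratum with one separating and one non-separating node: a genus-$0$ component $A$ carrying the two peripherals of $E$ and the separating node to $B$, together with a component $B$ of geometric genus $g - 1$ carrying $\{r\}$, the separating node, and a non-separating self-node of zero residue (stable because $g \geq 2$). Smoothing the non-separating node returns to the $E$-boundary, while smoothing the separating node yields a codimension-one stratum of non-separating type: a single arithmetic-genus-$g$ nodal curve with all three peripherals and one non-separating node. By Proposition \ref{p:connectedness of non compact boundary stratum} combined with the inductive hypothesis Theorem \ref{t:Rgn} at genus $g - 1$, the isoperiodic fiber on this non-separating stratum is connected, so I can move to a symmetric codimension-two configuration whose subsequent smoothing reaches the $E'$-boundary.

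The principal obstacle in both cases is verifying that after each degeneration the restriction of $p$ still satisfies the generic hypothesis of Theorem \ref{t:Rgn}, namely that its image is not contained in the rational span of the peripheral periods in the lower-dimensional setting. This should follow from a short homological argument analyzing how the $\mathbb{Q}$-span behaves under restriction to the $E$-module supported by the genus-$g$ factor of the $F$-boundary (in the $n \geq 4$ case) or under the quotient by the pinched non-separating cycle in the normalization (in the $n = 3$, $g \geq 2$ case), inherited from the original assumption $p(H_1(\Sigma\setminus\PP, \mathbb{Z})) \not\subset \mathbb{Q}\otimes p(\Pi)$.
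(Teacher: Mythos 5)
For the $n \geq 4$ case your argument is essentially the paper's. You pass through the same codimension-two compact-type stratum (a genus-$0$ part carrying $F = E \cap E'$, a genus-$0$ part carrying $\{r'\}$, and the genus-$g$ part carrying $\{r\}$), smooth the $B$--$C$ node to land in an $F$-boundary stratum, and apply the $(g,3)$ result to the genus-$g$ factor there before recoupling through the mirror stratum. The one thing you defer, the "short homological argument," is precisely what the paper verifies in-line and it is easy: if the restriction of $p$ to the genus-$g$ three-pole module lay in the rational span of its peripheral periods, then all of $p$ would lie in $\mathbb{Q}\otimes p(\Pi)$, because the genus-$0$ side contributes only periods from $p(\Pi)$. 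You also need $C_{g,3}$ (same genus) to be available before $C_{g,n}$, which matches the paper's ordering (the $n=3$ case is established by an inner induction on genus before passing to $n>3$).

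The $n = 3$ case, however, takes a genuinely different route from the paper and contains a gap. You degenerate through a codimension-two stratum with one separating and one non-separating node, smooth the separating node, and then claim that the isoperiodic fiber $\Omega_0\mathcal{B}_c(p)$ on the resulting non-separating boundary stratum is connected, citing Proposition \ref{p:connectedness of non compact boundary stratum} together with the genus-$(g-1)$ inductive hypothesis. But Proposition \ref{p:connectedness of non compact boundary stratum} does not give connectedness of $\Omega_0\mathcal{B}_c(p)$: it provides only the weak path-lifting property, namely that every connected component of $\Omega_0\mathcal{B}_c(p)$ surjects onto a connected component of $\Omega\mathcal{S}_{\overline{\Sigma},\overline{\PP}}(\overline{p})$. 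The fiber of the normalization-and-forget map over a fixed target point is parametrized by homology classes $\overline{\kappa}$ with fixed distinct extremities satisfying the isoperiodic gluing constraint $\int_{\overline{\kappa}}\overline{\omega}=p(b)$, and nothing in the proposition identifies different components of the source lying over the same target component. Connectedness of $\Omega\mathcal{S}_{\overline{\Sigma},\overline{\PP}}(\overline{p})$ therefore does not entail connectedness of $\Omega_0\mathcal{B}_c(p)$, and the bridge from the $E$-degeneration to the $E'$-degeneration is missing. The paper avoids this by never leaving the compact-type bordification: it uses Corollary 3.3 of \cite{CD} to split the genus-$g$ two-pole piece of the $E$-boundary along a \emph{separating} curve into a genus-$1$ and a genus-$(g-1)$ component (arranging the non-rationality hypothesis on both factors), then chains separating degenerations and smoothings using the $(1,3)$ base case and the genus-$(g-1)$ inductive hypothesis; since all nodes are separating, Lemma \ref{l: attaching map} gives clean product decompositions at each step and the gluing ambiguity never enters. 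Finally, you omit the base case $n=3$, $g=0$, though it follows immediately from Lemma \ref{l: connectedness spherical case} and Corollary \ref{c:closure}.
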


\begin{proof}
The proof has two cases, the first when \(n=3\), and the second when \( n>3\).

\vspace{0.4cm}

{\bf First case: \(n=3\).} In this case, the proof is by induction on the genus. If \(g=0\) or \(g=1\), the Lemma follows from the fact that the isoperiodic moduli spaces \(\Omega\mrs_{\Sigma,\PP}(p)= \text{Per}_{\Sigma,\PP}^{-1} (p) \) are connected (Lemma \ref{l: connectedness spherical case} if \(g=0\) and Theorem \ref{real13} if \(g=1\) under the genericity assumption on \(p\)). By Corollary \ref{c:closure},  $\Omega^{*}_{0}\mnc_{\Sigma,\PP}(p)$ is connected and contains all $E$-boundary components (actually with any cardinality for $E$ but in particular for $E$ of cardinality $n-1$).
Next assume that \(g\geq 2\) and the inductive hypothesis up to genus $g-1$. Denote the poles by $\{i,j,k\}$ and write $E=\{i,j\}$ and $E'=\{j,k\}$. Let us introduce the \(R\)-labelled
 graph associated to the \(E\)-boundary component as follows: 
\begin{center}
\begin{tikzpicture}[node distance = {20mm},thick, main/.style = {draw,circle}]
\node[main] (1) [label=below: $0$]{$\Pi_E$};
\node[main] (2) [label=below: $g$][right of = 1]{$W_2$};
\node (3) [above left of = 1]{$\pi_{i}$};
\node (4) [below left of =1]{$\pi_{j}$};
\node (5) [right of =2]{$\pi_{k}$};
\draw (2) -- (5);
\draw (1) -- (2);
\draw (1) -- (3);
\draw (1) -- (4);


\end{tikzpicture}
\end{center}
The part of genus $g$ has two simple poles and we can apply the results of \cite{CD}. The hypothesis on $p$ implies that the composition of $p_{W_2}:W_2\rightarrow \mathbb{R}$ with the quotient map $\mathbb{R}\rightarrow \mathbb{R}/(\mathbb{Q}\otimes p(\Pi_\PP))$  is non-zero. Corollary 3.3 of \cite{CD}  implies that we can find a decomposition $W_2=W_3+W_4$ so that $W_3$ (resp. $W_4$) is isomorphic to the homology of a genus one (resp. $g-1$) surface with two punctures and neither $p(W_3)$ nor $p(W_4)$ are contained in $\mathbb{Q}p(\pi_i)+\mathbb{Q}p(\pi_j)+\mathbb{Q}p(\pi_k)$. Lemma \ref{p:connected isoperiodic strata} implies that there is a non-empty boundary stratum of forms of period $p$ associated to the $\PP$-labelled graph: 
\begin{center}
\begin{tikzpicture}[node distance = {20mm},thick, main/.style = {draw,circle}]

\node[main] (1) [label=below: $0$]{$\Pi_E$};
\node[main] (2) [right of = 1][label=below: $1$]{$W_3$};
\node[main] (3) [right of= 2][label=below: $g-1$]{$W_4$};
\node (4) [above left of = 1]{$\pi_{i}$};
\node (5) [below left of =1]{$\pi_{j}$};
\node (6) [right of =3]{$\pi_{k}$};
\draw (3) -- (6);
\draw (1) -- (2);
\draw (2) -- (3);
\draw (1) -- (4);
\draw (1) -- (5);

\end{tikzpicture}
\end{center}
\noindent that is connected to the initial isoperiodic stratum by smoothing the node between the genus 1 and the genus $g-1$ component. On the other hand, by smoothing the node joining the genus zero and genus one components we fall in a stratum determined by the following graph 

\begin{center}
\begin{tikzpicture}[node distance = {20mm},thick, main/.style = {draw,circle}]

\node[main] (1) [label=below: $1$]{$W_5$};
\node[main] (2) [right of = 1][label=below: $g-1$]{$W_4$};
\node (3) [above left of = 1]{$\pi_{i}$};
\node (4) [below left of =1]{$\pi_{j}$};
\node (5) [right of =2]{$\pi_{k}$};
\draw (2) -- (5);
\draw (1) -- (2);
\draw (1) -- (3);
\draw (1) -- (4);

\end{tikzpicture}
\end{center}
still with the property that $p(W_5)$ is not contained in $\mathbb{Q}p(i)+\mathbb{Q}p(j)$. By the case of $(g,n)=(1,3)$ applied to $p_{|W_5}$ and Lemma \ref{l: attaching map}, the latter boundary stratum is connected to some boundary stratum of the form 
\begin{center}
\begin{tikzpicture}[node distance = {20mm},thick, main/.style = {draw,circle}]

\node[main] (1) [label=below: $1$]{$W_6$};
\node[main] (2) [right of = 1][label=below: $0$]{$W_7$};
\node[main] (3) [right of= 2][label=below: $g-1$]{$W_4$};
\node (4) [left of = 1]{$\pi_{i}$};
\node (5) [above of = 2]{$\pi_{j}$};
\node (6) [right of =3]{$\pi_{k}$};
\draw (3) -- (6);
\draw (1) -- (2);
\draw (2) -- (3);
\draw (1) -- (4);
\draw (2) -- (5);

\end{tikzpicture}
\end{center}
\noindent with $W_5=W_6+W_7$ and  $p(W_6)$ not contained in $\mathbb{Q}p(\pi_ i)$.   By smoothing the node between the part of genus $g-1$ and the part of genus zero, the latter stratum is isoperiodically connected to  the stratum:
\begin{center}
\begin{tikzpicture}[node distance = {20mm},thick, main/.style = {draw,circle}]
\node[main] (1) [label=below: $1$]{$W_6$};
\node[main] (2) [right of = 1][label=below: $g-1$]{$W_8$};
\node (3) [left of = 1]{$\pi_{i}$};
\node (4) [above right of =2]{$\pi_{j}$};
\node (5) [below right of =2]{$\pi_{k}$};
\draw (2) -- (5);
\draw (1) -- (2);
\draw (1) -- (3);
\draw (2) -- (4);


\end{tikzpicture}
\end{center}

\noindent  where $W_8=W_7+W_4$ satisfies that $p(W_8)$ is not contained in \(\mathbb{Q}p(\pi_ i)+\mathbb{Q}p(\pi_j)+\mathbb{Q}p(\pi_k)\) (this was already true for the submodule $W_4$). We can apply the case of three poles to \(p_{W_8}\) and isoperiodically connect the latter stratum to a stratum of type:
\begin{center}
\begin{tikzpicture}[node distance = {20mm},thick, main/.style = {draw,circle}]

\node[main] (1) [label=below: $1$]{$W_6$};
\node[main] (2) [right of = 1][label=below: $g-1$]{$W_{9}$};
\node[main] (3) [right of= 2][label=below: $0$]{$\Pi_{E'}$};
\node (4) [left of = 1]{$\pi_{i}$};
\node (5) [above right of =3]{$\pi_{j}$};
\node (6) [below right of =3]{$\pi_{k}$};
\draw (3) -- (6);
\draw (1) -- (2);
\draw (2) -- (3);
\draw (1) -- (4);
\draw (3) -- (5);

\end{tikzpicture}
\end{center}
Again smoothing the node between the parts corresponding to $W_6$ and $W_{9}$ and writing $W_{10}=W_6+W_9$ we obtain an isoperiodic connection with a stratum of the form:
\begin{center}
\begin{tikzpicture}[node distance = {20mm},thick, main/.style = {draw,circle}]

\node[main] (1) [label=below: $g$]{$W_{10}$};
\node[main] (2) [right of= 1][label=below: $0$]{$\Pi_{E'}$};
\node (4) [left of = 1]{$\pi_{i}$};
\node (5) [above right of =2]{$\pi_{j}$};
\node (6) [below right of =2]{$\pi_{k}$};
\draw (2) -- (6);
\draw (1) -- (2);
\draw (1) -- (4);
\draw (2) -- (5);

\end{tikzpicture}
\end{center}

\noindent which corresponds to a boundary stratum of type \(E'=\{j,k\}\).

{\bf Second case: \(n>3\).} Write \(E=\PP\setminus\{j\}\) and \(E'=\PP\setminus\{i\}\). Denote $\hat{\PP}=\{\pi_l:l\in \PP\setminus\{i,j\}\}$, which contains at least two elements. The $\PP$-labelled graph of an element of the $E$-boundary is of the form:
\begin{center}
\begin{tikzpicture}[node distance = {20mm},thick, main/.style = {draw,circle}]
\node[main] (1) [label=below: $0$]{$\Pi_E$};
\node[main] (2) [right of = 1][label=below: $g$]{$W_2$};
\node (3) [above of = 1]{$\pi_{i}$};
\node (4) [left of =1]{$\hat{\PP}$};
\node (5) [right of =2]{$\pi_{j}$};
\draw (2) -- (5);
\draw (1) -- (2);
\draw (1) -- (3);
\draw (1) -- (4);


\end{tikzpicture}
\end{center}
\noindent where the edge between $\hat{\PP}$ and the vertex represents an edge for each element of $\hat{\PP}$ (that is at least two edges). By the connectedness of isoperiodic sets in genus $0$ and the fact that in genus $0$ with at least three poles there are no restrictions to realize any homomorphism, the previous stratum can be isoperiodically connected to a stratum of the form: 
\begin{center}
\begin{tikzpicture}[node distance = {20mm},thick, main/.style = {draw,circle}]

\node[main] (1) [label=below: $0$]{$W_3$};
\node[main] (2) [right of = 1][label=below: $0$]{$W_4$};
\node[main] (3) [right of= 2][label=below: $g$]{$W_2$};
\node (4) [left of = 1]{$\hat{\PP}$};
\node (5) [above of = 2]{$\pi_{i}$};
\node (6) [right of =3]{$\pi_{j}$};
\draw (3) -- (6);
\draw (1) -- (2);
\draw (2) -- (3);
\draw (1) -- (4);
\draw (2) -- (5);

\end{tikzpicture}
\end{center}
By smoothing the node between the genus $g$ component and the genus $0$ component, we isoperiodically connect the previous to a stratum of the form 
\begin{center}
\begin{tikzpicture}[node distance = {20mm},thick, main/.style = {draw,circle}]

\node[main] (1) [label=below: $0$]{$W_{3}$};
\node[main] (2) [right of= 1][label=below: $g$]{$W_5$};
\node (4) [left of = 1]{$\hat{\PP}$};
\node (5) [above right of =2]{$\pi_{i}$};
\node (6) [below right of =2]{$\pi_{j}$};
\draw (2) -- (6);
\draw (1) -- (2);
\draw (1) -- (4);
\draw (2) -- (5);

\end{tikzpicture}
\end{center}
We claim that \(p(W_{5})\) is not contained in \(\mathbb{Q}p(\pi_i)+\mathbb{Q}p(\pi_j)\). Indeed, if it were, then, since $p(W_3)\subset p(\Pi)$, the image of $p$ would be contained in \(\mathbb{Q}\otimes p(\Pi)\), contrary to assumption. Thanks to Lemmas \ref{l: first degeneration} and \ref{l:second degeneration},  the component of genus $g$ with three poles can be  isoperiodically deformed  to a spherical boundary stratum characterized by a sum $W_5=W_6+W_7$ to obtain an isoperiodic deformation of the latter stratum to one of type 
\begin{center}
\begin{tikzpicture}[node distance = {20mm},thick, main/.style = {draw,circle}]

\node[main] (1) [label=below: $0$]{$W_3$};
\node[main] (2) [right of = 1][label=below: $0$]{$W_6$};
\node[main] (3) [right of= 2][label=below: $g$]{$W_7$};
\node (4) [left of = 1]{$\hat{\PP}$};
\node (5) [above of = 2]{$\pi_{j}$};
\node (6) [right of =3]{$\pi_{i}$};
\draw (3) -- (6);
\draw (1) -- (2);
\draw (2) -- (3);
\draw (1) -- (4);
\draw (2) -- (5);

\end{tikzpicture}
\end{center}
Finally, by observing that $\Pi_{E'}=W_3+W_6$,  a smoothing of the node between the two genus 
zero components to obtain an isoperiodic connection with the stratum of the form 
\begin{center}
\begin{tikzpicture}[node distance = {20mm},thick, main/.style = {draw,circle}]

\node[main] (1) [label=below: $0$]{$\Pi_{E'}$};
\node[main] (2) [right of = 1][label=below: $g$]{$W_7$};
\node (3) [above of = 1]{$\pi_{j}$};
\node (4) [left of =1]{$\hat{\PP}$};
\node (5) [right of =2]{$\pi_{i}$};
\draw (2) -- (5);
\draw (1) -- (2);
\draw (1) -- (3);
\draw (1) -- (4);


\end{tikzpicture}
\end{center}
\noindent which corresponds to a a form on a $E'$-boundary component.\end{proof}

\subsection{Connecting isoperiodic \(E\)-boundary strata}
 
\begin{lemma}\label{l: connecting E-module I}
Assume $\Sigma$ is of genus \(g\geq 2\) and $\PP$ has $n=3$ points. Let \(p\in H^1 (\Sigma\setminus\PP, \mathbb R) \) be a period such that \( p (H_1(\Sigma\setminus\PP, \mathbb Z) ) \) is not contained in \( \mathbb Q\otimes p(\Pi)\). Let  \(E\subset \PP\) a subset of cardinality \(2\). Then, for any $E$-module $M$,  we have that \( \mathcal{B}_M(p)\)  is non-empty and connected, and all such \(E\)-boundary strata belong to the same connected component of \(\Omega\mnc_{\Sigma,\PP}(p)\). 
\end{lemma}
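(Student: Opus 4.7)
The plan is to address the two assertions separately. For the first---that each $\mathcal{B}_M(p)$ is non-empty and connected---I would apply the attaching map of Lemma~\ref{l: attaching map} to split
\[
\mathcal{B}_M(p)\;\simeq\;\Omega\mathcal{S}_{\hat{\Sigma}_0,\hat{\PP}_0}(p_0)\times\Omega\mathcal{S}_{\hat{\Sigma}_g,\hat{\PP}_g}(p|_M),
\]
where the first factor parametrizes genus-zero forms with three marked points ($E$ together with the node) and the second parametrizes genus-$g$ forms with two marked points (the remaining pole $q_j$ and the node, having peripheral periods $p(\pi_j)$ and $-p(\pi_j)$). The spherical factor is non-empty and connected by Lemma~\ref{l: connectedness spherical case}. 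For the genus-$g$ factor I would invoke Theorem~\ref{2poles}: its peripheral submodule being $\mathbb{Z}\pi_j$, the hypothesis is that $p(M)$ is neither $\mathbb{Z}p(\pi_j)$ nor $\tfrac{1}{2}\mathbb{Z}p(\pi_j)$. Either equality would imply $p(M)\subset \mathbb{Q}\,p(\pi_j)$, and then $p(H_1(\Sigma\setminus\PP))=p(M)+p(\Pi_E)\subset \mathbb{Q}\otimes p(\Pi)$, contradicting the hypothesis of the lemma. Hence Theorem~\ref{2poles} applies and the genus-$g$ factor is non-empty and connected.

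For the second assertion---that different $\mathcal{B}_M(p)$'s belong to the same connected component of $\Omega^{*}_{0}\mnc_{\Sigma,\PP}(p)$---I would argue by induction on $g$ with base case $g=2$. Given two $E$-modules $M$ and $M'$ with respective spherical $E$-bounding curves $\gamma_M,\gamma_{M'}$, the plan is to route through a common compact-type boundary stratum $\mathcal{B}_\delta$. Concretely, I would take $\delta\subset \Sigma\setminus\PP$ a separating simple closed curve splitting $\Sigma$ into a genus-$1$ subsurface containing $q_j$ and a genus-$(g-1)$ subsurface containing the two poles of $E$, chosen disjoint from both $\gamma_M$ and $\gamma_{M'}$. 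Such $\delta$ exists because $\gamma_M$ and $\gamma_{M'}$ can be isotoped into an arbitrarily small neighborhood of $E$. Moreover, by varying the Torelli class of $\delta$ (equivalently, the rank-$2$ symplectic sublattice of $H_1(\Sigma)$ carried by the genus-$1$ side), I would arrange simultaneously that the restriction of $p$ to $H_1$ of the genus-$(g-1)$ side is not contained in $\mathbb{Q}\otimes p(\Pi)$, and that its restriction to $H_1$ of the genus-$1$ side is not contained in $\mathbb{Q}\,p(\pi_j)$.

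With such a $\delta$, Lemma~\ref{l: attaching map} combined with either the inductive hypothesis (Theorem~\ref{t:Rgn} for $(g-1,3)$) or Theorem~\ref{real13} (when $g=2$) applied to the genus-$(g-1)$ factor, together with Theorem~\ref{2poles} applied to the genus-$1$ factor, then yields that $\mathcal{B}_\delta(p)$ is non-empty and connected. The two-node strata $(\gamma_M,\delta)$ and $(\gamma_{M'},\delta)$ each lie in the closure of $\mathcal{B}_\delta$ (pinching $\gamma_M$ or $\gamma_{M'}$ inside the genus-$(g-1)$ factor) as well as in the closures of $\mathcal{B}_M$ and $\mathcal{B}_{M'}$ respectively (pinching $\delta$). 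By Theorem~\ref{t:local structure} on the local structure of the period map at boundary points, these strata connect via continuous isoperiodic paths, and one obtains a chain
\[
\mathcal{B}_M\,\rightsquigarrow\,(\gamma_M,\delta)\,\rightsquigarrow\,\mathcal{B}_\delta\,\rightsquigarrow\,(\gamma_{M'},\delta)\,\rightsquigarrow\,\mathcal{B}_{M'}
\]
in $\Omega^{*}_{0}\mnc_{\Sigma,\PP}(p)$, completing the argument. The hard part will be verifying the simultaneous period-genericity conditions on both sides of $\delta$: this amounts to showing that the loci of ``bad'' Torelli classes of $\delta$ form a proper subset of the affine space of admissible choices, which uses in an essential way the assumption that $p(H_1(\Sigma\setminus\PP))$ is not contained in $\mathbb{Q}\otimes p(\Pi)$ to exhibit a witness cycle realizable in the genus-$(g-1)$ side, while keeping the genus-$1$ side non-degenerate in the sense of Theorem~\ref{2poles}.
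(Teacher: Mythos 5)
The first part of your proposal (non-emptiness and connectedness of each individual $\mathcal{B}_M(p)$ via the attaching map, Lemma \ref{l: connectedness spherical case}, Theorem \ref{2poles}, and Lemma \ref{p:connected isoperiodic strata}) matches the paper's argument. The second part, however, takes a different route from the paper and has a genuine gap.

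Your plan is to connect $M$ and $M'$ \emph{in one step} through a single common stratum $\mathcal{B}_\delta(p)$, where $\delta$ is a separating curve disjoint from both $\gamma_M$ and $\gamma_{M'}$. Requiring $\delta$ disjoint from both forces the genus-$1$ side of $\delta$ to carry a rank-$2$ symplectic submodule $V_1$ contained in $M\cap M'$. Writing $M'=M+\varphi$ with $\varphi\in H^1(\Sigma,\mathbb{Z})$, this means $V_1\subset\ker(\varphi)$. But for some choices of $p$ and $\varphi$ compatible with the lemma's hypothesis there is \emph{no} such $V_1$ satisfying the Haupt-type genericity you need on the genus-$1$ side. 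Concretely, for $g=2$, a symplectic basis $a,b,c,d$, with $p(a)=p(b)=\sqrt{2}$, $p(c)=p(d)=0$, $p(\pi_{r_1})=-2$, $p(\pi_{r_2})=p(\pi_{r_3})=1$ (so $\mathbb{Q}\otimes p(\Pi)=\mathbb{Q}$, and the lemma hypothesis holds since $\sqrt{2}\notin\mathbb{Q}$), and $\varphi(a)=\varphi(b)=1$, $\varphi(c)=\varphi(d)=0$: here $\ker(\varphi)=\langle a-b,c,d\rangle$, its symplectic radical is $\mathbb{Z}(a-b)$, and every genus-$1$ symplectic $V_1\subset\ker(\varphi)$ has the form $\langle c+\alpha(a-b),\,d+\beta(a-b)\rangle$, on which $p$ vanishes identically because $p(a-b)=0$. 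Thus $p(V_1)\subset\mathbb{Q}\,p(\pi_j)$ for \emph{every} admissible $V_1$, the genus-$1$ factor of $\mathcal{B}_\delta$ has cyclic periods, and $\mathcal{B}_\delta(p)$ is empty (by Lemma \ref{l: Haupt}, third item). So the ``hard part'' you flag is not merely a verification to carry out: the single-step routing you propose genuinely fails for some pairs $(M,M')$.

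The paper avoids this by not attempting a direct one-step connection. Instead it proves $M\sim M+\psi$ for those $\psi\in H^1(\Sigma,\Pi_E)$ that vanish on a genus-$(g-1)$ symplectic submodule $\overline{H}_1$ chosen (using \cite{CD}) so that $p$ is generic modulo $\mathbb{Q}\,p(\pi_i)$ on both $\overline{H}_1$ and its genus-$1$ complement $\overline{H}_2$. The actual isoperiodic move is done by degenerating the genus-$g$ part along the splitting $\overline{H}_1\oplus\overline{H}_2$, smoothing a node, and then applying Theorem \ref{real13} to the resulting genus-$1$ piece with three poles to shift the $E$-module only on the $\overline{H}_2$ part. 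General $M'$ is reached by decomposing $M'-M$ as a sum of such ``localized'' $\psi$'s with respect to a full symplectic splitting $H_1(\Sigma)=\overline{H}_1+\cdots+\overline{H}_g$ and iterating. To repair your argument you would need an analogous chaining: replace the direct $\mathcal{B}_M\rightsquigarrow\mathcal{B}_\delta\rightsquigarrow\mathcal{B}_{M'}$ by a finite chain $M=M_0\rightsquigarrow M_1\rightsquigarrow\cdots\rightsquigarrow M_k=M'$, at each step choosing $\delta$ so that $V_1\subset M_{l}\cap M_{l+1}$ \emph{and} the two period genericity conditions hold; establishing that such intermediates always exist is precisely the content that the paper supplies via the decomposition of $\psi$ and the result borrowed from \cite{CD}, and it is missing from your proposal.
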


\begin{proof}
Let \( M\) be a \(E\)-module, and denote by \(i\) the pole which does not belong to \(E=\{j,k\}\). Observe that the restriction of \(p\) to \(M\) does not have image in \(\mathbb Q p(\pi_i)\) by our assumption on \(p\). 
Consider the $\PP$-labelled graph associated to elements of the boundary stratum \(\mathcal B_M(p)\):
\begin{center}
\begin{tikzpicture}[node distance = {20mm},thick, main/.style = {draw,circle}]
\node[main] (1) [label=below: $0$]{$\Pi_E$};
\node[main] (2) [right of = 1][label=below: $g$]{$M$};
\node (3) [above left of = 1]{$\pi_{k}$};
\node (4) [below left of =1]{$\pi_{j}$};
\node (5) [right of =2]{$\pi_{i}$};
\draw (2) -- (5);
\draw (1) -- (2);
\draw (1) -- (3);
\draw (1) -- (4);


\end{tikzpicture}
\end{center}
\noindent The first part of the Lemma is due to the connectedness result in each part: the genus zero case in Lemma \ref{l: connectedness spherical case} for the part corresponding to $\Pi_E$ and Theorem \ref{2poles} applied to the restriction of \(p\) to \(M\) (that necessarily takes some values that are not rationally dependent to \( p(\pi_E)\)). Lemma \ref{p:connected isoperiodic strata} then guarantees that the isoperiodic boundary stratum \(\mathcal B_M(p)\) associated to the $E$-boundary component defined by $M$ is connected. 

For the second part, observe that, thanks to the results in \cite{CD}, there exists a symplectic decomposition of the quotient \(E\)-module 
\(\overline{M}= \overline{H_1}\oplus\overline{H_2}\), with \(\overline{H_1}\) of genus \(g-1\) and \(\overline{H_2}\) of genus \(1\), such that the restriction of \(p\) modulo \(\mathbb Q p(\pi_i)\) to each of the summands does not vanish. We denote by \(H_1\) (resp. \(H_{2}\))  the submodule of \(M\) which is the pre-image of \(\overline{H_1}\) (resp. \(\overline{H_2}\)) by the quotient map \(M\rightarrow \overline{M}\). 

Let \(\overline{\psi} \in H^1 (\Sigma \setminus \{i\},  \Pi_E) \simeq H^1 (\Sigma,  \Pi_E) \simeq \text{Hom} (\overline{M}, \Pi_E) \) be a form that vanishes on \(\overline{H_1}\),  and denote \(M'= M+\psi\) where \(\psi\) is the composition of the projection \(M\rightarrow \overline{M}\) with \(\overline{\psi}\). We claim that  that we can isoperiodically connect \(\mathcal B_M(p)\) to \(\mathcal B_{M'}(p)\) as follows:

From Theorem \ref{2poles} we can isoperiodically connect the stratum \(\mathcal B_M(p)\) to the one corresponding to the $\PP$-labelled graph: 
\begin{center}
\begin{tikzpicture}[node distance = {20mm},thick, main/.style = {draw,circle}]
\node[main] (1) [label=below: $0$]{$\Pi_E$};
\node[main] (2) [right of = 1][label=below: $1$]{$H_2$};
\node[main] (3) [right of= 2][label=below: $g-1$]{$H_1$};
\node (4) [above left of = 1]{$\pi_{k}$};
\node (5) [below left of =1]{$\pi_{j}$};
\node (6) [right of =3]{$\pi_{i}$};
\draw (3) -- (6);
\draw (1) -- (2);
\draw (2) -- (3);
\draw (1) -- (4);
\draw (1) -- (5);


\end{tikzpicture}
\end{center}
By smoothing the node between the part of genus $0$ and the part of genus $1$ we connect the previous stratum to one of the form: 
\begin{center}
\begin{tikzpicture}[node distance = {20mm},thick, main/.style = {draw,circle}]
\node[main] (1) [label=below: $1$]{$W_3$};
\node[main] (2) [right of = 1][label=below: $g-1$]{$H_1$};
\node (3) [above left of = 1]{$\pi_{k}$};
\node (4) [below left of =1]{$\pi_{j}$};
\node (5) [right of =2]{$\pi_{i}$};
\draw (2) -- (5);
\draw (1) -- (2);
\draw (1) -- (3);
\draw (1) -- (4);


\end{tikzpicture}
\end{center}
\noindent where $W_3=\Pi_E+H_2$. The restriction $p_{|W_3}$ are the periods of a form of genus $1$ with three poles and some period not contained in $\mathbb{Q}p(\pi_j)+\mathbb{Q}p(\pi_k)$. The module $H_3:=H_2+\psi_{|H_2}\subset W_3$ is an $E$-module for $p_{W_3}$ with $W_3=H_3+\Pi_E$. By Theorem \ref{real13} applied to $p_{|W_3}$ we can connect the  stratum associated to last diagram to one of type :
\begin{center}
\begin{tikzpicture}[node distance = {20mm},thick, main/.style = {draw,circle}]
\node[main] (1) [label=below: $0$]{$\Pi_E$};
\node[main] (2) [right of = 1][label=below: $1$]{$H_3$};
\node[main] (3) [right of= 2][label=below: $g-1$]{$H_1$};
\node (4) [above left of = 1]{$\pi_{k}$};
\node (5) [below left of =1]{$\pi_{j}$};
\node (6) [right of =3]{$\pi_{i}$};
\draw (3) -- (6);
\draw (1) -- (2);
\draw (2) -- (3);
\draw (1) -- (4);
\draw (1) -- (5);


\end{tikzpicture}
\end{center}
Remark that, since $\psi$ is zero in restriction to $H_1$ we have that $M'=M+\psi= (H_{2}+H_1)+\psi=(H_{2}+\psi_{|H_2})+H_1=H_3+ H_1$. So smoothing the node between the component of genus 1 and the component of genus $g-1$ of the last diagram we isoperiodically connect last diagram with the stratum associated to the graph: 
\begin{center}
\begin{tikzpicture}[node distance = {20mm},thick, main/.style = {draw,circle}]
\node[main] (1) [label=below: $0$]{$\Pi_E$};
\node[main] (2) [right of = 1][label=below: $g$]{$M'$};
\node (3) [above left of = 1]{$\pi_{k}$};
\node (4) [below left of =1]{$\pi_{j}$};
\node (5) [right of =2]{$\pi_{i}$};
\draw (2) -- (5);
\draw (1) -- (2);
\draw (1) -- (3);
\draw (1) -- (4);


\end{tikzpicture}
\end{center}
\noindent which represents  precisely the stratum $\mathcal{B}_{M'}(p)$ as desired. 

To end the proof of the Lemma, fix a symplectic decomposition \(H_1(\Sigma_g,\mathbb Z) = \overline{H_1}+\ldots+\overline{H_g}\) as a sum of genus one submodules, and   notice that any form $\psi$ in \( H^1 (\Sigma_g, \Pi_E)\) is a sum of forms that vanish on all but one of the \(\overline{H_r}\)'s. 
\end{proof}

\begin{lemma}\label{l: connecting E-module II}
Assume \(n\geq 4\) is the cardinality of $\PP\subset \Sigma$  and \(g\geq 1\) is the genus of $\Sigma$. Let \(p\in H^1(\Sigma\setminus\PP, \mathbb R) \) be a period with non zero peripheral periods and such that \( p (H_1(\Sigma\setminus\PP, \mathbb Z) ) \) is not contained in  \(\mathbb{Q}\otimes p(\Pi)\). Let  \(E\subset \PP\) a subset of cardinality \(n-1\) and $M$ be an $E$-module. Then \( \mathcal{B}_M(p)\)  is non empty and connected, and all such isoperiodic \(E\)-boundary strata belong to the same connected component of \(\Omega\mnc_{\Sigma,\PP}(p)\). 
\end{lemma}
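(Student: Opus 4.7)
The plan is to establish the two claims in turn: first the nonemptiness and connectedness of each individual $\mathcal B_M(p)$, and then the fact that the various $\mathcal B_M(p)$ all lie in the same connected component of $\Omega\overline{\mathcal S}_{\Sigma,\PP}(p)$.

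For the first claim, I would apply the attaching map of Lemma \ref{l: attaching map} to write
\[ \mathcal B_M(p)\;\cong\; \Omega\mathcal S_{\hat\Sigma_0,\hat\PP_0}(p_0)\,\times\,\Omega\mathcal S_{\hat\Sigma_g,\hat\PP_g}(p_M), \]
where the first factor is the spherical component (with the $n-1$ poles of $E$ plus one node, hence $n$ marked points) and the second the genus $g$ component (with $\pi_i$, $\{i\}=\PP\setminus E$, and the node, hence two marked points and homology $M$). The first factor is nonempty and connected by Lemma \ref{l: connectedness spherical case}. For the second, I would check the hypothesis of Theorem \ref{2poles} for the restricted period $p_{|M}$: decomposing elements of $H_1(\Sigma\setminus\PP)=M+\Pi_E$ as $m+\pi$, the hypothesis $p(H_1(\Sigma\setminus\PP))\not\subset \mathbb Q\otimes p(\Pi)$ forces $p(M)\not\subset \mathbb Q\otimes p(\Pi)\supset\mathbb Q p(\Pi_M)$, so $p(M)$ is neither $p(\Pi_M)$ nor $\tfrac12 p(\Pi_M)$, and Theorem \ref{2poles} gives the desired conclusion.

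For the second claim, I would use the affine structure on the set of $E$-modules: any other $E$-module has the form $M'=M+\psi$ with $\psi\in\mathrm{Hom}(H_1(\Sigma_g),\Pi_E/\mathbb Z\pi_E)$ (subsection \ref{ss: E-modules}). The strategy is to decompose $\psi$ into elementary pieces and realise each elementary change of module by an isoperiodic path through a two-node stratum, exactly as in Lemma \ref{l: connecting E-module I}. When $g\geq 2$, I would pick a symplectic decomposition $H_1(\Sigma_g)=\overline{H_1}\oplus\cdots\oplus\overline{H_g}$ into genus one summands on which $p$ modulo $p(\Pi)$ is nontrivial, and write $\psi=\sum_k\psi_k$ with $\psi_k$ supported on $\overline{H_k}$. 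Fixing $k$, Theorem \ref{2poles} applied to the genus $g$ part lets me degenerate $\mathcal B_M(p)$ to the two-node stratum whose $\PP$-labelled tree has vertices $\text{sphere}(E)$, $\text{genus}~1\,(H_k)$ and $\text{genus}\,(g-1)$ (the complementary summand, carrying $\pi_i$). Smoothing the sphere to genus one node merges them into a genus one curve with $n$ marked points and homology $\Pi_E+H_k$; the inductive hypothesis, i.e.\ Theorem \ref{t:Rgn} for $(1,n)$ (available by induction on the genus), then supplies connectedness of this $(1,n)$ isoperiodic piece, inside which I replace $H_k$ by $H_k+\psi_k$. Re-opening the first node and smoothing the second yields a point of $\mathcal B_{M+\psi_k}(p)$.

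The case $g=1$ requires a separate argument since the genus one part cannot be split. I would instead decompose $\psi$ through the generators $\pi_j$ of $\Pi_E/\mathbb Z\pi_E$, for $j$ ranging over $E$ minus one element, so that $\psi=\sum_j\psi_j$ with $\psi_j$ taking values in $\mathbb Z\pi_j$. To realise $\psi_j$, I would degenerate by splitting the spherical component so as to isolate the pole $j$ on its own three-pointed sphere, which is allowed because $|E|=n-1\geq 3$. The resulting two-node stratum has vertices $\text{sphere}(E\setminus\{j\})$, $\text{sphere}(\{j\},\text{two nodes})$ and $\text{genus}~1\,(M,\pi_i)$; smoothing the second node merges the three-pointed sphere with the genus one part into a $(1,3)$-piece carrying $\pi_i,\pi_j$ and the remaining node. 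The base case Theorem \ref{real13} (genericity of the restricted period being inherited from the global hypothesis) yields connectedness of this $(1,3)$ isoperiodic piece, inside which I rearrange the internal module structure before re-opening and re-smoothing to land in $\mathcal B_{M+\psi_j}(p)$.

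\textbf{The hard part will be} the combinatorial bookkeeping: tracking how the $(\Sigma,\PP)$-homological marking of the boundary tree transforms under each node-smoothing and each internal deformation, verifying that the genericity assumption $p\not\subset\mathbb Q\otimes p(\Pi)$ descends to the restrictions required to legitimately apply Theorems \ref{2poles} and \ref{real13} at each intermediate step, and identifying precisely which elementary modification of the $E$-module is effected by each pair (smoothing, reopening). The case $g=1$ is the genuinely delicate one, because it rests on the more intricate $(1,3)$-analysis of Section \ref{s:(1,3)} and on a longer chain of degenerations than the relatively clean $g\geq 2$ case.
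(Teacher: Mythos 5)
Your proof of the first claim (nonemptiness and connectedness of each $\mathcal B_M(p)$) matches the paper's: same ingredients (Lemma~\ref{l: attaching map}, Lemma~\ref{l: connectedness spherical case}, Theorem~\ref{2poles}), same argument.

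For the second claim your proposal bifurcates by genus, whereas the paper runs a single, uniform argument for all $g\geq 1$: it is exactly your $g=1$ argument. In Steps 0--4 of Lemma~\ref{l: connecting E-module II} the paper isolates the pole $j$ on its own three-pointed sphere, smooths the node between that sphere and the genus-$g$ piece so as to merge them into a genus-$g$ curve with three marked points ($\pi_i$, $\pi_j$, a node) and homology $M+\mathbb Z\pi_j$, modifies the $E$-module inside that $(g,3)$ piece via Lemma~\ref{l: connecting E-module I} (and via Theorem~\ref{real13} when $g=1$), and re-smooths to land in $\mathcal B_{M'}(p)$. Nothing in this argument requires $g=1$, so the case split you introduce is unnecessary. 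Your $g\geq 2$ route — splitting the genus as $1+(g-1)$ and invoking Theorem~\ref{t:Rgn} for $(1,n)$ — can be made to work, but it is less economical than the paper's reduction on two counts: it reintroduces all $n$ poles into the auxiliary piece, so the recursive input is $(1,n)$ rather than $(g,3)$, which ties you to an induction on genus that is more delicate to coordinate with the rest of the argument than the paper's ``same genus, fewer poles'' step; and you would need to reproduce (rather than inherit from Lemma~\ref{l: connecting E-module I}) the argument, via Corollary~3.3 of \cite{CD}, that a genus-one summand $\overline{H_k}$ with suitably irrational restricted period exists. In short, the paper's approach is what you wrote for $g=1$, extended verbatim to all $g\geq 1$ by substituting Lemma~\ref{l: connecting E-module I} for Theorem~\ref{real13} when $g\geq 2$.
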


\begin{proof} 
 As in the first part of Lemma \ref{l: connecting E-module I}, the proof of the first part of the present lemma is due to Theorem \ref{2poles} applied to  the restriction of \(p\) to an $E$-module \(M\) which takes some values that are not rationally dependent to \( p(\pi_E)=p(i)\). Lemma \ref{p:connected isoperiodic strata} then guarantees that the isoperiodic boundary stratum \(\mathcal B_M(p)\) associated to the $\Pi_E$-boundary component defined by $M$ is connected. 
 
 For the second part, let \( M\) be a \(E\)-module, and denote by \(i\) the pole which does not belong to \(E\), i.e. \(E=\PP\setminus i\). It suffices to connect \(\mathcal B_M (p) \) to \(\mathcal B_{M'} (p)  \) with \(M' = M + \psi \pi_j\) for each \( j\in E\) and each \(\psi \in H^1 (\Sigma \setminus \{i\}, \mathbb Z)\simeq H^1 (\Sigma, \mathbb Z) \). Denote $\widehat{E}:=\{\pi_e: e\in E\setminus{j}$\}. The following steps prove the Lemma: 
 
 \vspace{0.2cm} 
  \textit{Step 0:} We consider the $\PP$-labelled graph associated to \(\mathcal B_M (p) \) :
  \begin{center}
\begin{tikzpicture}[node distance = {20mm},thick, main/.style = {draw,circle}]
\node[main] (1) [label=below: $0$]{$W_1$};
\node[main] (2) [right of = 1][label=below: $g$]{$M$};
\node (3) [above left of = 1]{$\pi_{j}$};
\node (4) [below left of =1]{$\widehat{E}$};
\node (5) [right of =2]{$\pi_{i}$};

\draw (2) -- (5);
\draw (1) -- (2);
\draw (1) -- (3);
\draw (1) -- (4);

\end{tikzpicture}
 \end{center}
 where we interpret the edge between $\widehat{E}$ and the vertex as a union of edges joining each element of $\widehat{E}$ with the component. 
 
 \textit{Step 1:} The part of genus zero  with $n$ poles can be isoperiodically degenerated to two parts of genus zero, one containing $E\setminus\{j\}$ and the other containing $j$ and the other polar class. In this way we reach a stratum associated to a graph of the form: 
  \begin{center}
 \begin{tikzpicture}[node distance = {20mm},thick, main/.style = {draw,circle}]

\node[main] (1) [label=below: $0$]{$W_2$};
\node[main] (2) [right of = 1][label=below: $0$]{$W_3$};
\node[main] (3) [right of= 2][label=below: $g$]{$M$};
\node (4) [left of = 1]{$\widehat{E}$};
\node (5) [above of = 2]{$\pi_{j}$};
\node (6) [right of =3]{$\pi_{i}$};
\draw (3) -- (6);
\draw (1) -- (2);
\draw (2) -- (3);
\draw (1) -- (4);
\draw (2) -- (5);

\end{tikzpicture}
 \end{center}
 
  \textit{Step 2:} By smoothing the node belonging  to the part corresponding to $M$ and writing $W^{g}_{4}=M+\mathbb{Z}j$ we reach the stratum associated to the graph: 
  \begin{center}
  \begin{tikzpicture}[node distance = {20mm},thick, main/.style = {draw,circle}]

\node[main] (1) [label=below: $0$]{$W_{2}$};
\node[main] (2) [right of= 1][label=below: $g$]{$W_4$};
\node (4) [left of = 1]{$\widehat{E}$};
\node (5) [above right of =2]{$\pi_{i}$};
\node (6) [below right of =2]{$\pi_{j}$};
\draw (2) -- (6);
\draw (1) -- (2);
\draw (1) -- (4);
\draw (2) -- (5);

\end{tikzpicture}
\end{center}

\textit{Step 3:} The restriction of $p$ to \(W^g_4\) has some period not contained in \(\mathbb{Q}p(\pi_i)+\mathbb{Q}p(\pi_ j)\) (otherwise, \(p\) would have image in \(\mathbb{Q}\otimes p(\Pi)\)). Apply Lemma \ref{l: connecting E-module I} to isoperiodically connect the part corresponding to \(W^g_4\) to a boundary stratum of type: 

\begin{center}
\begin{tikzpicture}[node distance = {20mm},thick, main/.style = {draw,circle}]

\node[main] (1) [label=below: $0$]{$W_2$};
\node[main] (2) [right of = 1][label=below: $0$]{$W_5$};
\node[main] (3) [right of= 2][label=below: $g$]{$M'$};
\node (4) [left of = 1]{$\widehat{E}$};
\node (5) [above of = 2]{$\pi_{j}$};
\node (6) [right of =3]{$\pi_{i}$};
\draw (3) -- (6);
\draw (1) -- (2);
\draw (2) -- (3);
\draw (1) -- (4);
\draw (2) -- (5);

\end{tikzpicture}
\end{center}
\textit{Step 4:} By smoothening the node between the two components of genus zero in the previous diagram, we obtain an isoperiodic connection with a form in the stratum 
\begin{center}
\begin{tikzpicture}[node distance = {20mm},thick, main/.style = {draw,circle}]
\node[main] (1) [label=below: $0$]{$W_1$};
\node[main] (2) [right of = 1][label=below: $g$]{$M'$};
\node (3) [above left of = 1]{$\pi_{j}$};
\node (4) [below left of =1]{$\widehat{E}$};
\node (5) [right of =2]{$\pi_{i}$};

\draw (2) -- (5);
\draw (1) -- (2);
\draw (1) -- (3);
\draw (1) -- (4);

\end{tikzpicture}
\end{center}
\noindent which corresponds to some point in \(\mathcal{B}_{M'}(p)\).
\end{proof}

\subsection{Proof of Theorem \ref{t:Rgn}}\label{ss: proof Rgn}
The theorem is true if \(g=0\) by Lemma \ref{l: connectedness spherical case}, if \(n=2\) by Theorem \ref{2poles}, or if \((g,n)= (1,3)\) by Lemma \ref{l: connecting E-module I}. Now assume that \(g\geq 2\) or \( n \geq 4\). Take a subset  \(E\subset \PP\) of the set of poles of cardinality \(|E|=n-1\). Lemma \ref{l: second degeneration} shows that any form in \( \Omega\mrs_{\Sigma,\PP} (p)\) can be isoperiodically degenerated to a \(E\)-boundary stratum of \(\Omega\mnc_{\Sigma,\PP}(p) \). Moreover, Lemmas \ref{l: connecting E-module I} and \ref{l: connecting E-module II} prove that all the forms in the \(E\)-boundary strata are isoperiodically connected in \(\Omega\mnc_{\Sigma,\PP}(p) \). This shows that \(\Omega\mnc_{\Sigma,\PP}(p) \) is connected. By Corollary \ref{c:closure},  \(\Omega\mrs_{\Sigma,\PP}(p) \) is also connected.

\section{The case of non-real period homomorphisms}

The goal of this section is to prove by induction that the following statement is true:

\vspace{0.3cm}

\noindent \((C_{g,n^*})\): Let $(\Sigma,\PP)$ be of type $(g,n)$. Then, for each \(p\in H^1(\Sigma\setminus\PP, \C)\) such that \( p(H_1(\Sigma\setminus\PP, \Z))\) is not contained in a real line, the set \(\Omega\mrs_{\Sigma,\PP}(p)\) is connected.

\vspace{0.3cm}

In this section we will apply the concept of $E$-modules associated to a subset $E\subset \PP$ of the poles--introduced in section \ref{ss: E-modules} -- for the particular case of $E=\PP$ and call them \(\PP\)-modules. Since the element $\pi_\PP$ is trivial,  a \(\PP\)-module is a sub-module \(V\) of \(H_1(\Sigma_g,\Z)\) that satisfies 
\begin{equation} \label{eq: decomposition}  H_1 (\Sigma\setminus\PP, \Z)= \Pi \oplus V,\end{equation} 
where \(\Pi=\Pi_\PP\) is the whole peripheral module. Such a module determines a spherical boundary stratum, \( \mathcal B _V\), corresponding to a
pointed Torelli class  of a simple closed curve \(c\subset \Sigma\setminus\PP \)  that separates the surface into two components: one of genus zero containing all the marked points, and the other of genus $g$ having its homology that surjects onto \(V\) via the inclusion map. We call such a boundary stratum a \(\PP\)-boundary stratum. 

The set of \(\PP\)-boundary strata, which is in bijection with the set of \(\PP\)-modules, acquires a structure of an affine space directed by \( \text{Hom} (H_1(\Sigma, \Z) , \Pi)\).

\subsection{Degeneration to \(\PP\)-boundary stratum}

\begin{lemma}\label{l:twinstodistinctpoles}
Let $\omega$ be a meromorphic differential with simple poles on a  smooth connected curve. Suppose $q_{1}, q_{2}$ are two distinct poles that have non-opposite peripheral period directions, i.e. $\frac{\int_{\pi_{q_{2}}}\omega}{\int_{\pi_{q_{1}}}\omega}\notin \mathbb{R}_{<0}$. Then up to a finite sequence of Schiffer variations,   $\omega$ admits a pair of  disjoint twins, each leading to one of the $q_{i}$'s.
\end{lemma}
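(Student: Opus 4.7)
The plan is to choose a well-adapted direction $\theta \in \mathbb{S}^1$, examine the structure of the directional foliation $\mathcal{G}_\theta$, and then perform a combinatorial reduction by Schiffer variations in the spirit of Lemma~\ref{l: first degeneration}.

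First I would choose $\theta$ so that $q_1$ and $q_2$ are simultaneously sinks of $\mathcal{G}_\theta$. Recalling Subsection~\ref{ss: singular flat metric and directional foliations}, the set of directions for which a pole $q$ of peripheral period $r$ appears as a sink of $\mathcal{G}_\theta$ is one of the two open half-circles of $\mathbb{S}^1\setminus\mathbb{R}r$; writing $r_i=\int_{\pi_{q_i}}\omega$, the two sink half-circles have non-empty intersection precisely when $r_1,r_2$ are not anti-parallel, which is exactly the hypothesis $r_2/r_1\notin\mathbb{R}_{<0}$. Within this intersection I would choose $\theta$ generically enough that it is not $\mathbb{R}$-collinear with any peripheral period and the forward $\theta$-separatrices of the zeros are in general position. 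By the classification of the components of $C\setminus R(\mathcal{G}_\theta)$ recalled in Subsection~\ref{ss: singular flat metric and directional foliations}, no component is then a cylinder of closed geodesics, so each is either a minimal component or a band of leaves joining a source pole to a sink pole; in particular some bands terminate at $q_1$ and at $q_2$.

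Next I would build the graph $\Gamma$ whose vertices are the singularities of $\omega$ and whose edges are the bands of $\mathcal{G}_\theta$, and consider a shortest path in $\Gamma$ from $q_1$ to $q_2$. If this path has length $2$, then its unique interior zero $z$ carries two $\theta$-separatrices—one in each adjacent band—leading to $q_1$ and $q_2$ respectively, and these form the desired pair of disjoint twins. Otherwise, at two consecutive interior bands meeting at a zero I would perform a Schiffer variation along a pair of short $\theta$-geodesic twins that cut across both bands, in direct analogy with the surgery carried out in the proof of Lemma~\ref{l: first degeneration}. This produces a differential with the same periods for which the $\Gamma$-distance between $q_1$ and $q_2$ has strictly decreased, while both $q_1,q_2$ remain sinks of $\mathcal{G}_\theta$ and the genericity of $\theta$ is preserved. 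Iterating finitely many times reduces the situation to the length-$2$ case.

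The main obstacle is the possible presence of minimal components of $\mathcal{G}_\theta$, which are automatically excluded in the real-periods setting of Lemma~\ref{l: first degeneration} but not a priori here. Handling them—whether by perturbing $\theta$ adaptively (using the absorbing character of the sink poles $q_1, q_2$ for the flow of $\mathcal{G}_\theta$), or by checking that the Schiffer variations used in the combinatorial reduction eventually split the minimal components into ordinary band pieces—is the most delicate step; once this is settled, the rest of the argument is a direct complex-period analog of the cylinder-graph reduction carried out in Lemma~\ref{l: first degeneration}.
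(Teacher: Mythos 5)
Your opening step---choosing a generic $\theta$ in the intersection of the two open sink half-circles so that both $q_1$ and $q_2$ are sinks of $\mathcal{G}_\theta$ and every generalized saddle connection runs from a zero to a pole---is exactly how the paper begins. After that the two arguments diverge, and your reduction has a genuine gap at exactly the point you flag yourself.

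In the real-periods setting of Lemma~\ref{l: first degeneration} the graph of cylinders is a faithful combinatorial model: the existence of the first integral forces \emph{every} component of the complement of the saddle connections to be a cylinder, so the graph connects all singularities and shortest-path surgery makes sense. For a generic $\theta$ with non-real holonomy the components are instead bands and minimal components. Bands join a \emph{source} pole to a \emph{sink} pole, so a graph ``whose vertices are the singularities and whose edges are the bands'' does not have zeros as interior vertices along band-paths, and a length-$2$ path from $q_1$ to $q_2$ through a zero is not what this graph produces. The adjacency you actually need---a zero possessing one $\theta$-separatrix into a band ending at $q_1$ and another into a band ending at $q_2$---has to be located by other means. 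More seriously, minimal components are two-dimensional regions with dense leaves and are not edges of any graph; they can separate the bands incident to $q_1$ from those incident to $q_2$, so $\Gamma$ may simply fail to connect the two poles. Your two suggested remedies (adaptively perturbing $\theta$; hoping the combinatorial surgeries dissolve the minimal components) are stated but not carried out, and neither is clearly true: a Schiffer variation along twins transverse to two bands can leave an adjacent minimal component untouched.

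The paper's proof sidesteps this by changing strategy immediately after choosing $\theta$. It uses the generic-$\theta$ picture only to observe that \emph{if} $\omega$ has a single zero, then the generalized separatrices at $q_1$ and $q_2$ both end at that zero and give the required twins. It then handles multiple zeros by a double induction on genus and number of poles: Schiffer variations along a shortest saddle connection between two distinct zeros (chosen after generic perturbation so that this period is not a period of $\omega$) either merge two zeros, or produce a nodal degeneration along a curve of zero period; in the latter case one applies the inductive hypothesis to the normalization (using the weak path-lifting property of Proposition~\ref{p:connectedness of non compact boundary stratum} for a non-separating node, or attaching maps as in Lemma~\ref{l: attaching map} for a separating one), together with Corollary~\ref{c:closure} to pass back to the smooth locus, and then smooths the node again. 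That structure never needs to understand the global decomposition of $C\setminus R(\mathcal{G}_\theta)$ beyond the single-zero case, so minimal components never enter. To save your proposal you would need a new idea for controlling minimal components; alternatively, keep your Step~1 and replace Steps~2--3 with the paper's inductive zero-reduction.
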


\begin{proof}

By the residue theorem, $\omega$ has at least three poles. 
As presented in subsection \ref{ss: singular flat metric and directional foliations}, given a  directional foliation $\mathcal{G}_{\theta}$ of $\omega$ that is not co-linear to any peripheral period, the poles of $\omega$ are divided into two types: those for which the local real analytic foliation is a node and those for which it is a source. The singularity type at the pole depends on the angle between the peripheral period and $\theta$. The hypothesis on $q_{1}$ and $q_{2}$ allows to find a sector of directions for which both are attracting poles. Since the set of directions of saddle connections of $\omega$ is countable, for a dense set of directions $\theta$, the foliation $\mathcal{G}_{\theta}$ has no leaf which is a saddle connection. This means that every generalized saddle connection in the ribbon graph $\mathcal{R}(\mathcal{G}_{\theta})$ joins a zero with a pole. Moreover, since the form has more than two poles, we cannot have that all the leaves that accumulate on one pole have the other endpoint at a single pole (this happens only for $(\mathbb{C}^*,\frac{adz}{z})$). Hence, for each pole of $\omega$ there is a generalized saddle connection of $\mathcal{G}_{\theta}$ joining the pole to some zero.

\underline{Case 1: If $\omega$ has a single zero.} The constructed generalized separatrices for $q_1$ and $q_2$ approach the same zero of $\omega$ and they constitute a pair of twin semi-infinite geodesics leading to the chosen poles.

\underline{Case 2: If $g=0$ and $n\geq 3$.} Since the set of marked forms with given peripheral periods is connected (see Lemma \ref{l: connectedness spherical case}) , it suffices to find one example of forms with the given peripheral periods. For the differential on a sphere with three simple poles of residues \(\text{Res}_{q_{1}}(\omega),\text{Res}_{q_{2}}(\omega)\) and \(-\text{Res}_{q_{1}}(\omega)-\text{Res}_{q_{2}}(\omega) \), there is a unique zero, so the first step gives the existence of a pair of twins going to the poles of residues \(\text{Res}_{q_{1}}(\omega),\text{Res}_{q_{2}}(\omega)\). Consider a differential on the sphere with \(n-1\) simple poles, the first of residue \(\text{Res}_{q_{1}}(\omega)+\text{Res}_{q_{2}}(\omega) \) and the other ones of residues \( \text{Res} _{q_k} (\omega)\) for \(k=3, \ldots, n\). Cut the two spheres along a geodesic surrounding the poles of residue \(\pm (\text{Res}_{q_{1}}(\omega)+\text{Res}_{q_{2}}(\omega) )\) and glue them back to get the desired example.

\underline{Case 3: If $n=3$, and $g\geq 0$.} The proof goes by induction on the genus. In the case $g=0$, there is a single zero and the statement is true by Case 1. Suppose $g\geq 1$ and that we have proven the Lemma for any form with three poles up to genus $g-1$, and let us prove it for genus $g$. If $\omega$ has a single zero we are done by Case 1. Suppose $\omega$ has at least two zeros and let us prove that up to a Schiffer variation we can either diminish the number of zeros or find the desired pair of twins. Up to some small Schiffer variations we can suppose that the distances between distinct pairs of saddles are different and the integrals associated to the shortest saddle connection does not lie in the group of periods $\Lambda_\omega\subset\mathbb{C}$. Take one of the shortest saddle connection and consider its (one or two) adjacent twins. If one of them ends at a regular point, the Schiffer variation along that pair of twins leads to a form on a connected smooth curve with a zero less.  If one of them ends at the endpoint of the initial saddle connection, the union of the twins forms a closed path $\gamma$ satisfying $\int_{[\gamma]}\omega=0$. The Schiffer variation along this pair of twins converges to a boundary stable form $(C_0,Q_0,\omega_0)$ with no zero component, a node of zero residue  and three poles.  
Choose some lift $(C_0,Q_0,m_0,\omega_0)\in\Omega_0\overline{\mathcal{S}}_{\Sigma,\PP}$.
In the notations of Subsection \ref{ss:boundary strata of non-separating type} we can consider the stratum $\Omega \mathcal{B}_{[c]}$ containing $(C_0,Q_0,m_0,\omega_0)$ where $[c]$ is the pointed Torelli class of a simple closed curve $c\subset \Sigma\setminus\PP$ such that $m_0([c])=[\gamma]\in H_1(C_0\setminus Q_0)$ and $p=\percompact (C_0,Q_0,m_0,\omega_0)\in H^1(\Sigma\setminus \PP,\mathbb{C})$. 

Suppose first that $\gamma$ is non-separating. The map defined in Proposition \ref{p:connectedness of non compact boundary stratum} -- obtained by composing the normalization map on $\Omega\mathcal{B}_{[c]}(p)$ with the map that forgets the two points corresponding to the node -- \begin{equation}
f:\Omega\mathcal{B}_{[c]}(p)\rightarrow \Omega\mrs_{\overline{\Sigma},\overline{\PP}}(\overline{p})
\end{equation}
where $\overline{\Sigma}$ is the genus $g-1$ compact surface obtained from the geometric completion of $\Sigma\setminus c$ by collapsing each boundary component to a point and $\overline{\PP}\subset\overline{\Sigma}$ is the set of points that correspond to points in $\PP\subset\Sigma\setminus c$. By the inductive hypothesis, there is some point  \((\overline{C_1},\overline{Q_1},\overline{m_1},\overline{\omega}_1)\in\Omega\mrs_{\overline{\Sigma},\overline{\PP}}(\overline{p})\) that has two twin paths from a zero to the poles $q_1$ and $q_2$. By Proposition \ref{p:connectedness of non compact boundary stratum} there is a point $(C_1,Q_1,m_1,\omega_1)$ in the connected component of $\Omega \mathcal{B}_c(p)$ containing $(C_0,Q_0,m_0,\omega_0)$ such that $f(C_1,Q_1,m_1,\omega_1)=(\overline{C_1},\overline{Q_1},\overline{m_1},\overline{\omega}_1)$. By construction, there are two twins for the stable form with one node $\omega_1$ joining a zero of $\omega_1$ with the two poles corresonding to $q_1$ and $q_2$. We claim that we can suppose that they do not pass through the node. Indeed, if they do,  we can deform one of the twins locally around the node each time it passes through the node, and take its twin, to avoid any of them passing through the node. Smoothing the node we obtain the desired pair of twins on a smooth curve. We have shown that the original form and this latter form lie in the same connected component of $\Omega_0\mnc_{\Sigma,R}(p)$. By Corollary \ref{c:closure}, they also lie in the same connected component of the manifold $\Omega\mrs_{\Sigma,\PP}(p)$. By the discussion of subsection \ref{ss:local description} they can also be joined by a finite sequence of Schiffer variations.

If $\gamma$ is separating, then as there are only three poles, it leaves all poles on one side thanks to the residue theorem applied to each part. On the other side the form is holomorphic and non-zero, so the genus cannot be zero. This means that the component that has the three poles has genus at most $g-1$ and the pair of poles with the non-opposite peripheral period direction condition there. Apply the inductive hypothesis to that component to find a point in its isoperiodic component that has the desired pair of twins to the poles $q_1$ and $q_2$. By using attaching maps as in  the proof of Lemma \ref{p:connected isoperiodic strata}, with the other factor fixed, we get an isoperiodic path in the boundary  associated to the nodal curve. The endpoint of this path has a separating node, and two twins to the poles $q_1$ and $q_2$ as desired. If the twins pass through the node, we can make a small local deformation of one of the twins around the node, and follow it with the other twin, and suppose that they do not pass through the node. Smoothing the node, we obtain the desired pair of twins on a smooth curve.

\underline{Case 4: $g\geq 0$ and $n\geq 3$:}
By induction on the genus. For genus $g=0$ apply Case 2. Take $g\geq 1$ and suppose Lemma is true for all genera up to $g-1$ and any number of poles. Again we proceed as in Case 3. Everything works word by word up to the case where $\gamma$ is non-separating where we have to substitute the the expression "three poles" with the number of poles of the form $\omega_0$. When $\gamma$ is a separating curve  that separates the set of poles in two non-empty parts, we proceed in a similar manner as in Case 3: apply the inductive hypothesis if the poles that we want to join by twins lie on the same side, or find a pair of twins starting at the node, if they lie on distinct sides. The only difference with Case 3 is that $\gamma$ can leave all poles on one part. Then, the other part has necessarily positive genus, since it has holomorphic  form with isolated zeros. This means that the side that contains the poles has at most genus $g-1$ and we are done by the inductive hypothesis and the smoothing procedure. 




\end{proof}

\begin{proposition}
Let $\omega$ be a  meromorphic differential on a  smooth connected curve $C$ of genus $g\geq 1$. Suppose $(\omega)_{\infty}=q_{1}+\ldots+q_{n}$ with $q_i\neq q_j$ if $i\neq j$ and  \[\Lambda_{\omega}=\left\{\int_{a}\omega:a \in H_{1}(C\setminus\{q_{1},\ldots,q_{n}\},\mathbb{Z})\right\}\text{ is not contained in a real line in }\mathbb{C}.\] Then $\omega$ is isoperiodically connected to a stable form  on a spherical boundary component where the spherical part contains all $n$ poles  (and the other genus $g$ part is holomorphic). 
\end{proposition}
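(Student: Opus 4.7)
\textbf{The plan} is to argue by induction on $n\geq 2$. The base case $n=2$ combines Theorem~\ref{2poles}, Lemma~\ref{l: connectedness spherical case} and Haupt's theorem (Lemma~\ref{l: Haupt}): since $\Lambda_\omega$ is not contained in a real line, the affine structure on the set of $\PP$-modules (directed by $\mathrm{Hom}(H_1(\Sigma),\Pi)$) allows one to choose $V$ so that $p|_V\in H^1(\Sigma,\mathbb{C})$ has sufficiently positive volume to satisfy Haupt's condition; gluing the resulting holomorphic form on a smooth genus $g$ curve to the unique spherical form with residues $(r_1,r_2,0)$ furnished by Lemma~\ref{l: connectedness spherical case} yields a boundary stable form $\omega^\ast$ of period $p$ on the full-spherical stratum, and Theorem~\ref{2poles} together with Corollary~\ref{c:closure} places $\omega$ and $\omega^\ast$ in the same connected component of the isoperiodic fiber.

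\textbf{Inductive step ($n\geq 3$).} Pick two poles $q_i,q_j$ whose peripheral periods $r_i,r_j$ satisfy $r_j/r_i\notin\mathbb{R}_{<0}$: such a pair exists because either the $r_k$'s are not $\mathbb{R}$-collinear (pick any two non-parallel), or they are, in which case $\sum_k r_k=0$ forces at least two to share a sign. By Lemma~\ref{l:twinstodistinctpoles}, after finitely many Schiffer variations $\omega$ admits twin geodesics $\gamma_1,\gamma_2$ from a common zero to $q_i$ and $q_j$. Performing the Schiffer variation along this pair as in Figure~\ref{fig:schiffer_twins_2_2_poles} degenerates $\omega$ isoperiodically to a stable form $\omega_1$ on $S_1\cup_{n_1}G_1$, where $S_1\cong\mathbb{P}^1$ carries $q_i,q_j$ and the node $n_1$ (residue $-(r_i+r_j)$) and $G_1$ is a smooth genus $g$ curve with $n-1$ marked poles (the remaining $q_k$'s and $n_1$, with residue $r_i+r_j$).

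\textbf{Iteration and merging.} With $(q_i,q_j)$ chosen (and the twin refined if necessary) so that $\omega_1|_{G_1}$ still has non-$\mathbb{R}$-collinear periods, the inductive hypothesis applied to $\omega_1|_{G_1}$ furnishes an isoperiodic path inside $\Omega_0^\ast\mnc_{G_1}$ to a stable form on $G_1$'s full-spherical stratum: a sphere $S_2$ carrying all $n-1$ marked poles of $G_1$, attached to a holomorphic genus $g$ curve $G_2$ via a node $n_2$ (of residue $0$ on both sides). The attaching-map isomorphism of Lemma~\ref{l: attaching map} lifts this to an isoperiodic path of the full stable form, ending at $\omega_2$ on the codimension-two stratum $S_1\cup_{n_1}S_2\cup_{n_2}G_2$. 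Both $n_1$ and $n_2$ are separating, so $\omega_2\in\Omega_0^\ast\mnc_{\Sigma,\PP}$, and Theorem~\ref{t:local structure} guarantees that the local isoperiodic set at $\omega_2$ is transverse to every boundary divisor through it. Isoperiodically smoothing $n_1$ thus merges $S_1$ and $S_2$ into a single sphere $S\cong\mathbb{P}^1$ bearing all $n$ marked poles, attached via $n_2$ to the holomorphic $G_2$; this is a stable form on the desired full-spherical stratum.

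\textbf{The main obstacle} lies in justifying the genericity assumption made in the previous paragraph: that the pair $(q_i,q_j)$ and the twin geodesic class can always be arranged so that $\omega_1|_{G_1}$ has non-$\mathbb{R}$-collinear periods, enabling the inductive step. In principle the restriction of $p$ to the homology of $G_1$ might fall into a real line even when $p$ itself does not. The resolution should exploit both the freedom in the selection of $(q_i,q_j)$ among pairs with non-opposite peripheral periods and the freedom in the Torelli class of the separating curve producing $n_1$; a dimension count in the affine space of admissible $\PP$-modules, combined with the non-$\mathbb{R}$-collinearity of $p$, should ensure that generic choices preserve non-$\mathbb{R}$-collinearity. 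A closely parallel argument underlies the existence of the $\PP$-module $V$ satisfying Haupt's condition in the base case.
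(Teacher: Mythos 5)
Your overall strategy mirrors the paper's exactly: induct on $n$, use Lemma~\ref{l:twinstodistinctpoles} to obtain twins to two poles with non-opposite residue directions, perform the Schiffer degeneration of Figure~\ref{fig:schiffer_twins_2_2_poles} to split off a three-punctured sphere, apply the inductive hypothesis to the genus-$g$ part, and smooth the first node to merge the two spheres. You also correctly identify the single point on which the argument hinges: showing that the genus-$g$ component $\omega_1|_{G_1}$ can be arranged to have periods not contained in a real line.

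However, this is precisely where your proposal has a genuine gap, and the resolution you sketch does not work. You suggest a dimension count in the affine space of $\PP$-modules, using freedom in the Torelli class of the separating curve. The difficulty is that the module $M_c$ produced by the Schiffer degeneration is not a free parameter: it is determined by the actual twin geodesics, which must be realized geometrically on the flat surface $(C,\omega)$. There is no a priori reason that a generic $E$-module in the affine space is attainable by a pair of twins of $\omega$, so the dimension count does not yield anything. The paper's actual resolution is a concrete pole-swap argument: if $\Lambda_{\omega_1}\subset\mathbb{R}$, then (since $\Lambda_\omega$ is not real, $\Lambda_{\omega_0}$ is generated by its peripheral periods, and the nodal residue is real) both $q_1$ and $q_2$ must have non-real residues; one then re-runs Lemma~\ref{l:twinstodistinctpoles} with a new pair consisting of one pole $\tilde q_1$ from the spherical side and one pole $\tilde q_2$ from the genus-$g$ side. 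After the new degeneration, the genus-$g$ part carries both a real-residue pole and a non-real-residue pole when $n\geq 4$, which forces non-$\mathbb{R}$-collinearity; when $n=3$ the genus-$g$ part has only two poles, and one invokes the two-pole case of Lemma~\ref{l: Haupt} to rule out the periods lying in the cyclic group generated by the (non-real) residue. Without this concrete mechanism, your inductive step does not close.
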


\begin{proof}
The statement is true for $n=2$ and $g\geq 1$ by \cite{CD}. 

Fix some $n\geq 3$ and suppose we have proved the statement for all meromorphic forms on smooth curves with at worst $n-1$ simple poles. 

Take a form $\omega$ of genus $g$ with $n$ simple poles.  By the residue theorem there exists at least a pair of  distinct poles, say $q_{1},q_{2}$, that have non-opposite peripheral period direction.  By Lemma \ref{l:twinstodistinctpoles} we can suppose that $\omega$ has a pair of twins  leading to the pair $q_{1},q_{2}$.  
The Schiffer variation along that pair of twins produces a stable form with one separating node that has a part $\omega_{1}$ of genus $g$ with $n-1$ poles and a part $\omega_{0}$ of genus zero with $3$ poles.

We claim that, up to changing the choice of twins to distinct poles, we can guarantee that $\Lambda_{\omega_{1}}$ is not contained in a real line in $\mathbb{C}$. If this is the case, we can apply the inductive hypothesis  and find an isoperiodic deformation of  $\omega_{1}$ towards a stable form with a separating node having a genus $g$ holomorphic component a genus zero meromorphic form with $n-1$  poles.  By using Proposition \ref{p:connected isoperiodic strata} we can take the isoperiodic deformation to the boundary stratum of the initial stable form. In the limit of that isoperiodic deformation we find a stable curve with two nodes. A further Schiffer variation (smoothing the initial node) connects the latter to a form as in the statement of the Lemma, having the same holomorphic part of genus $g$. 

Let us prove that,  if the first pair of twins leads to a form $\omega_{1}$ satisfying that $\Lambda_{\omega_{1}}$ is contained in a real line then we can find another pair of twins to distinct poles that does not satisfy this condition. 
Without loss of generality, we can suppose $\Lambda_{\omega_1}\subset\mathbb{R}$. Since $\Lambda_\omega$ is not contained in a real line, we have that $\Lambda_{\omega_0}$ is not contained in $\mathbb{R}$. Since the underlying curve has genus zero , $\Lambda_{\omega_0}$  is generated by the peripheral periods, and there is at least one pole, with non-real peripheral period. But since there are only three poles for $\omega_0$, the sum of their peripheral periods is zero, and the nodal peripheral has real period , we have that both poles $q_1,q_2$ have non-real peripheral period.
The argument can be restarted taking a pole $\tilde{q_1}$ of $\omega_0$ and a pole $\tilde{q}_2$ of $\omega_1$ and consider them as the poles of $\omega$.  We obtain forms  $\tilde{\omega}_0$ and  $\tilde{\omega}_1$ by the same procedure. If we find two periods of $\tilde{\omega}_1$ that are not $\mathbb{R}$-collinear we are done. If  $n\geq 4$, the form $\tilde{\omega}_1$ has a pole with real peripheral period  (corresponding to some pole of $\omega_1$) and a pole with non-real peripheral period (corresponding to one of the poles of  $\omega_0$). When $n=3$ the part $\tilde{\omega}_1$ has two poles. In this case it is enough to find a non-peripheral curve  with non-zero period for $\tilde{\omega}_1$ to conclude. By Lemma \ref{l: Haupt}, applied to the case of two poles, the periods of $\tilde{\omega}_1$ cannot lie in the cyclic module generated by the (non-real) peripheral period and we are done. 
\end{proof}

\subsection{Slices of \(\PP\)-boundary strata}
\label{s:slices}
Observe that the map \(H_1(\Sigma\setminus\PP , \Z) \rightarrow H_1(\Sigma, \Z)\) induced by inclusion preserves the intersection form on both parts, and restricts to any \(\PP\)-module as an isomorphism. In particular the restriction of the intersection form to any \(\PP\)-module is symplectic and unimodular. 

We recall that a period \( p\in H^1 (\Sigma, \C) \) on the closed compact surface $\Sigma$ of genus $g$ satisfies Haupt conditions if its volume \(\text{vol} (p)= \Re(p) \cdot \Im (p) \) is positive, and if it is different from the volume of the quotient \( \C / p (H_1(\Sigma, \Z))\). In the case the image of \(p\) is non discrete, we use the convention that the volume of \( \C / p (H_1(\Sigma, \Z))\) is infinite. Otherwise the volume of \(p\) is an integer multiple of the volume of \(\C / p (H_1(\Sigma, \Z))\), the integer is called the degree and is denoted by \(\text{deg} (p)\). So Haupt's conditions can be rephrased as follows \begin{equation}\label{eq:Haupt conditions} \text{vol} (p) >0\text{ and } \text{deg} (p) \geq 2.\end{equation} 
Those conditions characterize period homomorphisms of non zero abelian differentials on smooth homologically marked curves. See \cite[\S 6]{CDF2} for more on these concepts.

Given a period \(p\in H^1(\Sigma\setminus\PP , \C)\), we will say that a \(\PP\)-module \( V\) is \(p\)-admissible if the restriction \(p_{|V}\) satisfies Haupt's conditions, namely 
\[ \text{vol} (p_{|V}) >0 \text{ and } \text{deg} (p_{|V}) \geq 2 . \]

\begin{lemma} The intersection $\Omega\mathcal{B}_V(p)$ of a \(\PP\)-boundary component \(\mathcal \Omega B_V\) with  \(\Omega\mnc_{\Sigma,\PP}(p)\) is non empty if and only if \(V\) is \(p\)-admissible, and it is connected if \(\text{deg} (p_{|V}) >2\).  \end{lemma}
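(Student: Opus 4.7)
My plan is to factor \(\Omega\mathcal{B}_V(p)\) as a product of two simpler isoperiodic moduli spaces via the attaching map of Lemma~\ref{l: attaching map}, and then read off both the non-emptiness and connectedness criteria from known results about those factors.

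First I would observe that a \(\PP\)-boundary stratum \(\mathcal{B}_V\) is, by definition, the pointed Torelli class of a separating simple closed curve \(c \subset \Sigma \setminus \PP\) whose separating normalization produces two components: a sphere \((\hat{\Sigma}_0, \hat{\PP}_0)\) of type \((0, n+1)\) carrying all \(n\) marked points plus one node point, and a genus \(g\) surface \((\hat{\Sigma}_g, \hat{\PP}_g)\) of type \((g, 1)\) carrying only the node point; the genus~\(g\) component has its homology canonically identified with \(V\) via the homological marking. Since \( p(\pi_\PP)=0\), the residue theorem applied to the spherical side forces the residue at the node on the genus~\(g\) side to vanish, so the restriction to the genus~\(g\) part is automatically \emph{holomorphic}. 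Lemma~\ref{l: attaching map} then yields a biholomorphism
\[
\Omega\mathcal{B}_V(p) \ \simeq\ \Omega\mrs_{\hat{\Sigma}_0, \hat{\PP}_0}(p_0)\ \times\ \Omega\mrs_{\hat{\Sigma}_g, \hat{\PP}_g}(p_{|V}),
\]
where under the identification \(H_1(\hat{\Sigma}_g)\cong V\) the period of the genus~\(g\) factor is precisely \(p_{|V}\), and where \(p_0\) has the prescribed peripheral periods at the original punctures together with zero peripheral period at the node.

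For non-emptiness, the spherical factor is always non-empty (and connected) by Lemma~\ref{l: connectedness spherical case}, so the question reduces to the holomorphic factor. By Haupt's theorem (the first two items of Lemma~\ref{l: Haupt}), this factor is non-empty exactly when \(p_{|V}\) satisfies Haupt's conditions, which is the definition of \(p\)-admissibility of \(V\). For connectedness under \(\deg(p_{|V})>2\), the spherical factor is again connected by Lemma~\ref{l: connectedness spherical case}, while the holomorphic factor is connected by Theorem~\ref{t:isoperiodic sets of holo diff}: either \(g=1\) (in which case every non-empty fiber is automatically connected) or \(g \geq 2\) and \(\mathrm{vol}(p_{|V}) > 2\,\mathrm{vol}(\mathbb C / p_{|V}(H_1(\Sigma,\mathbb Z)))\), which is exactly \(\deg(p_{|V})>2\). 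The product is then connected.

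The only real care needed is in matching the period on the genus~\(g\) side with \(p_{|V}\) under the attaching isomorphism, and this is built into the framework of \(\PP\)-modules developed in Section~\ref{ss: E-modules}. The genuine limitation, which accounts for the \(\deg>2\) hypothesis in the connectedness statement, is precisely the excluded endpoint of Theorem~\ref{t:isoperiodic sets of holo diff}: the \(\deg=2\) case, corresponding to holomorphic differentials that are degree two coverings of elliptic curves, is not handled by any result in the paper and cannot be treated with these tools alone.
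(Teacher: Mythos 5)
Your proof is correct and follows essentially the same route as the paper's: factor $\Omega\mathcal{B}_V(p)$ via the attaching map of Lemma~\ref{l: attaching map} into a spherical factor handled by Lemma~\ref{l: connectedness spherical case} and a holomorphic genus-$g$ factor handled by Haupt's conditions (Lemma~\ref{l: Haupt}) for non-emptiness and by Theorem~\ref{t:isoperiodic sets of holo diff} for connectedness. The paper's phrasing is terser and implicitly identifies the genus-$g$ factor $\Omega\mrs_{\hat{\Sigma}_g,\{\text{node}\}}(p_{|V})$ with $\Omega\mrs_{\Sigma,\emptyset}(p_{|V})$ by forgetting the node point (at which the form is regular since the residue vanishes), but this is a bookkeeping detail; the substance of your argument matches.
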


\begin{proof}
By equation \eqref{eq: decomposition} and Lemma \ref{p:connected isoperiodic strata}, the non-emptyness (resp. connectedness)  of $\mathcal{B}_V(p)$ is equivalent to that of $\Omega\mrs_{\Sigma,\emptyset}(p_{|V})$ and $\Omega\mrs_{\Sigma_0,\PP_0}(p_{|\Pi})$ where $\Sigma_0$ has genus zero and $\PP_0\subset\Sigma_0$ has $n$-points. Lemma \ref{l: connectedness spherical case} treats the case of genus zero and Theorem \ref{t:isoperiodic sets of holo diff} the holomorphic case (without poles) in genus $g\geq 1$. 
\end{proof}


\begin{lemma}\label{l: connectedness slices} Let \(g, n\) be integers with \(n\geq 3\). Assume \(C_{h,m}\) is true for any genus \(h\) and any number of points \(m\leq n-1\). Let \(V,V'\) be two \(\PP\)-modules, and  \(E\subset \PP\) be a subset of cardinality \(|E|\leq n-2\). Assume that 
\[V+ \Pi_E = V' + \Pi_E .\]
Then, any connected component of \(\Omega\mathcal B_V (p)\) is connected in \(\Omega\mnc_{\Sigma,\PP} (p)\) to any connected component of \(\Omega\mathcal B_{V'} (p)\). \end{lemma}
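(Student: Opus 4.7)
If $|E|=0$, then $\Pi_E=\{0\}$ forces $V=V'$ and the statement is trivial, so assume $1\le|E|\le n-2$ and set $F:=\PP\setminus E$, $|F|\ge 2$. Put
\[ N := V+\Pi_E = V'+\Pi_E. \]
A direct check using $H_1(\Sigma\setminus\PP)=V\oplus\Pi$ shows $N+\Pi_F=H_1(\Sigma\setminus\PP)$ and $N\cap\Pi_F=\mathbb{Z}\pi_F$, so $N$ is an $F$-module in the sense of Definition \ref{d: E-module} and determines an $F$-boundary stratum $\mathcal B_N$. Since $\Omega\mathcal B_V(p)$ is non-empty, $V$ is $p$-admissible; hence $p|_V$ satisfies Haupt's conditions \eqref{eq:Haupt conditions}, so $p(V)$ and \emph{a fortiori} $p(N)\supset p(V)$ are not contained in any real line of $\mathbb C$.

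The strategy is to show $\Omega\mathcal B_N(p)$ is connected and then to use a common codimension-two boundary stratum as a bridge between it and $\Omega\mathcal B_V(p)$ (and symmetrically for $V'$). Applying the attaching map of Lemma \ref{l: attaching map} to the separating curve defining $\mathcal B_N$, one obtains
\[ \Omega\mathcal B_N(p) \;\simeq\; \Omega\mathcal{S}_{\hat\Sigma_0,\hat\PP_0}(p|_{\Pi_F}) \,\times\, \Omega\mathcal{S}_{\hat\Sigma_1,\hat\PP_1}(p|_N), \]
where $(\hat\Sigma_0,\hat\PP_0)$ is of type $(0,|F|+1)$ and $(\hat\Sigma_1,\hat\PP_1)$ of type $(g,|E|+1)$, with $|E|+1\le n-1$. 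The first factor is connected by Lemma \ref{l: connectedness spherical case}. For the second, $p|_N$ is not contained in any real line: when $|E|=1$, Theorem \ref{2poles} applies (its hypotheses $p(H_1)\neq p(\Pi),\tfrac{1}{2}p(\Pi)$ hold because $p(V)\not\subset\mathbb R\,p(\pi_E)$), and when $|E|\ge 2$ the inductive hypothesis $C_{g,|E|+1}$ applies. In either case the second factor is connected, hence so is $\Omega\mathcal B_N(p)$.

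For the bridge, consider the curve system $\{c,c'\}\subset\Sigma\setminus\PP$ where $c$ is the $\PP$-separating curve attached to $\mathcal B_V$ and $c'$ is a disjoint simple closed curve lying inside the sphere bounded by $c$ and cutting off a disc that contains the points of $F$. The associated 3-vertex homologically marked tree reads
\[ \Pi_F \;\text{---}\; \Pi \;\text{---}\; V, \]
with the middle vertex being the annular sphere marked by the points of $E$ and the two nodes. Smoothing $c'$ merges the two genus-zero pieces into the full $\PP$-sphere and returns $\mathcal B_V$; smoothing $c$ merges the annulus with the genus $g$ piece into a component whose image in $H_1(\Sigma\setminus\PP)$ is $V+\Pi_E=N$ and thus returns $\mathcal B_N$. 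Consequently $\Omega\mathcal B_{c\cup c'}(p)\subset\overline{\Omega\mathcal B_V(p)}\cap\overline{\Omega\mathcal B_N(p)}$ inside $\Omega\mnc_{\Sigma,\PP}(p)$, and this intersection is non-empty by $p$-admissibility of $V$.

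The conclusion follows by combining these facts. The attaching map decomposes both $\Omega\mathcal B_V(p)$ and $\Omega\mathcal B_{c\cup c'}(p)$ as products sharing a common genus $g$ factor of type $(g,1)$ with period $p|_V$, while all other factors are genus-zero, hence connected. By Theorem \ref{t:local structure}, every connected component of $\Omega\mathcal B_V(p)$ accumulates on a component of $\Omega\mathcal B_{c\cup c'}(p)$ with the same genus $g$ factor component. Since $\Omega\mathcal B_{c\cup c'}(p)\subset\overline{\Omega\mathcal B_N(p)}$ and $\Omega\mathcal B_N(p)$ is connected, every such component of $\Omega\mathcal B_V(p)$ lies in the unique connected component of $\Omega\mnc_{\Sigma,\PP}(p)$ containing $\Omega\mathcal B_N(p)$; the symmetric argument for $V'$ finishes the proof. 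The main subtlety resolved by this scheme is the degree-$2$ Haupt case, where the genus $g$ factor of $\mathcal B_V$ and hence $\Omega\mathcal B_V(p)$ itself may be disconnected: the bridge stratum $\mathcal B_N$ has genus $g$ component with $|E|+1\ge 2$ marked points, for which Theorem \ref{2poles} or the inductive hypothesis force connectedness and thus merge all components.
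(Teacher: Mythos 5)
Your proposal is correct and takes essentially the same approach as the paper: both introduce the codimension-two boundary stratum $\Pi_F$ --- $\Pi_E$ --- $V$ (where $F=\PP\setminus E$), pass from $\mathcal B_V$ to the $F$-boundary stratum $\mathcal B_{V+\Pi_E}$ by smoothing a node, and invoke the inductive hypothesis $C_{g,|E|+1}$ (or Theorem~\ref{2poles} when $|E|=1$) together with Lemma~\ref{l: attaching map} to get connectedness of the genus-$g$ factor with module $V+\Pi_E=V'+\Pi_E$. Your treatment of the degree-two Haupt case is the same observation the paper relies on implicitly via Theorem~\ref{t:local structure} and Corollary~\ref{c:closure}.
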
 
\begin{proof}
The boundary stratum \(\mathcal B_V\) containing a form of period $p$ is characterized by the graph:
\begin{center}
\begin{tikzpicture}[node distance = {20mm},thick, main/.style = {draw,circle}]
\node[main] (1) [label=below: $0$]{$\Pi$};
\node[main] (2) [right of = 1][label=below: $g$]{$V$};
\node (3) [left of = 1]{$\PP$};
\draw (1) -- (2);
\draw (1) -- (3);


\end{tikzpicture}
\end{center}
\noindent where the edge between $\PP$ and the vertex represents an edge for each element of the set. We have $\text{vol}(p_{|V})>0$ and $\Pi=\Pi_E+\Pi_{\PP\setminus E}$. 

By using Lemma \ref{l: connectedness spherical case} and Proposition \ref{p:connected isoperiodic strata} we can realize the period $p$ by a form in the stratum whose graph is: 
\begin{center}
\begin{tikzpicture}[node distance = {20mm},thick, main/.style = {draw,circle}]
\node[main] (1) [label=below: $0$]{$\Pi_E$};
\node[main] (2) [right of = 1][label=below: $g$]{$V$};
\node[main] (3) [left of = 1][label=below: $0$]{$\Pi_{\PP\setminus E}$};
\node (4) [left of =3]{$\PP\setminus E$};
\node (5) [above of =1 ]{$E$};
\draw (1) -- (2);
\draw (1) -- (3);
\draw (1) -- (5);
\draw (3) -- (4);


\end{tikzpicture}
\end{center}
Moreover, since the fibers or the period map over periods of genus zero are connected, the two previous forms are isoperiodically connected. 
Smoothing the node belonging to the genus $g$ component connect the previous forms isoperiodically to a form with the following graph: 
\begin{center}
\begin{tikzpicture}[node distance = {20mm},thick, main/.style = {draw,circle}]
\node[main] (1) [label=below: $0$]{$\Pi_{\PP\setminus E}$};
\node[main] (2) [right of = 1][label=below: $g$]{$\Pi_E+V$};
\node (3) [left of = 1]{$\PP\setminus E$};
\node (4) [right of =2]{$E$};
\draw (1) -- (2);
\draw (1) -- (3);
\draw (2) -- (4);


\end{tikzpicture}
\end{center}
The same argument can be used to isoperiodically connect any form in $\Omega\mathcal{B}_{V'}(p)$ to a stratum whose corresponding graph is: 
\begin{center}
\begin{tikzpicture}[node distance = {20mm},thick, main/.style = {draw,circle}]
\node[main] (1) [label=below: $0$]{$\Pi_{\PP\setminus E}$};
\node[main] (2) [right of = 1][label=below: $g$]{$\Pi_E+V'$};
\node (3) [left of = 1]{$\PP\setminus E$};
\node (4) [right of =2]{$E$};
\draw (1) -- (2);
\draw (1) -- (3);
\draw (2) -- (4);


\end{tikzpicture}
\end{center}
The hypothesis on $V,V'$ implies that the two forms lie in the same boundary stratum of the ambient space. Moreover, the inductive hypothesis applied to $p_{|\Pi_E+V}$ (which does not have image in a real line in $\mathbb{C}$ since $p_{|V}$ is the period of a holomorphic one form) and Proposition \ref{p:connected isoperiodic strata} shows that they lie in the same isoperiodic component, thus finishing the proof.
\end{proof}

\subsection{Pairs of periods satisfying Haupt's conditions: a lemma}

\begin{lemma}\label{l: pair of Haupt conditions} Let $\Sigma$ be a surface of genus $g\geq 1$ , \( p , p '\in H^1 (\Sigma, \C) \) and \( c,c '\in \C\). Assume that \( \Im ( c \overline{p})\) and \( \Im (c' \overline{p'}) \) are not zero and that they do not differ by multiplication by a negative real number. Then there exists \(\varphi \in H^1 (\Sigma, \Z) \) such that both \( p+ c \varphi\) and \( p' + c' \varphi \) satisfy Haupt conditions.  
\end{lemma}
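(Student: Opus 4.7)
The plan is to reduce the statement to the bilinear identity
\[
\vol(p + c\varphi) = \vol(p) + \Im(c\overline{p}) \cdot \varphi,
\]
valid for every $\varphi \in H^1(\Sigma, \R)$, where $\cdot$ is the intersection form on $H^1(\Sigma, \R)$. This follows by expanding $\Re(p+c\varphi) \cdot \Im(p+c\varphi)$, using the antisymmetry of the intersection form (which kills the $\varphi \cdot \varphi$ term) together with the identity $\Im(c\overline{p}) = \Im(c)\Re(p) - \Re(c)\Im(p)$ in $H^1(\Sigma, \R)$. The analogous identity applies to the pair $(p',c')$.

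Set $u := \Im(c\overline{p})$ and $u' := \Im(c'\overline{p'})$, both non-zero by hypothesis. Since the intersection form is non-degenerate on $H^1(\Sigma, \R)$, the linear functionals $u \cdot -$ and $u' \cdot -$ on $H^1(\Sigma, \R)$ are non-zero, and they are negative real multiples of one another exactly when $u$ and $u'$ are. The hypothesis of the lemma thus guarantees that the open cone $\{\varphi \in H^1(\Sigma,\R) : u \cdot \varphi > 0 \text{ and } u' \cdot \varphi > 0\}$ is non-empty, and by density it contains a primitive class $\varphi_0 \in H^1(\Sigma, \Z)$. Taking $\varphi = n\varphi_0$ for $n$ a large positive integer, the identity yields $\vol(p + cn\varphi_0) = \vol(p) + n\,(u \cdot \varphi_0)$ and $\vol(p' + c'n\varphi_0) = \vol(p') + n\,(u' \cdot \varphi_0)$, both arbitrarily large and positive.

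Next I would upgrade positive volume to Haupt's full conditions, i.e., also arrange $\deg \geq 2$ for both periods. By the paper's convention, this is automatic as soon as the image in $\C$ is non-discrete. Choosing $\gamma_0 \in H_1(\Sigma, \Z)$ with $\varphi_0(\gamma_0) = 1$, one has
\[
(p + cn\varphi_0)(H_1(\Sigma,\Z)) = p(\ker \varphi_0) + \Z\,(p(\gamma_0) + cn),
\]
and discreteness of this subgroup of $\C$ forces $p(\gamma_0) + cn$ to be commensurable with the rational span of $p(\ker \varphi_0)$ and the whole group to have rank at most $2$. For generic $n$ this fails, so all but countably many $n$ produce non-discrete image. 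The same analysis applies to $p'$, and intersecting the good sets yields infinitely many $n$ such that both $p + cn\varphi_0$ and $p' + c'n\varphi_0$ satisfy Haupt's conditions.

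The main obstacle I anticipate is the degenerate case in which the image of $p$ (or $p'$) in $\C$ is already of low rank, so that $p(\ker \varphi_0)$ has rank at most $2$ for every available $\varphi_0$: one must then explicitly rule out that $p + cn\varphi_0$ remains a lattice of degree exactly $1$. In such cases I would replace the one-parameter family $n\varphi_0$ by $n\varphi_0 + m\varphi_1$, with $\varphi_1 \in H^1(\Sigma, \Z)$ a second direction from the positive cone, which provides enough arithmetic flexibility to break any residual resonance. Apart from this subtlety, the lemma is essentially a bilinear computation with the intersection form combined with a lattice-point argument in the cone determined by the non-negative-multiple hypothesis.
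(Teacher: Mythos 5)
The first half of your argument — the bilinear identity $\vol(p + c\varphi) = \vol(p) + \Im(c\overline{p}) \cdot \varphi$, the use of nondegeneracy of the intersection form to produce a nonempty integral cone, and the conclusion that $\vol(p + cn\varphi_0)$ and $\vol(p' + c'n\varphi_0)$ both tend to $+\infty$ — matches the paper and is correct.

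The second half has a genuine gap, and the two-parameter fix you propose cannot close it. If $p(H_1(\Sigma,\Z)) \subset \Q + i\Q$ and $c \in \Q + i\Q$ (compatible with $\vol(p) > 0$ and $\Im(c\overline{p}) \neq 0$), then for \emph{every} $\varphi \in H^1(\Sigma,\Z)$ — not just $n\varphi_0$, but also $n\varphi_0 + m\varphi_1$ or any integral form whatsoever — the image $(p + c\varphi)(H_1(\Sigma,\Z))$ stays inside $\Q + i\Q$, is finitely generated, and is therefore discrete. No choice of integral $\varphi$ can make the image non-discrete, so your strategy fails in principle in exactly the degenerate case you were worried about. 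What is needed is a lower bound on the \emph{degree} when the image is discrete, not a proof that it generically isn't. The paper achieves this by first adjusting $\varphi$ (preserving $\Im(c\overline{p})\cdot\varphi > 0$ and $\Im(c'\overline{p'})\cdot\varphi > 0$) so that it does not vanish identically on $\ker p$ or on $\ker p'$, and then producing a rank-two submodule $M \subset \ker\varphi$ with $M \cap \ker p = M \cap \ker p' = 0$. Since $\varphi|_M = 0$, the restriction of $p_k = p + ck\varphi$ to $M$ is independent of $k$, and $\Lambda := p(M)$ is a \emph{fixed} rank-two free subgroup contained in $p_k(H_1(\Sigma,\Z))$ for all $k$. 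Whenever $p_k(H_1)$ is discrete it is then a lattice containing $\Lambda$ with finite index, so $\vol(\C/p_k(H_1)) \le \vol(\C/\Lambda)$ is bounded and $\text{deg}(p_k) = \vol(p_k)/\vol(\C/p_k(H_1)) \to \infty$. This is the step your proposal is missing: your formula $(p + cn\varphi_0)(H_1) = p(\ker\varphi_0) + \Z\bigl(p(\gamma_0) + cn\bigr)$ is correct but does not isolate a $k$-independent sublattice of controlled covolume, which is the crux.
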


\begin{proof}
By assumption, there exists \(\varphi\in H^1(\Sigma, \Z) \) such that 
\begin{equation}\label{eq: positivity} \Im (c\overline{p} ) \cdot \varphi >0 \text{ and } \Im (c'\overline{p'} ) \cdot \varphi >0 .\end{equation}
From the formula \(\text{vol} (q) = \frac{i}{2} q \cdot \overline{q} \), we have the expressions 
\[ \text{vol} (p+ c \varphi ) = \text{vol} (p) + \Im (c\overline{p} ) \cdot \varphi \text{ and } \text{vol} (p'+c' \varphi) = \text{vol} (p') + \Im (c'\overline{p'} ) \cdot \varphi, \]
so letting \( \varphi_k= k\varphi\) and \(p_k=p+c\varphi_k\), \(p_k'=p'+c'\varphi_k\), for a sufficiently large integer \(k\), we get that both volumes \(\text{vol} (p_k)\) and \( \text{vol} (p'_k) \) tend linearly to \(+\infty\) when \(k\) tends to \(+\infty\). Notice in particular that we can assume that \(p\) and \(p'\) (up to replacing \(p\) and \(p'\) by \(p_k\) and \(p_k'\) for large \(k\) if necessary) have positive volume.  

We can also assume that the form \( \varphi\) satisfying \eqref{eq: positivity} does not vanish identically on \(\text{ker} (p)\)  unless \(p\) is injective. Indeed, assuming that \(\varphi\) is identically vanishing on \(\text{ker} (p)\neq \{0\}\), take \(\psi\) that is not vanishing there and replace \(\varphi\) by \( l\varphi+\psi\) for a large integer \(l\).  The same argument can be made to ensure that \( \varphi\) does not vanish identically on \(\text{ker} (p')\) as well unless \(p'\) is injective.

Notice that \(p\) and \(p'\) have positive volume, hence their kernel have corank at least two. Because \(\varphi\) does not vanish identically on the submodules \(\text{ker} (p)\) and \(\text{ker} (p')\), appart in cases they are reduced to \(\{0\}\),  their intersections with \(\text{ker} (\varphi)\) are submodules of \(\text{ker} (\varphi)\) of corank at least two as well. Hence there exists \(M\subset \text{ker} (\varphi)\) a rank two submodule such that \(M\cap \text{ker} (p) = \{0\}  \) and \(M\cap \text{ker}(p')=\{0\}\).  

We claim that the degree of \(p_k\) tends to infinity when \(k\) tends to \(+\infty\). Indeed, notice that in restriction to \(M\), the period \(p_k\) equals \(p\), which is injective on \(M\) by construction, so \(p_k(M)\) is a fixed rank two submodule \(\Lambda\subset \C\). If for some \(k\), \(p_k\) has finite degree, then this module needs to be a lattice, and we get 
\[ \text{deg}(p_k) = \frac{\text{vol}(p_k)}{\text{vol} (\C/p_k(H_1(\Sigma_g,\Z)))}\geq \frac{\text{vol}(p_k)}{\text{vol} (\C/\Lambda)} \rightarrow _{k\rightarrow +\infty} +\infty ,\]
hence our claim follows.
A similar argument shows that \(\text{deg} (p_k')\) tends to \(+\infty\) when \(k\) tends to \(+\infty\), finishing the proof of the lemma. \end{proof}

\subsection{Connecting \(\PP\)-boundary strata: proof of \(C_{g,n}\) by induction} \label{ss: connecting R-bnd strata}

In \cite{CD} it is proved that \(C_{g,2}\) is true for any genus \(g\). Let $(\Sigma,\PP)$ be of type \((g,n)\) with \(n\geq 3\), and assume that \(C_{g,m}\) is true for any \(m<n\). Take $p\in H^1(\Sigma\setminus\PP,\mathbb{C})$ whose image is not contained in a real line in $\mathbb{C}$. 

Let \(V,V'\subset H_1 (\Sigma\setminus\PP, \Z)\) be two admissible \(\PP\)-modules. Write 
\[ V' = V + \Phi \text{ with } \Phi = \sum_{i=1,\ldots, n} \varphi_i \pi_i \]
where \(\varphi_i \in H^1(\Sigma, \Z)\) and each $\pi_i$ is a peripheral class in $H_1(\Sigma\setminus\PP)$ around a point of $\PP$. The \(n\)-tuple \((\varphi_1, \ldots,\varphi_n) \) is well-defined up to adding to each term a linear form \(\varphi\). 

Notice that if there exists two distinct indices \(i\neq j\) so that \( \varphi_i = \varphi_j \) then any component of \( \mathcal B_{V}(p)\) is connected in \(\Omega\mnc_{\Sigma,\PP}(p) \) to any connected component of \( \mathcal B_{V'} (p)\). Indeed, in that case substracting \(\varphi= \varphi_i= \varphi_j\) to all the \(\varphi_k\)'s -- which does not change \(\Phi\) -- we see that \(\Phi\) takes values in \( \Pi_{\PP\setminus \{i,j\}}\), so letting \(E:=\PP\setminus \{i,j\}\) we get \(V+\Pi_E= V'+\Pi_E\), and the conclusion follows from Lemma \ref{l: connectedness slices} and the fact that \( C_{g,m}\) is true for every \(m<n\). 

We claim that there exist two distinct indices \(i\neq j\) such that \( \Im (p (\pi_i) \overline{p_{|V}}) \) and \( \Im (p(\pi_j) \overline{p_{|V'}}) \) -- which are not vanishing since \(V\) and \(V'\) are \(p\)-admissible -- are not \(\R ^{\geq 0}\)-co-linear. Indeed, if this were not the case, since the number of indices is at least three, that would mean that all the periods \(\Im (p (\pi_i)  \overline{p_{|V}}) \) for \(i=1,\ldots, n\) would be \(\R^{\geq 0}\)-co-linear, and this is contradictory with the fact that \(\sum_i \pi_i = 0\). 

Introduce the modules 
\[ V'' := V+ (\varphi_i - \varphi ) \pi_i \text{ and } V''':= V' + (\varphi - \varphi_j)\pi_j ,\]
where \( \varphi \in H^1 (\Sigma, \Z)\) will be determined later. Define \(c:=- p (\pi_i) \), \( c':= p (\pi_j)\), and 
\[ q: = p_{|V} +\varphi_i p(\pi_i) \text{ and } q':= p_{V'}- \varphi _j p(\pi_j) \] that we see as elements of \( H_ 1(\Sigma, \Z) \) under the identifications \( V\simeq H_ 1 (\Sigma, \Z) \simeq V'\). We have 
\[ \Im ( c \overline{q}) = -\Im (p (\pi_i) \overline{p_{|V}} ) \text{ and } \Im (c' \overline{q'}) = \Im (p (\pi_j) \overline{p_{|V'}}) , \]
so \(\Im ( c \overline{q})\) and \(\Im ( c' \overline{q'})\) are not \( \R^{\leq 0}\)-co-linear.  Lemma \ref{l: pair of Haupt conditions} says that one can find \(\varphi \in H^1 (\Sigma_g, \Z)\) so that \( q + c\varphi \) and \( q' + c' \varphi\) both satisfy Haupt's conditions. That means that \(V''\) and \(V'''\) are both \(p\)-admissible. 

Since \( V''\) differs from \(V\) by a form with values in \( \Z \pi _i\), every connected component of \(\mathcal B_V (p)\) is connected in \(\Omega\mnc_{\Sigma,\PP}(p)\) to any connected componet of \( V''\). Similarly, every connected component of \( \mathcal B_{V'''} (p)\) is connected in \(\Omega\mnc_{\Sigma,\PP}(p)\) to any connected component of \( \mathcal B_{V''} (p)\). To conclude, notice that 
\[ V''' - V'' =  \sum _{k\neq i,j} (\varphi_k - \varphi) \pi_k .  \]
Hence, every connected component of \( \mathcal B_{V''} (p)\) is connected in \(\Omega\mnc_{\Sigma,\PP}(p)\) to any connected component of \( \mathcal B_{V'''} (p)\). This shows that \(C_{g,n}\) is true.

\section{Dynamics of modular group} 

Let \(n\geq 3\) and \(p_1, \ldots, p_n\in \mathbb C ^*\). In this section we classify the orbit closures of the action of the modular group \(\text{Mod} _{\Sigma,\PP} \) on \( H^1 _{p_1,\ldots, p_n} (\Sigma\setminus\PP , \mathbb C) \), the subgroup of \( H^1(\Sigma\setminus\PP, \mathbb C) \)  formed by periods that satisfy \( p(\pi_i) =p_i\) for the peripheral classes $\pi_i$ \(i=1,\ldots, n\) around each of the $n$ points of $\PP$.  

\begin{definition} A \textit{discrete factor} of the period \( p \in H^1(\Sigma\setminus\PP , \mathbb C) \) is a quotient \( \mathbb C \rightarrow \mathbb C / K\),
where \(K\) is a closed subgroup of \( \mathbb C\), which is such that the composition \(p_K\) of \( p\) with the quotient map \( \mathbb C \rightarrow \mathbb C / K\) has a discrete image. Two periods \(p,p'\) having the same discrete factor \(\mathbb C\rightarrow \mathbb C/ K\) have the same image in this latter if the images of \(p_K\) and \(p_K'\) are the same.\end{definition} 

Suppose that, up to post-composition of \(p\) with a \(\mathbb R\)-linear invertible map of \(\mathbb C\), we have \( p (\Pi) =\mathbb Z\) and \(\overline{ p \left(H_1(\Sigma\setminus\PP,\mathbb Z)\right)}= \mathbb R +i\mathbb Z\). In this case, we define \( V(p) \in \mathbb R/ \mathbb Z\) by the following: choose a section \( \sigma : H_1 (\Sigma, \mathbb Z ) \rightarrow H_1(\Sigma\setminus\PP , \mathbb Z) \) and define 
\begin{equation} \label{eq: vol mod Z} V(p ) := \text{Vol} (p\circ \sigma) \text{ mod } \mathbb Z ,\end{equation} 
where \( \text{Vol} (q) = \Re (q) \cdot \Im (q) \) as in the beginning of Section \ref{s:slices}. It corresponds to the volume modulo \(\mathbb Z\) of the restriction of \(p\) to a \(\PP\)-module. Notice that \( V(p)\) does not depend on the chosen section \(\sigma\), for if \(\sigma'\) is another choice, the difference \( \sigma' -  \sigma\) takes values in \(\Pi\), so \( q = p\circ \sigma' - p \circ \sigma  \in H^1 (\Sigma , \mathbb Z) \), and in particular: 
\[ \text{Vol} (p\circ \sigma' ) = \text{Vol}( p\circ \sigma + q ) = \text{Vol} (p\circ \sigma) + q \cdot \Im (p\circ \sigma) = \text{Vol} (p\circ \sigma)\text{ mod } \mathbb Z. \]
Notice also that under the same assumptions on \(p\), \( V(p) \) is invariant by the action of the modular group, namely 
\[ V( p \circ M) = V(p) \text{ for every } M\in \text{Mod} _{\Sigma,\PP} .\]
The main result of this section is the following:

\begin{proposition} \label{p: orbit closures}
The closure of the \(\text{Mod} _{\Sigma,\PP} \)-orbit of a period \(p\in H^1 _{p_1,\ldots, p_n} (\Sigma\setminus\PP , \mathbb C)\) consists of the periods \(p'\) having the following properties 
\begin{itemize} 
\item 
\(p' \left( H_1(\Sigma\setminus\PP , \mathbb Z) \right) \subset \overline{p \left(H_1(\Sigma\setminus\PP,\mathbb Z )\right)} .\)
\item Each discrete factor of \(p\) is a discrete factor of \(p'\). Moreover, \(p\) and \(p'\) have the same image in this discrete factor.
\item if up to post-composing \(p\) with a \(\mathbb R\)-linear invertible map of \(\mathbb C\), we have \( p (\Pi) =\mathbb Z\) and \( \overline{ p \left(H_1(\Sigma\setminus\PP,\mathbb Z)\right)}= \mathbb R +i\mathbb Z \), then \( V (p') = V (p)\). 

\end{itemize} 
\end{proposition}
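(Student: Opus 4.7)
The plan is to prove the two inclusions separately: the easy inclusion asserting that the listed invariants are preserved under the $\text{Mod}_{\Sigma,\PP}$-action and remain constant upon passage to the closure, and then the harder reverse inclusion, establishing that any period satisfying the three conditions actually lies in the orbit closure, via a reduction to homogeneous dynamics and Ratner's theorem applied exactly as in the analogous statements of \cite{CDF2} and \cite{CD}.

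For the easy inclusion, I would proceed as follows. Since any $M\in \text{Mod}_{\Sigma,\PP}$ acts on $H_1(\Sigma\setminus\PP,\mathbb Z)$ by an automorphism fixing each peripheral class, the composition $p\circ M$ takes values in $p\bigl(H_1(\Sigma\setminus\PP,\mathbb Z)\bigr)$, and the first condition is immediate from continuity. For the second, observe that if $\varphi:\mathbb C\to \mathbb C/K$ is a discrete factor of $p$, then the map $\varphi\circ p$ has discrete image, and its $\text{Mod}_{\Sigma,\PP}$-orbit lies inside the (discrete hence closed) set of homomorphisms from $H_1(\Sigma\setminus\PP,\mathbb Z)$ to that discrete subgroup sending peripherals to $\varphi(p_i)$; in particular the image subgroup $\varphi\circ p(H_1(\Sigma\setminus\PP,\mathbb Z))$ is constant along the orbit and along its closure. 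For the third condition, I would check that the definition \eqref{eq: vol mod Z} of $V$ is independent of the section $\sigma$ and is $\text{Mod}_{\Sigma,\PP}$-invariant: if $M\in \text{Mod}_{\Sigma,\PP}$, then $M\circ \sigma$ is another section of the quotient $H_1(\Sigma\setminus\PP,\mathbb Z)\to H_1(\Sigma,\mathbb Z)$ (since $M$ fixes $\Pi$ pointwise), and the symplectic form on $H_1(\Sigma,\mathbb Z)$ is preserved under the induced action, so $V(p\circ M)=V(p)$; continuity of the volume then gives invariance along the closure.

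For the harder reverse inclusion, my plan is to reduce to a homogeneous dynamical statement. Fix a $\PP$-module $V$ and the corresponding splitting $H_1(\Sigma\setminus\PP,\mathbb Z)=\Pi\oplus V$, which identifies the slice of periods with fixed peripherals with $\text{Hom}(V,\mathbb C)\cong H^1(\Sigma,\mathbb C)\cong \mathbb C^{2g}$; under this identification, the image of $\text{Mod}_{\Sigma,\PP}$ in $\text{Aut}(V)$ surjects onto the Torelli-style symplectic group acting on $H^1(\Sigma,\mathbb C)$, up to translations coming from the choice of splitting and from the action permuting the sections. I would then treat three cases according to the topological closure $\overline{\Lambda}$ of $\Lambda=p(H_1(\Sigma\setminus\PP,\mathbb Z))$: the lattice case (where the orbit is essentially finite and congruence-type arguments close the argument), the case of a closed subgroup isomorphic to $\mathbb R$ or $\mathbb R+\delta\mathbb Z$ (where $\Im$ is a discrete factor and the invariant $V$ comes into play), and the case $\overline{\Lambda}=\mathbb C$. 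In each case the orbit closure problem translates into describing the closure of an $\text{Sp}(2g,\mathbb Z)$-orbit on a product of tori and real vector spaces; this is precisely the type of situation handled by Ratner's measure classification and orbit closure theorems, and the relevant invariants that enumerate the closed invariant subsets are exactly the image of $p$ in the discrete factors together with the symplectic/volume invariant $V$.

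The main obstacle, and the step where I would spend the most care, is the case where $\Im$ is a discrete factor with image $\delta\mathbb Z\subset \mathbb R$ and $p(\Pi)$ is cyclic: here the quotient of the slice by the discrete factor is a torus on which the Torelli-symplectic representation acts, and one must show that the only additional invariant beyond the image of the discrete factor is the volume class $V\in\mathbb R/\delta\mathbb Z$. The key algebraic fact underlying this is that the symplectic group $\text{Sp}(2g,\mathbb Z)$ acting on the quotient space $\mathbb R^{2g}/\delta\mathbb Z^{2g}$ (or the appropriate twisted analogue) has orbit closures classified by precisely the volume invariant modulo $\delta$, a fact that is established either by a direct application of Ratner or by adapting the argument given for the analogous step in \cite{CDF2}. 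Once this is in place, combining with the connectedness theorems \ref{t:Rgn} and \ref{t:Cgn} via the Transfer Principle produces the desired description of the leaf closures, and hence concludes the proof of Proposition \ref{p: orbit closures}.
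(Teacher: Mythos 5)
Your strategy — reduction to a homogeneous dynamical problem and Ratner's theorem, with the image of the discrete factors and the volume class $V$ as the classifying invariants — matches the paper's. The forward inclusion is handled correctly. But the organization of the hard direction and one technical core deserve comment.

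The paper does not organize its case analysis by the closure $\overline{\Lambda}$ of the full image $\Lambda = p(H_1(\Sigma\setminus\PP,\mathbb Z))$, as you propose, but by the closure $\Lambda_\Pi$ of the peripheral submodule $p(\Pi)$. This is the natural stratification because the reduction step (the paper's Lemma \ref{l: reduction}) shows that closed $\text{Mod}_{\Sigma,\PP}$-invariant subsets of the slice are automatically invariant under translation by the group $H^1_{0,\ldots,0}(\Sigma\setminus\PP,\Lambda_\Pi)$: the unipotent subgroup $U\cong H^1(\Sigma,\Pi)$ in the image of the mapping class group acts on the slice by translations, and those translations form a \emph{dense} subgroup of the larger translation group. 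Consequently the problem descends to the action of $\text{Aut}(H_1(\Sigma,\mathbb Z))$ on $H^1(\Sigma,\mathbb C/\Lambda_\Pi)$. Your phrase ``up to translations coming from the choice of splitting and from the action permuting the sections'' gestures at this, but does not spell out the density argument, which is what justifies passing to the quotient. Your $\PP$-module splitting $H_1=\Pi\oplus V$ gives a coordinate system, not the reduction; the two should not be conflated.

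After that reduction, the lattice case ($\Lambda_\Pi$ a lattice) is not an $\text{Sp}(2g,\mathbb Z)$-action on a torus, as your plan says. The paper applies Ratner to the semidirect product $G^{\mathbb Z}=\text{Aut}(H^1(\Sigma,\mathbb Z))\ltimes H^1(\Sigma,\mathbb Z+i\mathbb Z)$ acting on $H^1(\Sigma,\mathbb C)$. The technical heart is the structure of the intermediate Lie group $G_P\supset S_P$ produced by Ratner's theorem, specifically the classification of the connected component $B_P^0$ of the translation part $B_P = G_P \cap H^1(\Sigma,\mathbb C)$: it must be an $\text{Aut}(H^1(\Sigma,\mathbb R))$-invariant $\mathbb R$-subspace, hence $\{0\}$, all of $H^1(\Sigma,\mathbb C)$, or a graph $\{\lambda u + i\mu u\}$ over $H^1(\Sigma,\mathbb R)$. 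It is the third alternative that produces a discrete factor $\mu\Re - \lambda\Im$, and it is via the analysis of $S_P\ltimes B_P^0$ that the volume class $V$ appears as the unique extra invariant. Your plan cites that this ``is established either by a direct application of Ratner or by adapting the argument in \cite{CDF2}'', which is accurate in spirit, but without the $B_P^0$ classification the proposal is a map of the road, not a proof. If you adopt the $\Lambda_\Pi$-based organization and supply the density-of-translations and $B_P^0$ arguments, the rest of your plan closes correctly.
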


Before proceeding to the proof of this result, we begin by a useful reduction. We denote by \( \Lambda _\Pi \) the closure in \(\mathbb C\) of the module generated by the \(p_i\)'s. Notice that the reduction of a period of \( H^1 _{p_1,\ldots, p_n} (\Sigma\setminus\PP , \mathbb C) \) modulo \( \Lambda_\Pi\) vanishes on each peripheral cycle \(\pi_i\), so it induces a period of the closed surface \(\Sigma\); this gives a map 
\begin{equation}\label{eq: map in cohomology} H^1 _{p_1,\ldots, p_n} (\Sigma\setminus\PP , \mathbb C) \rightarrow H^1  (\Sigma , \mathbb C/ \Lambda_\Pi )\end{equation}  

\begin{lemma}\label{l: reduction}
A closed \(\text{Mod} (\Sigma,\PP ) \)-invariant subset of \(H^1 _{p_1,\ldots, p_n} (\Sigma\setminus\PP , \mathbb C) \) is the pre-image under the map \eqref{eq: map in cohomology} of a closed \( \text{Mod} (\Sigma) \)-invariant subset of \(H^1  (\Sigma , \mathbb C/ \Lambda_\Pi ) \). 
\end{lemma}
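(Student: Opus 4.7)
The plan is to exhibit $\phi$ (the map \eqref{eq: map in cohomology}) as a principal bundle over $H^{1}(\Sigma, \mathbb{C}/\Lambda_\Pi)$ which is equivariant under a natural surjection $\mathrm{Mod}_{\Sigma,\PP}\twoheadrightarrow \mathrm{Mod}_{\Sigma}$, and then show that the kernel of this surjection already acts with dense orbits on every fiber. Once both facts are in place, the lemma becomes a formal consequence.

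First, I would verify that $\phi$ is a well-defined, continuous, open, surjective $\mathbb{R}$-linear quotient with kernel $N:=\mathrm{Hom}(H_1(\Sigma,\mathbb{Z}),\Lambda_\Pi)$ (a closed subgroup of $\mathrm{Hom}(H_1(\Sigma,\mathbb{Z}),\mathbb{C})$), and that it is equivariant: $\mathrm{Mod}_{\Sigma,\PP}$ preserves $\Pi\subset H_1(\Sigma\setminus\PP)$, hence acts on the quotient $H_1(\Sigma\setminus\PP)/\Pi=H_1(\Sigma)$, and this induced action factors through the natural surjection $\mathrm{Mod}_{\Sigma,\PP}\twoheadrightarrow \mathrm{Mod}_{\Sigma}$. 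Well-definedness on peripheral classes is automatic since $p_i\in\Lambda_\Pi$; surjectivity uses that $H_1(\Sigma,\mathbb{Z})$ is free to lift, then one adds any fixed period in $H^1_{p_1,\ldots,p_n}(\Sigma\setminus\PP,\mathbb{C})$.

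Next, the key step is to prove that the \emph{point-pushing subgroup} $K\subset\mathrm{Mod}_{\Sigma,\PP}$, i.e.\ the kernel of $\mathrm{Mod}_{\Sigma,\PP}\twoheadrightarrow \mathrm{Mod}_{\Sigma}$, acts with dense orbits on each fiber of $\phi$. By Lemma~\ref{l:image of mod in homology automorphisms}, every element of $K$ acts on $H_1(\Sigma\setminus\PP,\mathbb{Z})$ as $\mathrm{Id}+\psi$ with $\psi(\gamma)=\sum_{i=1}^n(\beta_i\cdot\gamma)\pi_i$, $\beta_i\in H_1(\Sigma,\mathbb{Z})$, and conversely every such transformation is realized. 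Hence the action on a period $p$ is $p\mapsto p+\sum_{i=1}^n p_i\,\beta_i^\ast$ with $\beta_i^\ast\in H^1(\Sigma,\mathbb{Z})$ varying independently. Since $\mathbb{Z}$-linear combinations of the $p_i$ are dense in $\Lambda_\Pi$, the orbit closure of $p$ is $p+\mathrm{Hom}(H_1(\Sigma,\mathbb{Z}),\Lambda_\Pi)=p+N$, which is exactly the $\phi$-fiber through $p$.

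Finally, I conclude as follows. If $A\subset H^1_{p_1,\ldots,p_n}(\Sigma\setminus\PP,\mathbb{C})$ is closed and $\mathrm{Mod}_{\Sigma,\PP}$-invariant, then it is in particular $K$-invariant and closed, so it contains every fiber it meets; thus $A=\phi^{-1}(\phi(A))$. Since $\phi$ is an open continuous surjection, $\phi(A)$ is closed; by equivariance and surjectivity of $\mathrm{Mod}_{\Sigma,\PP}\to \mathrm{Mod}_{\Sigma}$, it is $\mathrm{Mod}_{\Sigma}$-invariant. The converse direction (preimage of a closed $\mathrm{Mod}_{\Sigma}$-invariant subset is closed and $\mathrm{Mod}_{\Sigma,\PP}$-invariant) is immediate from equivariance and continuity. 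The only genuinely delicate point is the density argument in the fibers, and this reduces to the elementary observation that the $p_i$'s generate $\Lambda_\Pi$ densely, once Lemma~\ref{l:image of mod in homology automorphisms} has been invoked to identify the point-pushing action algebraically.
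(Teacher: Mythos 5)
Your proposal is correct and follows essentially the same route as the paper: identify the kernel (which you call the point-pushing subgroup $K$, the paper calls its homological image $U$) of the projection to $\mathrm{Mod}_\Sigma$; use Lemma~\ref{l:image of mod in homology automorphisms} to see that its action on a period $p$ is by translations $p\mapsto p+\sum_i p_i\varphi_i$ with $\varphi_i\in H^1(\Sigma,\mathbb Z)$; observe that the closure of the subgroup $\{\sum_i p_i\varphi_i\}$ is exactly $\mathrm{Hom}(H_1(\Sigma,\mathbb Z),\Lambda_\Pi)=\ker\phi$, so every closed $K$-invariant set is saturated for $\phi$; then invoke equivariance and the exact sequence to conclude. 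The paper phrases this inside $\mathrm{Aut}(H_1(\Sigma\setminus\PP,\mathbb Z))$ rather than via the Birman kernel, but the core density argument and the saturation-then-push-down conclusion are identical.
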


 \begin{proof}
The image of the modular group in \( \text{Aut} (H_1(\Sigma\setminus\PP , \mathbb Z) )\)
is the group \(\Gamma\) acting by the identity on the peripheral subgroup \(\Pi \), and on the quotient \( H_1(\Sigma\setminus\PP , \mathbb Z) /\Pi \simeq H_1 (\Sigma, \mathbb Z) \) by preserving the intersection form. Let \(U\) be the subgroup of \(\Gamma\)
defined as the kernel of the map \(\Gamma \rightarrow \text{Sp} (H_1 (\Sigma , \mathbb Z) )\). Given an element \(u \in U\), the map \( u-I\) takes values in the peripheral module \( \Pi\), and vanishes on this latter. Every such map belongs to \(U\), so 
\[ U \simeq H^1 (\Sigma, \Pi ) . \] 

The action of an element \(u\in U\) on \( H^1 _ {p_1,\ldots, p_n} (\Sigma\setminus\PP , \mathbb C) \) is by translation by the period of \( H^1 _ {0,\ldots, 0} (\Sigma\setminus\PP , \oplus_i \mathbb Z p_i) \) which is image of \(u\) by the composition of morphisms
\[ U \rightarrow H^1 (\Sigma, \Pi) \rightarrow H^1 _{0,\ldots, 0}(\Sigma\setminus\PP , \Pi ) \rightarrow H^1 _{0,\ldots, 0}(\Sigma\setminus\PP , \sum _i \mathbb Z p_i  ) , \]
where the second is induced by inclusion \( \Sigma\setminus\PP \rightarrow \Sigma\), and the third is induced by the map  \( \Pi \rightarrow \sum_i \mathbb Z p_i \) given by \(\pi_i \mapsto p_i\).
This composition of morphisms is surjective, so that the group acting  on \( H^1 _ {p_1,\ldots, p_n} (\Sigma\setminus\PP , \mathbb C) \) made of translations by  \( H^1 _ {0,\ldots, 0} (\Sigma\setminus\PP , \sum_i \mathbb Z p_i) \) is contained in the action of the modular group.
This group being dense in \( H_{0,\ldots, 0} ^1 (\Sigma\setminus\PP , \Lambda_\Pi)\), we infer that any closed \(\text{Mod} (\Sigma\setminus\PP ) \)-invariant subset of \(H^1 _{p_1,\ldots, p_n} (\Sigma\setminus\PP , \mathbb C)\) is invariant by translations by the closed subgroup \( H^1 _{0,\ldots, 0} (\Sigma\setminus\PP , \Lambda_\Pi)\). The result then follows from  the exact sequence 
\[ 0\rightarrow H^1_{0,\ldots, 0}  (\Sigma\setminus\PP, \Lambda_\Pi) \rightarrow H^1_{p_1,\ldots, p_n}  (\Sigma\setminus\PP, \mathbb C) \rightarrow H^1   (\Sigma \setminus\PP , \mathbb C / \Lambda_\Pi ) \rightarrow 0 ,  \]
the last arrow is precisely \eqref{eq: map in cohomology}.
\end{proof}

\begin{remark}\label{r: duality} We notice that the action of \( \text{Mod} (\Sigma)\) on \( H^1 (\Sigma, \mathbb C / \Lambda_\Pi)\) transits via the natural action of the unimodular symplectic group \(\text{Aut} (H_1 (\Sigma, \mathbb Z))\) on \( H^1 (\Sigma, \mathbb C / \Lambda_\Pi)\). This latter is isomorphic to \( \text{Aut} (H^1 (\Sigma, \mathbb Z))\) by duality. We will use this in the sequel. \end{remark}

\begin{proof} [Proof of Proposition \ref{p: orbit closures}]
We study several cases: 

\vspace{0.2cm}

\textit{Case 1: \( \Lambda _\Pi \) is infinite cyclic.} Notice that the third item of the proposition only occurs in that case. The closed invariant subsets of \( H^1 (\Sigma, \mathbb C / \Lambda_\Pi) \) have been classified in \cite{CD}, and as a consequence Proposition \ref{p: orbit closures} holds in that case. When $n=2$ we automatically fall in this case. 

\vspace{0.2cm}

\textit{Case 2: \(\Lambda_{\Pi} \) is a lattice.} Up to real linear automorphism of \(\mathbb C\), we have \( \Lambda _\Pi = \mathbb Z + i \mathbb Z\). Lemma \ref{l: reduction} and remark \ref{r: duality} reduce the question to the understanding of orbit closures of the action of \(\text{Aut} (H^1 (\Sigma ,\mathbb Z))  \) on \( H^1 (\Sigma, \mathbb C / \mathbb Z + i \mathbb Z)\).  An analog of Lemma \ref{l: reduction} shows that orbit closures of \( \text{Aut}( H^1 (\Sigma, \mathbb Z) ) \) on \( H^1 (\Sigma, \mathbb C /\mathbb Z + i \mathbb Z)\) are in bijective correspondence with orbit closures of the action of the group \(G^\mathbb Z =\text{Aut} (H^1 (\Sigma, \mathbb Z)) \ltimes H^1 (\Sigma, \mathbb Z+ i \mathbb Z) \) on \( H^1 (\Sigma, \mathbb C)\) (here the twisted product is defined by the linear representation of \(\text{Aut} (H^1 (\Sigma, \mathbb Z))\) on \( H^1 (\Sigma, \mathbb C)\) given by the natural representation on real and imaginary part of the cohomology, and the action of \( H^1 (\Sigma, \mathbb Z+i\mathbb Z) \) on \( H^1 (\Sigma, \mathbb C)\) is merely given by translations). We leave to the reader to verify that this corresponds to Lemma \ref{l: reduction} applied to the case \(n\geq3\) with $\Lambda_{\Pi}\subset\mathbb{C}$ a lattice.

As in \cite{CDF2} and \cite{CD}, we will make use of Ratner's theory for the classification of orbit closures of \( G^\mathbb Z\) on \( H^1 (\Sigma, \mathbb C)\), as in \cite{CD}, from which we borrow notations. So we let \( G = \text{Aut} (H^1 (\Sigma, \mathbb R))  \ltimes H^1 (\Sigma, \mathbb C) \), where as before the twisted prodcut is defined by the linear representation of \( \text{Aut} (H^1 (\Sigma , \mathbb R) )\) on \( H^1 (\Sigma, \mathbb C)\) given by the natural representation on real and imaginary part, and observe that \( G^\mathbb Z \) embeds naturally as a lattice in \( G\). Given a period \( P \in H^1 (\Sigma, \mathbb C)\), its stabilizer in \(G\) is the group 
\begin{equation}\label{eq: stabilizer}  S_P = \{ (L, t) \in G \ |\ t= P - LP \}. \end{equation}
Since it is generated by unipotents, Ratner's theorem shows that there exists a Lie subgroup \(G_P \subset G \) such that 
\begin{itemize}
    \item \(S_P\subset G_P\) 
    \item \(G_P^\mathbb Z := G_P \cap G^\mathbb Z \) is a lattice in \(G_P\)
    \item \(\overline{G^\mathbb Z P} = G^\mathbb Z G_P P \).
\end{itemize}

We denote \(B_P \subset G_P  \) the intersection of \(G_P\) and of the kernel of the natural projection \( G \rightarrow \text{Aut} (H^1 (\Sigma, \mathbb R))\), and by \(A_P\) the quotient of \( G \) by \( B_P\). As in \cite[Lemma 11.2]{CD}, one proves that \( B_P^\mathbb Z:= B_P \cap G^\mathbb Z\) is a lattice in \( B_P\), and that the image of \( G_P^\mathbb Z\) in \(A_P\) is a lattice denoted \( A_P^\mathbb Z\). We denote by \(i: A_P \rightarrow \text{Aut} (H^1 (\Sigma, \mathbb R))\) the restriction of the projection \( G \rightarrow \text{Aut} (H^1 (\Sigma, \mathbb R) ) \); this is an injective immersion. Since \( S_P\) is a section of the projection \( G \rightarrow \text{Aut} (H^1 (\Sigma, \mathbb R))\), one finds that \(i\) is surjective (even in restriction to the connected component of \(A_P\) containing the neutral element). We conclude that 
\[ G_P = S_P \ltimes B_P\]
and that \( B_P\) is invariant by \( \text{Aut} (H^1 (\Sigma, \mathbb R))\).

\begin{lemma}
    The connected component \( B_P^0\) of \( B_P\) containing the neutral element \(0\) is either 
    \begin{enumerate}
        \item \(\{0\}\),
        \item \(H^1 (\Sigma, \mathbb C)\),
        \item or a subgroup of the form \(\{\lambda u + \mu u i\ |\ u\in H^1 (\Sigma, \mathbb R)\}\) for some real numbers \( \lambda, \mu \in \mathbb R\).
    \end{enumerate}  
\end{lemma}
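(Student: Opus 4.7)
The subgroup $B_P$ is closed in $H^1(\Sigma,\mathbb{C})$ (being the intersection of the closed subgroup $G_P$ with the kernel of the projection to $\mathrm{Aut}(H^1(\Sigma,\mathbb{R}))$), and therefore $B_P^0$ is a closed connected Lie subgroup of the real vector group $H^1(\Sigma,\mathbb{C})$. Hence $B_P^0$ is a real vector subspace of $H^1(\Sigma,\mathbb{C})$. Moreover, since $B_P$ is invariant under $\mathrm{Aut}(H^1(\Sigma,\mathbb{R}))$, which is the symplectic group $\mathrm{Sp}(2g,\mathbb{R})$ and hence connected for $g\geq 1$, the identity component $B_P^0$ is itself preserved by the diagonal action of $\mathrm{Sp}(2g,\mathbb{R})$ on $H^1(\Sigma,\mathbb{C}) = V \oplus iV$, where $V := H^1(\Sigma,\mathbb{R})$. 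The problem therefore reduces to classifying the real $\mathrm{Sp}(V)$-invariant subspaces of $V\oplus iV$.

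The key input is that the standard representation $V$ of $\mathrm{Sp}(V)$ is absolutely irreducible (for $g\geq 1$); in particular, by Schur's lemma, every $\mathrm{Sp}(V)$-equivariant $\mathbb{R}$-linear endomorphism of $V$ is a scalar multiple of the identity. Let $W\subset V\oplus iV$ be an invariant subspace, and consider the projections $\pi_1,\pi_2: V\oplus iV\to V$ onto the real and imaginary parts. Each of $\pi_j(W)\subset V$ is $\mathrm{Sp}(V)$-invariant, hence equal to $0$ or $V$. If both are $0$ then $W=0$; if $\pi_1(W)=V$ and $\pi_2(W)=0$ then $W=V\oplus 0$; symmetrically if $\pi_1=0$, $\pi_2=V$. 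If both projections are surjective, inspect the invariant subspace $W\cap (V\oplus 0)$: if it equals $V\oplus 0$, a short diagram chase using surjectivity of $\pi_2|_W$ shows $W=V\oplus iV$; if it is trivial, then $\pi_2|_W$ is an $\mathrm{Sp}(V)$-equivariant isomorphism onto $V$, and so $W$ is the graph of an equivariant endomorphism $\varphi$ of $V$. Schur's lemma then gives $\varphi=\lambda\cdot\mathrm{id}$ for some $\lambda\in\mathbb{R}$, so $W=\{\lambda u + iu \mid u\in V\}$.

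Gathering the cases, the non-trivial proper invariant subspaces of $V\oplus iV$ are precisely the subspaces of the form $\{\lambda u + \mu u i \mid u\in V\}$ with $(\lambda,\mu)\in\mathbb{R}^2\setminus\{(0,0)\}$ (parametrized projectively by $[\lambda:\mu]\in\mathbb{RP}^1$, and recovering $V\oplus 0$ with $[1:0]$ and $0\oplus iV$ with $[0:1]$). Together with the trivial and full cases, this yields the three possibilities of the statement.

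The main subtlety to handle carefully is verifying absolute irreducibility of $V$ as an $\mathrm{Sp}(V)$-module (so that the commutant is $\mathbb{R}$ rather than $\mathbb{C}$ or $\mathbb{H}$, which would produce extra invariant subspaces); this is classical for the standard symplectic representation. A secondary issue is to ensure that the $\mathrm{Sp}(V)$-invariance of the full group $B_P$ passes to $B_P^0$, but this follows at once from connectedness of $\mathrm{Sp}(2g,\mathbb{R})$, the continuity of the action, and the fact that the identity is fixed.
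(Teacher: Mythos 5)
Your proof is correct and takes essentially the same approach as the paper's: reduce to classifying $\mathrm{Sp}(V)$-invariant $\mathbb{R}$-subspaces of $V\oplus iV$ (with $V=H^1(\Sigma,\mathbb{R})$) using irreducibility of the standard representation, with the graph case handled by Schur's lemma. The only cosmetic difference is that you organize the case analysis around the two projections $\pi_1,\pi_2$, whereas the paper first dichotomizes on $B_P^0\cap H^1(\Sigma,\mathbb{R})$, and the paper establishes that the commutant of $\mathrm{Sp}(V)$ is $\mathbb{R}$ by a direct eigenvector argument rather than invoking absolute irreducibility as a classical fact.
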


\begin{proof}
    This is because \(B_P^0\) is a \(\mathbb R\)-linear vector subspace of \( H^1 (\Sigma, \mathbb C)\) which is invariant under \( \text{Aut} (H^1 (\Sigma, \mathbb R))\), and that (1), (2) and (3) are the only such subspaces. 

    Indeed, the \(\mathbb R\)-linear subspace \( B_P^0\cap H^1 (\Sigma, \mathbb R) \) is invariant under \( \text{Aut} (H^1 (\Sigma, \mathbb R))\), hence since its action is irreducible, we have either  
    \[B_P^0 \cap H^1 (\Sigma, \mathbb R)= \{0\} \text{ or }  H^1(\Sigma, \mathbb R)\subset B_P .\]

In the case \(B_P^0 \cap H^1 (\Sigma, \mathbb R)= \{0\}\), \(B_P^0\) is a graph of \( \Im : H^1 (\Sigma, \mathbb C) \rightarrow H^1 (\Sigma, \mathbb R)\), over the image \( \Im B_P^0\). This latter being \( \text{Aut} (H^1 (\Sigma, \mathbb R))\)-invariant, irreducibility shows that it is either \(\{0\}\) (in which case \(B_P^0= \{0\}\)) or the whole \(H^1 (\Sigma, \mathbb R) \). In this latter case, one finds that \( B_P^0\) is of the form 
\[ B_P^0 = \{ Mu + i u \ |\ u \in H^1 (\Sigma, \mathbb R) \}\]
with \(M \in \text{End} (H^1 (\Sigma, \mathbb R))\). Invariance by \(\text{Aut} (H^1(\Sigma, \mathbb R) )\) shows that \(M\) commutes with every element of \( \text{Aut} (H^1(\Sigma, \mathbb R) )\). This implies that any element of \(H^1(\Sigma, \mathbb R)\) is an eigenvector of \(M\) (indeed, for every \(u \in H^1(\Sigma, \mathbb R) \setminus \{0\}\), choose a symplectic basis \(a_1, b_1, \ldots\) with \(a_1= u\), and observe that \( L\) defined as \( La_1 = 2 a_1 \), \(Lb_1 = \frac{1}{2} b_1 \), \(La_k = a_k \), \(L b_k= b_k \) for \(k\geq 2\), is symplectic, hence \( \text{ker} (L- 2 I) = \mathbb R u\) is \(M\)-invariant). So \(M= \lambda I\) for some \(\lambda \in \mathbb R\) and we are in case (3) with \(\mu = 1\). 

In the case where \(H^1(\Sigma, \mathbb R)\subset B_P \), one has \( B_P^0 = H^1 (\Sigma , \mathbb R) + i E\), for some real subspace of \(H^1( \Sigma, \mathbb R) \) invariant under \(\text{Aut} (H^1(\Sigma, \mathbb R) )\). Such a space is either \(\{0\}\) or \(H^1 (\Sigma, \mathbb R) \) and one deduces that \( B_P^0 \) is either \( H^1 (\Sigma, \mathbb R) \) (case (3) with \(\lambda=1\) and \(\mu=0\)) or \( H^1 (\Sigma, \mathbb C)\) (case (2))  as desired. 
\end{proof}

We will study the three cases occuring in the preceeding lemma separately. 

\underline{Case where \( B_P^0 = \{0\}\):} In this case \(B_P\) is a discrete subgroup of \( H^1 (\Sigma, \mathbb C) \) which is invariant by \( \text{Aut} (H^1 (\Sigma, \mathbb R) \) so \( B_P=\{0\}\) as well. We conclude that \( G_P = S_P \ltimes \{0\}\), and that \( S_P^\mathbb Z = S_P \cap G^\mathbb Z\) is a lattice in \(S_P\). In particular, there exists a finite index subgroup \( \Gamma \subset \text{Aut} (H^1 (\Sigma, \mathbb Z) ) \) in restriction to which the function \( L\mapsto P- LP \) takes integral values. This occurs iff the period \(P\) takes values in \( \mathbb Q +i \mathbb Q \), or equivalently the reduction of \(P\) modulo \(\mathbb Z+i \mathbb Z \) takes a finite number of values. 

\underline{Case where \( B^P_0 = H^1(\Sigma, \mathbb C)\):} In this case, one has \( G_P = G\), and the orbit of \(P\) is dense in \( H^1 ( \Sigma, \mathbb C).\)

\underline{Case where \( B_P ^0 = \{\lambda u + \mu u i\ |\ u\in H^1 (\Sigma, \mathbb R)\}\):} 
The group \((B_P^0 ) ^\mathbb Z = B_P^0 \cap H^1(\Sigma, \mathbb Z+i\mathbb Z)\) is a lattice, so the \(\mathbb R\)-vector space \( B_P^0\) is rational in \(H^1 (\Sigma, \mathbb C)\). This implies that \( \lambda,\mu\) can be chosen integers, by appropriately multiplying them by the same number. The lattice \( (G_P^0)^\mathbb Z = G_P^0 \cap G^\mathbb Z \)  in \( G_P^0\) can be expressed as an extension 
\[ 0\rightarrow B_P^0 \cap H^1 (\Sigma, \mathbb Z) \rightarrow (G_P^0)^\mathbb Z \rightarrow \Gamma \rightarrow 1\]
where \( \Gamma\) is a finite index subgroup of \( \text{Aut} (H^1 (\Sigma, \mathbb Z))\). Then each \( L \in \Gamma\) has a lift \( (L, t) \in (G_P^0)^\mathbb Z \subset G_P^0 \), and since \( G^0_P = S_P \ltimes B_P^0 \), there exists an element \(b \in B_P^0\) such that \(t= P-LP +b\). Denoting \( \varphi = \mu \Re - \nu \Im  : \mathbb C \rightarrow \mathbb R \), one has \(\varphi b=0 \), so \[\varphi P- L \varphi P= \varphi (P- LP) = - \varphi (t)\in H^1(\Sigma, \mathbb Z),\]
and since this happens for every element \(L\) of the lattice \(\Gamma\), one deduces that \(\varphi P \in H^1(\Sigma, \mathbb Q)\). In this case, \(\varphi \) is a discrete factor of \(P\) and the orbit of \(P\) is made of all periods having the same image in that discrete factor, so we are done. 

\vspace{0.2cm}

\textit{Case 3: up to \(\mathbb R\)-linear automorphisms of \(\mathbb C\), \(\Lambda_\Pi\) is isomorphic to \(\mathbb R +i \mathbb Z\).} Lemma \ref{l: reduction} reduces the question to the understanding of orbits closures of the action of \(\text{Aut} (H^1 (\Sigma,\mathbb Z)) \) on \(H^1 (\Sigma,  \mathbb R /\mathbb Z)\). These latter are dense appart orbits of rational points which are discrete. Details can be found in Section 11 of  \cite{CD}. Hence Proposition \ref{p: orbit closures} hold in this case.

\vspace{0.2cm}

\textit{Case 4: \(\Lambda_\Pi =\mathbb C\).} In this case, from Lemma \ref{l: reduction} the orbit of \(p\) is dense so Proposition \ref{p: orbit closures} holds. 
\end{proof}

\begin{proof}[Proof of Theorem \ref{t:leaf closures}:] By the connectedness of the fibers of the period map in theorems \ref{t:Rgn} and \ref{t:Cgn}, we can apply the transfer priciple for each  leaf $L$ of  $\mathcal{F}^\alpha_{g,n}$ such that $\Pi_{L}$ and $\Lambda_{L}$ satisfy the hypothesis of Theorem \ref{t:leaf closures} . Its leaf closure $\overline{L}\subset\Omega^{\alpha}\mathcal{M}_{g,n}$ corresponds to the points whose period homomorphism lies in the  mapping class group orbit closure of  $p\in H^1_\alpha(\Sigma\setminus \PP, \mathbb{C})$. 
 Proposition \ref{p: orbit closures} tells us that the closure is the set described in the statement of Theorem \ref{t:leaf closures}.

 The ergodicity result is a consequence of Moore's Theorem \cite{Moore}.

The leaves having $\Lambda_L$ discrete are closed by the analysis given above. 
The non-algebraicity of $L$ when \(\Lambda\) is a lattice comes from the fact that its closure in $\Omega^\alpha\overline{M}_{g,n}$ cuts the boundary in a non-algebraic way. Indeed, a fiber corresponding to a lift of $L$ to $\Omega\mnc(\Sigma,\PP)$ cuts all the boundary components corresponding to $\PP$-modules with different volume values in the holomorphic part. There are an infinite number of such  boundary components (see \ref{ss: connecting R-bnd strata}), and they cannot be in the same orbit of action of the group $\text{Mod}_{\Sigma,\PP}$ on $\Omega\mnc(\Sigma,\PP)$. Therefore they describe an infinite set of components of intersection of the closure of $L$ in $\Omega^\alpha\overline{M}_{g,n}$  with some boundary component 
corresponding to $\PP$-modules in $\Omega\mnc(\Sigma,\PP)$.\end{proof}

Next remark that if $L$ is a leaf of $\mathcal{F}_{g,n}$ the period group $\Lambda_{L}=\Lambda_{\omega}$ and the peripheral period group $\Pi_{L}=\Pi_\omega$  do not depend on \( (C,Q,\omega)\) but merely of \(L\). 

The number of different discrete factors of an element $p\in H^1(\Sigma\setminus \PP,\mathbb{C})$ tells us what the closure $\Lambda$ of the image of $p$ in $\mathbb{C}$ is. If $p$ has no discrete factors then  $\Lambda=\mathbb{C}$, if it has at least two different discrete factors then $\Lambda$ is discrete and if it has a unique discrete factor, then $\Lambda$ a $\mathbb{R}$-linear copy of $\mathbb{R}+i\mathbb{Z}$ in $\mathbb{C}$. 

From Theorem \ref{t:leaf closures} and the previous trichotomy on the number of different factors we easily deduce the following: 

\begin{corollary}\label{c: leaf closures}
Let $L$ be a leaf of $\mathcal{F}^{\alpha}_{g,n}$. Up to the action of $\text{GL}_2(\mathbb{R})$ we may assume that $\mathbb{Z}\subseteq \Pi_{L}$ and if it is discrete, $\Pi_{L}=\mathbb{Z}$. Suppose that, if \(\Pi_{L}\) is real, $\Lambda_{L}\not\subset\Pi_{L}\otimes \mathbb{Q}$, and denote $\Lambda=\overline{\Lambda}_{L}\subset\mathbb{C}$  the topological closure of $\Lambda_{L}$. Then 
\begin{enumerate}
    \item If $\Lambda$ is discrete, then $\overline{L}=\{(C,Q,\omega)\in\Omega^{\alpha}\mathcal{M}_{g,n}: \Lambda_{\omega}=\Lambda\}=L$
    \item If $\Lambda=\mathbb{C}$ then $\overline{L}=\Omega^{\alpha}\mathcal{M}_{g,n}$
    \item If $\Lambda$ is neither discrete nor dense, then
    \begin{enumerate}
        \item If $\Im\Lambda =0$ then $\overline{\mathcal {L}}=\{(C,Q,\omega)\in\Omega^{\alpha}\mathcal{M}_{g,n}: \Im \Lambda_{\omega}=0\}$
        \item If $\Im \Lambda\neq 0$ is discrete, and $\Pi_{L}$ is not discrete, then $$\overline{L}=\{(C,Q,\omega)\in\Omega^{\alpha}\mathcal{M}_{g,n}: \Im(\Lambda_{\omega})=\Im(\Lambda)\}$$
        \item If $\Im \Lambda\neq 0$ and $\Pi_{L}$ are both discrete (therefore $\Pi_{L}=\mathbb{Z}$) and we denote $V_{L}$ the value of $V$ on an element of $L$, then $$\overline{L}=\{(C,Q,\omega)\in\Omega^{\alpha}\mathcal{M}_{g,n}: \Im(\Lambda_{\omega})=\Im(\Lambda)\text{ and } V(\omega)=V_{L}\}$$
       \item If $\Im\Lambda$ is dense, then for a unique \(\beta\in\mathbb R\), \((\Re+\beta\Im) (\Lambda)\) is finite  in $\mathbb{R}$ and $$\overline{L}=\{(C,Q,\omega)\in\Omega^{\alpha}\mathcal{M}_{g,n}: (\Re+\beta\Im)(\Lambda_{\omega})=(\Re+\beta\Im)(\Lambda)\}$$
    \end{enumerate}
\end{enumerate}
\end{corollary}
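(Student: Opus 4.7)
The plan is to deduce Corollary \ref{c: leaf closures} directly from Theorem \ref{t:leaf closures} by translating its conditions on discrete factors and the $V$-invariant into the concrete conditions on $\Lambda=\overline{\Lambda_L}$ listed in the statement. The key input is a classification of closed subgroups of $\mathbb{C}\simeq\mathbb{R}^2$: up to $\mathbb{R}$-linear isomorphism they are $\{0\}$, a discrete subgroup of rank $1$ or $2$, a real line $\ell$ through the origin, the sum $\ell+\mathbb{Z}v$ of such a line with a transverse cyclic group, or all of $\mathbb{C}$. Recalling that a non-trivial discrete factor is a surjective $\mathbb{R}$-linear form $\varphi:\mathbb{C}\to\mathbb{R}$ with $\varphi(\Lambda)$ discrete, this shows that no such $\varphi$ exists when $\Lambda=\mathbb{C}$; every non-zero $\varphi$ is one when $\Lambda$ is discrete; and in the intermediate cases there is a unique one up to rescaling, characterized by $\ker\varphi=\ell$, with image $\{0\}$ in the pure-line case and $\mathbb{Z}\varphi(v)$ in the line-plus-cyclic case.

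Next, I would go through the cases of the corollary using the normalization $\mathbb{Z}\subseteq\Pi_L\subseteq\Lambda$, which forces $1\in\Lambda$ and hence $\ell=\mathbb{R}$ whenever $1\in\ell$. Case (2) with $\Lambda=\mathbb{C}$ is immediate: no discrete factor and no $V$-constraint, so $\overline{L}=\Omega^{\alpha}\mathcal{M}_{g,n}$. In case (3) the unique discrete factor is computed explicitly: it is $\Im$ in (3)(a)--(3)(c) (the line component being $\mathbb{R}$), with image $\{0\}$ in (3)(a) and $\delta\mathbb{Z}=\Im\Lambda$ in (3)(b)--(3)(c); the split between (3)(b) and (3)(c) is precisely the activation condition of the $V$-invariant, namely $\Pi_L=\mathbb{Z}$. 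In (3)(d) one has $\ell\neq\mathbb{R}$, so writing $1=\ell_0+kv$ with $k\neq 0$ and normalizing $\varphi(1)=1$ gives the unique $\beta\in\mathbb{R}$ with $\varphi=\Re+\beta\Im$, and $\Im$ is not a discrete factor (its image is dense), so the $V$-condition does not apply. In each sub-case the requirement of same image in every discrete factor rewrites as exactly the single equation stated in the corollary; the $V$-condition activates only in (3)(c), while in (3)(a) it is either inapplicable (if $\Pi_L$ is not cyclic) or trivially satisfied by $\Im P=0$ (if $\Pi_L=\mathbb{Z}$, since one may choose a real lift $P$).

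Case (1), $\Lambda$ discrete (necessarily a rank-$2$ lattice, the rank-$1$ case being excluded by the hypothesis $\Lambda_L\not\subset\mathbb{Q}\otimes\Pi_L$ for real $\Pi_L$), is the main obstacle. Every non-zero linear form is a discrete factor, so the condition of Theorem \ref{t:leaf closures} immediately pins down $\Lambda_\omega=\Lambda$, giving $\overline{L}\subseteq\{\omega:\Lambda_\omega=\Lambda\}$. The equality $\overline{L}=L$ is the closedness assertion of Theorem \ref{t:leaf closures} in the lattice case. The remaining content is the inclusion $\{\omega:\Lambda_\omega=\Lambda\}\subseteq L$, which asserts that no hidden invariant (such as $V$) further splits this set into multiple leaves. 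I would prove this by revisiting Case~$2$ of the proof of Proposition \ref{p: orbit closures}, computing the stabilizer $S_P\ltimes B_P$ explicitly and checking that when $\Lambda_\Pi$ is the lattice $\Lambda$, the modular group orbit of any period homomorphism realizing $\Lambda$ fills the full set of such homomorphisms; the transfer principle and the connectedness of fibers of the period map (Theorems \ref{t:Rgn} and \ref{t:Cgn}) then transfer this to isoperiodic leaves. The remaining assertions of the corollary (ergodicity, real-analytic sub-orbifold structure, non-algebraicity of transcendental closed leaves) are already contained in Theorem \ref{t:leaf closures}.
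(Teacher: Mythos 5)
Your proof follows the same route as the paper, which gives the corollary as an essentially direct re-reading of Theorem \ref{t:leaf closures} through the classification of closed subgroups of $\mathbb{C}$; you simply supply the detail the paper leaves implicit. The identification of the discrete-factor structure in each case, and the activation/deactivation of the $V$-invariant according to whether $\Im$ is a discrete factor and $\Pi_L=\mathbb{Z}$, all match the paper's intent.

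Two small remarks. First, in case (1) you write that ``every non-zero linear form is a discrete factor'' of a lattice $\Lambda$; this is not quite right, since $a\Re+b\Im$ has discrete image on a lattice only when the direction $(a,b)$ is rational with respect to $\Lambda$. The conclusion still holds, because together with item one of Proposition \ref{p: orbit closures} (which forces $\Lambda_\omega\subset\overline{\Lambda_{\omega_0}}=\Lambda$) the rational-direction factors already pin down $\Lambda_\omega=\Lambda$. Second, the ``remaining content'' you flag in case (1) — that $V$ might further split $\{\omega:\Lambda_\omega=\Lambda\}$ — is disposed of much more cheaply than by re-opening the Ratner-theoretic computation in Proposition \ref{p: orbit closures}. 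Once $\Lambda_\omega\subset\mathbb{Z}+i\delta\mathbb{Z}$ (after normalization), any lift $P$ of $\overline p$ has $\Re P\in H^1(\Sigma,\mathbb{Z})$ and $\Im P\in\delta H^1(\Sigma,\mathbb{Z})$, so the intersection product $\Re P\cdot\Im P$ lies in $\delta\mathbb{Z}$ and $V(p)\equiv 0\bmod\delta$ identically; the $V$-condition in Theorem \ref{t:leaf closures} is therefore vacuous on all of $\{\omega:\Lambda_\omega=\Lambda\}$, exactly as you already observe for case (3)(a). With that substitution your argument is complete and coincides with the paper's.
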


\begin{proof}[Proof of Corollary \ref{c: leaf closures}:] From Theorem \ref{t:leaf closures} each case corresponds to one of the possibilities of presence of discrete factors. Case 1) corresponds to the presence of two distinct discrete factors. Case 2) corresponds do the case of the absence of discrete factors. Case 3) corresponds to the presence of a unique discrete factor, with a distinction depending on wether its kernel contains the peripheral module (subcases (a) and (b) and (c)) or not (case (d)).  \end{proof}


\end{document}